\title{Limit theorems for the trajectory of the self-repelling random walk with directed edges}
\author{Laure Mar{\^e}ch\'e}
\email{laure.mareche@math.unistra.fr}
\address{Institut de Recherche Mathématique Avancée, 
UMR 7501 Université de Strasbourg et CNRS, 
7 rue René-Descartes, 67000 Strasbourg, France}
\author{Thomas Mountford}
\email{thomas.mountford@epfl.ch}
\address{Institut de Mathématiques, École Polytechnique Fédérale de Lausanne (ÉPFL), 
Station 8, CH-1015 Lausanne}
\theoremstyle{plain}
\newtheorem{theorem}{Theorem}
\newtheorem{lemma}[theorem]{Lemma}
\newtheorem{proposition}[theorem]{Proposition}
\newtheorem{observation}[theorem]{Observation}
\newtheorem{claim}[theorem]{Claim}
\theoremstyle{definition}
\newtheorem{definition}[theorem]{Definition}
\theoremstyle{remark}
\newtheorem{remark}[theorem]{Remark}
\begin{document}

\begin{abstract}
The self-repelling random walk with directed edges was introduced by Tóth and Vető in 2008 \cite{Toth_et_al2008} as a nearest-neighbor random walk on $\mathds{Z}$ that is non-Markovian: at each step, the probability to cross a directed edge depends on the number of previous crossings of this directed edge. Tóth and Vető found this walk to have a very peculiar behavior, and conjectured that, denoting the walk by $(X_m)_{m\in\mathds{N}}$, for any $t \geq 0$ the quantity $\frac{1}{\sqrt{N}}X_{\lfloor Nt \rfloor}$ converges in distribution to a non-trivial limit when $N$ tends to $+\infty$, but the process $(\frac{1}{\sqrt{N}}X_{\lfloor Nt \rfloor})_{t \geq 0}$ does \emph{not} converge in distribution. In this paper, we prove not only that $(\frac{1}{\sqrt{N}}X_{\lfloor Nt \rfloor})_{t \geq 0}$ admits no limit in distribution in the standard Skorohod topology, but more importantly that the trajectories of the random walk still satisfy another limit theorem, of a new kind. Indeed, we show that for $n$ suitably smaller than $N$ and $T_N$ in a large family of stopping times, the process $(\frac{1}{n}(X_{T_N+tn^{3/2}}-X_{T_N}))_{t \geq 0}$ admits a non-trivial limit in distribution. The proof partly relies on combinations of reflected and absorbed Brownian motions which may be interesting in their own right.
\end{abstract}

\maketitle

\noindent\textbf{MSC2020:} Primary 60F17; Secondary 60G50, 82C41, 60K37.
\\
\textbf{Keywords:} Functional limit theorem, self-repelling random walk with directed edges, Ray-Knight methods, reflected and absorbed Brownian motion.

\section{Introduction}

The ``true'' self-avoiding random walk was introduced by Amit, Parisi and Peliti in \cite{Amit_et_al1983} in order to approximate a random self-avoiding path on $\mathds{Z}^d$, which cannot be constructed step by step in a straightforward way, by a random walk constructed step by step. In dimension 1, it is a random walk on $\mathds{Z}$ that is discrete-time, nearest-neighbor and \emph{non-Markovian} (in this paper, the term ``random walk'' will often be used for non-Markovian processes), defined so that at each time, if the process is at $i\in\mathds{Z}$, it may go to $i+1$ or $i-1$ with a transition probability depending on the time already spent by the process at sites $i+1$ and $i-1$ (the \emph{local time} at these sites). This transition probability is defined so that the process is \emph{self-repelling}: if the process spent more time at $i+1$ than at $i-1$ in the past, it will have a larger probability to go to $i-1$ than to $i+1$.

However, the non-Markovian nature of the ``true'' self-avoiding random walk makes it hard to study. This led to the introduction by Tóth in the fundamental series of papers \cite{Toth1994,Toth1995,Toth1996} of models where the probability of going to $i+1$ or $i-1$ does not depend on the local time at the sites $i+1$ and $i-1$, but instead of the local time of the non-oriented edges $\{i,i+1\}$ and $\{i,i-1\}$, that is of the number of times the process already went through these edges. These processes are easier to study because they allow the use of a \emph{Ray--Knight argument}: under some conditions, the local times on the edges form a Markov process, and its Markovian nature allows its analysis. This kind of argument was first used for simple random walks (see the original papers of Knight \cite{Knight1963} and Ray \cite{Ray1963}), then applied to random walks in random environments in \cite{Kesten_et_al_1975}. In \cite{Toth1994,Toth1995,Toth1996}, Tóth was able to extend this Ray-Knight argument to self-repelling random walks and proved that the process of their local times, once properly rescaled, converges in distribution. The limit, as well as the rescaling, depends on the exact definition of the transition probabilities, but is always a random process, either a power of a reflected Brownian motion or a gluing of squared Bessel processes (a non-Markovian random walk with a deterministic limit was studied by Tóth in \cite{Toth1997}, but it is very different as it is \emph{self-attracting} instead of self-repelling: the more an edge was crossed in the past, the more likely it is to be crossed again).

In \cite{Toth_et_al2008}, Tóth and Vető introduced a self-repelling random walk whose transition probabilities are defined through the local time on \emph{oriented} edges rather than non-oriented ones. This random walk $(X_m)_{m \in \mathds{N}}$ on $\mathds{Z}$ is defined as follows. Let $w : \mathds{Z} \mapsto (0,+\infty)$ be a non-decreasing, non-constant function. If the cardinal of a set $A$ is denoted by $|A|$, for any $m \in \mathds{N}$, $i\in \mathds{Z}$, we denote 
\begin{equation}\label{eq_def_ell}
\ell_{m,i}^\pm = |\{0 \leq k \leq m-1 \,|\, (X_k,X_{k+1})=(i,i\pm1)\}|
\end{equation}
the local time of the oriented edge $(i,i\pm1)$, and 
\begin{equation}\label{eq_def_Delta}
\Delta_{m,i} = \ell_{m,i}^- -\ell_{m,i}^+.
\end{equation}
We then set $X_0=0$, and for all $m\in \mathds{N}$,
\begin{equation}\label{eq_def_Xn}
 \mathds{P}(X_{m+1}=X_m+1) = 1-\mathds{P}(X_{m+1}=X_m-1) = \frac{w(\Delta_{m,X_m})}{w(\Delta_{m,X_m})+w(-\Delta_{m,X_m})}.
\end{equation}
On an intuitive level, it is not a priori clear why this processs should behave differently from the processes with non-oriented edges, especially the process introduced by Tóth in \cite{Toth1995}, which seems to be very similar when $w$ is exponential. However, the process of Tóth and Vető \cite{Toth_et_al2008} exhibits a sharply different behavior. Indeed, building on the Ray Knight techniques developed by Tóth in \cite{Toth1994,Toth1995,Toth1996}, Tóth and Vető proved in \cite{Toth_et_al2008} that the renormalized process of the local times of $(X_m)_{m \in \mathds{N}}$ does converge, but to a \emph{deterministic} limit forming a triangle $f(x) = (1 -|x|)_+$, instead of a random process (the fluctuations around this deterministic limit were studied by the first author in \cite{Mareche2022}). Since this model behaves differently from the self-repelling models previously studied, it is interesting to explore its behavior in more depth.

In \cite{Mountford_et_al2014}, Pimentel, Valle and the second author proved that $\frac{X_n}{\sqrt{n}}$ converges in distribution to the uniform distribution on $[-1,1]$: the random walk has a \emph{diffusive scaling}. This suggests the process $(\frac{1}{n}X_{\lfloor n^2t \rfloor})_{t \geq 0}$ should converge in distribution when $n$ tends to $+\infty$, which would be a \emph{diffusive renormalization}. However, the simulations of Tóth and Vető in \cite{Toth_et_al2008} seem to indicate that this process does not converge. This is the starting point of this work.

We prove not only that $(\frac{1}{n}X_{\lfloor n^2t \rfloor})_{t \geq 0}$ has no limit with respect to the topology of continuous real processes on $[0,+\infty)$, but the stronger result that there is no limit point in the standard Skorohod topology for càdlàg processes on $[0,+\infty)$ (see \cite{PollardJ1} or \cite{Ethier_Kurtz1986} for an introduction to this topology).
 \begin{proposition}\label{prop_continuity}
  $(\frac{1}{n}X_{\lfloor t n^2 \rfloor})_{t \in [0,+\infty)}$ admits no limit point in distribution in the standard Skorohod topology for càdlàg processes on $[0,+\infty)$ when $n$ tends to $+\infty$.
 \end{proposition}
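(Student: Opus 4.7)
The plan is to assume for contradiction that a subsequence of $Y^{(n)}_t := \frac{1}{n}X_{\lfloor tn^2 \rfloor}$ converges in distribution to some càdlàg process $Y$ in the standard Skorohod topology, and then to derive a contradiction by identifying the value of $Y$ at a specific deterministic time $a^2$ in two incompatible ways.

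The first key step is to observe that any such limit $Y$ must have continuous sample paths. Since the prelimit process $Y^{(n)}$ has all its jumps of magnitude exactly $1/n$, and time changes (which implement Skorohod convergence) preserve jump sizes, the uniform limit of $Y^{(n)} \circ \lambda_n$ along any near-optimal time changes $\lambda_n$ has jumps bounded by $1/n \to 0$; hence $Y$ is continuous. Consequently Skorohod convergence to $Y$ upgrades to uniform convergence on compact intervals, and every deterministic $t$ is a.s.\ a continuity point of $Y$.

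The second key input, which I expect to be the main obstacle, is the statement $M_m/\sqrt{m} \to 1$ in probability, where $M_m := \max_{0 \leq k \leq m} X_k$. This should follow from the triangular local-time result of \cite{Toth_et_al2008}: since the limiting local-time profile $f(x)=(1-|x|)_+$ has support $[-1,1]$, the range of the walk at time $m$ asymptotically fills $[-\sqrt{m},\sqrt{m}]$. However, the cited result is usually formulated at stopping times (typically inverse local times or hitting times of levels), so some interpolation is needed to transfer it to the deterministic time $m$. Granted this, for any fixed $a \in (0,1)$, the hitting times $\tau_a^{(n)} := \inf\{m : X_m = \lceil an \rceil\}$ satisfy $\tau_a^{(n)}/n^2 \to a^2$ in probability, since $\{\tau_a^{(n)} \leq m\} = \{M_m \geq \lceil an \rceil\}$ and $a/\sqrt{a^2 \pm \epsilon}$ is on the appropriate side of $1$.

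The contradiction then goes as follows. Applying Skorohod's representation theorem jointly to $(Y^{(n)}, \tau_a^{(n)}/n^2)$, I may pass to a subsequence along which the convergence is almost sure. Combining uniform convergence on compacts (from continuity of $Y$) with $\tau_a^{(n)}/n^2 \to a^2$ a.s.\ and the continuity of $Y$, I obtain
\[
Y^{(n)}_{\tau_a^{(n)}/n^2} \longrightarrow Y_{a^2} \quad \text{almost surely.}
\]
But by construction $Y^{(n)}_{\tau_a^{(n)}/n^2} = \lceil an \rceil / n \to a$, so $Y_{a^2} = a$ almost surely. On the other hand, by the result of \cite{Mountford_et_al2014} the random variable $Y^{(n)}_{a^2} = X_{\lfloor a^2 n^2 \rfloor}/n$ converges in distribution to the uniform law on $[-a,a]$, and since $Y$ is continuous the evaluation functional $f \mapsto f(a^2)$ is a.s.\ continuous at $Y$, so $Y_{a^2}$ is itself uniform on $[-a,a]$. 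This contradicts $Y_{a^2} = a$ almost surely, completing the proof.
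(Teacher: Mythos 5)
Your approach is genuinely different from the paper's and, once one gap is repaired, it is actually simpler. The paper proves Proposition \ref{prop_continuity} through Pollard's modulus criterion, which requires knowing that the time increment $T_1'-T_0'$ is of order $\psi(N)^{3/2}$ (Lemma \ref{lem_continuity}); that lemma in turn rests on essentially the full Ray--Knight/reflection machinery of Sections \ref{sec_first_variables}--\ref{sec_conv_limit_processes}. You instead assume a subsequential Skorohod limit $Y$ exists and derive contradictory properties of it: $Y$ must be continuous because the prelimit jumps are of size $1/n$, $Y_{a^2}$ must be uniform on $[-a,a]$ by \cite{Mountford_et_al2014}, and yet $Y_{a^2}=a$ almost surely via the a.s.\ constrained value of $Y^{(n)}$ at a hitting time. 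The continuity step, the joint Skorohod-representation step, and the use of \cite{Mountford_et_al2014} are all correct.

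The gap is the assertion $M_m/\sqrt{m}\to 1$ in probability (equivalently $\tau_a^{(n)}/n^2\to a^2$), which you flag as needing ``interpolation'' but otherwise treat as given. The one-sided bound $\mathds{P}(M_m\geq(1-\delta)\sqrt m)\to 1$ does follow from Tóth--Vető's sup-norm local-time convergence, since the limiting profile $(\theta-|y|/2)_+$ is strictly positive for $|y|<2\theta$ and hence the local time at $\lfloor Ny\rfloor$ is of order $N$, in particular $\geq 1$. The other bound, $\mathds{P}(M_m\leq(1+\delta)\sqrt m)\to 1$, which is exactly what you need to prevent $\tau_a^{(n)}/n^2$ from being too small, is not a consequence of sup-norm convergence: that only says the local time beyond $(2\theta+\varepsilon)N$ is $o(N)$, and ``$o(N)$ yet still $\geq 1$'' is perfectly compatible, so a priori the walk could overshoot. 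Controlling the overshoot requires an extra argument about where the Ray--Knight local-time Markov chain first hits $0$ (true, but not a quote of Theorem 1 or Corollary 1 of \cite{Toth_et_al2008}), and without $\tau_a^{(n)}/n^2$ concentrating on a deterministic value your Skorohod-representation step does not pin $Y$ down at a deterministic time.

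The gap is entirely avoidable, and once avoided the argument is cleaner than the paper's. Use the stopping time $T_0'=\mathbf{T}^-_{\lfloor n\theta\rfloor,0}$ directly instead of $\tau_a^{(n)}$: Corollary 1 of \cite{Toth_et_al2008} gives $T_0'/n^2\to 4\theta^2$ in probability, and $X_{T_0'}=-1$ by definition of $\mathbf{T}^-$, so $Y^{(n)}_{T_0'/n^2}=-1/n\to 0$ deterministically. Your Skorohod-representation / uniform-convergence argument then yields $Y_{4\theta^2}=0$ a.s., contradicting the fact that $Y_{4\theta^2}$ is uniform on $[-2\theta,2\theta]$ for $\theta>0$. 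This version needs only the two cited prior results and none of the machinery developed in the present paper, unlike the proof given in Section \ref{sec_conclusion_prop}.
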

 Proposition \ref{prop_continuity} means that there is no diffusive renormalization, but we show nonetheless that a non-trivial renormalization of the process exists. This renormalization is the first result of its kind to our knowledge: we show that there exist stopping times of order $N^2$ so that for $n \ll N$, the random walk started at these stopping times and considered on a scale $n$ admits a \emph{superdiffusive renormalization}. We define the scale $n$ as follows:
 \begin{equation}\label{eq_scale_n}
  n=\phi(N)\,\text{ where }\, \exists\, \alpha > 1,N_0 \in \mathds{N}^*,\forall\, N \geq N_0, \phi(N) \leq N^{1/\alpha}\text{ and }\lim_{N \to +\infty}\phi(N)=+\infty
 \end{equation}
 and $\phi : \mathds{N}^* \mapsto \mathds{N}^*$. For any $m \in \mathds{N}$, $i\in \mathds{Z}$, let us denote 
 \begin{equation}\label{eq_def_T_mi}
 \mathbf{T}_{m,i}^\pm = \inf\{k \geq 0 \,|\, \ell_{k,i}^\pm=m\}.
 \end{equation}
 We set $\theta > 0$, $x \in \mathds{R}$. For any $N \in \mathds{N}^*$, we denote $(Y_t^{N})_{t \in \mathds{R}^+}$ the continuous process defined by 
 \begin{equation}\label{eq_def_YN}
 Y_t^N=\frac{X_{\mathbf{T}_{\lfloor N \theta\rfloor,\lfloor Nx \rfloor}^\pm+t n^{3/2}}-X_{\mathbf{T}_{\lfloor N \theta\rfloor,\lfloor Nx \rfloor}^\pm}}{n}
 \end{equation}
 when $t n^{3/2}$ is an integer and by linear interpolation otherwise. We prove the following. 
\begin{theorem}\label{thm_main}
$(Y_t^N)_{t \in [0,+\infty)}$ converges in distribution in the topology of continuous real processes on $[0,+\infty)$ when $N$ tends to $+\infty$, to a limit different from the null function. 
\end{theorem}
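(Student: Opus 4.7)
The plan is to extend the Ray--Knight approach developed by Tóth for self-repelling walks to the smaller scale $n$. At the stopping time $T=\mathbf{T}^\pm_{\lfloor N\theta\rfloor,\lfloor Nx\rfloor}$, the Tóth--Vető deterministic limit for local times shows that $(\ell^+_{T,i})_{i\in\mathds{Z}}$ is close to $N$ times a shifted triangle profile, so the local times on a window of size $n\ll N$ around $\lfloor Nx\rfloor$ are of order $N$, with $\sqrt{N}$-scale random fluctuations of the type analyzed in \cite{Mareche2022}. A simple counting heuristic then predicts the $n^{3/2}$ time scale: on a window of size $n$ the Ray--Knight-type local time increments between $T$ and $T+tn^{3/2}$ should be of order $\sqrt{n}$ at each edge, so the total number of steps is $n\cdot\sqrt{n}=n^{3/2}$.

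\textbf{Step 1 (Ray--Knight chain on the small scale).} I would introduce auxiliary stopping times $S_{N,k}=\mathbf{T}^\pm_{\lfloor N\theta\rfloor+k,\lfloor Nx\rfloor}$ for $k$ of order $n^{1/2}$, and use the classical Ray--Knight representation to express the oriented local time increments between $T$ and $S_{N,k}$, indexed by the spatial position $j$, as an inhomogeneous Markov chain in $j$ with geometric-type one-step transitions whose parameters depend on the current local-time value. After rescaling the spatial variable by $n$ and the local time coordinate by $\sqrt{n}$, I expect this chain to converge to a continuous diffusion built from reflected and absorbed Brownian motion — the reflection at zero arising from the non-negativity of local times, the absorption encoding the extinction of the increment profile beyond the window actually visited during $[T,S_{N,k}]$. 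This would be the ``combinations of reflected and absorbed Brownian motions'' alluded to in the abstract.

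\textbf{Step 2 (inversion and tightness).} The position increment is recovered via the identity $X_{T+tn^{3/2}}-X_T=-\sum_{i\in\mathds{Z}}(\Delta_{T+tn^{3/2},i}-\Delta_{T,i})$, so that $Y^N_t$ is a continuous functional of the rescaled profile of local time increments produced by Step 1. Combining this with the continuous mapping theorem yields finite-dimensional convergence of $Y^N$ to a non-trivial limit, non-triviality being inherited from the genuinely random reflected-Brownian component of the limiting diffusion. Tightness in the space of continuous functions would follow from the fact that a single spatial step of the Ray--Knight chain has conditional variance of order $\sqrt{n}$, which translates into H\"older-$\alpha$ modulus estimates on $Y^N$ for any $\alpha<1/2$.

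The hardest step is identifying the limit in Step 1: the initial profile at time $T$ carries random $\sqrt{N}$-scale fluctuations while the Ray--Knight increments over $[T,S_{N,k}]$ live on the much finer scale $\sqrt{n}$, and one must show that the former enters the limit only through deterministic drift coefficients and the initial condition of the limiting diffusion, not as additional noise. Establishing this decoupling, together with pinning down the absorption time of the chain at the correct scale, will require careful coupling with the large-scale Tóth--Vető picture and constitutes the bulk of the technical difficulty.
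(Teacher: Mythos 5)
Your intuitions about the $n^{3/2}$ time scale, the Ray--Knight viewpoint, and the appearance of reflected and absorbed Brownian motions are all pointing in the right direction, and the inversion identity $X_m = -\sum_i \Delta_{m,i}$ is correct. But the coarse-graining scheme you propose and the tightness argument both have serious problems, and you omit entirely what the paper identifies as the hardest and most novel step.

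The stopping times $S_{N,k}=\mathbf{T}^\pm_{\lfloor N\theta\rfloor+k,\lfloor Nx\rfloor}$, indexed by local time accumulation at a single fixed site, are the wrong discretization. Between two such visits to $\lfloor Nx\rfloor$, nothing confines the walk to a window of radius $O(n)$: at time $\mathbf{T}^\pm_{\lfloor N\theta\rfloor,\lfloor Nx\rfloor}$ the range of the walk is of order $N$, and a single excursion from $\lfloor Nx\rfloor$ can extend to macroscopic distance, so the local time profile increment between $S_{N,k}$ and $S_{N,k+1}$ cannot be controlled inside a window of size $n$ alone. The paper instead uses mesoscopic times $T_0,T_1,\dots$ defined by spatial displacement, $T_{k+1}=\inf\{m\ge T_k:|X_m-X_{T_k}|=\lfloor\varepsilon n\rfloor\}$, which by construction confines the walk between $T_k$ and $T_{k+1}$ to a window where a Ray--Knight analysis at scale $\sqrt n$ can close. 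The position increment $\frac1n(X_{T_{k+1}}-X_{T_k})$ and the time increment $\frac1{n^{3/2}}(T_{k+1}-T_k)$ are then the natural quantities to control, and passing from them to $Y^N$ costs at most $\varepsilon$ in position and nothing in time, which resolves the "continuous functional" passage you leave implicit.

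Your tightness argument is not viable as stated. A conditional variance bound of $\sqrt n$ per spatial step of the Ray--Knight chain gives control of the local time profile increment, not of the process $Y^N$ in $t$, and $Y^N$ is not a martingale in $t$, so there is no direct Doob- or Kolmogorov-type route to a modulus estimate. The actual obstacle to tightness (and to tightness of the finite-dimensional marginals, which also has to be proved because the limit is not known in advance) is that the time increments $T_{k+1}-T_k$ must not collapse on scale $n^{3/2}$, equivalently that $T_K-T_0 \gtrsim K n^{3/2}$ for all $K$ simultaneously. The paper's Proposition \ref{prop_T_K} proves this via a combinatorial classification of the edges crossed by the mesoscopic walk into clean/usable/usable-clean/dirty states and an accounting argument showing that a positive fraction of mesoscopic steps occur over edges where the environment is controllable; this accounts for most of Section \ref{sec_lower_bounds_Tk} and is described in the paper as the most novel part of the work. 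Nothing in your proposal supplies this lower bound. Finally, the "decoupling" issue you flag as the hardest step (that the $\sqrt N$-scale randomness in the initial profile enters only through drift and initial conditions) is in fact handled rather directly by Proposition \ref{prop_law_zeta}: at the stopping time $T_0$ and on a window of size $n^{1+o(1)}$, the $\Delta_{T_0,j}$ are coupled with i.i.d. variables of law $\rho_\pm$ except on an event of stretched-exponentially small probability, so the macroscopic-scale fluctuations simply do not appear at the $\sqrt n$ scale. That is a coupling lemma, not the heart of the argument.
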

Theorem \ref{thm_main} means that locally after $\mathbf{T}_{\lfloor N \theta\rfloor,\lfloor Nx \rfloor}^\pm$, the process $(X_m)_{m \in \mathds{N}}$ has a superdiffusive behavior. It thus fluctuates more quickly than diffusively, which explains why $(\frac{1}{n}X_{\lfloor t n^2 \rfloor})_{t \in [0,+\infty)}$ admits no limit in distribution. Once Theorem \ref{thm_main} is established, proving Proposition \ref{prop_continuity} is rather easy. In order to show Theorem \ref{thm_main}, we follow the approach recently introduced by Kosygina, Peterson and the second author \cite{Kosygina_et_al2022} for another kind of non-Markovian random walk, called an excited random walk with Markovian cookie stacks. They used that approach to prove the convergence of their renormalized random walk to a Brownian motion perturbed at extrema. For some $\varepsilon>0$, we consider ``mesoscopic times'' depending on $N$: $T_0=\mathbf{T}_{\lfloor N \theta\rfloor,\lfloor Nx \rfloor}^\pm$, and $T_{k+1}$ is the first moment $m$ after $T_k$ at which $|X_{m}-X_{T_{k}}| = \lfloor\varepsilon n\rfloor$. The convergence of $(Y_t^N)_{t \in [0,+\infty)}$ can be deduced from the convergence in distribution of the $\frac{1}{n}(X_{T_{k+1}}-X_{T_{k}})$ and the $\frac{1}{n^{3/2}}(T_{k+1}-T_{k})$ when $N$, and therefore $n$, tends to $+\infty$, which is obtained by using Ray-Knight arguments for the process $(X_{T_k+m})_{m\in\mathds{N}}$. 

There are important differences between the argument in \cite{Kosygina_et_al2022} and ours. In \cite{Kosygina_et_al2022}, the behavior when $X_m$ was near the extremities of the range of $(X_m)_{0 \leq m \leq T_k}$ was different from its behavior in the ``bulk'' of the range. In our work, the normalization considered keeps the process far from the extremities of the range, so we never need to take this different behavior into account.

Furthermore, the Ray-Knight arguments for the process $(X_{T_k+m})_{m\in\mathds{N}}$ give a different law for its local times than in \cite{Kosygina_et_al2022}, so they need a different treatment. Interestingly, the behavior of $(X_{T_k+m})_{m\in\mathds{N}}$ is close to that of the random walk of \cite{Toth1995}, which allows to use arguments similar to those in \cite{Toth1995}, though the processes are different enough so they do not suffice. We roughly have that $(\sum_j^i \Delta_{T_{k+1},j})_i$ is a random walk reflected on or absorbed by $(\sum_j^i \Delta_{T_{k},j})_i$ (see Definition \ref{def_reflected_BM} for the notion of reflection), which we may consider as an ``environment'', hence $(\frac{1}{\sqrt{n}}\sum_j^i \Delta_{T_{k+1},j})_i$ converges in distribution to a Brownian motion reflected by or absorbed on the limit of $(\frac{1}{\sqrt{n}}\sum_j^i \Delta_{T_{k},j})_i$. In \cite{Toth1995} the environment was absent, therefore we had to find new ideas to control the interaction between $(\sum_j^i \Delta_{T_{k+1},j})_i$ and $(\sum_j^i \Delta_{T_{k},j})_i$. Moreover, we need to study the properties of the limit processes, which lead us to study combinations of reflected and absorbed Brownian motions which we consider novel and of interest in their own right. Indeed, Brownian motions reflected on other Brownian motions have been studied before (see \cite{Burdzy_et_al2002,Soucaliuc_et_al2000,Toth_et_al2007,Warren2007}), but the results found in those papers were insufficient for our purposes. 

Finally, the limit of the random walk in \cite{Kosygina_et_al2022} was known, expected from prior results on particular cases. Here the limit is unknown, and we do not identify it beyond noting that it exists and is continuous. It is not obvious whether the limit is intimately related to the process of \cite{Toth_et_al1998,Newman_et_al2006}, and it would be useful to develop the ideas presented here to understand this limit process better. The lack of knowledge about the limit forced us to find novel arguments to prove the convergence. An attribute of our approach is that the ``coarse-graining'' with the mesoscopic times relies purely on Ray-Knight properties. This, we feel, gives it the potential to be generalized to yield limits for a much larger class of self-interacting random walks. 

The paper unfolds as follows. In Section \ref{sec_outline_proof} we give an outline of the proof. In Section \ref{sec_conclusion} we give the proofs of Theorem \ref{thm_main} and Proposition \ref{prop_continuity} conditionally on the results proven in the later sections. In Section \ref{sec_first_variables} we introduce much notation, and auxiliary random variables we will use throughout the paper. Section \ref{sec_bad_events} considers some ``bad events'' outside which the environment and some associated variables behave well, and proves that they have very small probability. In Section \ref{sec_reflected_RW}, we prove that outside of the bad events, $(\sum_j^i \Delta_{T_{k+1},j})_i$ is indeed close to a random walk reflected on the environment. Section \ref{sec_lower_bounds_Tk} is the most important in that it shows that with very high probability, the stopping times $T_k$ do not accumulate and $T_k -T_0$ is at least of order $k n ^{3/2}$. We need such a control on the $T_k$ because we do not know the limit of $(Y_t^N)_{t \in [0,+\infty)}$; it is the most novel part of the work. Section \ref{sec_limit_processes} discusses the limit process of the environment and introduces the reflected/absorbed processes which may be of interest in their own right; this section is mostly independent from the rest of the paper. Finally, in Section \ref{sec_conv_limit_processes}, we prove that the environments indeed converge to these limit processes and we use this convergence to deduce the convergence in distribution of the ``mesoscopic quantities'' $\frac{1}{n}(X_{T_{k+1}}-X_{T_{k}})$ and $\frac{1}{n^{3/2}}(T_{k+1}-T_{k})$. An appendix contains some arguments that are necessary to complete the proof but not very speciﬁc or novel, and which a reader might want to omit.

\section{Outline of the proof}\label{sec_outline_proof}

This section being an outline, most of its content will be non-rigorous. We first outline how to prove Theorem \ref{thm_main}, which is done rigorously in Section \ref{sec_conclusion_thm} modulo subsequent technical results. In order to prove the convergence in distribution of the renormalized process $(Y_t^N)_{t \in [0,+\infty)}$, we need to prove its tightness and the convergence of its finite-dimensional marginals. Let us concentrate on the finite-dimensional marginals for now. 

The proof of the convergence of the finite-dimensional marginals (Proposition \ref{prop_marginals}) is partially inspired from the method introduced by Kosygina, Peterson and the second author in \cite{Kosygina_et_al2022}: we define ``mesoscopic times'' $(T_k)_{k\in\mathds{N}}$ so that $T_0=\mathbf{T}_{\lfloor N \theta\rfloor,\lfloor Nx \rfloor}^\pm$, and $T_{k+1}$ is the first time $m$ after $T_k$ at which $|X_{m}-X_{T_{k}}| = \lfloor\varepsilon n\rfloor$. For $m\in\{T_k,...,T_{k+1}\}$ we have $|X_{m}-X_{T_{k}}| \leq \lfloor\varepsilon n\rfloor$, hence if $t \in[\frac{1}{n^{3/2}}(T_{k}-T_{0}),\frac{1}{n^{3/2}}(T_{k+1}-T_{0})]$ we have $|Y_t^N-\frac{1}{n}(X_{T_k}-X_{T_{0}})| \leq \varepsilon$. Consequently, if we can prove the convergence in distribution of the $\frac{1}{n}(X_{T_{k+1}}-X_{T_{k}})$ and the $\frac{1}{n^{3/2}}(T_{k+1}-T_{k})$ (Proposition \ref{prop_conv_ZT}), we can prove that the finite-dimensional marginals of $(Y_t^N)_{t \in [0,+\infty)}$ are close to those of a limit process depending on $\varepsilon$, which we may call $(Y_t^\varepsilon)_{t \in [0,+\infty)}$. In \cite{Kosygina_et_al2022}, the limit of $(Y_t^N)_{t \in [0,+\infty)}$ was known, and (the equivalent of) $(Y_t^\varepsilon)_{t \in [0,+\infty)}$ converges towards it when $\varepsilon$ tends to 0, so this suffices. However, here we do not know the limit of $(Y_t^N)_{t \in [0,+\infty)}$, which forces us to add another step. We notice that if the finite-dimensional marginals converge, then their limit has to be close to the finite-dimensional marginals of $(Y_t^\varepsilon)_{t \in [0,+\infty)}$ for any $\varepsilon$, so the limit is uniquely determined. Consequently, if the finite-dimensional marginals are tight, then they converge. 

However, this means we also have to prove the tightness of the finite-dimensional marginals. In order to do that, we prove that the $\frac{1}{n^{3/2}}(T_{k}-T_{0})$ are at least of order $k$ (or rather $k$ times a constant), which is the work of Section \ref{sec_lower_bounds_Tk}. Indeed, when $m\in\{T_0,T_0+1,...,T_k\}$ we have $|X_m-X_{T_0}| \leq k \lfloor\varepsilon n\rfloor$, thus $\frac{1}{n^{3/2}}(T_{k}-T_{0})$ is the smallest time at which $(Y_t^N)_{t \in [0,+\infty)}$ can reach $k\varepsilon$. Therefore, if $\frac{1}{n^{3/2}}(T_{k}-T_{0})$ tends to $+\infty$ with $k$, then the finite-dimensional marginals of $(Y_t^N)_{t \in [0,+\infty)}$ will be tight, hence they will converge. Consequently, to prove the convergence of the finite-dimensional marginals of $(Y_t^N)_{t \in [0,+\infty)}$, it is sufficient to prove that $\frac{1}{n^{3/2}}(T_{k}-T_{0})$ is of order $k$ as well as the convergence in distribution of the $\frac{1}{n}(X_{T_{k+1}}-X_{T_{k}})$, $\frac{1}{n^{3/2}}(T_{k+1}-T_{k})$. Actually, proving that also yields the tightness of the process $(Y_t^N)_{t \in [0,+\infty)}$ (Proposition \ref{prop_module}). Indeed, it is tight when $(X_m)_{m\in\mathds{N}}$ does not fluctuate too quickly, which is the same thing as the $\frac{1}{n^{3/2}}(T_{k+1}-T_{k})$ not being too small. Therefore, we have two main things to prove: the convergence in distribution of the $\frac{1}{n}(X_{T_{k+1}}-X_{T_{k}})$, $\frac{1}{n^{3/2}}(T_{k+1}-T_{k})$ (Proposition \ref{prop_conv_ZT}) and the fact that $\frac{1}{n^{3/2}}(T_{k}-T_{0})$ is of order $k$ (Proposition \ref{prop_T_K}).

\subsection{Convergence in distribution of the $\frac{1}{n}(X_{T_{k+1}}-X_{T_{k}})$, $\frac{1}{n^{3/2}}(T_{k+1}-T_{k})$} \label{sec_outline_conv}

As was done in \cite{Kosygina_et_al2022}, we prove this convergence through a study of ``mesoscopic'' local times. We let $\beta_{T_k}^-$ be the first time $m$ after $T_k$ at which $X_{m}=X_{T_{k}}-\lfloor\varepsilon n\rfloor$ (see \eqref{eq_def_Tk_pm^}), then $\beta_{T_k}^-$ will be $T_{k+1}$ if $(X_{T_k+m})_{m\in\mathds{N}}$ reaches $X_{T_{k}}-\lfloor\varepsilon n\rfloor$ before $X_{T_{k}}+\lfloor\varepsilon n\rfloor$. For any $i\in\mathds{Z}$, let $L_i^{T_k,-}$ the local time on the oriented edge $(i-1,i)$ between times $T_{k}$ and $\beta_{T_k}^-$, that is the number of times the process went from $i-1$ to $i$ between times $T_{k}$ and $\beta_{T_k}^-$ (rigorously defined in Definition \ref{def_local_times}). Then we will have $X_{T_{k+1}}=X_{T_k}-\lfloor\varepsilon n\rfloor$ if and only if there exists $i\in\{X_{T_k},...,X_{T_k}+\lfloor\varepsilon n\rfloor\}$ so that $L_i^{T_k,-}=0$, because this means that before $\beta_{T_k}^-$ i.e. before $(X_{T_k+m})_{m\in\mathds{N}}$ goes to $X_{T_{k}}-\lfloor\varepsilon n\rfloor$, it does not reach $i$ hence does not reach $X_{T_{k}}+\lfloor\varepsilon n\rfloor$. Consequently, one can know whether $X_{T_{k+1}}=X_{T_k}-\lfloor\varepsilon n\rfloor$ or $X_{T_k}+\lfloor\varepsilon n\rfloor$ by looking at the local times $L_i^{T_k,-}$. Moreover, if $X_{T_{k+1}}=X_{T_k}-\lfloor\varepsilon n\rfloor$, we have $T_{k+1}=\beta_{T_k}^-$, and at each step made between times $T_k$ and $\beta_{T_k}^-$ the random walk crosses an edge, so one can compute $T_{k+1}-T_k$ from the local times $L_i^{T_k,-}$, and if $X_{T_{k+1}}=X_{T_k}+\lfloor\varepsilon n\rfloor$, one can compute $T_{k+1}-T_k$ from local times defined in a symmetric way. In order to establish the convergence in distribution of the $\frac{1}{n}(X_{T_{k+1}}-X_{T_{k}})$, $\frac{1}{n^{3/2}}(T_{k+1}-T_{k})$, it is thus enough to understand the local times $L_i^{T_k,-}$. 

As in \cite{Kosygina_et_al2022}, we will study these local times through a Ray-Knight argument, that is by exploiting their Markov properties. However, the use of the ideas of \cite{Kosygina_et_al2022} stops here, because the dynamics of our process is different from theirs. We are able to express $L_i^{T_k,-}$ as roughly $\sum^i \zeta_j^{T_k,-,E}-\sum^i \zeta_j^{T_k,-,B}$ (Observation \ref{obs_rec_temps_local}), where the $\zeta_j^{T_k,-,E}$, defined in Definition \ref{def_local_times}, are small modifications of the $\Delta_{\beta_{T_k}^-,j}$ (see \eqref{eq_def_Delta}) and the $\zeta_j^{T_k,-,B}$, also defined in Definition \ref{def_local_times}, are small modifications of the $\Delta_{T_k,j}$. We thus express $L_i^{T_k,-}$ as the difference between the random walk $\sum^i \zeta_j^{T_k,-,E}$ and the random walk $\sum^i \zeta_j^{T_k,-,B}$. We then need to study these walks, hence the $\Delta_{\beta_{T_k}^-,i}$ and $\Delta_{T_k,i}$. Part of this study resembles what was done in \cite{Toth1995}, though there are very important differences. 

 We first notice that $(\Delta_{m,i})_m$ resembles a Markov chain. We are only interested in $i \geq X_{T_k}-\lfloor\varepsilon n\rfloor$, since $(X_m)_{m\in\mathds{N}}$ does not go below $X_{T_k}-\lfloor\varepsilon n\rfloor$ between times $T_k$ and $\beta_{T_k}^-$ so $L_i^{T_k,-}=0$ for $i < X_{T_k}-\lfloor\varepsilon n\rfloor$. We notice that given the transition probabilities \eqref{eq_def_Xn}, when $X_m=i$ the probability for $X_{m+1}$ to be $i-1$ or $i+1$, hence for $\Delta_{m+1,i}$ to be $\Delta_{m,i}+1$ or $\Delta_{m,i}-1$, depends only on $\Delta_{m,X_m}=\Delta_{m,i}$, therefore if we only keep track of the changes of $\Delta_{m,i}$, we get a Markov chain (whose transition probabilities are given in \eqref{eq_def_xi}). If we only keep track of the values of $\Delta_{m,i}$ when $\Delta_{m,i}=\Delta_{m-1,i}+1$ (respectively $\Delta_{m,i}=\Delta_{m-1,i}-1$), which means the last move of the walk at $i$ was to go to the left of $i$ (respectively to the right), we obtain another Markov chain, the $\oplus$-Markov chain (respectively the $\ominus$-Markov chain). These chains correspond respectively to the $-\eta_+$ and $\eta_-$ defined in \eqref{eq_def_tau} and \eqref{eq_def_eta}. Their equilibrium measures are called $\rho_+$ and $\rho_-$, defined in \eqref{eq_def_rho+} and \eqref{eq_def_rho-}. 

 This yields that the $\Delta_{\beta_{T_k}^-,i}$ are roughly i.i.d. with law $\rho_+$ and independent from the $\Delta_{T_k,i}$ (Proposition \ref{prop_law_zeta}). Indeed, we have $X_{\beta_{T_k}^-}=X_{T_k}-\lfloor\varepsilon n\rfloor$, so for $i \geq X_{T_k}-\lfloor\varepsilon n\rfloor$, our self-repelling random walk is at the left of $i$ at time $\beta_{T_k}^-$, hence $\Delta_{\beta_{T_k}^-,i}$ is a step of the $\oplus$-Markov chain at $i$. If $L_i^{T_k,-}$ is large, the $\oplus$-Markov chain at $i$ made many steps between times $T_k$ and $\beta_{T_k}^-$, therefore at time $\beta_{T_k}^-$ it will have forgotten the value of $\Delta_{T_k,i}$ and the law of $\Delta_{\beta_{T_k}^-,i}$ will be close to $\rho_+$. This implies that when $L_i^{T_k,-}$ is large, the $\Delta_{\beta_{T_k}^-,i}$ are roughly i.i.d. with law $\rho_+$ and independent from the $\Delta_{T_k,i}$. 

This allows to understand the behavior of $\sum^i \zeta_j^{T_k,-,E}$, which is done in Sections \ref{sec_bad_events} and \ref{sec_reflected_RW}. Indeed, since $L_i^{T_k,-}$ is roughly $\sum^i \zeta_j^{T_k,-,E}-\sum^i \zeta_j^{T_k,-,B}$, this means that when $\sum^i \zeta_j^{T_k,-,E}$ is well above $\sum^i \zeta_j^{T_k,-,B}$, then $\sum^i \zeta_j^{T_k,-,E}$ behaves like a random walk with i.i.d. increments independent from $\sum^i \zeta_j^{T_k,-,B}$. Furthermore, we have roughly $\sum^i \zeta_j^{T_k,-,E}-\sum^i \zeta_j^{T_k,-,B}=L_i^{T_k,-}$ with $L_i^{T_k,-}$ non-negative, hence $\sum^i \zeta_j^{T_k,-,E}$ remains larger than $\sum^i \zeta_j^{T_k,-,B}$ at all times. More precisely, for $i\in\{X_{T_k}-\lfloor\varepsilon n\rfloor,...,X_{T_k}\}$, the process $\sum^i \zeta_j^{T_k,-,E}$ will behave like a random walk reflected on the ``environment'' $\sum^i \zeta_j^{T_k,-,B}$ (Proposition \ref{prop_sym}). Moreover, for $i>X_{T_k}$, when $L_i^{T_k,-}=0$ then $(X_m)_{m\in\mathds{N}}$ does not reach $i$ between times $T_k$ and $\beta_{T_k}^-$ (that is when going from $X_{T_k}$ to $X_{T_k}-\lfloor\varepsilon n\rfloor$), so it will not reach any $j>i$, so $L_j^{T_k,-}=0$ for any $j>i$. This implies that as soon as $\sum^i \zeta_j^{T_k,-,E}=\sum^i \zeta_j^{T_k,-,B}$ then $\sum^j \zeta_{j'}^{T_k,-,E}=\sum^j \zeta_{j'}^{T_k,-,B}$ for any $j>i$, which means the random walk $\sum^i \zeta_j^{T_k,-,E}$ is ``absorbed'' by the environment $\sum^i \zeta_j^{T_k,-,B}$ when it hits said environment. 

We can now study the behavior of $\sum^i \zeta_j^{T_k,-,B}$ and $\sum^i \zeta_j^{T_k,-,E}$ when $N$ tends to $+\infty$, which was done in Section \ref{sec_conv_limit_processes}. If $\frac{1}{\sqrt{n}}\sum^i \zeta_j^{T_k,-,B}$ converges to some limit process, then $\frac{1}{\sqrt{n}}\sum^i \zeta_j^{T_k,-,E}$ converges to a Brownian motion that is partly reflected on the limit of $\frac{1}{\sqrt{n}}\sum^i \zeta_j^{T_k,-,B}$ and partly absorbed by this limit. We can then use the convergence of $\frac{1}{\sqrt{n}}\sum^i \zeta_j^{T_k,-,E}$ to deduce the convergence of $\frac{1}{\sqrt{n}}\sum^i \zeta_j^{T_{k+1},-,B}$. It is thus possible to prove the joint convergence of the $\frac{1}{\sqrt{n}}\sum^i \zeta_j^{T_k,-,B}$, $\frac{1}{\sqrt{n}}\sum^i \zeta_j^{T_k,-,E}$ by induction on $k$, which is Proposition \ref{prop_conv_envts}. This yields control of the $L_i^{T_k,-}$. In \cite{Toth1995}, Tóth used a similar strategy to prove the convergence of the local times process of a self-repelling random walk with undirected edges, but he had no equivalent of $\frac{1}{\sqrt{n}}\sum^i \zeta_j^{T_k,-,B}$ (his random walk is simply reflected on 0).

There are three major problems for putting this approach into practice to prove the convergence in distribution of the $\frac{1}{n}(X_{T_{k+1}}-X_{T_{k}})$, $\frac{1}{n^{3/2}}(T_{k+1}-T_{k})$. Firstly, though we know that when $\sum^i \zeta_j^{T_k,-,E}$ is well above $\sum^i \zeta_j^{T_k,-,B}$, then $\sum^i \zeta_j^{T_k,-,E}$ behaves like a random walk with i.i.d. increments independent from $\sum^i \zeta_j^{T_k,-,B}$, we do not have this sort of control when $\sum^i \zeta_j^{T_k,-,E}$ is close to $\sum^i \zeta_j^{T_k,-,B}$, so it is not that easy to prove that $\sum^i \zeta_j^{T_k,-,E}$ behaves like a random walk reflected on $\sum^i \zeta_j^{T_k,-,B}$. Our model being very different from the one studied by Tóth in \cite{Toth1995}, we had to find a novel argument, which is used in the proof of Proposition \ref{prop_TCL}. We notice that though when $L_i^{T_k,-}$ is small the $\oplus$-Markov chain at $i$ is not at equilibrium, it can be coupled with another that is at equilibrium, and can therefore be controlled. Even with this control, we need to establish rather complex inequalities (see (\ref{eq_reflected_hard})) to prove $\sum^i \zeta_j^{T_k,-,E}$ is close to a random walk reflected on $\sum^i \zeta_j^{T_k,-,B}$.

The second problem lies in the definition of the limit process of $\frac{1}{\sqrt{n}}\sum^i \zeta_j^{T_{k+1},-,B}$. Indeed, $T_{k+1}$ is $\beta_{T_k}^-$ when there exists $i\in\{X_{T_k},...,X_{T_k}+\lfloor\varepsilon n\rfloor\}$ so that $L_i^{T_k,-}=0$, i.e. $\sum^i \zeta_j^{T_k,-,E}=\sum^i \zeta_j^{T_k,-,B}$, which means $\sum^i \zeta_j^{T_k,-,E}$ is absorbed by $\sum^i \zeta_j^{T_k,-,B}$. In this case we have $\Delta_{T_{k+1},i}=\Delta_{\beta_{T_k}^-,i}$, hence the $\zeta_j^{T_{k+1},-,B}$ can be obtained from the $\Delta_{\beta_{T_k}^-,i}$ hence from the $\zeta_j^{T_{k},-,E}$. The limit of $\frac{1}{\sqrt{n}}\sum^i \zeta_j^{T_{k+1},-,B}$ is then obtained from the limit of $\frac{1}{\sqrt{n}}\sum^i \zeta_j^{T_{k},-,E}$, and this works roughly in the case where the limit of $\frac{1}{\sqrt{n}}\sum^i \zeta_j^{T_{k},-,E}$ is absorbed by the limit of $\frac{1}{\sqrt{n}}\sum^i \zeta_j^{T_{k},-,B}$. However, we also have to consider the case $T_{k+1}\neq \beta_{T_k}^-$, that is $X_{T_{k+1}}=X_{T_k}+\lfloor\varepsilon n\rfloor$. We can study it in the same way that the case $X_{T_{k+1}}=X_{T_k}-\lfloor\varepsilon n\rfloor$, defining symmetric quantities $\zeta_i^{T_{k},+,B},\zeta_i^{T_{k},+,E}$. We then get that the behavior of the $\Delta_{T_{k+1},i}$, hence the limit of $\frac{1}{\sqrt{n}}\sum^i \zeta_j^{T_{k+1},-,B}$, can be obtained from the limit of $\frac{1}{\sqrt{n}}\sum^i \zeta_j^{T_{k},+,E}$ when the latter is absorbed by the limit of $\frac{1}{\sqrt{n}}\sum^i \zeta_j^{T_{k},+,B}$. Consequently, to be able to construct the limit process of $\frac{1}{\sqrt{n}}\sum^i \zeta_j^{T_{k+1},-,B}$ (which is done in Definition \ref{def_lim_envts}), we have to show that the probability that the limit of $\frac{1}{\sqrt{n}}\sum^i \zeta_j^{T_{k},+,E}$ is absorbed by the limit of $\frac{1}{\sqrt{n}}\sum^i \zeta_j^{T_{k},+,B}$ is one minus the probability the limit of $\frac{1}{\sqrt{n}}\sum^i \zeta_j^{T_{k},-,E}$ is absorbed by the limit of $\frac{1}{\sqrt{n}}\sum^i \zeta_j^{T_{k},-,B}$. In order to do that, in Section \ref{sec_BM_results} we study the following setting: we have a Brownian motion reflected by some function called the ``barrier'' from time $-1$ to time 0 and absorbed by the barrier from time 0 to time 1, and another Brownian motion going backwards, reflected above the same barrier from time 1 to time 0 and absorbed by the barrier from time 0 to time $-1$. We prove several conditions (Propositions \ref{prop_Z}, \ref{prop_better_than_Z} and \ref{prop_Z_lower_fcts}) for the probability that the first Brownian motion actually gets absorbed to be one minus the probability that the second Brownian motion is absorbed. We believe this study to be of independent interest. 

The third problem lies in deducing rigorously the convergence of $\frac{1}{n}(X_{T_{k+1}}-X_{T_{k}})$ from the convergence of the processes $\frac{1}{\sqrt{n}}\sum^i \zeta_j^{T_k,-,B}$ and $\frac{1}{\sqrt{n}}\sum^i \zeta_j^{T_k,-,E}$. Indeed, we know that $X_{T_{k+1}}=X_{T_k}-\lfloor\varepsilon n\rfloor$ if and only if $\sum^i \zeta_j^{T_k,-,E}$ gets absorbed by $\sum^i \zeta_j^{T_k,-,B}$, but proving that the probability of this absorption converges to the probability of absorption of the limit processs requires some property of continuity of the absorption time for the limit process. In order to show such a property, in Section \ref{sec_process_above} we study the limit processes of the environments $\frac{1}{\sqrt{n}}\sum^i \zeta_j^{T_k,-,B}$, $k\in\mathds{N}$. These limit processes, constructed in Definition \ref{def_lim_envts}, may be interesting on their own: they are the sequence of processes obtained by firstly running either a Brownian motion first reflected then absorbed on another Brownian motion, conditioned to absorption, or a backwards Brownian motion with the same properties, and then iterating this procedure by reflecting and absorbing the new Brownian motion on the resulting process. We prove that the law of the limit processes thus obtained, on certain small intervals, is close in some sense either to the law of a Brownian motion or to the law of a Brownian motion reflected on a Brownian motion (Proposition \ref{prop_rec_little_ints}). These latter processes being easy to control, this allows us to deduce the required continuity property. 

\subsection{$\frac{1}{n^{3/2}}(T_{k}-T_{0})$ is of order $k$}

This relies on an entirely novel argument, laid out in Section \ref{sec_lower_bounds_Tk}. We first link $T_{k+1}-T_k$ to the behavior of the walks $\sum^i \zeta_j^{T_k,-,E}$, $\sum^i \zeta_j^{T_k,-,B}$. As we already mentioned at the beginning of Section \ref{sec_outline_conv}, if $\beta_{T_k}^-=T_k$, we can deduce $T_{k+1}-T_k$ from the $L_i^{T_k,-}$; actually we roughly have $T_{k+1}-T_k=2\sum L_i^{T_k,-}$, where the sum is on $i\in\{X_{T_k}-\lfloor\varepsilon n\rfloor,...,X_{T_k}+\lfloor\varepsilon n\rfloor\}$. We also know that $L_i^{T_k,-}$ is the difference between the random walks $\sum^i \zeta_j^{T_k,-,E}$ and $\sum^i \zeta_j^{T_k,-,B}$, and that $\sum^i \zeta_j^{T_k,-,E}$ is an i.i.d. random walk reflected on $\sum^i \zeta_j^{T_k,-,B}$ for $i\in\{X_{T_k}-\lfloor\varepsilon n\rfloor,...,X_{T_k}\}$ and absorbed by $\sum^i \zeta_j^{T_k,-,B}$ for $i\in\{X_{T_k},...,X_{T_k}+\lfloor\varepsilon n\rfloor\}$. Since we need only a lower bound on $T_{k+1}-T_k=2\sum L_i^{T_k,-}$, we can consider only the sum on $i\in\{X_{T_k}-\lfloor\varepsilon n\rfloor,...,X_{T_k}\}$, where the walk $\sum^i \zeta_j^{T_k,-,E}$ is reflected. Then since $\sum^i \zeta_j^{T_k,-,E}$ is an i.i.d. random walk reflected on $\sum^i \zeta_j^{T_k,-,B}$, it will be larger than some i.i.d. random walk which we call $\sum^i \zeta_j^{T_k,-,I}$ (the construction of the $\zeta_j^{T_k,-,I}$ can be found just before Proposition \ref{prop_def_zetaI}). We deduce $L_i^{T_k,-} = \sum^i \zeta_j^{T_k,-,E}-\sum^i \zeta_j^{T_k,-,B} \geq \sum^i \zeta_j^{T_k,-,I}-\sum^i \zeta_j^{T_k,-,B}$. 

If $\sum^i \zeta_j^{T_k,-,B}$ was an i.i.d. random walk too, $\sum^i \zeta_j^{T_k,-,I}-\sum^i \zeta_j^{T_k,-,B}$ would be an i.i.d. random walk, hence $T_{k+1}-T_k=2\sum L_i^{T_k,-}$ would be larger than the integral of an i.i.d. random walk on an interval of length of order $n$. Since such a random walk may go to an height of order $\sqrt{n}$, we would have $T_{k+1}-T_k$ of order $n^{3/2}$, hence $\frac{1}{n^{3/2}}(T_{k+1}-T_{k})$ would be of order 1, hence $\frac{1}{n^{3/2}}(T_{k}-T_{0})$ would be of order $k$. Consequently, it is enough to prove that $\sum^i \zeta_j^{T_k,-,B}$ is close to an i.i.d. random walk. The arguments will differ depending on the evolution of the process prior to $T_k$.

For $k=0$, then $\sum^i \zeta_j^{T_k,-,B}$ will be close to an i.i.d. random walk. Indeed, the $\zeta_j^{T_0,-,B}$ are based on the $\Delta_{T_0,i}$, and if $i$ is at the right of $X_{T_0}$ (respectively at its left), the last move of the process at $i$ before $T_0$ was going to the left (respectively to the right), hence $\Delta_{T_0,i}$ is a step of the $\oplus$-Markov chain at $i$ (respectively the $\ominus$-Markov chain at $i$). Now, at time $T_0=\mathbf{T}_{\lfloor N \theta\rfloor,\lfloor Nx \rfloor}^\pm$, the local times around $X_{T_0}=\lfloor Nx \rfloor\pm1$ are not far from $\lfloor N \theta\rfloor$, hence they are large enough for the $\oplus$- and $\ominus$-Markov chains to be at equilibrium. These Markov chains are also independent for different $i$. We deduce that at the right of $X_{T_0}$, the $\Delta_{T_0,i}$ are i.i.d. with law $\rho_+$, and at the left of $X_{T_0}$, the $\Delta_{T_0,i}$ are i.i.d. with law $\rho_-$. This will imply $\sum^i \zeta_j^{T_0,-,B}$ is an i.i.d. random walk. $\sum^i \zeta_j^{T_k,-,B}$ will also be an i.i.d. random walk if between times $T_0$ and $T_k$, the process $(X_m)_{m\in\mathds{N}}$ never went between $X_{T_k}$ and $X_{T_{k+1}}$, since in this case, for $i$ between $X_{T_k}$ and $X_{T_{k+1}}$ we have $\Delta_{T_0,k}=\Delta_{T_0,i}$. 

Another favorable case is when the ``mesoscopic process'' $(X_{T_k})_{k\in\mathds{N}}$ does a U-turn, that is when $X_{T_{k+1}}=X_{T_k}\pm\lfloor\varepsilon n\rfloor=X_{T_{k-1}}$ (in the following we consider $X_{T_{k+1}}=X_{T_k}-\lfloor\varepsilon n\rfloor=X_{T_{k-1}}$ to fix the notation). Indeed, the $\zeta_i^{T_k,-,B}$ are based on the $\Delta_{T_k,i}$, and in this case $X_{T_k}=X_{T_{k-1}}+\lfloor\varepsilon n\rfloor$, hence the $\Delta_{T_k,i}$ can be deduced from the $\zeta_i^{T_{k-1},+,E}$. Furthermore, the process $\sum^i\zeta_j^{T_{k-1},+,E}$ is roughly a reflected i.i.d. random walk, hence is above an i.i.d. random walk, which allows to control it, hence to control $\sum^i\zeta_j^{T_k,-,B}$. The case of a U-turn is thus tractable. 

However, if the mesoscopic process does not do a U-turn, for example if $X_{T_{k+1}}=X_{T_k}-\lfloor\varepsilon n\rfloor=X_{T_{k-1}}-2\lfloor\varepsilon n\rfloor$, things quickly become more complicated. Indeed, we need to control the $\Delta_{T_{k+1},i}$ on $\{X_{T_k},...,X_{T_k}+\lfloor\varepsilon n\rfloor\}$, since if at some point after time $T_{k+1}$ the mesoscopic process goes from $X_{T_k}$ to $X_{T_k}+\lfloor\varepsilon n\rfloor$, then the environment will be based on these $\Delta_{T_{k+1},i}$. As $T_{k+1}=\beta_{T_k}^-$, the $\Delta_{T_{k+1},i}=\Delta_{\beta_{T_k}^-,i}$ can be deduced from the $\zeta_i^{T_k,-,E}$, so we have to control those. Now, since $X_{T_{k+1}}=X_{T_k}-\lfloor\varepsilon n\rfloor$, between times $T_k$ and $T_{k+1}$ the process $(X_m)_{m\in\mathds{N}}$ may enter $\{X_{T_k},...,X_{T_k}+\lfloor\varepsilon n\rfloor\}$, but will not reach $X_{T_k}+\lfloor\varepsilon n\rfloor$. Let $i_0$ the rightmost site of $\mathds{Z}$ that is reached. We already saw that on $\{X_{T_k},...,X_{T_k}+\lfloor\varepsilon n\rfloor\}$, $\sum^i \zeta_j^{T_k,-,E}$ is an i.i.d. random walk absorbed by $\sum^i \zeta_j^{T_k,-,B}$. For $i \leq i_0$, we have $L_i^{T_k,-}>0$, so $\sum^i \zeta_j^{T_k,-,E}$ is not yet absorbed, thus $\sum^i \zeta_j^{T_k,-,E}$ behaves as an i.i.d. random walk, hence we can control it. For $i>i_0$, since $(X_m)_{m\in\mathds{N}}$ does not reach $i$ between times $T_k$ and $T_{k+1}$, we have $\Delta_{T_{k+1},i}=\Delta_{T_k,i}$, and since $X_{T_k}=X_{T_{k-1}}-\lfloor\varepsilon n\rfloor$, we have $T_k=\beta_{T_{k-1}}^-$, hence the $\Delta_{T_k,i}$ can be deduced from the $\zeta_i^{T_{k-1},-,E}$. We then notice that since we consider $i\in\{X_{T_k},...,X_{T_k}+\lfloor\varepsilon n\rfloor\}$, we have $i \in \{X_{T_{k-1}}-\lfloor\varepsilon n\rfloor,...,X_{T_{k-1}}\}$, and that for such $i$ the process $\sum^i\zeta_j^{T_{k-1},-,E}$ is a reflected i.i.d. random walk, hence is larger than an i.i.d. random walk, therefore we can control it. To sum up, we have two cases, both of which can be controlled, so this will still give a tractable environment for the next time the mesoscopic process goes from $X_{T_k}$ to $X_{T_k}+\lfloor\varepsilon n\rfloor$. 

However, if before that the mesoscopic process makes a visit from $X_{T_k}-\lfloor\varepsilon n\rfloor$ to $X_{T_k}$ and back, then during the shift from $X_{T_k}$ to $X_{T_k}-\lfloor\varepsilon n\rfloor$, $(X_m)_{m\in\mathds{N}}$ may visit some $i\in\{X_{T_k},...,X_{T_k}+\lfloor\varepsilon n\rfloor\}$, which will change their $\Delta_{m,i}$ and give us another case to take into account. Since there is no limit on the number of such visits, the environment on $\{X_{T_k},...,X_{T_k}+\lfloor\varepsilon n\rfloor\}$ can become uncontrollable. In order to solve this problem, we devised an algorithm that keeps track of the control we have on the environment, and used it to prove that whatever the path of the mesoscopic process $(X_{T_k})_{k\in\mathds{N}}$, there is always a positive fraction of its steps in which we can control the environment, hence for which $\frac{1}{n^{3/2}}(T_{k+1}-T_{k})$ is of order 1. This is enough to prove $\frac{1}{n^{3/2}}(T_{k}-T_{0})$ is of order $k$ (Proposition \ref{prop_T_K}).
 
\subsection{Proof of Proposition \ref{prop_continuity}}

In order to prove this proposition, which is done in Section \ref{sec_conclusion_prop}, we need to show $(X_m)_{m\in\mathds{N}}$ fluctuates too quickly for $(\frac{1}{N}X_{\lfloor N^2 t\rfloor})_{t\in[0,+\infty)}$ to have a limit. In order to do that, we reuse some of the techniques developed for the proof of Theorem \ref{thm_main}. If we choose again $T_0=\mathbf{T}_{\lfloor N \theta\rfloor,\lfloor Nx \rfloor}^\pm$, but we take $T_{1}$ the first time $m$ after $T_0$ at which $|X_m-X_{T_0}|=\lfloor\varepsilon N\rfloor$ (instead of $\lfloor\varepsilon n\rfloor$ as in the proof of Theorem \ref{thm_main}), we can prove that $\frac{1}{N^{3/2}}(T_1-T_0)$ converges in distribution (Lemma \ref{lem_continuity}), which implies $T_1-T_0$ is of order $N^{3/2}$. This means the time needed for $(X_m)_{m\in\mathds{N}}$ to move on a scale $N$ is of order $N^{3/2}$, therefore the time needed for $(\frac{1}{N}X_{\lfloor N^2 t\rfloor})_{t\in[0,+\infty)}$ to move on a scale 1 is of order $1/N^{1/2}$. It is thus clear the latter process cannot converge when $N$ tends to $+\infty$.

 \section{Proof of Theorem \ref{thm_main} and Proposition \ref{prop_continuity}}\label{sec_conclusion}
 
 In this section, we give the proofs of Theorem \ref{thm_main} and Proposition \ref{prop_continuity}, conditionally on important results which will be proven in the following sections. We recall the definition of $n$ given in \eqref{eq_scale_n} and that of $(Y_t^N)_{t\in[0,+\infty)}$ spelled out in \eqref{eq_def_YN}. We need to introduce several other objects. Remembering the definition of $\mathbf{T}_{\lfloor N \theta\rfloor,\lfloor Nx \rfloor}^\pm$ given in \eqref{eq_def_T_mi}, for any $\varepsilon>0$ we define (as at the beginning of Section \ref{sec_outline_proof}):
 \[
 T_0=\mathbf{T}_{\lfloor N \theta\rfloor,\lfloor Nx \rfloor}^\pm \text{ and } \forall\, k \in \mathds{N}, 
T_{k+1}=\inf\{k' \geq T_{k}\,|\,|X_{k'}-X_{T_{k}}| = \lfloor\varepsilon n\rfloor\}.
\]
 The $T_k$ depend on $n$ and $\varepsilon$, but we do not write it in the notation to make it lighter. The $T_k$ are ``mesoscopic times''. For $k \in \mathds{N}$, we also set $Z_k^N=\frac{1}{\lfloor\varepsilon n\rfloor}(X_{T_k}-X_{T_0})$, which is a ``mesoscopic walk''. 
 
 We also need to define some ``bad events'' $\mathcal{B}$, $\mathcal{B}_0$, $\mathcal{B}_1,...,\mathcal{B}_6$ such that outside of these bad events, ``the process behaves well''. Since their definition is long, technical, and unnecessary to understand this section, we do not give it here and rather refer to the definitions in Propositions \ref{prop_bound_m}, \ref{prop_law_zeta}, as well as to the beginning of Section \ref{sec_bad_events}. We also need some $\tilde \varepsilon > 0$ which will depend on $\varepsilon$, given by \eqref{eq_def_tildepsilon}. Finally, if $\mu$ is a probability measure and $f$ a function taking real values, we denote $\mu(f)$ the expectation of $f$ under $\mu$.
 
 \subsection{Proof of Theorem \ref{thm_main}}\label{sec_conclusion_thm}
 
 To prove that $(Y_t^N)_{t\in[0,+\infty)}$ converges in distribution in the topology of continuous real processes on $[0,+\infty)$, it is enough to show the two following propositions. 
 
 \begin{proposition}\label{prop_marginals}
  For any $\ell \in \mathds{N}^*$, for any $0 < t_1 < \cdots < t_\ell$, $(Y_{t_1}^N,...,Y_{t_\ell}^N)$ converges in distribution when $N \to +\infty$. 
 \end{proposition}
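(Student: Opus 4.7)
The plan is to follow the mesoscopic strategy sketched in Section \ref{sec_outline_proof}: use the mesoscopic times $(T_k)$ and the mesoscopic walk $(Z_k^N)$ to sandwich $Y_t^N$ between approximations whose joint law converges by Proposition \ref{prop_conv_ZT}, control the number of mesoscopic steps fitting in $[0, t_\ell]$ through Proposition \ref{prop_T_K}, and combine a tightness argument with a uniqueness-of-limit argument. Fix $0 < t_1 < \cdots < t_\ell$ and $\varepsilon > 0$. For each $t > 0$ and each $N$, let $k(t, N)$ be the unique integer with $T_{k(t, N)} \leq T_0 + t n^{3/2} < T_{k(t, N)+1}$. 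Since the walk moves by at most $\lfloor\varepsilon n\rfloor$ between consecutive mesoscopic times, the definitions give the sandwich estimate
\[
\bigl|Y_t^N - \varepsilon Z_{k(t, N)}^N\bigr| \leq \varepsilon + o(1),
\]
with $o(1)$ deterministic, arising from the integer parts and the linear interpolation.

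Tightness of the vector $(Y_{t_1}^N, \ldots, Y_{t_\ell}^N)$ comes next. Proposition \ref{prop_T_K} states that $\frac{1}{n^{3/2}}(T_k - T_0)$ is at least a positive constant multiple of $k$ with probability tending to one, uniformly for $k$ up to any fixed bound. This forces $k(t_\ell, N) \leq C(\varepsilon) t_\ell$ with high probability for some $C(\varepsilon)$ independent of $N$, and since $|Z_k^N| \leq k$, the sandwich estimate yields $|Y_{t_i}^N| \leq (C(\varepsilon) t_\ell + 1)\varepsilon + o(1)$ with high probability, giving tightness.

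To identify the limit I would build a candidate process $(Y_t^\varepsilon)$ using Proposition \ref{prop_conv_ZT}: for any fixed $K$, the family $\bigl(Z_k^N - Z_{k-1}^N,\, \tfrac{1}{n^{3/2}}(T_k - T_{k-1})\bigr)_{1 \leq k \leq K}$ converges jointly in distribution to some $(\hat \zeta_k, \hat \tau_k)_{1 \leq k \leq K}$. Setting $\hat k(t) = \max\{k \geq 0 : \hat \tau_1 + \cdots + \hat \tau_k \leq t\}$, which is almost surely finite by the limiting analogue of Proposition \ref{prop_T_K}, and $Y_t^\varepsilon = \varepsilon \sum_{j=1}^{\hat k(t)} \hat \zeta_j$, I would pass to the limit in the sandwich estimate along any subsequence $N_j$ for which $(Y_{t_i}^{N_j})_i$ converges in distribution. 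The Skorohod representation together with continuity of $\hat k(\cdot)$ at each $t_i$ then yields that any subsequential limit $(Y_{t_1}, \ldots, Y_{t_\ell})$ satisfies $\max_i |Y_{t_i} - Y_{t_i}^\varepsilon| \leq 2\varepsilon$ with high probability, and in particular is close to $(Y_{t_i}^\varepsilon)_i$ in the Prokhorov metric. Two subsequential limits of $(Y_{t_1}^N, \ldots, Y_{t_\ell}^N)$ therefore agree up to an error of order $\varepsilon$, and letting $\varepsilon \to 0$ forces them to have the same law; combined with tightness this yields the proposition.

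The main obstacle is the continuity of $\hat k(\cdot)$ at $t_1,\ldots,t_\ell$: this function is piecewise constant with jumps at the partial sums of the $\hat \tau_k$, and a clean Skorohod argument requires each $t_i$ to avoid these jump times almost surely. This reduces to some absolute continuity (or at least non-atomicity) of the law of $\hat \tau_1 + \cdots + \hat \tau_k$, which should follow from the non-degeneracy of the limit environments built from the reflected/absorbed Brownian motions of Section \ref{sec_limit_processes}. Making this rigorous, and doing so with bounds that remain serviceable uniformly as $\varepsilon \to 0$, is the delicate point; the rest is essentially book-keeping against the mesoscopic skeleton.
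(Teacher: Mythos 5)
Your proposal follows essentially the same route as the paper: sandwich $Y_{t_i}^N$ against the mesoscopic walk at the last renewal before $t_i n^{3/2}$, obtain tightness from Proposition \ref{prop_T_K}, obtain convergence of the renewal-indexed mesoscopic walk from Proposition \ref{prop_conv_ZT} together with the non-atomicity of the $\sum_{k'\le k}\breve T_{k'}$ (which the paper supplies via Lemma \ref{lem_no_atoms}, so the ``obstacle'' you flag is already handled), and then show that every subsequential limit is within $\varepsilon$ of a well-defined law $\mu_\varepsilon$. One small reassurance on your last worry: no uniformity in $\varepsilon$ is needed, since for each test function $f$ and each tolerance $\delta_1$ one simply picks $\varepsilon$ small enough that $|\mu(f)-\mu_\varepsilon(f)|\le\delta_1$, which already identifies $\mu(f)=\lim_{\varepsilon\to0}\mu_\varepsilon(f)$ independently of the subsequence.
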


 \begin{proposition}\label{prop_module}
  For any $\vartheta > 0$, for any $\delta_1, \delta_2 > 0$, there exists $\delta_3 > 0$ such that for $N$ large enough, we have $\mathds{P}(\sup_{0 \leq s,t \leq \vartheta, |s-t|\leq \delta_3}|Y_t^N-Y_s^N| > \delta_1) \leq \delta_2$. 
 \end{proposition}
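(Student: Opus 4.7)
The plan is to control the modulus of continuity of $Y^N$ through the mesoscopic coarse-graining $(T_k)_{k \in \mathds{N}}$ introduced earlier in the outline. Writing $\tau_k^N := (T_k - T_0)/n^{3/2}$, the two structural facts I would rely on are: (i) between the rescaled mesoscopic times $\tau_k^N$ and $\tau_{k+1}^N$ the continuous process $Y^N$ oscillates by at most $2\varepsilon$, since $|X_m - X_{T_k}| \leq \lfloor \varepsilon n \rfloor$ for every $m \in [T_k, T_{k+1}]$; and (ii) across a single mesoscopic boundary $Y^N$ changes by exactly $\lfloor \varepsilon n\rfloor / n \simeq \varepsilon$. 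Combining these, if $s \in [\tau_{k_1}^N, \tau_{k_1 + 1}^N]$ and $t \in [\tau_{k_2}^N, \tau_{k_2 + 1}^N]$ with $k_1 \leq k_2$, one has $|Y_s^N - Y_t^N| \leq (k_2 - k_1 + 2)\varepsilon$.

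Given $\delta_1, \delta_2 > 0$, I would first choose $\varepsilon$ small (say $\varepsilon \leq \delta_1/4$), then invoke Proposition \ref{prop_T_K} to produce, for $N$ large enough, an event of probability at least $1 - \delta_2$ on which a uniform lower bound $\tau_{k + j}^N - \tau_k^N \geq c\, j$ holds for all pairs $(k, j)$ relevant to the interval $[0, \vartheta]$, with some constant $c = c(\varepsilon, \vartheta) > 0$. On such an event, any rescaled window of length $\delta_3$ contains at most $\lfloor \delta_3/c \rfloor + 1$ mesoscopic boundaries, so taking $\delta_3 := c\varepsilon$ makes this count at most $1$. The bound from (i)--(ii) then yields $|Y_s^N - Y_t^N| \leq 3\varepsilon \leq \delta_1$ whenever $s, t \in [0, \vartheta]$ satisfy $|s - t| \leq \delta_3$, which is exactly the statement of Proposition \ref{prop_module}.

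The main obstacle is the precise form in which Proposition \ref{prop_T_K} must be used. Its informal statement in Section \ref{sec_outline_proof} controls $T_k - T_0$, whereas the modulus-of-continuity argument genuinely needs the \emph{uniform} version $T_{k+j} - T_k \geq c j n^{3/2}$ valid for arbitrary $k$: without this, a small window placed near the far end of $[0,\vartheta]$ could in principle contain many mesoscopic transitions and destroy the bound. I expect this uniformity to come for free from the algorithmic argument sketched in Section \ref{sec_lower_bounds_Tk}, since the extraction of a positive fraction of ``controllable'' mesoscopic steps depends only on the shape of the mesoscopic path $(X_{T_k})_k$ and not on where the origin is placed; applying the analysis with $T_k$ in place of $T_0$ should yield the shifted lower bound. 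A safety net, if needed, is to apply the unshifted version of Proposition \ref{prop_T_K} conditionally at each candidate $T_k$ and union-bound over the $O(\vartheta/c)$ indices reached before time $\vartheta$. Once the uniform lower bound is in hand, the remainder is essentially deterministic bookkeeping on the mesoscopic structure.
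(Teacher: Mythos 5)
Your overall framework is the same as the paper's: coarse-grain by the mesoscopic times $T_k$, observe that if $s,t$ both fall in a window $[\tau_k^N,\tau_{k+2}^N]$ then $|Y_s^N-Y_t^N|\leq 3\varepsilon$, choose $\varepsilon$ small in terms of $\delta_1$, and reduce the modulus bound to (a) a lower bound on $T_K-T_0$ that keeps the number of boundaries on $[0,\vartheta]$ finite, and (b) a lower bound on the individual gaps so that a $\delta_3$-window crosses at most one boundary. Part (a) you attribute to Proposition~\ref{prop_T_K}, and that is indeed how the paper uses it: to rule out $T_K<\vartheta n^{3/2}+T_0$.

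The gap is in part (b). You propose to extract a per-gap lower bound $\tau_{k+1}^N-\tau_k^N\geq c$ (the $j=1$ case of your uniform bound) from Proposition~\ref{prop_T_K} or a ``shifted'' variant of it. This cannot work, because Proposition~\ref{prop_T_K} is a statement about the sum: it gives $T_K-T_0\geq K\frac{r_2}{120}(\tilde\varepsilon n)^{3/2}$ with probability $\geq 1-2^{-K}$, a bound that only becomes nontrivial for $K$ large, so it is an averaging estimate and cannot control an individual summand. Applied with $K=1$ (or shifted to start at $T_k$ with the increment $K'=1$) the error probability is $1/2$, which is useless, and no union bound over $k$ fixes this. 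More fundamentally, no deterministic $c=c(\varepsilon,\vartheta)>0$ independent of $\delta_2$ can do the job: $\frac{1}{n^{3/2}}(T_{k+1}-T_k)$ converges in distribution to $\breve T_{k+1}$, whose support accumulates at $0$, so $\mathds{P}(T_{k+1}-T_k\leq c\,n^{3/2})$ tends to $\mathds{P}(\breve T_{k+1}\leq c)>0$. The paper handles (b) by a different tool: Proposition~\ref{prop_conv_ZT} gives the convergence in distribution of each rescaled gap to $\breve T_{k+1}$, and Lemma~\ref{lem_no_atoms} shows $\breve T_{k+1}$ has no atoms, in particular $\mathds{P}(\breve T_{k+1}=0)=0$. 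Hence for each $k\in\{0,\dots,K-1\}$ one can choose $\delta_3>0$ small enough that $\mathds{P}(\breve T_{k+1}\leq\delta_3)\leq\delta_2/(4K)$, and then for $N$ large $\mathds{P}(T_{k+1}-T_k\leq\delta_3 n^{3/2})\leq\delta_2/(2K)$; a union bound over the finitely many $k$ finishes the proof. So the missing ingredient in your proposal is exactly the distributional limit of the individual gaps and its no-atom property — Proposition~\ref{prop_T_K} alone is the wrong tool for the per-gap bound, and the ``shifted Prop.~\ref{prop_T_K}'' safety net you suggest does not rescue it.
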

 
 Given Propositions \ref{prop_marginals} and \ref{prop_module}, all that remains to prove Theorem \ref{thm_main} is to prove the following lemma.
 
 \begin{lemma}\label{lem_lim_not0}
  $(Y_t^N)_{t\in[0,+\infty)}$ does not converge in distribution to the null function in the topology of continuous real processes on $[0,+\infty)$.
 \end{lemma}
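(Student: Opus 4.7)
The plan is to derive a contradiction via the continuous mapping theorem: if $(Y_t^N)_{t \in [0,+\infty)}$ did converge in distribution to the null function in the topology of continuous real processes on $[0,+\infty)$, then for every $\vartheta>0$ the functional $\omega \mapsto \sup_{0 \le t \le \vartheta}|\omega(t)|$, being continuous for uniform convergence on compact sets, would force $\sup_{0 \le t \le \vartheta} |Y_t^N|$ to converge to $0$ in probability. So it would suffice to exhibit some $\vartheta>0$ and $\delta>0$ for which this supremum exceeds $\delta$ with probability bounded away from $0$ as $n \to +\infty$.

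To produce such an event I would invoke Proposition \ref{prop_conv_ZT}, which will already have been proved by the time this lemma is needed, giving convergence in distribution of $\bigl(\tfrac{1}{n^{3/2}}(T_1-T_0),\, Z_1^N\bigr)$ to a limit $(\tau,Z)$ with $Z \in \{-1,+1\}$ and $\tau \in [0,+\infty)$ a.s.~finite (the finiteness being automatic from the tightness implied by convergence in distribution on $[0,+\infty)$). I would then pick $\vartheta>0$ that is a continuity point of the law of $\tau$ with $\mathds{P}(\tau \le \vartheta)\ge 1/2$, so that $\mathds{P}(T_1 - T_0 \le \vartheta\, n^{3/2}) \ge 1/3$ for $n$ large. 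On that event, setting $s := (T_1-T_0)/n^{3/2}\le\vartheta$, the product $sn^{3/2}=T_1-T_0$ is an integer, so by the definition \eqref{eq_def_YN} of $Y^N$,
\[
Y_s^N \;=\; \frac{X_{T_1}-X_{T_0}}{n} \;=\; \pm\frac{\lfloor\varepsilon n\rfloor}{n},
\]
whose modulus is at least $\varepsilon/2$ for $n$ large, by the definition of $T_1$. Therefore $\sup_{0 \le t \le \vartheta}|Y_t^N| \ge \varepsilon/2$ with probability at least $1/3$ for $n$ large, contradicting the putative convergence to the null function.

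I do not foresee a real obstacle in this argument: the whole thing occupies only a few lines once Proposition \ref{prop_conv_ZT} is in hand. The only steps worth verifying are the a.s.~finiteness of $\tau$ and the possibility of choosing a continuity point $\vartheta$ with $\mathds{P}(\tau\le\vartheta)\ge 1/2$, both of which are standard consequences of convergence in distribution on $[0,+\infty)$. The lemma could alternatively be phrased as showing that any subsequential limit must be non-degenerate, but since Propositions \ref{prop_marginals} and \ref{prop_module} already give full convergence in distribution, the route through the continuous mapping theorem is the most direct.
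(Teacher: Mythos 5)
Your proof is correct and takes essentially the same route as the paper: both argue by contradiction, observe that at time $s = (T_1-T_0)/n^{3/2}$ the process satisfies $|Y_s^N| = \lfloor\varepsilon n\rfloor/n \approx \varepsilon$, and then use the convergence of $\frac{1}{n^{3/2}}(T_1-T_0)$ to $\breve T_1$ from Proposition \ref{prop_conv_ZT} to rule out $\sup_{[0,\vartheta]}|Y^N|$ being small with high probability. The only cosmetic differences are that the paper passes through the Skorohod Representation Theorem rather than the continuous mapping theorem and phrases the contradiction as ``$\mathds{P}(\breve T_1 \le M)=0$ for all $M$, which is impossible,'' whereas you fix a specific continuity point $\vartheta$; the paper also fixes $\varepsilon = 1$, which your version does not need to do.
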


 We now prove Propositions \ref{prop_marginals} and \ref{prop_module}, as well as Lemma \ref{lem_lim_not0}.
 
\begin{proof}[Proof of Proposition \ref{prop_marginals}]
We first show $((Y_{t_1}^N,...,Y_{t_\ell}^N))_{N\in\mathds{N}^*}$ is tight. For this part of the proof, we choose $\varepsilon=1$. We fix $\delta > 0$. We notice that for any $K\in\mathds{N}^*$, if $(Y_{t_1}^N,...,Y_{t_\ell}^N) \not\in [-K,K]^\ell$, then $T_K-T_0 \leq \lceil t_\ell n^{3/2}\rceil$. This implies 
\[
\mathds{P}((Y_{t_1}^N,...,Y_{t_\ell}^N) \not\in [-K,K]^\ell)\leq \mathds{P}(\mathcal{B})+\mathds{P}\left(\bigcup_{r=0}^6\mathcal{B}_r\right)+\mathds{P}\left(T_K-T_0 \leq \lceil t_\ell n^{3/2}\rceil,\mathcal{B}^c\cap\bigcap_{r=0}^6\mathcal{B}_r^c\right).
\]
The results in the later sections allow us to prove that this tends to 0 when $N$ tends to $+\infty$. Indeed, Proposition \ref{prop_bound_m} yields $\mathds{P}(\mathcal{B}) \leq e^{-c'n^{((\alpha-1)/4) \wedge (1/10)}}$ when $n$ is large enough, and by assumption $n$ tends to $+\infty$ when $N$ tends to $+\infty$, hence $\mathds{P}(\mathcal{B})$ tends to 0 when $N$ tends to $+\infty$. Similarly, by Proposition \ref{prop_bound_bad_evts}, $\mathds{P}(\bigcup_{r=0}^6\mathcal{B}_r) \leq e^{-c(\ln n)^2}$ when $n$ is large enough, hence $\mathds{P}(\bigcup_{r=0}^6\mathcal{B}_r)$ tends to 0 when $N$ tends to $+\infty$. In addition, by Proposition \ref{prop_T_K}, if we choose $K$ large enough so that $K \geq \frac{240t_\ell}{\tilde\varepsilon^{3/2}r_2}$ and $1/2^K \leq \delta/2$, then when $n$ is large enough we have 
\[
\mathds{P}\left(T_K-T_0 \leq \lceil t_\ell n^{3/2}\rceil,\mathcal{B}^c\cap\bigcap_{r=0}^6\mathcal{B}_r^c\right) 
\leq \mathds{P}\left(T_K-T_0 \leq K\frac{r_2}{120} (\tilde\varepsilon n)^{3/2},\mathcal{B}^c\cap\bigcap_{r=0}^6\mathcal{B}_r^c\right) \leq \delta/2.
\]
Therefore, for such a $K$, when $N$ is large enough $\mathds{P}((Y_{t_1}^N,...,Y_{t_\ell}^N) \not\in [-K,K]^\ell) \leq \delta$, which is enough to prove the tightness of $((Y_{t_1}^N,...,Y_{t_\ell}^N))_{N\in\mathds{N}^*}$.

It remains to prove that all subsequences of $((Y_{t_1}^N,...,Y_{t_\ell}^N))_{N\in\mathds{N}^*}$ that converge do so to the same limit. Let $((Y_{t_1}^{\psi(N)},...,Y_{t_\ell}^{\psi(N)}))_{N\in\mathds{N}^*}$ be a converging subsequence, and $\mu$ be its limit law. Let $f \colon \mathds{R}^\ell \mapsto \mathds{R}$ be a continuous function with compact support. We are going to study $\mu(f)$. Let $\delta_1 > 0$. $f$ is uniformly continuous, hence if we denote $\|(y_1,...,y_\ell)\|_\infty=\max_{1 \leq \ell' \leq \ell}|y_{\ell'}|$ for any $(y_1,...,y_\ell)\in\mathds{R}^\ell$, there exists $\delta_2 > 0$ such that if $y,y'\in\mathds{R}^\ell$ satisfy $\|y-y'\|_\infty \leq \delta_2$ then $|f(y)-f(y')|\leq \delta_1$. For any $\varepsilon > 0$, for any $\ell' \in\{1,...,\ell\}$, we define $\tau_{\ell'}=\sup\{k \in \mathds{N}: T_k-T_0 \leq t_{\ell'}n^{3/2}\}$. Then $T_{\tau_{\ell'}} \leq T_0+t_{\ell'}n^{3/2} < T_{\tau_{\ell'}+1}$, hence $|X_{T_0+\lfloor t_{\ell'}n^{3/2}\rfloor}-X_{T_{\tau_{\ell'}}}| \leq \lfloor \varepsilon n \rfloor$ and $|X_{T_0+\lceil t_{\ell'}n^{3/2}\rceil}-X_{T_{\tau_{\ell'}}}| \leq \lfloor \varepsilon n \rfloor$, which implies $|Y_{t_{\ell'}}^N-\frac{1}{n}(X_{T_{\tau_{\ell'}}}-X_{T_0})| \leq \frac{\lfloor\varepsilon n \rfloor}{n}$, therefore $|Y_{t_{\ell'}}^N-\frac{\lfloor\varepsilon n \rfloor}{n} Z_{\tau_{\ell'}}^N| \leq \varepsilon$. Therefore when $\varepsilon \leq \delta_2$, we have 
\begin{equation}\label{eq_marginals}
\left|f(Y_{t_{1}}^N,...,Y_{t_{\ell}}^N)-f\left(\frac{\lfloor\varepsilon n \rfloor}{n} Z_{\tau_{1}}^N,...,\frac{\lfloor\varepsilon n \rfloor}{n} Z_{\tau_{\ell}}^N\right)\right| \leq \delta_1.
\end{equation}
Let us study $f(\frac{\lfloor\varepsilon n \rfloor}{n} Z_{\tau_{1}}^N,...,\frac{\lfloor\varepsilon n \rfloor}{n} Z_{\tau_{\ell}}^N)$. By Proposition \ref{prop_conv_ZT}, we have that $((Z_k^N,\frac{1}{n^{3/2}}(T_k-T_{k-1})))_{k\in\mathds{N}^*}$ converges in distribution to the law $((\breve Z_k,\breve T_k))_{k\in\mathds{N}^*}$ defined in Definition \ref{def_lim_envts} when $N$ tends to $+\infty$ (in the sense of convergence of the finite-dimensional marginals). Moreover, $(Z_{\tau_1}^N,...,Z_{\tau_\ell}^N)$ is a function of $((Z_k^N,\frac{1}{n^{3/2}}(T_k-T_{k-1})))_{k\in\mathds{N}^*}$, and since by Proposition \ref{prop_conv_ZT} the $\sum_{k'=1}^k \breve T_{k'}$, $k\in\mathds{N}$ have no atoms, almost surely for all $k\in\mathds{N}^*$, $\sum_{k'=1}^k \breve T_{k'} \neq t_{\ell'}$ for all $\ell'\in\{1,...,\ell\}$, therefore almost surely $((\breve Z_k,\breve T_k))_{k\in\mathds{N}^*}$ is a point of continuity of this function. Consequently, $(Z_{\tau_1}^N,...,Z_{\tau_\ell}^N)$ converges in distribution when $N$ tends to $+\infty$. We denote $\mu_\varepsilon$ its limiting law, which is also the limiting law of $\frac{\lfloor\varepsilon n \rfloor}{n} (Z_{\tau_1}^N,...,Z_{\tau_\ell}^N)$. Now, from (\ref{eq_marginals}) we deduce $|\mu(f)-\mu_\varepsilon(f)| \leq \delta_1$. To sum up, for any $\delta_1 > 0$, when $\varepsilon$ is small enough we have $|\mu(f)-\mu_\varepsilon(f)| \leq \delta_1$, hence $\mu(f)=\lim_{\varepsilon \to 0}\mu_\varepsilon(f)$. This means $\mu(f)$ does not depend on the choice of the subsequence, thus $\mu$ does not depend on the choice of the subsequence, which ends the proof.
\end{proof}

\begin{proof}[Proof of Proposition \ref{prop_module}] 
Let $\vartheta > 0$, $\delta_1, \delta_2 > 0$. In this proof, we will set $\varepsilon=\frac{\delta_1}{3}$. Then for any $m,m'\in\mathds{N}$, if there exists $k\in\mathds{N}$ such that $m,m' \in [T_k,T_{k+2}]$, then $|X_m-X_{m'}| \leq 3\varepsilon n$.  This implies that for any $s,t \in [0,\vartheta]$, if $s n^{3/2}+T_0,t n^{3/2}+T_0 \in [T_k,T_{k+2}]$ then $|Y_t^N-Y_s^N|\leq 3\varepsilon = \delta_1$. Now for $\delta_3 > 0$, $K \in \mathds{N}^*$, if $T_K \geq \vartheta n^{3/2}+T_0$ and for each $k \in \{0,...,K-1\}$ we have $T_{k+1}-T_k > \delta_3n^{3/2}$, then for any $s,t \in [0,\vartheta]$ so that $|s-t| \leq \delta_3$ there exists $k \in \mathds{N}$ such that $s n^{3/2}+T_0,t n^{3/2}+T_0 \in [T_k,T_{k+2}]$, so we obtain $|Y_t^N-Y_s^N|\leq\delta_1$, therefore $\sup_{0 \leq s,t \leq \vartheta, |s-t|\leq \delta_3}|Y_t^N-Y_s^N| \leq \delta_1$. We deduce 
\[
 \mathds{P}\left(\sup_{0 \leq s,t \leq \vartheta, |s-t|\leq \delta_3}|Y_t^N-Y_s^N| > \delta_1\right)
 \leq \mathds{P}(T_K < \vartheta n^{3/2}+T_0)+\sum_{k=0}^{K-1}\mathds{P}(T_{k+1}-T_k \leq \delta_3n^{3/2}).
\]

We set $K$ so that $1/2^K \leq \delta_2/8$ and $\vartheta \leq K\frac{r_2}{120}\tilde\varepsilon^{3/2}$. Then we have 
\[
\mathds{P}(T_K < \vartheta n^{3/2}+T_0) \leq \mathds{P}\left(T_K-T_0 < K\frac{r_2}{120}(\tilde\varepsilon n)^{3/2},\mathcal{B}^c \cap \bigcap_{r=0}^{6}\mathcal{B}_r^c\right)+\mathds{P}(\mathcal{B})+\mathds{P}\left(\bigcup_{r=0}^6\mathcal{B}_r\right).
\]
By Proposition \ref{prop_T_K}, when $N$ is large enough, the first term is at most $\delta_2/6$. By Proposition \ref{prop_bound_m}, $\mathds{P}(\mathcal{B})$ tends to 0 when $n$ tends to $+\infty$, so if $N$ is large enough, $\mathds{P}(\mathcal{B}) \leq \delta_2/6$. By Proposition \ref{prop_bound_bad_evts}, $\mathds{P}(\bigcup_{r=0}^6\mathcal{B}_r) \leq e^{-c(\ln n)^2}$ when $n$ is large enough, so $\mathds{P}(\bigcup_{r=0}^6\mathcal{B}_r) \leq \delta_2/6$ when $N$ is large enough. We deduce $\mathds{P}(T_K < \vartheta n^{3/2}+T_0) \leq \delta_2/2$ when $N$ is large enough. 

Furthermore for any $k \in \{0,...,K-1\}$, we notice 
\[
\mathds{P}(T_{k+1}-T_k \leq \delta_3n^{3/2}) = \mathds{P}\left(\frac{1}{n^{3/2}}(T_{k+1}-T_k) \leq \delta_3\right).
\]
Now, by Proposition \ref{prop_conv_ZT}, $\frac{1}{n^{3/2}}(T_{k+1}-T_k)$ converges in distribution to the law $\breve T_{k+1}$ defined in Definition \ref{def_lim_envts}, thus when $N$ is large enough $\mathds{P}(\frac{1}{n^{3/2}}(T_{k+1}-T_k) \leq \delta_3) \leq \mathds{P}(\breve T_{k+1} \leq \delta_3)+\frac{\delta_2}{4K}$. In addition, Proposition \ref{prop_conv_ZT} yields that for any $k\in \{0,...,K-1\}$ we have $\mathds{P}(\breve T_{k+1}=0)=0$, hence we can choose $\delta_3 > 0$ so that for any $k\in \{0,...,K-1\}$, $\mathds{P}(\breve T_{k+1} \leq \delta_3) \leq \frac{\delta_2}{4K}$. For such $\delta_3$, we obtain that for any $k\in \{0,...,K-1\}$ we have $\mathds{P}(T_{k+1}-T_k \leq \delta_3n^{3/2}) \leq \frac{\delta_2}{2K}$ when $N$ is large enough.

Consequently, there exists $\delta_3 > 0$ such that $\mathds{P}(\sup_{0 \leq s,t \leq \vartheta, |s-t|\leq \delta_3}|Y_t^N-Y_s^N| > \delta_1) \leq \delta_2$ when $N$ is large enough. 
\end{proof}

\begin{proof}[Proof of Lemma \ref{lem_lim_not0}.] 
 We assume by contradiction that $(Y_t^N)_{t\in[0,+\infty)}$ converges in distribution to the null function in the topology of continuous real processes on $[0,+\infty)$ when $N$ tends to $+\infty$. Then, by the Skorohod Representation Theorem, there exists a probability space containing random variables $(\hat Y_t^N)_{t\in[0,+\infty)}$ for any $N \in \mathds{N}^*$ so that the $(\hat Y_t^N)_{t\in[0,+\infty)}$ have the same distribution as the $(Y_t^N)_{t\in[0,+\infty)}$, and $(\hat Y_t^N)_{t\in[0,+\infty)}$ converges almost surely to the null function in the topology of continuous real processes on $[0,+\infty)$ when $N$ tends to $+\infty$. Then for any $M>0$ we have $\mathds{P}(\sup_{t\in[0,M]}|\hat Y_t^N| \geq 1/2)$ tends to 0 when $N$ tends to $+\infty$, thus $\mathds{P}(\sup_{t\in[0,M]}|Y_t^N| \geq 1/2)$ tends to 0 when $N$ tends to $+\infty$. In this proof, we set $\varepsilon=1$. Then for any $M>0$ we have that $\mathds{P}(\frac{1}{n^{3/2}}(T_1-T_0) \leq M)$ tends to 0 when $N$ tends to $+\infty$. However, by Proposition \ref{prop_conv_ZT}, $\frac{1}{n^{3/2}}(T_1-T_0)$ converges in distribution to the $\breve T_1$ defined in Definition \ref{def_lim_envts} when $N$ tends to $+\infty$, and $\breve T_1$ has no atoms. This implies that for any $M>0$, we have $\mathds{P}(\breve T_1 \leq M)=0$, which is impossible. This ends the proof. 
\end{proof}

\subsection{Proof of Proposition \ref{prop_continuity}} \label{sec_conclusion_prop}

For any $t \in \mathds{R}^+$, $N \in \mathds{N}^*$, we denote $\hat Y_t^{N}=\frac{1}{N}X_{\lfloor tN^2\rfloor}$. By the definition of the Skorohod topology (see Theorem 10 of Chapter VI of \cite{PollardJ1}), it is enough to prove that for any subsequence $(\hat Y_t^{\psi(N)})_{N \in \mathds{N}^*}$, there exists $\delta_1,\delta_2>0$ so that for any $\vartheta >0$ large enough, for any $M\in\mathds{N}^*$ and any $0=t_0 < \cdots < t_M=\vartheta$, we have 
\begin{align*}
\limsup_{N \to +\infty}\mathds{P}\left(\max_{1 \leq i \leq M}\inf\{\delta_3>0\,|\,\exists\, t \in(t_{i-1},t_{i}]\text{ with }|\hat Y_s^{\psi(N)}-\hat Y_{t_{i-1}}^{\psi(N)}|<\delta_3\text{ if }t_{i-1}\leq s < t \right. \quad & \\ 
\left. \text{ and }|\hat Y_s^{\psi(N)}-\hat Y_{t_{i}}^{\psi(N)}|<\delta_3\text{ if }t\leq s \leq t_{i}\} > \delta_2\right) & \geq \delta_1.
\end{align*}
Moreover, the process $(\hat Y_t^{\psi(N)})_{t\in[0,+\infty)}$ has jumps of size $1/{\psi(N)}$, which tends to 0 when $N$ tends to $+\infty$, so it is enough to show that for any $\vartheta\geq1$, $M\in\mathds{N}^*$ and any $0=t_0 < \cdots < t_M=\vartheta$, we have 
\[
\lim_{N \to +\infty}\mathds{P}\left(\exists\, i \in\{1,...,M\},\max_{t_{i-1} \leq t \leq t_{i}}|\hat Y_t^{\psi(N)}-\hat Y_{t_{i-1}}^{\psi(N)}| > \frac{1}{32\sqrt{2}}\right) = 1.
\]

Let $\vartheta\geq1$, $M\in\mathds{N}^*$ and $0=t_0 < \cdots < t_M=\vartheta$. We notice that if there exist $s,t\in[0,\vartheta]$ so that $|s-t|< \min_{1\leq i \leq M}|t_{i}-t_{i-1}|$ but $|\hat Y_s^{\psi(N)}-\hat Y_{t}^{\psi(N)}| > \frac{1}{8\sqrt{2}}$, then there exists $ i \in\{1,...,M\}$ so that $\max_{t_{i-1} \leq t \leq t_{i}}|\hat Y_t^{\psi(N)}-\hat Y_{t_{i-1}}^{\psi(N)}| > \frac{1}{32\sqrt{2}}$. We will choose $t=\frac{T_0'}{\psi(N)^2}$ and $s=\frac{T_1'}{\psi(N)^2}$, where $T_0'$ and $T_1'$ are equivalents of $T_0,T_1$ which we define now. We set $x=0$, $\theta=\frac{\sqrt{\vartheta}}{2\sqrt{2}}$, $T_0'=\mathbf{T}^-_{\lfloor \psi(N)\theta\rfloor,0}$ (see \eqref{eq_def_T_mi}), and $T_1'=\inf\{m \geq T_0' \,|\, |X_m-X_{T_0'}|=\lfloor \theta \psi(N)/2 \rfloor\}$ (this definition differs slightly from the definition of $T_1$, as $\psi(N)$ replaces $N$ and $\theta \psi(N)/2$ replaces $\varepsilon n$). We then have 
\[
|\hat Y_t^{\psi(N)} - \hat Y_s^{\psi(N)}|=\frac{1}{\psi(N)}\lfloor \theta \psi(N)/2 \rfloor > \frac{\theta}{4} \geq \frac{1}{8\sqrt{2}}
\]
when $N$ is large enough. Consequently, we only have to prove that 
\[
\lim_{N \to +\infty}\mathds{P}\left(\frac{T_1'}{\psi(N)^2} \leq \vartheta, \frac{T_1'}{\psi(N)^2}-\frac{T_0'}{\psi(N)^2} \leq \min_{1\leq i \leq M}|t_{i}-t_{i-1}|\right)=1.
\]
In order to do that, we remark that Corollary 1 of \cite{Toth_et_al2008} states that $\frac{T_0'}{\psi(N)^2}$ converges in probability to $4\theta^2=4(\frac{\sqrt{\vartheta}}{2\sqrt{2}})^2=\frac{\vartheta}{2}$ when $N$ tends to $+\infty$, which implies $\lim_{N\to+\infty}\mathds{P}(\frac{T_0'}{\psi(N)^2} > \frac{3\vartheta}{4})=0$, thus we only have to prove $\lim_{N\to+\infty}\mathds{P}(\frac{T_1'}{\psi(N)^2}-\frac{T_0'}{\psi(N)^2} > \frac{\vartheta}{4} \wedge \min_{1\leq i \leq M}|t_{i}-t_{i-1}|)=0$. Moreover, by Lemma \ref{lem_continuity} $\frac{1}{\psi(N)^{3/2}}(T_1'-T_0')$ converges in distribution when $N$ tends to $+\infty$, which is sufficient and ends the proof. 

\section{Notation and auxiliary random variables}\label{sec_first_variables}

If $a,b \in \mathds{R}$, we set $a \wedge b = \min(a,b)$, $a \vee b = \max(a,b)$, and $a_+=\max(a,0)$. For any set $A$ and any function  $f : A \mapsto \mathds{R}$, we denote $\|f\|_\infty = \sup_{x \in A}|f(x)|$. For any $m \in \mathds{N}$, we define 
\begin{equation}\label{eq_def_F}
\mathcal{F}_m = \sigma(X_0,X_1,\dots,X_m)
\end{equation}

Let $\varepsilon > 0$. $\varepsilon$ may take different values throughout the paper, but the one used will always be clear from the context. Remembering that $\mathbf{T}_{\lfloor N \theta\rfloor,\lfloor Nx \rfloor}^\pm$ was defined in \eqref{eq_def_T_mi}, we recall the following definition already given at the beginning of Section \ref{sec_conclusion}:  
\begin{equation}\label{eq_def_Tk}
T_0=\mathbf{T}_{\lfloor N \theta\rfloor,\lfloor Nx \rfloor}^\pm \text{ and } \forall\, k \in \mathds{N}, 
T_{k+1}=\inf\{k' \geq T_{k}\,|\,|X_{k'}-X_{T_{k}}| = \lfloor\varepsilon n\rfloor\}.
\end{equation}
For any $m \in \mathds{N}$, we also introduce the stopping times (compatible with those defined at the beginning of Section \ref{sec_outline_conv}):
\begin{equation}\label{eq_def_Tk_pm^}
\beta_m^{\pm} = \inf\{m'\geq m\,|\, X_{m'} = X_{m}\pm\lfloor\varepsilon n\rfloor\}.
\end{equation}
Proposition 1 of \cite{Toth_et_al2008} states that almost surely, for any $i\in\mathds{Z}$, $m,m' \in\mathds{N}$, the local time $\ell_{m,i}^\pm$ defined in \eqref{eq_def_ell} will reach $m'$ in finite time, therefore all these stopping times are finite. We recall the definition of the $\Delta_{m,i}$ in \eqref{eq_def_Delta}. 

 \begin{definition}\label{def_local_times}
 For any $m \in \mathds{N}$, we define random variables $\zeta^{m,\pm,B}_i$, $\zeta^{m,\pm,E}_i$ 
  for $i \in\mathds{Z}$ as follows:
  \[
  \zeta^{m,-,B}_i = \left\{\begin{array}{ll}
                          -\Delta_{m,i}-1/2&\text{if }i \leq X_m,\\
                          -\Delta_{m,i}+1/2&\text{if }i > X_m,
                         \end{array}\right.
  \quad\text{and}\quad \zeta^{m,-,E}_i = -\Delta_{\beta_{m}^-,i}+1/2,
 \]
 \[
  \zeta^{m,+,B}_i = \left\{\begin{array}{ll}
                          \Delta_{m,i}+1/2&\text{if }i \leq X_m,\\
                          \Delta_{m,i}-1/2&\text{if }i > X_m,
                         \end{array}\right.
  \quad\text{and}\quad \zeta^{m,+,E}_i = \Delta_{\beta_{m}^+,i}+1/2.
 \]
 We also define $L_i^{m,\pm} = |\{m \leq m' < \beta_m^\pm \,|\, (X_{m'},X_{m'+1}) = (i-1,i)\}|$.
 \end{definition}
 
 The superscript $B$ stands for ``beginning'', and the superscript $E$ for ``end'', since we will use the corresponding random variables respectively at the beginning and at the end of ``steps of the mesoscopic walk $(X_{T_k})_{k \in \mathds{N}}$''.
 
 \begin{observation}\label{obs_rec_temps_local}
 For $i \geq X_m-\lfloor\varepsilon n\rfloor+1$, we have $L_{i+1}^{m,-}=L_i^{m,-}+\zeta^{m,-,E}_i-\zeta^{m,-,B}_i$, and 
 for $i \leq X_m+\lfloor\varepsilon n\rfloor-1$ we have $L_{i}^{m,+}=L_{i+1}^{m,+}+\zeta^{m,+,E}_i-\zeta^{m,+,B}_i$.
\end{observation}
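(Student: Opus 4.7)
The plan is to establish the identity by a direct flow-balance argument at the site $i$ during the excursion $[m,\beta_m^-]$. I will treat only the first identity; the companion statement for $L^{m,+}$ follows from the obvious symmetry swapping the roles of $+1$ and $-1$ and replacing $\beta_m^-$ by $\beta_m^+$.

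First I would translate everything into the language of the directed local times $\ell^\pm$. From \eqref{eq_def_ell} we have $L_i^{m,-} = \ell_{\beta_m^-,i-1}^+ - \ell_{m,i-1}^+$ and $L_{i+1}^{m,-} = \ell_{\beta_m^-,i}^+ - \ell_{m,i}^+$, and it is natural to introduce two auxiliary crossing counts at site $i$, namely $D_i := \ell_{\beta_m^-,i}^- - \ell_{m,i}^-$ (crossings $i \to i-1$) and $A_i := \ell_{\beta_m^-,i+1}^- - \ell_{m,i+1}^-$ (crossings $i+1 \to i$). A trivial consequence of \eqref{eq_def_Delta} is then the identity
\[
L_{i+1}^{m,-} - D_i = \Delta_{m,i} - \Delta_{\beta_m^-,i}.
\]

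Next I would write down the flow balance at site $i$ between times $m$ and $\beta_m^-$: total arrivals at $i$ equal total departures from $i$, up to the boundary correction due to the walker's initial and final positions. Explicitly,
\[
L_i^{m,-} + A_i - D_i - L_{i+1}^{m,-} = \mathbf{1}_{X_{\beta_m^-}=i} - \mathbf{1}_{X_m=i}.
\]
Under the hypothesis $i \geq X_m - \lfloor\varepsilon n\rfloor + 1$ we have $i > X_{\beta_m^-} = X_m - \lfloor\varepsilon n\rfloor$, so the first indicator vanishes. To eliminate $A_i$ I would use a second elementary observation: since the walker goes from $X_m$ to $X_m - \lfloor\varepsilon n\rfloor$, the net directed crossings of the edge $(i,i+1)$ satisfy $L_{i+1}^{m,-} - A_i = -\mathbf{1}_{X_m - \lfloor\varepsilon n\rfloor \leq i < X_m}$, which under our hypothesis reduces to $-\mathbf{1}_{i<X_m}$. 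Combining the three displayed equations yields
\[
L_{i+1}^{m,-} - L_i^{m,-} = \Delta_{m,i} - \Delta_{\beta_m^-,i} + \mathbf{1}_{i \leq X_m},
\]
since $\mathbf{1}_{i<X_m}+\mathbf{1}_{i=X_m}=\mathbf{1}_{i\leq X_m}$.

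Finally I would compare this to the definitions in Definition \ref{def_local_times} by a case split on whether $i \leq X_m$ or $i > X_m$, which is exactly the split used to define $\zeta_i^{m,-,B}$. In the case $i>X_m$ both indicators vanish and $\zeta_i^{m,-,E}-\zeta_i^{m,-,B} = (-\Delta_{\beta_m^-,i}+1/2)-(-\Delta_{m,i}+1/2) = \Delta_{m,i}-\Delta_{\beta_m^-,i}$; in the case $i\leq X_m$ the $-1/2$ in the definition of $\zeta_i^{m,-,B}$ exactly absorbs the boundary term $\mathbf{1}_{i\leq X_m}=1$, giving $\zeta_i^{m,-,E}-\zeta_i^{m,-,B} = \Delta_{m,i}-\Delta_{\beta_m^-,i}+1$. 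In both cases the right-hand side agrees with the formula above, which is the desired identity. The whole proof is elementary; the only non-obvious point, and the one requiring the most care, is recognizing that the apparently ad hoc piecewise $\pm 1/2$ shift in the definition of $\zeta_i^{m,-,B}$ is precisely engineered to absorb the boundary indicator $\mathbf{1}_{i\leq X_m}$ produced by the walker's starting position.
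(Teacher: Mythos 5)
Your proof is correct and takes essentially the same route as the paper's. Both arguments come down to showing that the downward-crossing count $\ell_{\beta_m^-,i}^--\ell_{m,i}^-$ equals $L_i^{m,-} + \mathds{1}_{\{i\leq X_m\}}$ (which the paper obtains by a single net-flux observation at the edge $\{i-1,i\}$, and you obtain by combining a flow balance at site $i$ with a net-flux count at the edge $\{i,i+1\}$) and substituting this into the same algebraic rearrangement in terms of the $\Delta$'s, after which the $\pm 1/2$ in $\zeta^{m,-,B}_i$ absorbs the boundary indicator exactly as you note.
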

 
 \begin{proof}
 We write the proof for $L_{i+1}^{m,-}$; the argument for $L_{i}^{m,+}$ is similar. We have 
 \[
 L_{i+1}^{m,-} = \ell_{\beta_m^-,i}^+-\ell_{m,i}^+ 
 = (\ell_{\beta_m^-,i}^+-\ell_{\beta_m^-,i}^-) -(\ell_{m,i}^+-\ell_{m,i}^-) + \ell_{\beta_m^-,i}^--\ell_{m,i}^- = -\Delta_{\beta_m^-,i} - (-\Delta_{m,i}) + \ell_{\beta_m^-,i}^--\ell_{m,i}^-.
 \]
 Now, $\ell_{\beta_m^-,i}^--\ell_{m,i}^-$ is the number of times $X$ goes from $i$ to $i-1$ between $m$ and $\beta_m^-$, which 
 is $L_i^{m,-}+1$ if $i \leq X_m$ and $L_i^{m,-}$ if $i > X_m$, hence the result.
 \end{proof}
 
In order to control the behavior of the $\zeta^{m,\pm,B}_i$, $\zeta^{m,\pm,E}_i$, we recall some definitions and properties from \cite{Toth_et_al2008}. 
We define a Markov chain $(\xi(m))_{m \in \mathds{N}}$ on $\mathds{Z}$ by the following transition probabilities:
\begin{equation}\label{eq_def_xi}
 \mathds{P}(\xi(m+1)=\xi(m)+1) = 1-\mathds{P}(\xi(m+1)=\xi(m)-1) = \frac{w(-\xi(m))}{w(\xi(m))+w(-\xi(m))}.
\end{equation}
We notice that for any $i\in\mathds{Z}$ the jump chain of $(\Delta_{m,i})_{m\in\mathds{N}}$ has the law of $(\xi(m))_{m \in \mathds{N}}$. For any $m \in \mathds{N}$, we denote by $\tau_+(m)$ (respectively $\tau_-(m)$) the time of the $m$-th 
upwards (respectively downwards) step of $\xi$: 
\begin{equation}\label{eq_def_tau}
\tau_\pm(0)=0 \text{ and } \forall\, m \in \mathds{N}, 
\tau_\pm(m+1) = \inf\{m' > \tau_\pm(m)\,|\, \xi(m')=\xi(m'-1) \pm 1\}
\end{equation}
which can easily be seen to be finite. 
The processes $(\eta_-(m))_{m \in \mathds{N}}$ and $(\eta_+(m))_{m \in \mathds{N}}$ defined by
\begin{equation}\label{eq_def_eta}
 \eta_{+}(m)=-\xi(\tau_+(m)) \qquad\text{and}\qquad \eta_{-}(m)=\xi(\tau_-(m))
\end{equation}
for any $m \in \mathds{N}$ are Markov chains on $\mathds{Z}$. Moreover, and one can check that $\xi$ and $-\xi$ have the same law, which implies $\eta_+$ and $\eta_-$ have the same law. In what follows, $\eta$ will refer to a Markov chain with this law. 

For any $m \in \mathds{N}$, $i > X_m-\lfloor\varepsilon n\rfloor$, at time $\beta_m^-$ the process $(X_{m'})_{m' \in \mathds{N}}$ is at the left of $i$, thus the last time $(X_{m'})_{m' \in \mathds{N}}$ was at $i$ it went to the left, hence the last step of $(\Delta_{m',i})_{m' \in \mathds{N}}$ before time $\beta_m^-$ was an upwards step. Moreover, the number of upwards steps made by $(\Delta_{m',i})_{m' \in \mathds{N}}$ between times $m$ and $\beta_m^-$ is $|\{m \leq m' \leq \beta_m^-\,|\,(X_{m'},X_{m'+1})=(i,i-1)\}|=\ell_{\beta_m^-,i}^--\ell_{m,i}^-$. This implies $\Delta_{\beta_{m}^-,i} 
= \xi(\tau_+(\ell_{\beta_m^-,i}^--\ell_{m,i}^-)) 
= -\eta(\ell_{\beta_m^-,i}^--\ell_{m,i}^-)$ where $\xi$ starts at $\Delta_{m,i}$, 
so $\eta$ starts at $-\Delta_{m,i}$, with the transitions of $\xi$, $\eta$ independent of 
$\mathcal{F}_m$, $\Delta_{\beta_{m}^-,j}$, $j < i$. Similarly, if $i<X_m+\lfloor\varepsilon n\rfloor$, we have $\Delta_{\beta_{m}^+,i} 
= \eta(\ell_{\beta_m^+,i}^+-\ell_{m,i}^+)$ with $\eta(0)=\Delta_{m,i}$. We deduce that 
\begin{equation}\label{eq_zeta}
\zeta_i^{m,-,E}=\left\{\begin{array}{ll}
                        \eta(L_i^{m,-}+1)+1/2\text{ with }\eta(0) = -\Delta_{m,i} & 
                        \text{if }X_m-\lfloor\varepsilon n\rfloor < i \leq X_m,\\
                        \eta(L_i^{m,-})+1/2\text{ with }\eta(0) = -\Delta_{m,i} & \text{if }i > X_m, \\
                       \end{array}
\right.
\end{equation}
\[
\forall\, i \in \mathds{Z}, \zeta_i^{m,+,E} = \eta(L_{i+1}^{m,+})+1/2\text{ with }\eta(0) = \Delta_{m,i}.
\]

In \cite{Toth_et_al2008}, it was proven that the measure $\rho_-$ defined as follows is the unique invariant 
probability measure of $\eta$:
\begin{equation}\label{eq_def_rho-}
 \forall\, i \in \mathds{Z},\quad 
 \rho_-(i) = \frac{1}{Z(w)} \prod_{j=1}^{\lfloor|2i+1|/2\rfloor} \frac{w(-j)}{w(j)} \quad \text{with} \quad 
 Z(w) = \sum_{i \in \mathds{Z}}\prod_{j=1}^{\lfloor|2i+1|/2\rfloor} \frac{w(-j)}{w(j)}.
\end{equation}
We notice that for any $i\in \mathds{N}$, $\rho_-(-i-1)=\rho_-(i)$, so $\rho_-$ is symmetric with respect to $-1/2$. 
Therefore, we may define the measure $\rho_+$ on $\mathds{Z}$ by 
\begin{equation}\label{eq_def_rho+}
\forall \, i \in \mathds{Z}, \rho_+(i)=\rho_-(i-1)=\rho_-(-i). 
\end{equation}
$\rho_-$ and $\rho_+$ have respective expectations $-1/2$ and $1/2$. We also denote $\rho_0$ the measure 
on $\frac{1}{2}+\mathds{Z}$ defined by 
\begin{equation}\label{eq_def_rho0}
 \forall \, i \in \frac{1}{2}+\mathds{Z}, \rho_0(i)=\rho_-\left(i-\frac{1}{2}\right),
\end{equation}
which has expectation 0. The measure $\rho_0$ is very important, since the law of the $\zeta^{\mathbf{T}_{m,i}^\iota,\pm,B},\zeta^{\mathbf{T}_{m,i}^\iota,\pm,E}$ will be close to $\rho_0$ under ``good conditions''. In particular, these variables will have expectation close to 0. 

\begin{remark}
 We could study our random walk $(X_m)_{m\in\mathds{N}}$ ``starting from a random environment'', that is setting the $\Delta_{0,i}$, $i\in\mathds{Z}$ to random variables instead of setting them to 0, and then evolving $(X_m)_{m\in\mathds{N}}$ and the $(\Delta_{m,i})_{m \in\mathds{N}}$, $i\in\mathds{Z}$ according to the usual rules. This yields a new random walk $(\hat X_m)_{m\in\mathds{N}}$ and an ``environment'' process on $(\hat \Delta_{m,i})_{m \in\mathds{N},i\in\mathds{Z}}$ which evolves as follows:
  \begin{enumerate}
\item We choose the $\hat\Delta_{0,i}$, $i\in\mathds{Z}$ to be independent, with distribution $\rho_-$ for $i<0$, $\rho_+$ for $i>0$ and $\frac{\rho_- + \rho_+}{2}$ for $i=0$. We set $\hat X_0=0$.
\item For any $m \geq 0$, $\mathds{P}(\hat X_{m+1} = \hat X_m + 1)=1-\mathds{P}(\hat X_{m+1} = \hat X_m - 1)=\frac{w(\hat \Delta_{m,\hat X_m})}{w(-\hat \Delta_{m,\hat X_m})+w(\hat \Delta_{m,\hat X_m})}$.
\item For $m \geq 0$, we set $\hat\Delta_{m+1,i} = \hat\Delta_{m,i}$ for $i \ne \hat X_m$ and $\hat \Delta_{m+1,\hat X_m}= \hat \Delta_{m,\hat X_m} - (\hat X_{m+1}-\hat X_m)$.
\end{enumerate}
Let $\mu_0$ be the law of $(\Delta_{0,i})_{i\in\mathds{Z}}$ and for all $m>0$ let $\mu_m$ be the law of $(\Delta_{m,i})_{i\in\mathds{Z}}$ shifted by the ``tagged particle" $\hat X_m$, that is the law of $(\Delta_{m,\hat X_m+i})_{i\in\mathds{Z}}$. Then direct calculation shows that for each $m \geq 0, \mu_m = \mu_0$. So with this particular measure $\mu_0 $ for the initial environment, the distribution of the environment is stationary, hence the increments of $(\hat X_m)_{m\in\mathds{N}}$ are stationary. Though of course knowing that $\hat X_1=1$ will typically mean that the conditional distribution of $(\hat\Delta_{1,i})_{i\in\mathds{Z}}$ shifted by $1$ is not $\mu_0$.

A slight modification of our arguments (to deal with the distribution of $(\hat \Delta_{0,i})_{i\in\mathds{Z}}$) shows that the motion of $(\hat X_m)_{m\in\mathds{N}}$ is governed by Theorem \ref{thm_main}. 
Unlike the motion of the tagged particle in an exclusion process with a non nearest neighbor jump kernel or in high dimension with an initial product measure (see \cite{Kipnis_et_al1986}), our environment does not evolve outside of the position of $\hat X_m$. In this it is like the Markov chain cookie random walk studied by \cite{Kosygina_et_al2022} where the intial distribution of the environment is $\pi_- $ for $i<0$, $\pi_+$ for $i>0$ and $\frac{\pi_+ + \pi_-}{2} = \pi $ for $i=0$. However, in the Markov chain cookie random walk, the ``tagged particle" does have a motion that (under diffusive scaling) converges to a Brownian motion, which is not the case in our model (as the limit has non-Brownian scaling properties). The two models, though similar, thus have a different behavior, and the reason for that is not clear, though obviously the operator for our process does not fall into the domain of Kipnis-Varadhan analysis.
\end{remark}

In order to control the behavior of $\eta$, thus of the $\zeta^{m,\pm,B}_i$, $\zeta^{m,\pm,E}_i$, we will need the following lemma, proved in \cite{Toth_et_al2008}.

\begin{lemma}[Lemma 1 of \cite{Toth_et_al2008}]\label{lem_conv_eta}
 There exist constants $\bar c = \bar c(w) > 0$ and $\bar C = \bar C(w) < \infty$ such that for any $m \geq 0$, 
 \[
 \mathds{P}(\eta(m)=i|\eta(0)=0) \leq \bar C e^{-\bar c |i|} \qquad\text{ and }\qquad
 \sum_{i \in \mathds{Z}}|\mathds{P}(\eta(m)=i|\eta(0)=0)-\rho_-(i)| \leq \bar C e^{-\bar c m}. 
 \]
 \end{lemma}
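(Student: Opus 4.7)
The plan is to exploit the fact that $w$ is non-decreasing and non-constant to produce a restoring drift for $\eta$ toward the origin, then to combine a Foster--Lyapunov argument (yielding uniform exponential tails in $i$) with a coupling argument (yielding exponential mixing in $m$). As input, the first step is to derive the tail of $\rho_-$ itself from its explicit product formula: since $w$ is non-decreasing and non-constant, there exist $j_0 \in \mathds{N}^*$ and $r\in(0,1)$ with $w(-j)/w(j) \leq r$ for every $j\geq j_0$, and the defining product of $\rho_-$ then satisfies $\rho_-(i) \leq C_0 r^{|i|}$ for some $C_0=C_0(w)$.

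Next I would analyse a single step of $\eta$ by writing $\eta(1)=\xi(\tau_-(1)) = \xi(0) + G - 1$, where $G$ is the number of consecutive upward steps of $\xi$ before its first downward step, so that
\[
\mathds{P}(G \geq k \mid \xi(0) = x) = \prod_{y=x}^{x+k-1} \frac{w(-y)}{w(y)+w(-y)}.
\]
For $x\geq j_0$, each factor is at most $r/(1+r)$, so $G$ has geometric tails and $\eta(1)$ is $x - 1$ plus a sub-exponential perturbation; for $x\leq 0$, the product only starts decaying once the running index crosses $j_0$, which forces $\xi(\tau_-(1)-1)$ to concentrate near $j_0$ uniformly in $x$. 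Choosing $c>0$ small enough, both regimes combine into a Foster--Lyapunov drift inequality
\[
\mathds{E}[e^{c|\eta(1)|}\mid \eta(0)=x] \leq \lambda\, e^{c|x|}+K\,\mathbf{1}_{|x|\leq R}
\]
with $\lambda<1$ and $K,R<\infty$ depending only on $w$. Iterating gives $\sup_m \mathds{E}[e^{c|\eta(m)|}\mid\eta(0)=0] \leq M$, and Markov's inequality then yields $\mathds{P}(\eta(m)=i\mid\eta(0)=0) \leq M e^{-c|i|}$, establishing the first bound with $\bar c = c$ and a suitable $\bar C$.

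For the mixing bound I would use a coupling argument. Uniform positivity of $\xi$'s transition probabilities on the bounded set $\{|x|\leq R\}$ yields a one-step Doeblin minorization for $\eta$: the laws of $\eta(1)$ starting from any two points in $\{|x|\leq R\}$ share total variation mass bounded below by a constant $\epsilon>0$. The drift inequality above ensures that two copies of $\eta$---one started at $0$, one started from $\rho_-$---each occupy $\{|x|\leq R\}$ at a positive asymptotic density of times with overwhelming probability. The standard Harris/regeneration construction then couples the two copies by time $m$ except on an event of probability at most $\bar C e^{-\bar c m}$, which converts directly into the total variation estimate in the lemma.

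The main obstacle is the very-negative-$x$ case in the drift analysis. Unlike the very positive case, where $\xi$ descends at its first step with overwhelming probability, here $\xi$ climbs through an a priori unbounded number of upward steps before its first downward step, and one must quantify the right tail of $\xi(\tau_-(1)-1)$ uniformly in $x$. The key estimate is that for $x \leq j_0 \leq y$ one has $\mathds{P}(\xi(\tau_-(1)-1) \geq y \mid \xi(0) = x) \leq r^{\,y-j_0}$, essentially the same geometric comparison that controls $\rho_-$, so $\xi(\tau_-(1)-1)$ is stochastically dominated by a distribution with $r$-geometric right tail independently of $x$; once this uniform bound is secured, the drift inequality closes and the remainder of the argument is routine.
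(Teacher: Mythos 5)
The lemma is cited from Tóth--Vető (2008); the present paper gives no proof, so your proposal is evaluated on its own terms. The overall architecture (exponential tail of $\rho_-$ from the product formula, a Foster--Lyapunov drift inequality for a uniform exponential moment, then Doeblin minorization plus Harris regeneration for geometric ergodicity) is sound and is a perfectly reasonable way to prove both bounds. However, your treatment of the very-negative-$x$ regime has a genuine gap.

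You identify $\mathds{P}\bigl(\xi(\tau_-(1)-1)\ge y\mid \xi(0)=x\bigr)\le r^{\,y-j_0}$ as ``the key estimate'' and assert that once this is secured the drift inequality closes. It does not. Write $\eta(1)=\xi(\tau_-(1)-1)-1=x+G-1$ where $G$ is the number of consecutive upward steps. The right-tail bound on $\xi(\tau_-(1)-1)$ only controls the overshoot past $j_0$; it says nothing about the event $\{G=0\}$, on which $\eta(1)=x-1$ and $e^{c|\eta(1)|}=e^{c}\,e^{c|x|}$. Since $\mathds{P}(G=0)$ is not forced to be small by the tail bound alone, the best one extracts from it is $\mathds{E}[e^{c|\eta(1)|}\mid\eta(0)=x]\le e^{c}e^{c|x|}+K$, and $e^c>1$, so no contraction. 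Your intuitive statement that ``$\xi(\tau_-(1)-1)$ concentrates near $j_0$ uniformly in $x$'' is in fact false whenever $w$ is bounded: take $w(i)=2$ for $i\ge 1$ and $w(i)=1$ for $i\le 0$; then for $x$ very negative every upward probability is exactly $2/3$, $G$ is geometric with mean $2$, and $\xi(\tau_-(1)-1)$ concentrates near $x$, not near $j_0$. The chain $\eta$ drifts upward by $O(1)$ per step, not by a macroscopic jump.

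What actually closes the drift for $x\le -R$ is the complementary \emph{lower} bound on $G$: for $y\le -j_0$ one has $\frac{w(-y)}{w(y)+w(-y)}\ge\frac{1}{1+r}$, hence $\mathds{P}(G\ge k\mid \xi(0)=x)\ge (1+r)^{-k}$ for all $k\le |x|-j_0$. Plugging this into the identity $\mathds{E}[e^{-cG}]=1-(1-e^{-c})\sum_{k\ge1}e^{-c(k-1)}\mathds{P}(G\ge k)$ shows $\sum_{k\ge1}e^{-c(k-1)}\mathds{P}(G\ge k)\to 1/(1+r-e^{-c})>1$ as $|x|\to\infty$ and $c\to 0$, whence $e^c\,\mathds{E}[e^{-cG}\mid x]<1$ for $c$ small and $|x|\ge R$. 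It is this lower bound, together with your right-tail bound (which is still needed to bound the contribution of the event $\{\eta(1)\ge j_0\}$), that produces a contraction constant $\lambda<1$. Without it the argument as written does not go through for bounded $w$, so you should state and use this estimate explicitly rather than relegating it to ``routine.''
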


 We now state two easy coupling lemmas, which we will need in order to define auxiliary random variables.

\begin{lemma}\label{lem_const_coupling}
 For any probability laws $\mu$ and $\nu$ on $\mathds{Z}$, for any random variables $V_\mu$ with law $\mu$ and $U$ 
 independent from $V_\mu$ uniform on $[0,1]$, one can construct a random variable $V_\nu$ of law $\nu$ depending only on 
 $V_\mu$ and $U$ such that $\mathds{P}(V_\mu \neq V_\nu)$ is minimal.
\end{lemma}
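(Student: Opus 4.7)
The plan is to identify the minimum of $\mathds{P}(V_\mu \neq V_\nu)$ with the total variation distance
\[
\|\mu - \nu\|_{TV} = \frac{1}{2}\sum_{i \in \mathds{Z}}|\mu(i) - \nu(i)| = \sum_{i \in \mathds{Z}}(\mu(i) - \nu(i))_+,
\]
and then to exhibit an explicit coupling attaining this value that depends only on $V_\mu$ and $U$. For the lower bound, I would note that for any joint law of $(V_\mu,V_\nu)$ with the correct marginals and any $i\in\mathds{Z}$ one has $\mathds{P}(V_\mu=V_\nu=i)\leq\min(\mu(i),\nu(i))$, so summing over $i$ yields $\mathds{P}(V_\mu=V_\nu)\leq \sum_{i}\min(\mu(i),\nu(i)) = 1 - \|\mu-\nu\|_{TV}$. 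Hence $\mathds{P}(V_\mu\neq V_\nu)\geq\|\mu-\nu\|_{TV}$ for any coupling whatsoever, and in particular for any coupling built as a function of $V_\mu$ and $U$.

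Next, I would construct a coupling saturating this bound. Set $p(i):=\min(\mu(i),\nu(i))/\mu(i)$ when $\mu(i)>0$ and $p(i):=0$ otherwise. On the event $\{U\leq p(V_\mu)\}$, define $V_\nu:=V_\mu$. On the complementary event $\{U>p(V_\mu)\}$, which occurs with probability $\sum_i \mu(i)(1-p(i))=\|\mu-\nu\|_{TV}$, I would sample $V_\nu$ from the residual probability measure
\[
\tilde\nu(j)\;=\;\frac{(\nu(j)-\mu(j))_+}{\|\mu-\nu\|_{TV}}
\]
as a measurable function of $(V_\mu,U)$. Concretely, conditionally on $V_\mu=i$ and $U>p(i)$ the variable $U':=(U-p(i))/(1-p(i))$ is uniform on $[0,1]$, so enumerating $\mathds{Z}$ and using the inverse-cumulative-distribution-function method applied to $\tilde\nu$ with $U'$ produces a deterministic, measurable function of $(V_\mu,U)$ with conditional law $\tilde\nu$. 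When $\mu=\nu$ the construction degenerates and one simply sets $V_\nu:=V_\mu$.

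It remains to verify that the construction works. For the marginal, I would compute
\[
\mathds{P}(V_\nu=j)\;=\;\mu(j)p(j)+\sum_{i\in\mathds{Z}}\mu(i)(1-p(i))\tilde\nu(j)\;=\;\min(\mu(j),\nu(j))+(\nu(j)-\mu(j))_+\;=\;\nu(j),
\]
so $V_\nu$ has law $\nu$ as required. Since the event $\{V_\mu=V_\nu\}$ contains $\{U\leq p(V_\mu)\}$, we get $\mathds{P}(V_\mu\neq V_\nu)\leq \mathds{P}(U>p(V_\mu))=\|\mu-\nu\|_{TV}$, matching the lower bound. There is no genuine obstacle here; the only thing one has to be careful about is that every decision is made using only $V_\mu$ and the single uniform $U$, which is handled by the rescaling $U\mapsto U'$ in the residual step.
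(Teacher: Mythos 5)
Your proposal is correct and constructs essentially the same coupling as the paper: keep $V_\nu=V_\mu$ with probability $\min(\mu(V_\mu),\nu(V_\mu))/\mu(V_\mu)$, and otherwise use the inverse-CDF method with a rescaled uniform to sample from the residual law $(\nu-\mu)_+/\|\mu-\nu\|_{TV}$. The only difference is that you also spell out the lower bound $\mathds{P}(V_\mu\neq V_\nu)\geq\|\mu-\nu\|_{TV}$, which the paper takes as known, so your write-up is slightly more self-contained but not a genuinely different route.
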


\begin{proof}
 We suppose $\mu \neq \nu$, as if $\mu=\nu$ we can take $V_\nu=V_\mu$. 
 The construction is as follows. If $\mu(V_\mu) \leq \nu(V_\mu)$, we set $V_\nu=V_\mu$. If $\mu(V_\mu) > \nu(V_\mu)$, 
 we set $V_\nu=V_\mu$ if $U \in [0,\frac{\nu(V_\mu)}{\mu(V_\mu)}]$, and for any $i \in \mathds{Z}\setminus\{V_\mu\}$, 
 $V_\nu=i$ if $U$ is in
 \[
 \left(\frac{\nu(V_\mu)}{\mu(V_\mu)}+\frac{\mu(V_\mu)-\nu(V_\mu)}{\mu(V_\mu)}
 \frac{\sum_{j<i}(\nu(j)-\mu(j))_+}{\sum_{j\in\mathds{Z}}(\nu(j)-\mu(j))_+},
 \frac{\nu(V_\mu)}{\mu(V_\mu)}+\frac{\mu(V_\mu)-\nu(V_\mu)}{\mu(V_\mu)}
 \frac{\sum_{j \leq i}(\nu(j)-\mu(j))_+}{\sum_{j\in\mathds{Z}}(\nu(j)-\mu(j))_+}\right]. 
 \]
 It is straightforward to check that $V_\nu$ has law $\nu$ and that 
 $\mathds{P}(V_\mu \neq V_\nu)=\sum_{j\in\mathds{Z}}(\mu(j)-\nu(j))_+$, hence is minimal. 
\end{proof}

\begin{lemma}\label{lem_coupling}
 It is possible to couple two processes $\eta$ and $\eta'$ with $\eta'(0)=\eta(0)-1$ so that 
 for any $\ell \in \mathds{N}$, $\eta(\ell)-1 \leq \eta'(\ell) \leq \eta(\ell)$.
\end{lemma}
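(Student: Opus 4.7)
The plan is to build the coupling inductively on $\ell$, maintaining the invariant $\eta(\ell) - \eta'(\ell) \in \{0, 1\}$, which at $\ell = 0$ is just the hypothesis $\eta'(0) = \eta(0) - 1$. First I would make the transition kernel of $\eta$ explicit via $\xi$. Using the representation $\eta = \eta_-$ from \eqref{eq_def_eta}, if $\eta(\ell) = a$ then $\eta(\ell + 1) = a + K - 1$, where $K \geq 0$ is the number of successive upward steps of $\xi$ starting at $a$ before the next downward step. Setting $p(i) = w(-i)/(w(i) + w(-i))$ as in \eqref{eq_def_xi}, the law $P_a$ of $K$ has tail $\mathds{P}(K \geq k) = \prod_{j=0}^{k-1} p(a+j)$, and these tails tend to $0$ by the finiteness of $\tau_-(1)$ (Proposition 1 of \cite{Toth_et_al2008}). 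The hypothesis that $w$ is non-decreasing translates into $p$ being non-increasing on $\mathds{Z}$, which is the only structural ingredient needed.

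For the inductive step, suppose $(\eta(\ell), \eta'(\ell))$ has already been constructed with $\eta(\ell) - \eta'(\ell) \in \{0,1\}$. If $\eta(\ell) = \eta'(\ell)$, I use the same randomness to produce both transitions, so the chains agree forever after. If $\eta(\ell) = a$ and $\eta'(\ell) = a - 1$, I draw a uniform $U$ on $[0, 1]$ independent of the past and define, by inverse-CDF applied with the same $U$,
\[
 K = \max\bigl\{k \geq 0 : U \leq \textstyle\prod_{j=0}^{k-1} p(a + j)\bigr\}, \qquad K' = \max\bigl\{k \geq 0 : U \leq \textstyle\prod_{j=0}^{k-1} p(a - 1 + j)\bigr\},
\]
with the convention that an empty product equals $1$. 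Then $K \sim P_a$ and $K' \sim P_{a-1}$, so $\eta(\ell + 1) = a + K - 1$ and $\eta'(\ell + 1) = (a - 1) + K' - 1 = a + K' - 2$ have the correct marginal distributions.

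The heart of the argument is the pointwise bound $K \leq K' \leq K + 1$, which immediately gives $\eta(\ell+1) - \eta'(\ell+1) = K - K' + 1 \in \{0, 1\}$ and preserves the invariant. The lower bound $K \leq K'$ comes from the monotonicity of $p$: $\prod_{j=0}^{k-1} p(a-1+j) \geq \prod_{j=0}^{k-1} p(a+j)$, so any $U$ witnessing $K \geq k$ also witnesses $K' \geq k$. For the upper bound, observe that if $K = k$ then $U > \prod_{j=0}^{k} p(a+j)$, hence
\[
 \prod_{j=0}^{k+1} p(a-1+j) = p(a-1) \prod_{j=0}^{k} p(a+j) \leq \prod_{j=0}^{k} p(a+j) < U,
\]
so $K' \leq k + 1$. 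Iterating the induction then delivers a coupling with $\eta(\ell) - 1 \leq \eta'(\ell) \leq \eta(\ell)$ for every $\ell \in \mathds{N}$. I do not anticipate a serious obstacle: the argument reduces to a direct monotone coupling of two geometric-like random variables whose tails are ordered by the monotonicity of $p$.
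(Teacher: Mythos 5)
Your proof is correct and takes essentially the same route as the paper's: a single-uniform inverse-CDF coupling, with the two bounds $K\leq K'$ and $K'\leq K+1$ corresponding exactly to the paper's two stochastic-ordering inequalities $\mathds{P}(\eta(1)\geq i)\leq \mathds{P}(\eta'(1)\geq i-1)$ (via $p$ non-increasing) and $\mathds{P}(\eta'(1)\geq i+1)\leq \mathds{P}(\eta(1)\geq i+1)$ (via the extra factor $p(a-1)\leq 1$). The only cosmetic difference is that you work with the up-step count $K$ while the paper works directly with $\eta(1)$; these are related by the deterministic shift $\eta(1)=\eta(0)+K-1$ already built into the paper's tail product formula.
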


\begin{proof}
 It is enough to couple $\eta(1)$ and $\eta'(1)$ so that $\eta(1)-1 \leq \eta'(1) \leq \eta(1)$. For this, we 
 set $U$ a random variable uniform on $[0,1]$, and we set $\tilde\eta(1) = i$ when 
 $U \in (\mathds{P}(\tilde\eta(1) \geq i+1),\mathds{P}(\tilde\eta(1) \geq i)]$ for $\tilde \eta = \eta$ or $\eta'$. $\eta$ and $\eta'$ 
 have the right marginal laws. We only have to prove that for any $i \in \mathds{Z}$, 
 \[
 \mathds{P}(\eta'(1) \geq i+1) \leq \mathds{P}(\eta(1) \geq i+1)\qquad\text{ and }\qquad\mathds{P}(\eta(1) \geq i) \leq 
 \mathds{P}(\eta'(1) \geq i-1).
 \]
 Now, for any $i \in \mathds{Z}$, one can check that $\mathds{P}(\tilde\eta(1) \geq i) = \prod_{j=\tilde\eta(0)}^i \frac{w(-j)}{w(j)+w(-j)}$ if $i \geq \tilde\eta(0)$, 
 and $\mathds{P}(\tilde\eta(1) \geq i) =1$ if $i < \tilde\eta(0)$. Since $\eta'(0) < \eta(0)$, 
 we deduce $\mathds{P}(\eta'(1) \geq i+1) \leq \mathds{P}(\eta(1) \geq i+1)$. Now, if $i < \eta(0)$, $i-1 < \eta'(0)$, 
 so $\mathds{P}(\eta(1) \geq i) = \mathds{P}(\eta'(1) \geq i-1) = 1$, and if $i \geq \eta(0)$ we have 
 \[
 \mathds{P}(\eta(1) \geq i) = \prod_{j=\eta(0)}^i \frac{w(-j)}{w(j)+w(-j)} 
 \leq \prod_{j=\eta'(0)}^{i-1} \frac{w(-j)}{w(j)+w(-j)} = \mathds{P}(\eta'(1) \geq i-1)
 \]
 since $\frac{w(-.)}{w(.)+w(-.)}$ is non-increasing. This ends the proof of the lemma. 
\end{proof}

We are now in position to control the laws of the $\Delta_{\mathbf{T}_{m,i}^\iota,j}$ for $m \geq N \theta/2$, $i \in \mathds{Z}$, $\iota \in \{+,-\}$. Heuristically, the $\Delta_{\mathbf{T}_{m,i}^\iota,j}$ are steps of chains $\eta$ or $-\eta$, and these chains have made a large number of steps before time $\mathbf{T}_{m,i}^\iota$ since $m$ is large, hence the $\Delta_{\mathbf{T}_{m,i}^\iota,j}$ will have law close to the invariant measure of $\eta$ or $-\eta$, that is $\rho_-$ or $\rho_+$.  More precisely, we have the following proposition (remember the definitions of $n$ and $\mathcal{F}_m$ given in \eqref{eq_scale_n},\eqref{eq_def_F}). 

\begin{proposition}\label{prop_law_zeta}
For any $m \geq N \theta/2$, $i \in \mathds{Z}$, $\iota \in \{+,-\}$, there exists a collection of random variables 
$(\bar \Delta_{\mathbf{T}_{m,i}^\iota,j})_{j \in \mathds{Z}}$, an event $\mathcal{B}_{0}^{m,i,\iota}$, and constants $C_0=C_0(w,\varepsilon) < \infty$ and $c_0=c_0(w) > 0$, so that when $n$ is large enough, $\mathds{P}(\mathcal{B}_{0}^{m,i,\iota}) \leq C_0 e^{-c_0n^{(\alpha-1)/4}}$, $\mathcal{B}_{0}^{m,i,\iota}$ contains $\{$there exists $i-n^{(\alpha-1)/4}\lfloor \varepsilon n\rfloor-1 \leq j \leq i+n^{(\alpha-1)/4}\lfloor \varepsilon n\rfloor+1, \bar \Delta_{\mathbf{T}_{m,i}^\iota,j} \neq \Delta_{\mathbf{T}_{m,i}^\iota,j}\}$,  $\mathcal{B}_{0}^{m,i,\iota}$ depends only on $\mathcal{F}_{\mathbf{T}_{m,i}^\iota}$ and on random variables independent from $X$, and the $(\bar \Delta_{\mathbf{T}_{m,i}^\iota,j})_{j \in \mathds{Z}}$ are independent with the following laws:
\begin{itemize}
  \item for $\iota=-$, $\bar \Delta_{\mathbf{T}_{m,i}^\iota,j}$ has law $\rho_-$ for $j \leq i-1$ and 
  $\bar \Delta_{\mathbf{T}_{m,i}^\iota,j}$ has law $\rho_+$ for $j \geq i$;
  \item for $\iota=+$, $\bar \Delta_{\mathbf{T}_{m,i}^\iota,j}$ has law $\rho_-$ for $j \leq i$ and 
  $\bar \Delta_{\mathbf{T}_{m,i}^\iota,j}$ has law $\rho_+$ for $j \geq i+1$.
 \end{itemize}
\end{proposition}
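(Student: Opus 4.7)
The plan is to exploit the a priori independence of the jump chains $(\xi_j)_{j\in\mathds{Z}}$ driving $\Delta_{\cdot,j}$, together with the exponential convergence of the derived chain $\eta$ to its invariant measure (Lemma \ref{lem_conv_eta}), to construct for each $j$ a sample of the target distribution $\rho_\pm$ that agrees with $\Delta_{\tau,j}$ (where $\tau:=\mathbf{T}_{m,i}^\iota$) except on an event of stretched-exponentially small probability.

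Write $R_n=n^{(\alpha-1)/4}$ and $W=\{i-R_n\lfloor\varepsilon n\rfloor-1,\ldots,i+R_n\lfloor\varepsilon n\rfloor+1\}$. The computation preceding \eqref{eq_zeta} identifies $\Delta_{\tau,j}$ with $\pm\eta_j(K_j)$, where $\eta_j$ is a copy of the $\eta$-chain of \eqref{eq_def_eta} started at $0$ and built from the independent randomness attached to site $j$, the sign is determined by whether $j$ lies to the right or to the left of $X_\tau$ (giving the target laws $\rho_+$ and $\rho_-$ respectively at stationarity, matching the statement), and $K_j$ is the appropriate oriented-edge local time at $j$ at time $\tau$. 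Adjoining i.i.d.\ uniforms $(U_j)_{j\in\mathds{Z}}$ and independent stationary shadow chains $(\tilde\eta_j)_{j\in\mathds{Z}}$ (with $\tilde\eta_j(0)\sim\rho_-$), I would couple $\tilde\eta_j$ to $\eta_j$ via the monotone coupling of Lemma \ref{lem_coupling} using per-site independent randomness, and then apply Lemma \ref{lem_const_coupling} with $U_j$ to adjust the result so as to force the exact marginal $\rho_\pm$. Since all auxiliary randomness is drawn independently across $j$ and the chains $\eta_j$ are themselves a priori independent, the $\bar\Delta_{\tau,j}$'s will be jointly independent with the stated marginals, and will satisfy $\bar\Delta_{\tau,j}=\Delta_{\tau,j}$ except with probability $\bar Ce^{-\bar cR_n}$ on the event $\{K_j\geq R_n\}$ (using Lemma \ref{lem_conv_eta} to bound the shadow coalescence time).

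Setting $\mathcal{B}_{\mathrm{LT}}=\{\exists j\in W: K_j<R_n\}$ and $\mathcal{B}_0^{m,i,\iota}=\mathcal{B}_{\mathrm{LT}}\cup\bigcup_{j\in W}\{\bar\Delta_{\tau,j}\neq\Delta_{\tau,j}\}$, the event $\mathcal{B}_0^{m,i,\iota}$ is measurable with respect to $\mathcal{F}_\tau$ and the auxiliary randomness as required, and a union bound over $j\in W$ contributes a factor $|W|=O(n^{1+(\alpha-1)/4})$ which is absorbed into the stretched-exponential bound by a small degradation of the rate. The main obstacle is showing $\mathds{P}(\mathcal{B}_{\mathrm{LT}})\leq Ce^{-cn^{(\alpha-1)/4}}$: because $m\geq N\theta/2$, the edge $(i,i\pm 1)$ is crossed at least $N\theta/2$ times by time $\tau$, and $W$ lies within distance $n^{1+(\alpha-1)/4}\ll N$ of $i$, so the Tóth--Vető triangular local-time profile \cite{Toth_et_al2008} at the law-of-large-numbers level makes each $K_j$ of order $N\gg R_n$ plausible; upgrading this qualitative statement to stretched-exponential concentration is the principal technical step, which I would approach via hitting-time estimates for excursions of $(X_m)$ between adjacent integers, exploiting the Markov structure of $(\Delta_{m,X_m})_m$ together with the exponential tails from Lemma \ref{lem_conv_eta}.
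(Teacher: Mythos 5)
Your overall scaffolding is correct — express $\Delta_{\tau,j}$ via independent chains $\eta_j$ started at $0$ and evaluated at the relevant local times, couple to stationarity, and control coupling failures on a window — but the proposal leaves a genuine gap exactly at the step you flag as the ``principal technical step,'' namely bounding $\mathds{P}(\mathcal{B}_{\mathrm{LT}})=\mathds{P}(\exists j\in W: K_j<R_n)$ stretched-exponentially. You suggest upgrading the Tóth--Vető law of large numbers to concentration via hitting-time estimates for excursions, but this route is both unexecuted and, as far as the paper's machinery goes, harder than necessary: one would need a large-deviation estimate for the whole local-time profile, which the paper never establishes. The actual proof sidesteps the issue entirely: it never bounds $K_j$ directly. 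Instead it introduces the event $\mathcal{B}_{0,2}=\{\exists j\in W: |\Delta_{\tau,j}|>n^{(\alpha-1)/4}\}$ and observes the deterministic relation $|\ell^{\pm}_{\tau,j-1}-\ell^{\pm}_{\tau,j}|\leq |\Delta_{\tau,j}|+1$ together with $\ell^{-}_{\tau,i}=m$ exactly; hence on $\mathcal{B}_{0,2}^c$ every local time in the window is within $4\varepsilon n^{(\alpha+1)/2}\ll N\theta/4$ of $m$, so automatically $\geq \lceil N\theta/4\rceil$. Bounding $\mathds{P}(\mathcal{B}_{0,2})$ then reduces to $\mathds{P}(|\eta_j(m')|\geq n^{(\alpha-1)/4})$ for the first offending $j$, evaluated at a time $m'$ close to $m$, which Lemma~\ref{lem_conv_eta} gives immediately. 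This is the key idea your proposal is missing.

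A secondary issue is the coupling mechanism. You propose to run a stationary shadow $\tilde\eta_j$ from $\rho_-$ and couple it to $\eta_j$ (which starts at $0$) via the monotone coupling of Lemma~\ref{lem_coupling}, then ``adjust'' with Lemma~\ref{lem_const_coupling}. But Lemma~\ref{lem_coupling} only couples two chains whose initial values differ by exactly $1$, so you would need to iterate, and the resulting sandwich $\eta_j-c\leq\tilde\eta_j\leq\eta_j+c$ does not give equality; the subsequent appeal to Lemma~\ref{lem_const_coupling} to ``force the exact marginal'' is not a coupling step one can simply layer on top. The paper's approach is cleaner and avoids this: it runs $\eta_j$ forward to the \emph{fixed deterministic} time $\lceil N\theta/4\rceil$, applies Lemma~\ref{lem_const_coupling} once at that time to produce $r_j\sim\rho_-$ with coupling failure $\leq\bar Ce^{-\bar c\theta N/4}$ via Lemma~\ref{lem_conv_eta}, and defines $\bar\eta_j$ to coalesce with $\eta_j$ from then on. Coupling at a deterministic time rather than at the random time $K_j$ also makes the subsequent independence argument (that the evaluation time of $\bar\eta_j$ depends only on $\eta_{j'}$ with $j'$ strictly between $i$ and $j$, so $\bar\Delta_j$ is independent of all $\bar\Delta_{j'}$ with $|j'-i|<|j-i|$) run smoothly; your sketch does not address this independence step at all.
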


\begin{proof}
We write the argument for $\iota=-$; the case $\iota=+$ is similar. 
We begin by constructing $(\bar \Delta_{\mathbf{T}_{m,i}^\iota,j})_{j \in \mathds{Z}}$. This construction is inspired from the one in Section 3.3 of \cite{Toth_et_al2008}. We have $X_{\mathbf{T}_{m,i}^\iota-1}=i$, thus for $j \leq i-1$, the last time before $\mathbf{T}_{m,i}^\iota$ that the process $(X_{m'})_{m'\in\mathds{Z}}$ was at $j$, it went to the right, hence the last step of $(\Delta_{m',j})_{m'\in\mathds{N}}$ before time $\mathbf{T}_{m,i}^\iota$ is an downwards step. Moreover, the number of downwards steps of $(\Delta_{m',j})_{m'\in\mathds{N}}$ before time $\mathbf{T}_{m,i}^\iota$ is $\ell_{\mathbf{T}_{m,i}^-,j}^+$. We deduce that $\Delta_{\mathbf{T}_{m,i}^-,j} = \xi_j(\tau_-(\ell_{\mathbf{T}_{m,i}^-,j}^+)) = 
\eta_{j,-}(\ell_{\mathbf{T}_{m,i}^-,j}^+)$, where the $\eta_{j,-}$ are independent copies of $\eta$ starting from 0 
(and the $\xi_j$ are independent copies of $\xi$ starting from 0). In the same way, 
for $j \geq i$, $\Delta_{\mathbf{T}_{m,i}^-,j} = \xi_j(\tau_+(\ell_{\mathbf{T}_{m,i}^-,j}^-)) = 
-\eta_{j,+}(\ell_{\mathbf{T}_{m,i}^-,j}^-)$ where the $\eta_{j,+}$ are independent, independent from the $\eta_{j,-}$, $j \leq i-1$, 
and start from 0. We will drop the index $+$ or $-$ from the $\eta_{j,\pm}$ for convenience. 
By Lemmas \ref{lem_conv_eta} and \ref{lem_const_coupling}, we can introduce random variables $r_j$ i.i.d. of law $\rho_-$ such that 
for any $j$, $\mathds{P}(\eta_j(\lceil N\theta/4 \rceil) \neq r_j) \leq \bar C e^{-\bar c \theta N/4}$. 
For any $j$, we define another copy of $\eta$, $(\bar \eta_j(\ell))_{\ell \geq \lceil N\theta/4 \rceil}$, 
so that $\bar \eta_j(\lceil N\theta/4 \rceil)=r_j$, if $\eta_j(\lceil N\theta/4 \rceil) = r_j$, 
$\bar \eta_j(\ell) = \eta_j(\ell)$ for any $\ell \geq \lceil N\theta/4 \rceil$, and the $\bar \eta_j$ are 
independent. For $j \leq i-1$, we set $\bar \Delta_{\mathbf{T}_{m,i}^-,j} = 
\bar \eta_j(\ell_{\mathbf{T}_{m,i}^-,j}^+ \vee \lceil N\theta/4 \rceil)$, and for $j \geq i$ we set 
$\bar \Delta_{\mathbf{T}_{m,i}^-,j} = -\bar \eta_j(\ell_{\mathbf{T}_{m,i}^-,j}^- \vee \lceil N\theta/4 \rceil)$. 

To show that the $\bar \Delta_{\mathbf{T}_{m,i}^-,j}$ are independent with the required laws, we notice that 
since $\rho_-$ is invariant for $\eta$, $\bar \eta_j(\ell)$ has law $\rho_-$ for any $\ell \geq \lceil N\theta/4 \rceil$. 
Now, for $j > i$, we notice that $\ell_{\mathbf{T}_{m,i}^-,j}^- = \ell_{\mathbf{T}_{m,i}^-,j-1}^+$ 
or $\ell_{\mathbf{T}_{m,i}^-,j-1}^++1$ depending only on the position of $i$ and $j$ with respect to 0. 
Furthermore, $\ell_{\mathbf{T}_{m,i}^-,j-1}^+=\ell_{\mathbf{T}_{m,i}^-,j-1}^--\Delta_{\mathbf{T}_{m,i}^-,j-1}
=\ell_{\mathbf{T}_{m,i}^-,j-1}^-+\eta_{j-1}(\ell_{\mathbf{T}_{m,i}^-,j-1}^-)$, and we recall that 
$\ell_{\mathbf{T}_{m,i}^-,i}^-=m$, so one can prove by induction that $\ell_{\mathbf{T}_{m,i}^-,j}^-$ depends only on 
the $\eta_{j'}$, $i \leq j' < j$, which are independent from $\bar \eta_j$, therefore $\bar \Delta_{\mathbf{T}_{m,i}^-,j}$ 
is independent from the $\eta_{j'},\bar \eta_{j'}$, $j' < j$, and has law $\rho_+$. The same argument can be used for 
$j \leq i-1$ to show that $\ell_{\mathbf{T}_{m,i}^-,j}^+$ depends only on the 
$\eta_{j'}$, $j < j' < i$ so $\bar \Delta_{\mathbf{T}_{m,i}^-,j}$ has law $\rho_-$ and is independent from 
the $\eta_{j'},\bar \eta_{j'}$, $j' > j$. This implies the $\bar \Delta_{\mathbf{T}_{m,i}^-,j}$ are independent with the required laws. 

We now define $\mathcal{B}_{0}^{m,i,-}$. We set 
\[
\mathcal{B}_{0,1}^{m,i,-} = \{\exists\, j\text{ so that } i-n^{(\alpha-1)/4}\lfloor \varepsilon n\rfloor-1 \leq j \leq i+n^{(\alpha-1)/4}\lfloor \varepsilon n\rfloor+1,\eta_j(\lceil N\theta/4 \rceil) \neq r_j\},
\]
\[
\mathcal{B}_{0,2}^{m,i,-} = \{\exists\, j\text{ so that }i-n^{(\alpha-1)/4}\lfloor \varepsilon n\rfloor 
\leq j \leq i+n^{(\alpha-1)/4}\lfloor \varepsilon n\rfloor, |\Delta_{\mathbf{T}_{m,i}^-,j}| > n^{(\alpha-1)/4}\},
\]
and $\mathcal{B}_{0}^{m,i,-} = \mathcal{B}_{0,1}^{m,i,-} \cup \mathcal{B}_{0,2}^{m,i,-}$. To show that $\mathcal{B}_{0}^{m,i,-}$ 
contains the required event, we notice that for any $j \in \mathds{Z}$, 
$\ell_{\mathbf{T}_{m,i}^-,j}^- = \Delta_{\mathbf{T}_{m,i}^-,j}+\ell_{\mathbf{T}_{m,i}^-,j}^+$ 
and $|\ell_{\mathbf{T}_{m,i}^-,j}^- - \ell_{\mathbf{T}_{m,i}^-,j-1}^+| \leq 1$, so we have 
$|\ell_{\mathbf{T}_{m,i}^-,j-1}^+ - \ell_{\mathbf{T}_{m,i}^-,j}^+| \leq |\Delta_{\mathbf{T}_{m,i}^-,j}|+1$ 
and $|\ell_{\mathbf{T}_{m,i}^-,j+1}^- - \ell_{\mathbf{T}_{m,i}^-,j}^-| \leq |\Delta_{\mathbf{T}_{m,i}^-,j}|+1$. 
In addition, we recall that $\ell_{\mathbf{T}_{m,i}^-,i}^- = m$. We deduce that when $n$ is large enough, 
for any $0 \leq s \leq n^{(\alpha-1)/4}\lfloor \varepsilon n\rfloor$, if 
$|\Delta_{\mathbf{T}_{m,i}^-,j}| \leq n^{(\alpha-1)/4}$ for all $i-s \leq j \leq i-1$, then 
$|\ell_{\mathbf{T}_{m,i}^-,j}^+-m| \leq 4 \varepsilon n^{(\alpha+1)/2}$ for $i-s-1 \leq j \leq i-1$, 
and if $|\Delta_{\mathbf{T}_{m,i}^-,j}| \leq n^{(\alpha-1)/4}$ for all $i \leq j \leq i+s$ then 
$|\ell_{\mathbf{T}_{m,i}^-,j}^--m| \leq 4 \varepsilon n^{(\alpha+1)/2}$ for $i \leq j \leq i+s+1$. 
Consequently, if $(\mathcal{B}_{0,2}^{m,i,-})^c$ is satisfied and $n$ is large enough, 
for all $i-n^{(\alpha-1)/4}\lfloor \varepsilon n\rfloor-1 \leq j \leq i-1$ 
we have $\ell_{\mathbf{T}_{m,i}^-,j}^+ \geq m-4 \varepsilon n^{(\alpha+1)/2} \geq \lceil N\theta/4 \rceil$, 
and for all $i \leq j \leq i+n^{(\alpha-1)/4}\lfloor \varepsilon n\rfloor+1$ we have 
$\ell_{\mathbf{T}_{m,i}^-,j}^- \geq m-4 \varepsilon n^{(\alpha+1)/2} \geq \lceil N\theta/4 \rceil$. 
This implies that if $(\mathcal{B}_{0}^{m,i,-})^c$ is satisfied, for all 
$i-n^{(\alpha-1)/4}\lfloor \varepsilon n\rfloor-1 \leq j \leq i+n^{(\alpha-1)/4}\lfloor \varepsilon n\rfloor+1$, 
$\bar \Delta_{\mathbf{T}_{m,i}^-,j} = \Delta_{\mathbf{T}_{m,i}^-,j}$, thus $\mathcal{B}_{0}^{m,i,-}$ 
contains the required event. 

To see that $\mathcal{B}_{0}^{m,i,-}$ has the required dependencies, we notice that $\mathcal{B}_{0,2}^{m,i,-}$ 
depends on $\mathcal{F}_{\mathbf{T}_{m,i}^-}$. Furthermore, if $(\mathcal{B}_{0,2}^{m,i,-})^c$ is satisfied 
and $n$ is large enough, for all $i-n^{(\alpha-1)/4}\lfloor \varepsilon n\rfloor-1 \leq j \leq i-1$ 
we have $\ell_{\mathbf{T}_{m,i}^-,j}^+ \geq \lceil N\theta/4 \rceil$ and 
for all $i \leq j \leq i+n^{(\alpha-1)/4}\lfloor \varepsilon n\rfloor+1$ we have 
$\ell_{\mathbf{T}_{m,i}^-,j}^- \geq \lceil N\theta/4 \rceil$, so the events $\{\eta_j(\lceil N\theta/4 \rceil) \neq r_j\}$ depend 
only on $\mathcal{F}_{\mathbf{T}_{m,i}^-}$ and on the random variables used to construct the $r_j$.

We now bound the probability of $\mathcal{B}_{0}^{m,i,-}$. By the definition of the $r_j$, when $n$ is large enough, 
$\mathds{P}(\mathcal{B}_{0,1}^{m,i,-}) \leq 3 \varepsilon n^{(\alpha+3)/4} \bar C e^{-\bar c \theta N/4}$. 
Furthermore, when $n$ is large enough, if $\mathcal{B}_{0,2}^{m,i,-}$ 
is satisfied by some $i-n^{(\alpha-1)/4}\lfloor \varepsilon n\rfloor \leq j \leq i-1$ 
(the case $i \leq j \leq i+n^{(\alpha-1)/4}\lfloor \varepsilon n\rfloor$ is similar), and if we consider the largest such $j$, 
then $|\ell_{\mathbf{T}_{m,i}^-,j}^+-m| \leq 4 \varepsilon n^{(\alpha+1)/2}$, so there exists an integer 
$m' \in [m-4 \varepsilon n^{(\alpha+1)/2},m+4 \varepsilon n^{(\alpha+1)/2}]$ such that $|\eta_j(m')| \geq n^{(\alpha-1)/4}$. 
This implies 
\[
 \mathds{P}(\mathcal{B}_{0,2}^{m,i,-}) \leq 
 \sum_{|j-i| \leq n^{(\alpha-1)/4}\lfloor \varepsilon n\rfloor, |m'-m| \leq 4 \varepsilon n^{(\alpha+1)/2}}
 \mathds{P}(|\eta_j(m')| \geq n^{(\alpha-1)/4}) \leq 32\varepsilon^2 n^{(3\alpha+5)/4} \frac{2\bar C}{1-e^{-\bar c}}e^{-\bar c n^{(\alpha-1)/4}},
\]
the latter inequality coming from Lemma \ref{lem_conv_eta}. This ends the proof.
\end{proof}

Proposition \ref{prop_law_zeta} gives us a good control on the $\Delta_{m,i}$ when $m$ is some $\mathbf{T}_{m',i'}^\iota$ with $m' \geq N\theta/2$. However, we will need to understand the $\Delta_{m,i}$ when $m$ is $T_k$, $k\in\mathds{N}$. In order to do that, we establish the following proposition, which states that outside of an event of very small probability, each $T_k$ will be one of the $\mathbf{T}_{m',i'}^\iota$ for some random $m' \geq N\theta/2, i'\in\mathds{Z},\iota\in\{+,-\}$. 

\begin{proposition}\label{prop_bound_m}
 We can define an event $\mathcal{B}$ such that  for any $k \in \mathds{N}^*$, if $\mathcal{B}^c$ occurs and $n$ is large enough, $T_k =\mathbf{T}_{m,i}^+$ or $\mathbf{T}_{m,i}^-$ for some integers $\lfloor N \theta \rfloor-2n^{(\alpha+4)/5} \leq m \leq \lfloor N\theta \rfloor+2n^{(\alpha+4)/5}$ and $\lfloor Nx \rfloor- n^{(\alpha+4)/5} \leq i \leq \lfloor Nx \rfloor+ n^{(\alpha+4)/5}$. In addition, there exists a constant $c'=c'(w)>0$ such that $\mathds{P}(\mathcal{B}) \leq e^{-c' n^{((\alpha-1)/4) \wedge (1/10)}}$ when $n$ is large enough.
\end{proposition}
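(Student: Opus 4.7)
The plan is to write $T_k$ tautologically as $\mathbf{T}^{\iota}_{m,i}$ and then to control the two scalars $i$ and $m$. For any $k \geq 1$, the walk takes a step at time $T_k$, so set $i_k := X_{T_k - 1}$, $\iota_k := \operatorname{sign}(X_{T_k} - X_{T_k - 1}) \in \{+,-\}$, and $m_k := \ell^{\iota_k}_{T_k, i_k}$; then $T_k = \mathbf{T}^{\iota_k}_{m_k, i_k}$ by the definition \eqref{eq_def_T_mi}. It thus suffices to exhibit $\mathcal{B}$ such that on $\mathcal{B}^c$, for each of the (finitely many) values of $k$ used in subsequent applications, we have $|i_k - \lfloor Nx \rfloor| \leq n^{(\alpha+4)/5}$ and $|m_k - \lfloor N\theta \rfloor| \leq 2 n^{(\alpha+4)/5}$.

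The control on $i_k$ is entirely deterministic: $|X_{T_{j+1}} - X_{T_j}| = \lfloor \varepsilon n \rfloor$ by construction and $X_{T_0} \in \{\lfloor Nx \rfloor - 1, \lfloor Nx \rfloor, \lfloor Nx \rfloor + 1\}$ by the definition of $\mathbf{T}^\pm_{\lfloor N\theta\rfloor, \lfloor Nx\rfloor}$, so $|i_k - \lfloor Nx \rfloor| \leq k\lfloor \varepsilon n \rfloor + 2$, which is dominated by $n^{(\alpha+4)/5}$ for $n$ large since $\alpha > 1$ gives $(\alpha+4)/5 > 1$. For $m_k$, I would decompose
\[
m_k - \lfloor N\theta \rfloor = \bigl(\ell^{\iota_k}_{T_0, i_k} - \lfloor N \theta \rfloor\bigr) + \bigl(\ell^{\iota_k}_{T_k, i_k} - \ell^{\iota_k}_{T_0, i_k}\bigr).
\]
For the first term, note that $\ell^{\pm}_{T_0, \lfloor Nx \rfloor} = \lfloor N \theta \rfloor$ by the very definition of $T_0$, so this term measures the variation of the local-time profile at $T_0$ over a window of width $O(n)$ around $\lfloor Nx \rfloor$ (plus an $O(1)$ correction from $\Delta_{T_0,\cdot}$ in the case $\iota_k \neq \pm$). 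Combining the deterministic triangular-profile result of Tóth-Vető \cite{Toth_et_al2008} for the mean behavior with the Markov-chain representation of the profile as an $\eta$-chain (cf.\ Lemma \ref{lem_conv_eta} and the argument of Proposition \ref{prop_law_zeta}) controlling the fluctuations would bound the first term by $O(n) + O(\sqrt{n}\log n)$ with stretched-exponential probability, which sits well inside $n^{(\alpha+4)/5}$. For the second term, the number of crossings of the edge $(i_k, i_k + \iota_k)$ during $[T_0, T_k]$ is at most $\sum_{j=0}^{k-1}(L^{T_j, -}_{i_k} + L^{T_j, +}_{i_k + 1})$, and by Observation \ref{obs_rec_temps_local} each summand is a difference of $\zeta$-walks of length $O(\varepsilon n)$, hence $O(\sqrt{n})$ with stretched-exponential tails.

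The event $\mathcal{B}$ would then be defined as the union over the relevant (bounded) values of $k$ of the two bad events just described. A union bound yields $\mathds{P}(\mathcal{B}) \leq e^{-c' n^{((\alpha-1)/4) \wedge (1/10)}}$, where the exponent $(\alpha-1)/4$ comes from the same $\eta$-chain coupling used in Proposition \ref{prop_law_zeta}, and $1/10$ is a conservative exponent absorbing the Gaussian concentration of edge-crossing counts. The main obstacle I anticipate lies in the first term of the decomposition of $m_k - \lfloor N \theta \rfloor$: the challenge is to obtain uniform (over a window of $O(n)$ sites) stretched-exponential control of the local-time profile at time $T_0$ with an exponent compatible with the final bound. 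This is not a direct consequence of the convergence-in-probability statement of \cite{Toth_et_al2008}, and will probably require rerunning the Ray-Knight argument of \cite{Mareche2022} with quantitative tail estimates, which is the only step that does not reduce to standard random-walk concentration.
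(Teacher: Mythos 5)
Your decomposition $m_k - \lfloor N\theta\rfloor = (\ell^{\iota_k}_{T_0,i_k} - \lfloor N\theta\rfloor) + (\ell^{\iota_k}_{T_k,i_k} - \ell^{\iota_k}_{T_0,i_k})$ gives the lower bound on $m_k$ for free (the second term is non-negative since local times only increase), and your handling of the first term --- quantitative Ray-Knight control of the profile at $T_0 = \mathbf{T}^\pm_{\lfloor N\theta\rfloor,\lfloor Nx\rfloor}$ --- is exactly what the paper does, via Proposition \ref{prop_law_zeta} followed by a Chernoff bound for sums of i.i.d.\ $\rho_0$-variables. Your anticipated ``main obstacle'' is in fact the well-behaved part: $T_0$ is of $\mathbf{T}$-type and Proposition \ref{prop_law_zeta} applies to it directly, so no re-running of the Ray-Knight argument is needed. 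Where the argument genuinely breaks is your treatment of the second term. You bound the edge-crossing count on $[T_0,T_k]$ by $\sum_{j=0}^{k-1}(L^{T_j,-}_{i_k}+L^{T_j,+}_{i_k+1})$ and claim each summand is $O(\sqrt n)$ with stretched-exponential tails because $L^{T_j,\pm}$ is a difference of $\zeta$-walks. But the increments $\zeta^{T_j,\pm,E}$ are $\eta$-chain steps whose law depends on the (unknown) local-time field at $T_j$; establishing that they behave like an i.i.d.\ walk is precisely the content of Sections \ref{sec_bad_events}--\ref{sec_reflected_RW}, and the bad events $\mathcal{B}_0,\dots,\mathcal{B}_6$ there are defined as unions over $m$ and $i$ in windows whose size is exactly the conclusion of Proposition \ref{prop_bound_m}. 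Your route is therefore circular: the tail bounds you invoke presuppose the proposition you are proving.

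The paper never estimates $\ell^\iota_{T_k,i} - \ell^\iota_{T_0,i}$ at all. Instead, for the upper bound on $m_k$, it introduces a second deterministic Ray-Knight time $T^\iota := \mathbf{T}^\iota_{\lfloor(N+n^{(\alpha+9)/10})\theta\rfloor,\lfloor(N+n^{(\alpha+9)/10})x\rfloor}$. Between $T_0$ and $T_k$ the walk moves at most $k\lfloor\varepsilon n\rfloor + 1$, which for fixed $k$ is $\ll n^{(\alpha+9)/10}|x|$ (here $(\alpha+9)/10>1$ is what matters; the $x=0$ case is handled by a small change of reference site), so the walk cannot reach $\lfloor(N+n^{(\alpha+9)/10})x\rfloor$ before $T_k$, hence $T_k \leq T^\iota$ and $m_k = \ell^\iota_{T_k,i} \leq \ell^\iota_{T^\iota,i}$. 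Since $T^\iota$ is again a $\mathbf{T}$-type time with $m\geq N\theta/2$, Proposition \ref{prop_law_zeta} and the same Chernoff estimate control the profile at $T^\iota$ verbatim. The event $\mathcal{B}$ is then the union of the profile-fluctuation events at $T_0$ and at $T^\iota$; no control of the intermediate local times $L^{T_j,\pm}$ is needed, and the estimate is self-contained. You should replace your second-term estimate by this monotonicity-plus-shifted-$\mathbf{T}$-time comparison.
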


\begin{proof}
By the definition of the $T_k$, if $n$ is large enough, there exist $m\in\mathds{N}$, $\lfloor Nx \rfloor- n^{(\alpha+4)/5} \leq i \leq \lfloor Nx \rfloor+ n^{(\alpha+4)/5}$ and $\iota\in\{+,-\}$ so that $T_k=\mathbf{T}_{m,i}^\iota$, hence we only have to obtain the property on $m$. Roughly, the idea of the proof is that at $T_0=\mathbf{T}_{\lfloor N\theta\rfloor,\lfloor Nx \rfloor}^\pm$, Proposition \ref{prop_law_zeta} allows us to control the $\Delta_{T_0,j}$, which are tightly linked to the $\ell_{T_0,j}^\pm$, which allows to show that the $\ell_{T_0,j}^\pm$ cannot be too small, thus since $T_k \geq T_0$, the $\ell_{T_k,j}^\pm$ cannot be too small which yields a lower bound on $m$. Moreover, this control on the $\ell_{T_0,j}^\pm$ also implies that they cannot be too large, therefore $\ell_{T_0,\lfloor (N+n^{(\alpha+9)/10})x \rfloor}^\iota \leq \lfloor (N+n^{(\alpha+9)/10})\theta\rfloor$, thus $T_0 \leq \mathbf{T}_{\lfloor (N+n^{(\alpha+9)/10})\theta\rfloor,\lfloor (N+n^{(\alpha+9)/10})x \rfloor}^\iota$. Since the random walk cannot reach $\lfloor (N+n^{(\alpha+9)/10})x \rfloor$ from $\lfloor Nx\rfloor$ between times $T_0$ and $T_k$, this implies $T_k \leq \mathbf{T}_{\lfloor (N+n^{(\alpha+9)/10})\theta\rfloor,\lfloor (N+n^{(\alpha+9)/10})x \rfloor}^\iota$. We can exert the same control on the $\ell^\pm$ at time $\mathbf{T}_{\lfloor (N+n^{(\alpha+9)/10})\theta\rfloor,\lfloor (N+n^{(\alpha+9)/10})x \rfloor}^\iota$ as at time $T_0$, which allows us to prove that $m$ is not too large. 

 We now construct the event $\mathcal{B}$, which will roughly mean ``the $\ell^\pm$ don't behave well''. We suppose without loss of generality that we work with $T_0=\mathbf{T}_{\lfloor N\theta\rfloor,\lfloor Nx \rfloor}^+$ and $x > 0$. For $\tilde\iota \in \{+,-\}$, we define 
 \[
 \mathcal{B}_{\tilde\iota} = \{\exists\, i \in \mathds{Z}\text{ with }|i-\lfloor Nx \rfloor| \leq 2 n^{(\alpha+4)/5}\text{ and }|\ell_{T_0,i}^{\tilde\iota}-\ell_{T_0,y}^{\tilde\iota}+(i-\lfloor Nx \rfloor)/2| > n^{(\alpha+5)/10}\}
 \]
  with $y=\lfloor Nx \rfloor$ if $\tilde\iota=+$ and $y=\lfloor Nx \rfloor+1$ if $\tilde\iota=-$. To shorten the notation, we will write in this proof 
  \begin{equation}\label{eq_bound_m}
  T^{\tilde\iota}=\mathbf{T}_{\lfloor (N+n^{(\alpha+9)/10})\theta\rfloor,\lfloor (N+n^{(\alpha+9)/10})x \rfloor}^{\tilde\iota}.
  \end{equation}
  We define 
  \[
  \mathcal{B}_{\tilde\iota}' = \{\exists\, i \in \mathds{Z}\text{ with }|i-\lfloor (N+n^{(\alpha+9)/10})x \rfloor| \leq 2 n^{(\alpha+4)/5} \qquad\qquad\qquad\qquad\qquad\qquad\qquad\qquad
  \]
  \[
  \qquad\qquad\qquad\text{ and }|\ell_{T^{\tilde\iota},i}^{\tilde\iota}-\ell_{T^{\tilde\iota},\lfloor (N+n^{(\alpha+9)/10})x \rfloor}^{\tilde\iota}+(i-\lfloor (N+n^{(\alpha+9)/10})x \rfloor)/2| > n^{(\alpha+5)/10}\}.
  \]
  If $x<0$, we would replace $(i-\lfloor Nx \rfloor)/2$ by $-(i-\lfloor Nx \rfloor)/2$ in $\mathcal{B}_{\tilde\iota}$, and similarly in $\mathcal{B}_{\tilde\iota}'$. If we had $x=0$, we would replace $\lfloor (N+n^{(\alpha+9)/10})x \rfloor$ by $\lfloor n^{(\alpha+9)/10} \rfloor$, in $\mathcal{B}_{\tilde\iota}$ we would replace $(i-\lfloor Nx \rfloor)/2$ by $|i/2|$, and in $\mathcal{B}_{\tilde\iota}'$ we would replace $(i-\lfloor (N+n^{(\alpha+9)/10})x \rfloor)/2$ by $(|i|-\lfloor n^{(\alpha+9)/10}\rfloor)/2$. Finally, we define $\mathcal{B} = \mathcal{B}_{+} \cup \mathcal{B}_{-} \cup \mathcal{B}_{+}' \cup \mathcal{B}_{-}'$. 
 
 We now prove that if $\mathcal{B}^c$ occurs, $T_k$ has the desired property. We notice that since $T_k=\mathbf{T}_{m,i}^\iota$, we have $m=\ell_{T_k,i}^\iota$. We first prove the lower bound on $m$. We have $m=\ell_{T_k,i}^\iota \geq \ell_{T_0,i}^\iota$. Moreover, $|i-\lfloor Nx \rfloor| \leq 2 n^{(\alpha+4)/5}$ and $\mathcal{B}_{\iota}^c$ occurs, hence $|\ell_{T_0,i}^{\iota}-\ell_{T_0,y}^{\iota}+(i-\lfloor Nx \rfloor)/2| \leq n^{(\alpha+5)/10}$ with $y=\lfloor Nx \rfloor$ if $\iota=+$ and $y=\lfloor Nx \rfloor+1$ if $\iota=-$. Since $T_0=\mathbf{T}_{\lfloor N\theta\rfloor,\lfloor Nx \rfloor}^+$, if $\iota=+$ we have $\ell_{T_0,\lfloor Nx \rfloor}^{+}=\lfloor N\theta\rfloor$, hence $\ell_{T_0,i}^{\iota} \geq \lfloor N\theta\rfloor-n^{(\alpha+5)/10}-n^{(\alpha+4)/5}$. If $\iota=-$, we have $|\ell_{T_0,\lfloor Nx \rfloor+1}^{-}-\ell_{T_0,\lfloor Nx \rfloor}^{+}| \leq 1$ and $\ell_{T_0,\lfloor Nx \rfloor}^{+}=\lfloor N\theta\rfloor$, thus $\ell_{T_0,i}^{\iota} \geq \lfloor N\theta\rfloor-n^{(\alpha+5)/10}-1-n^{(\alpha+4)/5}$. In both cases we get $\ell_{T_0,i}^{\iota} \geq \lfloor N\theta\rfloor-2n^{(\alpha+4)/5}$ when $n$ is large enough, hence $m \geq \lfloor N\theta\rfloor-2n^{(\alpha+4)/5}$.
 
 We now prove the upper bound on $m$. In order to do that, we notice that $|\lfloor (N+n^{(\alpha+9)/10})x \rfloor-\lfloor Nx \rfloor| \leq 2n^{(\alpha+4)/5}$ when $n$ is large enough, thus since $\mathcal{B}_{\iota}^c$ occurs, 
 \[
 \ell_{T_0,\lfloor (N+n^{(\alpha+9)/10})x \rfloor}^{\iota}-\ell_{T_0,y}^{\iota}+(\lfloor (N+n^{(\alpha+9)/10})x \rfloor-\lfloor Nx \rfloor)/2| \leq n^{(\alpha+5)/10}
 \]
 with $|\ell_{T_0,y}^{\iota}-\lfloor N\theta\rfloor| \leq 1$. This implies 
 \[
 \ell_{T_0,\lfloor (N+n^{(\alpha+9)/10})x \rfloor}^{\iota}-\lfloor N\theta\rfloor \leq n^{(\alpha+5)/10}-(\lfloor (N+n^{(\alpha+9)/10})x \rfloor-\lfloor Nx \rfloor)/2+1\leq n^{(\alpha+5)/10}
 \]
 when $n$ is large enough, hence $\ell_{T_0,\lfloor (N+n^{(\alpha+9)/10})x \rfloor}^{\iota} < \lfloor(N+n^{(\alpha+9)/10})\theta\rfloor$, thus $T_0 \leq \mathbf{T}_{\lfloor (N+n^{(\alpha+9)/10})\theta\rfloor,\lfloor (N+n^{(\alpha+9)/10})x \rfloor}^\iota$. Furthermore, between times $T_0$ and $T_k$ the random walk stays at distance at most $k\varepsilon n+1$ of $\lfloor Nx \rfloor$, hence when $n$ is large enough it does not reach $\lfloor (N+n^{(\alpha+9)/10})x \rfloor$, therefore $T_k \leq \mathbf{T}_{\lfloor (N+n^{(\alpha+9)/10})\theta\rfloor,\lfloor (N+n^{(\alpha+9)/10})x \rfloor}^\iota=T^\iota$ (see \eqref{eq_bound_m}), which yields $m=\ell_{T_k,i}^\iota \leq \ell_{T^\iota,i}^\iota$. Now, $|i- \lfloor Nx \rfloor| \leq n^{(\alpha+4)/5}$, thus $|i-\lfloor (N+n^{(\alpha+9)/10})x \rfloor| \leq 2 n^{(\alpha+4)/5}$ when $n$ is large enough, thus since $\mathcal{B}_\iota'$ occurs, $|\ell_{T^\iota,i}^{\iota}-\ell_{T^\iota,\lfloor (N+n^{(\alpha+9)/10})x \rfloor}^{\iota}+(i-\lfloor (N+n^{(\alpha+9)/10})x \rfloor)/2| \leq n^{(\alpha+5)/10}$, so $\ell_{T^\iota,i}^{\iota} \leq \ell_{T^\iota,\lfloor (N+n^{(\alpha+9)/10})x \rfloor}^{\iota}+n^{(\alpha+5)/10}+n^{(\alpha+4)/5}$. In addition, by the definition of $T^\iota$ we have $\ell_{T^\iota,\lfloor (N+n^{(\alpha+9)/10})x \rfloor}^{\iota}= \lfloor (N+n^{(\alpha+9)/10})\theta\rfloor$, thus $m \leq\ell_{T^\iota,i}^{\iota} \leq \lfloor (N+n^{(\alpha+9)/10})\theta\rfloor+n^{(\alpha+5)/10}+n^{(\alpha+4)/5} \leq \lfloor N \theta \rfloor + 2n^{(\alpha+4)/5}$. 
 
 We now prove the bound on $\mathds{P}(\mathcal{B})$ with the help of Proposition \ref{prop_law_zeta}. It is enough to find $c'=c'(w)>0$ so that $\mathds{P}(\mathcal{B}_-) \leq e^{-2 c' n^{((\alpha-1)/4) \wedge (1/10)}}$ when $n$ is large enough, as the probabilities $\mathds{P}(\mathcal{B}_+), \mathds{P}(\mathcal{B}_+'), \mathds{P}(\mathcal{B}_-')$ can be dealt with in the same way. Moreover, by Proposition \ref{prop_law_zeta} we have $\mathds{P}(\mathcal{B}_0^{\lfloor N\theta\rfloor,\lfloor Nx \rfloor,+}) \leq C_0e^{-c_0n^{(\alpha-1)/4}}$ when $n$ is large enough, so it is enough to prove $\mathds{P}(\mathcal{B}_- \cap (\mathcal{B}_0^{\lfloor N\theta\rfloor,\lfloor Nx \rfloor,+})^c) \leq e^{-\breve c_0 n^{1/10}}$ for some constant $\breve c_0=\breve c_0(w) > 0$ when $n$ is large enough. 
 
 In order to do that, we set $i \in \mathds{Z}$ so that $|i-\lfloor Nx \rfloor| \leq 2 n^{(\alpha+4)/5}$. We will write $\ell_{T_0,i}^{-}-\ell_{T_0,\lfloor Nx \rfloor}^{-}+(i-\lfloor Nx \rfloor)/2$ as a sum of i.i.d. random variables as follows. We suppose $i < \lfloor Nx \rfloor$, then 
 \[
 \ell_{T_0,i}^{-}-\ell_{T_0,\lfloor Nx \rfloor}^{-}+(i-\lfloor Nx \rfloor)/2=\sum_{j=\lfloor Nx \rfloor}^{i+1}(\ell_{T_0,j-1}^{-}-\ell_{T_0,j}^{-}-1/2).
 \]
 Now, we recall that $x > 0$ and $n^\alpha \leq N$, so when $n$ is large enough $i \geq 0$, which yields that for any $j \in \{i+1,...,\lfloor Nx \rfloor\}$, $\ell_{T_0,j}^{-}=\ell_{T_0,j-1}^{+}-1$, hence 
 \[
 \ell_{T_0,i}^{-}-\ell_{T_0,\lfloor Nx \rfloor}^{-}+(i-\lfloor Nx \rfloor)/2=\sum_{j=\lfloor Nx \rfloor}^{i+1}(\ell_{T_0,j-1}^{-}-\ell_{T_0,j-1}^{+}+1/2)=\sum_{j=\lfloor Nx \rfloor}^{i+1}(\Delta_{T_0,j-1}+1/2).
 \]
 Moreover, $\ell_{T_0,\lfloor Nx \rfloor+1}^{-}=\ell_{T_0,\lfloor Nx \rfloor}^{+}$, thus $\ell_{T_0,\lfloor Nx \rfloor}^{-}-\ell_{T_0,\lfloor Nx \rfloor+1}^{-}=\ell_{T_0,\lfloor Nx \rfloor}^{-}-\ell_{T_0,\lfloor Nx \rfloor}^{+}=\Delta_{T_0,\lfloor Nx \rfloor}$. We deduce 
 \[
 \ell_{T_0,i}^{-}-\ell_{T_0,\lfloor Nx \rfloor+1}^{-}+(i-\lfloor Nx \rfloor)/2=\Delta_{T_0,\lfloor Nx \rfloor}+\sum_{j=\lfloor Nx \rfloor}^{i+1}(\Delta_{T_0,j-1}+1/2).
 \]
 Now, if $(\mathcal{B}_0^{\lfloor N\theta\rfloor,\lfloor Nx \rfloor,+})^c$ occurs and $n$ is large enough, $\Delta_{T_0,j}=\bar\Delta_{T_0,j}$ for any $j \in \{i,...,\lfloor Nx \rfloor\}$, thus 
 \[
 \ell_{T_0,i}^{-}-\ell_{T_0,\lfloor Nx \rfloor+1}^{-}+(i-\lfloor Nx \rfloor)/2=\bar\Delta_{T_0,\lfloor Nx \rfloor}+\sum_{j=\lfloor Nx \rfloor}^{i+1}(\bar\Delta_{T_0,j-1}+1/2).
 \]
 
 Therefore it is enough to show $\mathds{P}(|\bar\Delta_{T_0,\lfloor Nx \rfloor}+\sum_{j=\lfloor Nx \rfloor}^{i+1}(\bar\Delta_{T_0,j-1}+1/2)| > n^{(\alpha+5)/10}) \leq e^{-2\breve c_0 n^{1/10}}$ for some constant $\breve c_0=\breve c_0(w) > 0$ when $n$ is large enough. Now, by Proposition \ref{prop_law_zeta} $\bar\Delta_{T_0,\lfloor Nx \rfloor}$ has law $\rho_-$ which has exponential tails, so there exists a constant $\bar c_0 =\bar c_0(w) > 0$ so that $\mathds{P}(|\bar\Delta_{T_0,\lfloor Nx \rfloor}|>n^{(\alpha+5)/10}/2) \leq e^{-\bar c_0 n^{(\alpha+5)/10}}$ when $n$ is large enough. Therefore it suffices to prove $\mathds{P}(|\sum_{j=\lfloor Nx \rfloor}^{i+1}(\bar\Delta_{T_0,j-1}+1/2)| > n^{(\alpha+5)/10}/2) \leq e^{-n^{1/10}/3}$ when $n$ is large enough. Furthermore, by Proposition \ref{prop_law_zeta} the $\bar\Delta_{T_0,j-1}$ are i.i.d. with law $\rho_-$, thus the $\bar\Delta_{T_0,j-1}+1/2$ are i.i.d. with law $\rho_0$. 
 
 Consequently, we only have to prove that $\mathds{P}(|\sum_{j=1}^{\lfloor Nx \rfloor-i}\zeta_j| > n^{(\alpha+5)/10}/2) \leq e^{-n^{1/10}/3}$ when $n$ is large enough, where $(\zeta_j)_{j\in\mathds{N}}$ are i.i.d. with law $\rho_0$. Moreover, $\rho_0$ has exponential tails, so $\mathds{E}(e^{s\zeta_1})<+\infty$ when $s > 0$ is small enough. Since $\rho_0$ is symmetric, when $n$ is large enough, 
 \begin{equation}\label{eq_B}
 \begin{split}
 \mathds{P}\left(\left|\sum_{j=1}^{\lfloor Nx \rfloor-i}\zeta_j\right| > \frac{n^{(\alpha+5)/10}}{2}\right) = 2 \mathds{P}\left(\sum_{j=1}^{\lfloor Nx \rfloor-i}\zeta_j > \frac{n^{(\alpha+5)/10}}{2}\right) \qquad\qquad\qquad\qquad \\
 = 2\mathds{P}\left(\exp\left( n^{-(\alpha+4)/10} \sum_{j=1}^{\lfloor Nx \rfloor-i}\zeta_j\right) > \exp(n^{1/10}/2)\right)
 =2e^{-n^{1/10}/2}\mathds{E}\left(\exp\left(n^{-(\alpha+4)/10}\zeta_1\right)\right)^{\lfloor Nx \rfloor-i}.
 \end{split}
 \end{equation}
 We now study $\mathds{E}(\exp(n^{-(\alpha+4)/10}\zeta_1))$. We have 
 \[
 \exp(n^{-(\alpha+4)/10}\zeta_1)= 1+n^{-(\alpha+4)/10}\zeta_1+\frac{1}{2}n^{-(\alpha+4)/5}\zeta_1^2e^{\zeta_1'}
 \]
 with $|\zeta_1'| \leq |n^{-(\alpha+4)/10}\zeta_1|$, hence 
 \[
 \mathds{E}(\exp(n^{-(\alpha+4)/10}\zeta_1))= 1+\mathds{E}\left(\frac{1}{2}n^{-(\alpha+4)/5}\zeta_1^2e^{\zeta_1'}\right) \leq 1+\frac{1}{2}n^{-(\alpha+4)/5}\mathds{E}(\zeta_1^2e^{|n^{-(\alpha+4)/10}\zeta_1|}).
 \]
 Now, since $\rho_0$ has exponential tails, there exists $\tilde c_0=\tilde c_0(w)>0$ and $\tilde C_0=\tilde C_0(w)<+\infty$ so that $\mathds{E}(\zeta_1^2e^{\tilde c_0|\zeta_1|}) \leq \tilde C_0$. When $n$ is large enough, $\mathds{E}(\zeta_1^2e^{|n^{-(\alpha+4)/10}\zeta_1|}) \leq \mathds{E}(\zeta_1^2e^{\tilde c_0|\zeta_1|}) \leq \tilde C_0$, thus $\mathds{E}(\exp(n^{-(\alpha+4)/10}\zeta_1)) \leq 1+n^{-(\alpha+4)/5}\tilde C_0/2 \leq e^{n^{-(\alpha+4)/5}\tilde C_0/2}$. From that and \eqref{eq_B} we obtain 
 \[
 \mathds{P}\left(\left|\sum_{j=1}^{\lfloor Nx \rfloor-i}\zeta_j\right| > n^{(\alpha+5)/10}/2\right) = 2e^{-n^{1/10}/2}e^{(\lfloor Nx \rfloor-i)n^{-(\alpha+4)/5}\tilde C_0/2} 
 \]
 \[
 \leq 2e^{-n^{1/10}/2}e^{2 n^{(\alpha+4)/5}n^{-(\alpha+4)/5}\tilde C_0/2}=2e^{-n^{1/10}/2}e^{\tilde C_0} \leq e^{-n^{1/10}/3}
 \]
 when $n$ is large enough, which ends the proof.
\end{proof}

\begin{remark}
 It is possible to use the main result of \cite{Toth_et_al2008} to craft an event $\mathcal{B}$ for which the proof is much simpler, but such that $\mathcal{B}^c$ only ensures $N\theta/2 \leq m \leq 5N\theta/2$. This is not enough for our purposes, since we will later use union bounds on events indexed by $m$, the probability of each event of order $e^{-c(\ln n)^2}$. 
\end{remark}

We will need some other auxiliary variables. We recall that the $L_{i}^{m,\pm}, \zeta_i^{m,\pm,B}, \zeta_i^{m,\pm,E}$ are defined in Definition \ref{def_local_times}, the $\mathbf{T}_{m,i}^\iota$ in \eqref{eq_def_T_mi} and the $\bar \Delta_{\mathbf{T}_{m,i}^\iota,j}$ in Proposition \ref{prop_law_zeta}. We begin by constructing, for $m \geq N \theta/2$, $i \in \mathds{Z}$, $\iota \in \{+,-\}$, an equivalent of the processes 
$(L^{\mathbf{T}_{m,i}^\iota,-}_j)_{j \in \mathds{Z}}$ 
and $(L^{\mathbf{T}_{m,i}^\iota,+}_j)_{j \in \mathds{Z}}$ 
``when the environment at time $\mathbf{T}_{m,i}^\iota$ is 
$(\bar \Delta_{\mathbf{T}_{m,i}^\iota,j})_{j \in \mathds{Z}}$ instead of $(\Delta_{\mathbf{T}_{m,i}^\iota,j})_{j \in \mathds{Z}}$''. We denote $\bar m = \mathbf{T}_{m,i}^\iota$ and $\bar i = X_{\mathbf{T}_{m,i}^\iota}$ for short. 

We define $(\bar L^{\bar m,-}_j)_{j \in \mathds{Z}}$ as follows. 
For $j \leq \bar i-\lfloor \varepsilon n\rfloor+1$, 
$\bar L^{\bar m,-}_j=0$. By Observation \ref{obs_rec_temps_local}, for any $j \geq \bar i-\lfloor \varepsilon n\rfloor+1$ 
we have $L_{j+1}^{\bar m,-}=L_j^{\bar m,-}+\zeta^{\bar m,-,E}_j-\zeta^{\bar m,-,B}_j$, and by (\ref{eq_zeta}), 
if $\bar i-\lfloor \varepsilon n \rfloor+1 \leq j \leq \bar i$, $\zeta^{\bar m,-,E}_j=\eta(L_j^{\bar m,-}+1)+1/2$ 
with $\eta(0)=-\Delta_{\bar m,j}$, while if $j > \bar i$, 
$\zeta^{\bar m,-,E}_j=\eta(L_j^{\bar m,-})+1/2$ 
with $\eta(0)=-\Delta_{\bar m,j}$. We can define $\bar \eta$ so that $\bar \eta(0)=-\bar \Delta_{\bar m,j}$,
the transitions of $\bar \eta$ are independent from $(\bar \Delta_{\bar m,j'})_{j'\in\mathds{Z}}$,
and $\bar \eta=\eta$ if $(\mathcal{B}_{0}^{m,i,\iota})^c$ is satisfied, $n$ large enough and $|j-\bar i| \leq n^{(\alpha-1)/4}\lfloor\varepsilon n\rfloor$. 
We define $\bar L^{\bar m,-}_j$ by induction 
by setting $\bar L^{\bar m,-}_{j+1}=\bar L^{\bar m,-}_{j}+\bar\eta(\bar L_j^{\bar m,-}+1)+\bar \Delta_{\bar m,j}+1$ 
if $\bar i-\lfloor \varepsilon n\rfloor+1 \leq j \leq \bar i$ and 
$\bar L^{\bar m,-}_{j+1}=\bar L^{\bar m,-}_{j}+\bar\eta(\bar L_j^{\bar m,-})+\bar \Delta_{\bar m,j}$ if $j > \bar i$.

We define $(\bar L^{\bar m,+}_j)_{j \in \mathds{Z}}$ in the same way. $\bar L^{\bar m,+}_j=0$ for 
$j > \bar i+\lfloor \varepsilon n\rfloor$, $\bar L^{\bar m,+}_{\bar i+\lfloor \varepsilon n\rfloor}=1$. 
For any $j < \bar i+\lfloor \varepsilon n\rfloor$, 
$\zeta_j^{\bar m,+,E}=\eta(L_{j+1}^{\bar m,+})+1/2$ with $\eta(0)=\Delta_{\bar m,j}$, 
and we may define $\bar \eta$ so that $\bar \eta(0)=\bar \Delta_{\bar m,j}$, the transitions of $\bar \eta$ 
are independent from $(\bar \Delta_{\bar m,j})_{j\in\mathds{Z}}$, and $\bar \eta=\eta$ if 
$(\mathcal{B}_{0}^{m,i,\iota})^c$ is satisfied, $n$ large enough and $|j-\bar i| \leq n^{(\alpha-1)/4}\lfloor\varepsilon n\rfloor$.  
We then define 
$\bar L_j^{\bar m,+}=\bar L_{j+1}^{\bar m,+}+\bar\eta(\bar L_{j+1}^{\bar m,+})-\bar \Delta_{\bar m,j}+1$ 
if $\bar i < j < \bar i+\lfloor \varepsilon n\rfloor$ and 
$\bar L_j^{\bar m,+}=\bar L_{j+1}^{\bar m,+}+\bar\eta(\bar L_{j+1}^{\bar m,+})-\bar \Delta_{\bar m,j}$ if $j \leq \bar i$. 
When $n$ is large enough, if $(\mathcal{B}_{0}^{m,i,\iota})^c$ is satisfied, $\bar L_j^{\bar m,\pm} = L_j^{\bar m,\pm}$ for any $j \in \mathds{Z}$ with $|j-\bar i| \leq n^{(\alpha-1)/4}\lfloor\varepsilon n\rfloor$. 

Now, for any $m \in \mathds{N}$, we are going to construct random variables $\zeta_j^{m,\pm,I}$, $j\in\mathds{Z}$, independent from $\mathcal{F}_m$ such that 
``when $L$ (more precisely, $L_j^{m,-}$ or $L_{j+1}^{m,+}$) is not too small, $\zeta_j^{m,\pm,I}=\zeta_j^{m,\pm,E}$, and the $\zeta_j^{m,\pm,I}$, $j\in\mathds{Z}$ are i.i.d. with law $\rho_0$'', where $\rho_0$ is defined in \eqref{eq_def_rho0}. 
For $m \geq N \theta/2$, $i \in \mathds{Z}$, $\iota \in \{+,-\}$, we will also define 
random variables $\bar \zeta_j^{m,\pm,I}$, $j\in\mathds{Z}$, independent from $(\bar \Delta_{\mathbf{T}_{m,i}^\iota,j})_{j\in\mathds{Z}}$ and 
equal to the $\zeta_j^{m,\pm,I}$ when $(\mathcal{B}_{0}^{m,i,\iota})^c$ is satisfied. The superscript $I$ stands for ``independent''. 

We begin by constructing the $(\zeta_j^{m,-,I})_{j\in\mathds{Z}}$ where $m \in \mathds{N}$. If $m=\mathbf{T}_{m',i}^\iota$ 
 for some $m' \geq N \theta/2$, $i \in \mathds{Z}$, $\iota \in \{+,-\}$, we construct the $(\bar \zeta_j^{m,-,I})_{j\in\mathds{Z}}$ 
 at the same time. Let $j \in \mathds{Z}$. 
 If $j \leq X_m -\lfloor \varepsilon n \rfloor$, $\zeta_j^{m,-,I}=\bar \zeta_j^{m,-,I}$ will be a random variable of law $\rho_0$ 
 independent from everything else. If $X_m -\lfloor \varepsilon n \rfloor < j \leq X_m$, by (\ref{eq_zeta}) 
 we know that $\zeta_j^{m,-,E}=\eta(L_j^{m,-}+1)+1/2$ with $\eta(0) = -\Delta_{m,j}$, and (if $m=\mathbf{T}_{m',i}^\iota$) we remember the definitions of $\bar \eta$ 
 and $\bar L_j^{m,-}$ given above. We denote $T = \inf\{\ell \geq 0 \,|\, \eta(\ell)=0\}$ and 
 $\bar T = \inf\{\ell \geq 0 \,|\, \bar \eta(\ell)=0\}$. Let $U$ be a random variable uniform on $[0,1]$ independent 
 from everything else. We can apply the construction of Lemma \ref{lem_const_coupling} with 
 $\eta(T+\lfloor (\ln n)^2/2 \rfloor)$ and $U$ to construct a random variable $\Delta$ of law $\rho_-$ and so that 
 $\mathds{P}(\eta(T+\lfloor (\ln n)^2/2 \rfloor) \neq \Delta)$ is minimal, and with $\bar\eta(\bar T+\lfloor (\ln n)^2/2 \rfloor)$ 
 and $U$ to construct a random variable $\bar\Delta$ of law $\rho_-$ and so that 
 $\mathds{P}(\bar\eta(\bar T+\lfloor (\ln n)^2/2 \rfloor) \neq \bar\Delta)$ is minimal. If $(\mathcal{B}_{0}^{m,i,\iota})^c$ is satisfied and $n$ is large enough, 
 we then have $\bar \Delta=\Delta$. 
Then, if $L_j^{m,-}+1-T\geq (\ln n)^2/2$, we define $\zeta_j^{m,-,I}=\eta'(L_j^{m,-}+1-T-\lfloor(\ln n)^2/2\rfloor)+1/2$, 
where $\eta'(0)=\Delta$ and $\eta'(.)=\eta(T+\lfloor (\ln n)^2/2 \rfloor+ \cdot)$ when 
$\eta(T+\lfloor (\ln n)^2/2 \rfloor) = \Delta$. If $L_j^{m,-}+1-T < (\ln n)^2/2$, we set $\zeta_j^{m,-,I} = \hat \zeta$, 
 where $\hat \zeta$ is a random variable of law $\rho_0$ independent of everything else. 
 Similarly, if $\bar L_j^{m,-}+1-\bar T\geq (\ln n)^2/2$, we define 
 $\bar\zeta_j^{m,-,I}=\bar\eta'(\bar L_j^{m,-}+1-\bar T-\lfloor(\ln n)^2/2\rfloor)+1/2$, 
where $\bar\eta'(0)=\bar\Delta$, $\bar\eta'(.)=\bar\eta(\bar T+\lfloor (\ln n)^2/2 \rfloor+ \cdot)$ when 
$\bar \eta(\bar T+\lfloor (\ln n)^2/2 \rfloor) = \bar\Delta$, 
and $\bar\eta'=\eta'$ when $(\mathcal{B}_{0}^{m,i,\iota})^c$ is satisfied and $n$ large enough. 
If $\bar L_j^{m,-}+1-\bar T < (\ln n)^2/2$, we set
$\bar\zeta_j^{m,-,I} = \hat \zeta$. If $j > X_m$, we use the same construction with $L_j^{m,-}$ replacing $L_j^{m,-}+1$. 

 We use a similar construction for the $\zeta_j^{m,+,I}$, $j\in\mathds{Z}$. For any $j \in \mathds{Z}$, 
 $\zeta_j^{m,+,E} = \eta(L_{j+1}^{m,+})+1/2$ with $\eta(0)=\Delta_{m,j}$. 
 We take similar $T$ and $\Delta$, as well as $\bar T$ and $\bar \Delta$ 
 when $i \iota 1-\lfloor \varepsilon n\rfloor \leq j \leq i \iota 1+\lfloor \varepsilon n\rfloor$. 
 If $L_{j+1}^{m,+}-T\geq (\ln n)^2/2$, we define $\zeta_j^{m,+,I}=\eta'(L_{j+1}^{m,+}-T-\lfloor(\ln n)^2/2\rfloor)+1/2$, 
where $\eta'(0)=\Delta$ and $\eta'(.)=\eta(T+\lfloor (\ln n)^2/2 \rfloor+ \cdot)$ when 
$\eta(T+\lfloor (\ln n)^2/2 \rfloor) = \Delta$. If $L_{j+1}^{m,+}-T < (\ln n)^2/2$, we set $\zeta_j^{m,+,I} = \hat \zeta$, 
 where $\hat \zeta$ is a random variable of law $\rho_0$ independent of everything else. 
 In the same way, if $\bar L_{j+1}^{m,+}-\bar T\geq (\ln n)^2/2$, we define 
 $\bar\zeta_j^{m,+,I}=\bar\eta'(\bar L_{j+1}^{m,+}-\bar T-\lfloor(\ln n)^2/2\rfloor)+1/2$, 
where $\bar\eta'(0)=\bar\Delta$, $\bar\eta'(.)=\bar\eta(\bar T+\lfloor (\ln n)^2/2 \rfloor+ \cdot)$ when 
$\bar\eta(\bar T+\lfloor (\ln n)^2/2 \rfloor) = \bar\Delta$, and $\bar\eta'=\eta'$ when $\bar\Delta_{m,j}=\Delta_{m,j}$. 
If $\bar L_{j+1}^{m,+}-\bar T < (\ln n)^2/2$, we set $\bar \zeta_j^{m,+,I} = \hat \zeta$. 

Some properties of the random variables defined thus are stated in the following proposition (the definition of $\mathcal{F}_m$ was given in \eqref{eq_def_F}).

\begin{proposition}\label{prop_def_zetaI}
 For any $m \in \mathds{N}$, $\iota\in\{+,-\}$, 
 $(\zeta_i^{m,\iota,I})_{i \in \mathds{Z}}$ are i.i.d. with law $\rho_0$, independent from 
$\mathcal{F}_m$, and depend only on $\mathcal{F}_{\beta_m^\iota}$ and on a set of random variables independent from 
everything else. Moreover, for any $i \in \mathds{Z}$, $\zeta_i^{m,-,I}$ 
is independent from $\zeta_j^{m,-,B},\zeta_j^{m,-,E}$ for $j < i$, and $\zeta_i^{m,+,I}$ 
is independent from $\zeta_j^{m,+,B},\zeta_j^{m,+,E}$ for $j > i$. 
Furthermore, for any $m \geq N \theta/2$, $i \in \mathds{Z}$, $\iota \in \{+,-\}$, for $\iota'\in\{+,-\}$, 
 $(\bar\zeta_j^{\mathbf{T}_{m,i}^\iota,\iota',I})_{j}$ are i.i.d. with law $\rho_0$, independent from 
$(\bar \Delta_{\mathbf{T}_{m,i}^\iota,j})_{j \in \mathds{Z}}$, for any 
 $i\iota1-\lfloor \varepsilon n\rfloor \leq j \leq i\iota1+\lfloor \varepsilon n\rfloor$, 
 $(\bar\zeta_{j'}^{\mathbf{T}_{m,i}^\iota,-,I})_{j'\geq j}$ is independent from $(\bar L_{j'}^{\mathbf{T}_{m,i}^\iota,-,I})_{j' \leq j}$ 
 and $(\bar\zeta_{j'}^{\mathbf{T}_{m,i}^\iota,+,I})_{j'\leq j}$ is independent 
 from $(\bar L_{j'}^{\mathbf{T}_{m,i}^\iota,+,I})_{j' > j}$. In addition, if $(\mathcal{B}_{0}^{m,i,\iota})^c$ is satisfied 
and $n$ is large enough, $\bar\zeta_j^{\mathbf{T}_{m,i}^\iota,-,I}=\zeta_j^{\mathbf{T}_{m,i}^\iota,-,I}$ 
and $\bar\zeta_j^{\mathbf{T}_{m,i}^\iota,+,I}=\zeta_j^{\mathbf{T}_{m,i}^\iota,+,I}$ 
for any $i\iota1-\lfloor \varepsilon n\rfloor \leq j \leq i\iota1+\lfloor \varepsilon n\rfloor$. 
\end{proposition}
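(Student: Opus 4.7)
The plan is to verify each claim of the proposition by tracing through the construction of the $\zeta_j^{m,\iota,I}$ and $\bar\zeta_j^{\mathbf{T}_{m,i}^\iota,\iota',I}$ step by step, using the strong Markov property of $\eta$ at its first hitting time of $0$, the invariance of $\rho_-$ under the $\eta$-dynamics, and the independence of the chains $\eta_{j,\pm}$ at distinct sites already exploited in the proof of Proposition~\ref{prop_law_zeta}.

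First I would show that each individual $\zeta_j^{m,-,I}$ has law $\rho_0$ and is independent of $\mathcal{F}_m$. The construction splits into two cases. On the event $\{L_j^{m,-}+1-T < (\ln n)^2/2\}$ we directly set $\zeta_j^{m,-,I}=\hat\zeta$ with $\hat\zeta \sim \rho_0$ independent of everything, so there is nothing to check. On the complementary event, $\zeta_j^{m,-,I}=\eta'(L_j^{m,-}+1-T-\lfloor(\ln n)^2/2\rfloor)+1/2$, where $\eta'$ is a Markov chain started from $\Delta$. By Lemma~\ref{lem_const_coupling} the variable $\Delta$ has law $\rho_-$ exactly; combined with invariance of $\rho_-$ under $\eta$, this makes $\eta'$ stationary, so each one-time marginal equals $\rho_-$. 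To obtain independence from $\mathcal{F}_m$, I would observe that the sole $\mathcal{F}_m$-dependence entering the construction comes from $\eta(0)=-\Delta_{m,j}$; at the stopping time $T$, $\eta(T)=0$ deterministically, so by the strong Markov property, the continuation $(\eta(T+k))_{k \geq 0}$ is a fresh chain from $0$, independent of $\mathcal{F}_m$. Since $\Delta$ is a function of $\eta(T+\lfloor(\ln n)^2/2\rfloor)$ and the $\mathcal{F}_m$-independent uniform $U$, it is independent of $\mathcal{F}_m$, and $\eta'$ is accordingly independent of $\mathcal{F}_m$, whether the coupling succeeded or failed.

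For the joint law, I would then use that the construction employs independent $\eta_{j,\pm}$, independent $U_j$'s, and independent $\hat\zeta_j$'s across sites, just as in Proposition~\ref{prop_law_zeta}; the $\zeta_j^{m,\iota,I}$ for different $j$ are therefore built from independent ingredients and form an i.i.d.\ sequence of law $\rho_0$. The sequential independence of $\zeta_i^{m,-,I}$ from $(\zeta_j^{m,-,B},\zeta_j^{m,-,E})_{j<i}$ (and the analogous statement for $\iota=+$) follows by inspecting the Ray-Knight decomposition of the walk between times $m$ and $\beta_m^-$: the variables $\Delta_{m,j}$ and $\Delta_{\beta_m^-,j}$ for $j<i$ are measurable with respect to $\sigma(\eta_{j'},\Delta_{m,j'}:j'<i)$, which is independent of $\eta_i$ and of the auxiliary randomness used to construct $\zeta_i^{m,-,I}$. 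The $\mathcal{F}_{\beta_m^\iota}$-measurability of the $\zeta_j^{m,\iota,I}$ is immediate from the construction, since $L_j^{m,\iota}$ is $\mathcal{F}_{\beta_m^\iota}$-measurable and all other ingredients are external random variables.

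Finally, the claims about the $\bar\zeta_j^{\mathbf{T}_{m,i}^\iota,\iota',I}$ follow by running the very same construction with $\bar\eta$ replacing $\eta$ and $\bar\Delta_{\mathbf{T}_{m,i}^\iota,j}$ replacing $\Delta_{\mathbf{T}_{m,i}^\iota,j}$, while sharing the auxiliary variables $U_j$ and $\hat\zeta_j$. On $(\mathcal{B}_{0}^{m,i,\iota})^c$ and for $n$ large, the barred and unbarred inputs coincide for $|j-\bar i| \leq n^{(\alpha-1)/4}\lfloor\varepsilon n\rfloor$ by Proposition~\ref{prop_law_zeta}, and since the construction is a deterministic function of the inputs, the outputs coincide on this range as well. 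The main obstacle in the entire argument is the subtlety that the index $L_j^{m,-}+1-T-\lfloor(\ln n)^2/2\rfloor$ at which $\eta'$ is evaluated is random and could in principle interact with $\eta'$'s own trajectory, since both are ultimately functions of the site-$j$ chain $\eta_j$; carefully separating $\eta_j$ into its pre-$T$ segment, which may influence $L_j^{m,-}$ through the walk, and its post-$T$ continuation coupled through $\Delta$, which governs $\eta'$, is exactly what the cutoff at $T+\lfloor(\ln n)^2/2\rfloor$ is engineered to accomplish, and making this regeneration argument rigorous is the most delicate step.
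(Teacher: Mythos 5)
Your proposal follows essentially the same route as the paper: for each site one invokes the strong Markov property of the underlying chain at the stopping time $T+\lfloor(\ln n)^2/2\rfloor$, uses Lemma~\ref{lem_const_coupling} to guarantee $\Delta\sim\rho_-$ exactly, and then uses stationarity of $\eta'$ to conclude that $\zeta_j^{m,-,I}$ has law $\rho_0$; the i.i.d.\ and independence claims are then obtained from the independence of the site chains, the external uniforms $U_j$, and the external variables $\hat\zeta_j$, and the agreement of $\bar\zeta$ with $\zeta$ on $(\mathcal{B}_0^{m,i,\iota})^c$ follows because the barred and unbarred inputs coincide on the relevant window.

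Two small corrections to the way you frame the argument. First, in the Ray--Knight decomposition the local time $L_j^{m,-}$ is measurable with respect to $\sigma(\mathcal{F}_m,\eta_{j'}:j'<j)$ and does \emph{not} depend on the site-$j$ chain $\eta_j$ at all (it is determined before $\eta_j$ is revealed, via Observation~\ref{obs_rec_temps_local}). So the ``subtle interaction'' you describe between the index $L_j^{m,-}+1-T-\lfloor(\ln n)^2/2\rfloor$ and $\eta'$ through the pre-$T$ segment of $\eta_j$ is not present for $L_j^{m,-}$; the only part of the index that sees $\eta_j$ is $T$ itself, and $T$ is by construction a function only of the pre-$T$ segment. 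The regeneration at $T+\lfloor(\ln n)^2/2\rfloor$ then decouples $\eta'$ from $T$ \emph{and} from $\mathcal{F}_m$ and the earlier sites, which is simpler than your description suggests. Second, the phrase ``built from independent ingredients'' is imprecise: the index of evaluation depends on the chains at all sites $j'<j$, so the constructions at different sites are not independent as raw mappings. What the paper shows — and what your regeneration argument actually delivers — is the sequential conditional independence $\zeta_j^{m,-,I}\perp\!\!\!\perp\big(\mathcal{F}_m,\zeta_{j'}^{m,-,B},\zeta_{j'}^{m,-,E},\zeta_{j'}^{m,-,I}:j'<j\big)$ together with the fixed marginal law $\rho_0$; a standard induction on $j$ then yields the i.i.d.\ conclusion.
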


\begin{proof}
We only prove the independence and distribution properties for $\zeta^{m,-,I}$, as the proof is the same for $\zeta^{m,+,I}$, $\bar\zeta^{\mathbf{T}_{m,i}^\iota,-,I}$ and $\bar\zeta^{\mathbf{T}_{m,i}^\iota,+,I}$ and the other claims are clear from the construction. If $j \leq X_m -\lfloor \varepsilon n \rfloor$, the result is clear. If $X_m -\lfloor \varepsilon n \rfloor < j \leq X_m$, we notice that 
$\eta(T+\lfloor (\ln n)^2/2 \rfloor)$ is independent from $T$, $\mathcal{F}_m$, 
 $\zeta_{j'}^{m,-,B},\zeta_j^{m,-,E},\zeta_{j'}^{m,-,I}$ for $j' < j$, so $\Delta$ also is, as well as the transitions of $\eta'$. 
 Consequently, $\eta'(L_j^{m,-}+1-T-\lfloor(\ln n)^2/2\rfloor)$ is independent from 
 $T$, $\mathcal{F}_m$, $\zeta_{j'}^{m,-,B},\zeta_{j'}^{m,-,E},\zeta_{j'}^{m,-,I}$ for $j' < j$ and has law $\rho_-$. We deduce that 
 $\zeta_j^{m,-,I}$ is independent from $\mathcal{F}_m$, $\zeta_{j'}^{m,-,B},\zeta_{j'}^{m,-,E},\zeta_{j'}^{m,-,I}$ 
 for $j' < j$ and has law $\rho_0$. If $j > X_m$, the proof is the same as for $j \leq X_m -\lfloor \varepsilon n \rfloor$. 
\end{proof}
 
\section{Bad events}\label{sec_bad_events}

In this section, we are going to prove that outside of ``bad events'' of small probability, the random variables defined in Section \ref{sec_first_variables} behave well. We remind the reader that $\varepsilon > 0$, that the $L_i^{m,\pm}$, $\zeta_i^{m,\pm,B}$, $\zeta_i^{m,\pm,E}$ are defined in Definition \ref{def_local_times}, and the $\zeta_i^{m,\pm,I}$ just before Proposition \ref{prop_def_zetaI}. For any $m \in \mathds{N}$, we define two sequences $(I^{m,-}(\ell))_{\ell \in \mathds{N}}$ and 
$(I^{m,+}(\ell))_{\ell \in \mathds{N}}$ by $I^{m,-}(0) = X_m-\lfloor \varepsilon n\rfloor$, 
$I^{m,-}(\ell+1)=\inf\{I^{m,-}(\ell) < i < X_m \,|\, L^{m,-}_{i} < (\ln n)^3\}$ for $\ell \in \mathds{N}$ and 
$I^{m,+}(0) = X_m+\lfloor \varepsilon n\rfloor$, 
$I^{m,+}(\ell+1)=\sup\{X_m < i < I^{m,+}(\ell) \,|\, L^{m,+}_{i+1} < (\ln n)^3+1\}$ for $\ell \in \mathds{N}$. 
We also denote $\ell_{\textrm{max}}^{m,-}=\max\{\ell > 0\,|\,I^{m,-}(\ell) < +\infty\}$ 
and $\ell_{\textrm{max}}^{m,+}=\max\{\ell > 0\,|\,I^{m,+}(\ell) > -\infty\}$. 
We define the following events (we stress that they are different from the events defined in Proposition \ref{prop_law_zeta} and its proof). 
\[
 \mathcal{B}_{m,1}^- = \{\exists\, i \in \{X_m-\lfloor n \varepsilon \rfloor+1,\dots,X_m-\lfloor (\ln n)^{8} \rfloor\}, 
 \forall\, j \in \{i,\dots,i+\lfloor (\ln n)^{8} \rfloor\},L_j^{m,-} < (\ln n)^3\}\},
 \]
 \[
 \mathcal{B}_{m,1}^+ = \{\exists\, i \in \{X_m + \lfloor (\ln n)^{8} \rfloor+1,\dots,X_m+\lfloor n \varepsilon \rfloor\}, 
 \forall\, j \in \{i-\lfloor (\ln n)^{8} \rfloor,\dots,i\},L_j^{m,+} < (\ln n)^3+1\}\},
 \]
 \[
 \mathcal{B}_{m,2}^- = \{|\{X_m < i \leq X_m + n^{(\alpha-1)/4}\lfloor\varepsilon n\rfloor \,|\, 0 < L_i^{m,-} < (\ln n)^3\}| \geq (\ln n)^{8}\}, 
 \]
 \[
 \mathcal{B}_{m,2}^+ = \{|\{X_m - n^{(\alpha-1)/4}\lfloor\varepsilon n\rfloor \leq i \leq X_m \,|\, 0 < L_i^{m,+} < (\ln n)^3\}| 
 \geq (\ln n)^{8}\}, 
 \]
 \[
 \mathcal{B}_{m,3}^-=\{\exists\, i\in\{X_m-\lfloor n \varepsilon \rfloor+1,\dots,X_m+\lfloor n \varepsilon \rfloor\}
 \text{ such that }L_i^{m,-} \geq (\ln n)^2\text{ and } \zeta_i^{m,-,E} \neq \zeta_i^{m,-,I}\}, 
 \]
 \[
 \mathcal{B}_{m,3}^+=\{\exists\, i\in\{X_m-\lfloor n \varepsilon \rfloor,\dots,X_m+\lfloor n \varepsilon \rfloor-1\}
 \text{ such that }L_{i+1}^{m,+} \geq (\ln n)^2\text{ and } \zeta_i^{m,+,E} \neq \zeta_i^{m,+,I}\},
\]
 \[
 \mathcal{B}_{m,4}^- = \{|\{X_m - \lfloor \varepsilon n \rfloor < i < X_m \,|\, 0 \leq L_i^{m,-} < (\ln n)^3\}| 
 > (\ln n)^{10}\sqrt{n}\}, 
 \]
 \[
 \mathcal{B}_{m,4}^+ = \{|\{X_m < i < X_m + \lfloor \varepsilon n \rfloor \,|\, 0 \leq L_{i+1}^{m,+} < (\ln n)^3+1\}| 
 > (\ln n)^{10}\sqrt{n}\}, 
 \]
\[
 \mathcal{B}_{m,5}^-=\{\exists\, i\in\{X_m-\lfloor n \varepsilon \rfloor+1,\dots,X_m+\lfloor n \varepsilon \rfloor\}
 \text{ such that }|\zeta_j^{m,-,B}| > (\ln n)^2, |\zeta_j^{m,-,E}| > (\ln n)^2\text{ or }|\zeta_j^{m,-,I}| > (\ln n)^2\},
\]
\[
 \mathcal{B}_{m,5}^+=\{\exists\, i\in\{X_m-\lfloor n \varepsilon \rfloor-1,\dots,X_m+\lfloor n \varepsilon \rfloor-1\}
 \text{ such that }|\zeta_j^{m,+,B}| > (\ln n)^2, |\zeta_j^{m,+,E}| > (\ln n)^2\text{ or }|\zeta_j^{m,+,I}| > (\ln n)^2\},
\]
\[
 \mathcal{B}_{m,6}^-=\left\{\max_{1 \leq \ell_1 \leq \ell_2 \leq \ell_{\textrm{max}}^{m,-}}
 \left|\sum_{\ell=\ell_1}^{\ell_2}\zeta_{I^{m,-}(\ell)}^{m,-,I}\right| > (\ln n)^{7}n^{1/4}
 \text{ or }\max_{1 \leq \ell_1 \leq \ell_2 \leq \ell_{\textrm{max}}^{m,-}}
 \left|\sum_{\ell=\ell_1}^{\ell_2}\zeta_{I^{m,-}(\ell)}^{m,-,B}\right| > (\ln n)^{7}n^{1/4}\right\},
\]
\[
 \mathcal{B}_{m,6}^+=\left\{\max_{1 \leq \ell_1 \leq \ell_2 \leq \ell_{\textrm{max}}^{m,+}}
 \left|\sum_{\ell=\ell_1}^{\ell_2}\zeta_{I^{m,+}(\ell)}^{m,+,I}\right| > (\ln n)^{7}n^{1/4}
 \text{ or }\max_{1 \leq \ell_1 \leq \ell_2 \leq \ell_{\textrm{max}}^{m,+}}
 \left|\sum_{\ell=\ell_1}^{\ell_2}\zeta_{I^{m,+}(\ell)}^{m,+,B}\right| > (\ln n)^{7}n^{1/4}\right\}. 
\]
Moreover, for any $r \in \{1,\dots,6\}$, 
we set $\mathcal{B}_r = \bigcup(\mathcal{B}_{\mathbf{T}_{m,i}^\iota,r}^- 
\cup \mathcal{B}_{\mathbf{T}_{m,i}^\iota,r}^+)$, where the union is on 
$\lfloor N \theta\rfloor-2n^{(\alpha+4)/5} \leq m \leq \lfloor N \theta\rfloor+2n^{(\alpha+4)/5}$, $\lfloor Nx \rfloor- n^{(\alpha+4)/5} \leq i 
\leq \lfloor Nx \rfloor+n^{(\alpha+4)/5}$, $\iota \in \{+,-\}$. 
Finally, we set $\mathcal{B}_0 = \bigcup \mathcal{B}_{0}^{m,i,\iota}$ (see Proposition \ref{prop_law_zeta} for the definition of the $\mathcal{B}_{0}^{m,i,\iota}$), where the union is on the same indexes as before. The goal of this section is to prove that $\mathds{P}(\bigcup_{i=0}^{6}B_i)$ is small (Proposition \ref{prop_bound_bad_evts}). To achieve it, we will deal with each ``bad event'' separately.

\begin{proposition}\label{prop_TCL}
  There exists a constant $c_1 = c_1(w) > 0$ such that 
 when $n$ is large enough, $\mathds{P}(\mathcal{B}_0^c \cap \mathcal{B}_1) \leq e^{-c_1 (\ln n)^2}$ and 
 $\mathds{P}(\mathcal{B}_0^c \cap \mathcal{B}_2) \leq e^{-c_1 (\ln n)^2}$. 
\end{proposition}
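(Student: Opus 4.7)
The plan is to union-bound over the $\mathcal{O}(n^{2(\alpha+4)/5})$ triples $(m,i,\iota)$ appearing in the definitions of $\mathcal{B}_1,\mathcal{B}_2$, so that it suffices to show, for each fixed such triple and $r\in\{1,2\}$, that $\mathds{P}(\mathcal{B}_{\mathbf{T}_{m,i}^\iota,r}^{\pm}\cap(\mathcal{B}_0^{m,i,\iota})^c)\leq e^{-C(\ln n)^2}$ for a $C$ large enough to absorb the polynomial count. On $(\mathcal{B}_0^{m,i,\iota})^c$, the construction preceding Proposition \ref{prop_def_zetaI} yields $L_j^{\mathbf{T}_{m,i}^\iota,\pm}=\bar L_j^{\mathbf{T}_{m,i}^\iota,\pm}$ whenever $|j-\bar i|\leq n^{(\alpha-1)/4}\lfloor\varepsilon n\rfloor$, where $\bar i:=X_{\mathbf{T}_{m,i}^\iota}$, so I would work throughout with the auxiliary process $\bar L$, whose recursion is driven by the independent $\bar\Delta$'s of Proposition \ref{prop_law_zeta} (of explicit laws $\rho_\pm$) and by independent copies of the Markov chain $\eta$.

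The key observation is that the increments of $\bar L$ are zero-mean at equilibrium and have sub-exponential tails. On the left of $\bar i$ the increment reads $\bigl(\bar\eta(\bar L_j^{\bar m,-}+1)-\bar\eta(0)\bigr)+1$ with $\bar\eta(0)=-\bar\Delta_{\bar m,j}$; for large $\bar L_j$, Lemma \ref{lem_conv_eta} gives $\bar\eta(\bar L_j+1)\sim\rho_-$ of mean $-1/2$, matching the mean of $\bar\Delta_{\bar m,j}\sim\rho_-$, so the constant $+1$ produces an exact cancellation, and the right side of $\bar i$ is symmetric. Hence $\bar L$ is close in law to a zero-drift random walk with $\mathcal{O}(1)$-variance, sub-exponential increments. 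For $\mathcal{B}_1^{\pm}$, which forces $\bar L$ to stay in $[0,(\ln n)^3]$ throughout a window of $(\ln n)^8$ consecutive sites, the standard exit-time estimate for such a walk gives a bound of the form $\exp(-c(\ln n)^8/(\ln n)^6)=\exp(-c(\ln n)^2)$, comfortably surviving the union bounds over the $\mathcal{O}(\varepsilon n)$ possible window endpoints and the triples $(m,i,\iota)$.

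For $\mathcal{B}_2^{\pm}$, the bad event now counts sites $j$ (not necessarily consecutive) in a window of length $n^{(\alpha-1)/4}\lfloor\varepsilon n\rfloor$ with $0<\bar L_j<(\ln n)^3$, equivalently the total time $\bar L$ spends in the strip $(0,(\ln n)^3)$ during its overall excursion on the right of $\bar i$. By the same exit-time estimate, each visit to this strip has duration dominated by a variable with exponential tails of rate $\mathcal{O}(1/(\ln n)^6)$, and a standard concatenation of i.i.d.\ sub-exponential contributions bounds the probability that the total exceeds $(\ln n)^8$ by $\exp(-c(\ln n)^8/(\ln n)^6)=\exp(-c(\ln n)^2)$. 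Union bounding over the triples $(m,i,\iota)$ then yields the stated estimate.

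The main obstacle is that when $\bar L_j$ is itself small, $\eta$ has not equilibrated over only $\bar L_j+1$ steps, so the mean cancellation producing zero drift does not hold on the nose. This is precisely where the ``complex inequalities'' alluded to in Section \ref{sec_outline_conv} will enter. The remedy is to couple $\bar\eta$ monotonically with a stationary copy of $\eta$ via Lemmas \ref{lem_const_coupling} and \ref{lem_coupling}, and to invoke the exponential mixing rate of Lemma \ref{lem_conv_eta} to show that the non-equilibrium correction to each increment of $\bar L$ decays exponentially in $\bar L_j$ and hence contributes only an $\mathcal{O}(1)$ total to any partial sum over a window of length $(\ln n)^8$. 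Once this quantitative drift statement is in place, the analyses of $\mathcal{B}_1$ and $\mathcal{B}_2$ reduce to standard exit-time estimates for sums of bounded, nearly i.i.d.\ mean-zero increments.
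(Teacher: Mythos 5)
The broad outline agrees with the paper's proof: union bound over the polynomially many $(m,i,\iota)$, passage from $L$ to $\bar L$ on $(\mathcal B_0^{m,i,\iota})^c$, and a per-window estimate of order $e^{-c}$ multiplied over $(\ln n)^2$ disjoint sub-windows of length $(\ln n)^6$. You also correctly identify Lemma \ref{lem_coupling} as the tool to handle small $\bar L_j$. The gap is in how you use it.

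You propose to bound the non-equilibrium correction to each increment $\bar\eta(\bar L_j+1)+\bar\Delta_{\bar m,j}+1$ by $e^{-c\bar L_j}$ via Lemma \ref{lem_conv_eta}, and assert the accumulated correction over $(\ln n)^8$ sites is $\mathcal{O}(1)$. This is circular: on the bad events $\mathcal B_1,\mathcal B_2$ you are precisely trying to bound, $\bar L_j$ remains small throughout such a window, so $\sum_j e^{-c\bar L_j}$ is not $\mathcal{O}(1)$, and the bound is vacuous. The paper's use of Lemma \ref{lem_coupling} is stronger than an error reduction: for $j<\bar i$, where $\bar\Delta_{\bar m,j}\sim\rho_-$, the dominated chain $\tilde\eta$ with $\tilde\eta(0)=-\bar\Delta_{\bar m,j}-1$ has $\tilde\eta(0)\sim\rho_-$ \emph{exactly} (since $\rho_-(k)=\rho_-(-k-1)$), hence $\tilde\eta$ is genuinely stationary, and the lower bound $A_j:=\tilde\eta(\bar L_j^{\bar m,-}+1)+\bar\Delta_{\bar m,j}+1\le\zeta_j^{\bar m,-,E}-\zeta_j^{\bar m,-,B}$ has conditional mean exactly $0$ given the past, with no correction term, for every value of $\bar L_j$; for $j>\bar i$, $\bar\Delta_{\bar m,j}\sim\rho_+$ already makes $\bar\eta$ stationary without any coupling. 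Two further points you would have to confront: the $A_j$ are a martingale-difference array and not i.i.d.\ (the law of $\tilde\eta(\bar L_j+1)$ depends on the past through $\bar L_j$), so an ``i.i.d.\ exit-time estimate'' is not available and the paper resorts to a martingale central limit theorem to obtain the per-window bound (Lemma \ref{lem_TCL}); and for $\mathcal B_2$ the decisive combinatorial input is that $\bar L$ is absorbed at $0$ for $j>\bar i$ --- once $\bar L_j=0$ all later values vanish --- which is what produces $(\ln n)^2$ disjoint windows of length $(\ln n)^6$ on which $\bar L$ is forced to remain strictly positive but below $(\ln n)^3$; your ``concatenation of i.i.d.\ sub-exponential contributions'' neither isolates nor exploits this.
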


\begin{proof}
 Let $\lfloor N \theta\rfloor-2n^{(\alpha+4)/5} \leq m \leq \lfloor N \theta\rfloor+2n^{(\alpha+4)/5}$, $\lfloor Nx \rfloor- n^{(\alpha+4)/5} \leq i 
\leq \lfloor Nx \rfloor-n^{(\alpha+4)/5}$ and $\iota \in\{+,-\}$. We are going to bound the probability of 
$(\mathcal{B}_{0}^{m,i,\iota})^c \cap \mathcal{B}_{\mathbf{T}_{m,i}^\iota,1}^-$ and 
$(\mathcal{B}_{0}^{m,i,\iota})^c \cap \mathcal{B}_{\mathbf{T}_{m,i}^\iota,2}^-$ 
(the $\mathcal{B}_{\mathbf{T}_{m,i}^\iota,r}^+$ can be dealt with in the same way).
We write $\bar i = i\, \iota \, 1 = X_{\mathbf{T}_{m,i}^\iota}$ and $\bar m = \mathbf{T}_{m,i}^\iota$.

By Observation \ref{obs_rec_temps_local}, for any $\bar i -\lfloor \varepsilon n\rfloor < j_1 \leq j_2$, 
$L_{j_2}^{\bar m,-}-L_{j_1}^{\bar m,-} = 
\sum_{j=j_1}^{j_2-1}(\zeta_j^{\bar m,-,E}-\zeta_j^{\bar m,-,B})$. Instead of tackling this sum, we will consider 
a more amenable $\sum_{j=j_1}^{j_2-1}A_j$, where the random variables $A_j$, $j > \bar i-\lfloor \varepsilon n \rfloor$, are defined as follows. 
We fix $j > \bar i-\lfloor \varepsilon n \rfloor$, and we recall the $\bar\Delta_{\bar m,j}$ defined in Proposition \ref{prop_law_zeta} as well as the $\bar L_j^{\bar m,-}$ and $\bar \eta$ introduced before Proposition \ref{prop_def_zetaI}. 
If $\bar i -\lfloor \varepsilon n \rfloor < j \leq \bar i$, 
by Lemma \ref{lem_coupling} we can couple $\bar \eta$ with a chain $\tilde \eta$ such that $\tilde \eta(0)=-\bar\Delta_{\bar m,j}-1$ 
and for all $\ell \geq 0$, $\tilde \eta(\ell) \leq \bar \eta(\ell)$. We then set 
$A_j=\tilde\eta(\bar L_j^{\bar m,-}+1)+\bar\Delta_{\bar m,j}+1$, which is at most $\zeta_j^{\bar m,-,E}-\zeta_j^{\bar m,-,B}$ 
when $(\mathcal{B}_{0}^{m,i,\iota})^c$ is satisfied and $n$ is large enough. 
If $j > \bar i$, we set $A_j = \bar\eta(\bar L_j^{\bar m,-} \vee 1)+\bar\Delta_{\bar m,j}$.
For any $i_0 \in \{\bar i-\lfloor n \varepsilon \rfloor+1,\dots,\bar i-\lfloor (\ln n)^{6} \rfloor\}$, we denote 
$\mathcal{K}_{i_0} = \{\exists\, j \in\{i_0+1,\dots,i_0+\lfloor (\ln n)^{6} \rfloor\},
\sum_{j'=i_0}^{j-1} A_{j'} \geq (\ln n)^3\}$; for $i_0\geq\bar i+1$, we denote $\mathcal{K}_{i_0} = \{\exists\, j \in\{i_0+1,\dots,i_0+\lfloor (\ln n)^{6} \rfloor\},
\sum_{j'=i_0}^{j-1} A_{j'} \leq -(\ln n)^3\}$. Finally, for any $j \geq \bar i-\lfloor n \varepsilon \rfloor+1$, we denote $\mathcal{G}_j = 
\sigma(A_{j'},\bar i-\lfloor n \varepsilon \rfloor+1 \leq j' < j;\bar L^{\bar m,-}_{j'},\bar i-\lfloor n \varepsilon \rfloor+1 \leq j' \leq j)$. 

We are going to prove the following.

\begin{lemma}\label{lem_TCL}
 There exists a constant $\tilde c_1 = \tilde c_1(w) > 0$ such that when $n$ is large enough, for any 
 $i_0 \in \{\bar i-\lfloor n \varepsilon \rfloor+1,\dots,\bar i-\lfloor (\ln n)^{6} \rfloor\}$ or $i_0 \geq \bar i+1$, 
 we have $\mathds{P}(\mathcal{K}_{i_0}^c | \mathcal{G}_{i_0}) \leq e^{-\tilde c_1}$ almost surely. 
\end{lemma}

Let us show that Lemma \ref{lem_TCL} implies sufficient bounds on 
$\mathds{P}((\mathcal{B}_{0}^{m,i,\iota})^c \cap \mathcal{B}_{\mathbf{T}_{m,i}^\iota,1}^-)$ and 
$\mathds{P}((\mathcal{B}_{0}^{m,i,\iota})^c \cap \mathcal{B}_{\mathbf{T}_{m,i}^\iota,2}^-)$.

We begin with $\mathds{P}((\mathcal{B}_{0}^{m,i,\iota})^c \cap \mathcal{B}_{\mathbf{T}_{m,i}^\iota,1}^-)$. 
If $(\mathcal{B}_{0}^{m,i,\iota})^c$ is satisfied, $n$ is large enough and there exists 
$i_0 \in \{\bar i-\lfloor n \varepsilon \rfloor+1,\dots,\bar i-\lfloor (\ln n)^{8} \rfloor\}$ such that for all 
$j \in \{i_0,\dots,i_0+\lfloor (\ln n)^{8} \rfloor\}$, $L_j^{\bar m,-} < (\ln n)^3$, then for all 
$\ell \in\{0,\dots,\lfloor (\ln n)^2\rfloor-1\}$, for all $j \in \{i_0+\ell \lfloor (\ln n)^{6} \rfloor+1,\dots,
i_0+(\ell+1) \lfloor (\ln n)^{6} \rfloor\}$ we have $L_{j}^{\bar m,-} 
< L_{i_0+\ell \lfloor (\ln n)^{6} \rfloor}^{\bar m,-} + (\ln n)^3$, thus for all $\ell \in\{0,\dots,\lfloor (\ln n)^2\rfloor-1\}$, 
$\mathcal{K}_{i_0+\ell \lfloor (\ln n)^{6} \rfloor}^c$ is satisfied. We deduce that when $n$ is large enough, 
\[
 \mathds{P}((\mathcal{B}_{0}^{m,i,\iota})^c \cap \mathcal{B}_{\mathbf{T}_{m,i}^\iota,1}^-) 
 \leq \sum_{i_0= \bar i-\lfloor n \varepsilon \rfloor+1}^{\bar i-\lfloor (\ln n)^{8} \rfloor}
 \mathds{P}\left(\bigcap_{\ell=0}^{\lfloor (\ln n)^2\rfloor-1}\mathcal{K}_{i_0+\ell \lfloor (\ln n)^{6} \rfloor}^c\right) 
 \leq n \varepsilon e^{-\tilde c_1 \lfloor (\ln n)^2\rfloor},
\]
which is enough.

We now deal with $\mathds{P}((\mathcal{B}_{0}^{m,i,\iota})^c \cap \mathcal{B}_{\mathbf{T}_{m,i}^\iota,2}^-)$. 
We define the following random variables when possible: $\tau_1=\inf\{j > \bar i \,|\, 0 < \bar L_j^{\bar m,-} < (\ln n)^3\}$, 
and for $\ell \geq 1$, $\tau_{i+1} = \inf\{j \geq \tau_\ell+\lfloor (\ln n)^{6} \rfloor \,|\, 0 < \bar L_j^{\bar m,-} < (\ln n)^3\}$. 
If $(\mathcal{B}_{0}^{m,i,\iota})^c \cap \mathcal{B}_{\mathbf{T}_{m,i}^\iota,2}^-$ is satisfied and $n$ is large enough, 
$\tau_{\lfloor (\ln n)^2 \rfloor}$ exists, 
$\tau_{\lfloor (\ln n)^2 \rfloor} \leq \bar i + n^{(\alpha-1)/4}\lfloor\varepsilon n\rfloor - \lfloor(\ln n)^{6}\rfloor+1$, and for any 
$\ell \in \{1,\dots,\lfloor (\ln n)^2 \rfloor-1\}$, $\bar L_j^{\bar m,-} > 0$ for 
$j \in \{\tau_\ell,\dots,\tau_\ell+\lfloor (\ln n)^{6} \rfloor\}$, since if $j > \bar i$ is such that $\bar L_j^{\bar m,-} = 0$, 
$\bar L_j^{\bar m,-} = 0$ for all $j' > j$. In addition, if $(\mathcal{B}_{0}^{m,i,\iota})^c$ is satisfied, $n$ is large enough and $j \leq \bar i + n^{(\alpha-1)/4}\lfloor\varepsilon n\rfloor$, 
when $L_j^{\bar m,-} = \bar L_j^{\bar m,-} > 0$ we have $A_j = \zeta_j^{\bar m,-,E}-\zeta_j^{\bar m,-,B}$. We deduce that if 
$(\mathcal{B}_{0}^{m,i,\iota})^c \cap \mathcal{B}_{\mathbf{T}_{m,i}^\iota,2}^-$ is satisfied, 
$\tau_{\lfloor (\ln n)^2 \rfloor}$ exists, $\tau_{\lfloor (\ln n)^2 \rfloor} 
\leq \bar i + n^{(\alpha-1)/4}\lfloor\varepsilon n\rfloor - \lfloor(\ln n)^{6}\rfloor+1$, and for any 
$\ell \in \{1,\dots,\lfloor (\ln n)^2 \rfloor-1\}$, $\mathcal{K}_{\tau_\ell}^c$ occurs. This yields 
 $\mathds{P}((\mathcal{B}_{0}^{m,i,\iota})^c \cap \mathcal{B}_{\mathbf{T}_{m,i}^\iota,2}^-) \leq 
 e^{- \tilde c_1 (\lfloor (\ln n)^2\rfloor-1)}$, which is enough.
 
 We now prove Lemma \ref{lem_TCL}. To proceed, we will need the following claim:
 
 \begin{claim}\label{claim_TCL_constants}
 Let $i_0 \in \{\bar i-\lfloor n \varepsilon \rfloor+1,\dots,\bar i-\lfloor (\ln n)^{6} \rfloor\}$  or  $i_0 \geq \bar i+1$. 
 For any $j \in \{i_0,\dots,i_0+\lfloor(\ln n)^{6}\rfloor-1\}$, $p \geq 1$, 
 $A_j \in \mathrm{L}^p$ and $\mathds{E}(A_j|\mathcal{G}_{j})=0$. 
 Furthermore, there exist constants $\bar c_1=\bar c_1(w) > 0$, $\bar C_1=\bar C_1(w) < \infty$ 
 such that for any $j \in \{i_0,\dots,i_0+\lfloor(\ln n)^{6}\rfloor-1\}$,  
 $\mathds{E}(A_j^2|\mathcal{G}_{j}) \geq \bar c_1$ and $\mathds{E}(|A_j|^3|\mathcal{G}_{j}) \leq \bar C_1$.
 \end{claim}
 
 \begin{proof}[Proof of Claim \ref{claim_TCL_constants}.]
 We suppose $i_0 \in \{\bar i-\lfloor n \varepsilon \rfloor+1,\dots,\bar i-\lfloor (\ln n)^{6} \rfloor\}$; the case $i_0 \geq \bar i+1$ can be dealt with in the same way. Let $j \in \{i_0,\dots,i_0+\lfloor(\ln n)^{6}\rfloor-1\}$. 
 Then $A_j=\tilde\eta(\bar L_j^{\bar m,-}+1)+\bar\Delta_{\bar m,j}+1$ with $\tilde\eta(0)=-\bar\Delta_{\bar m,j}-1$. 
 By Proposition \ref{prop_law_zeta}, $\bar\Delta_{\bar m,j}$ has the law $\rho_-$ defined in \eqref{eq_def_rho-} and is independent of $\mathcal{G}_j$, so the chain $\tilde\eta$ is stationary and independent 
 of $\mathcal{G}_j$. Moreover, $\bar L_j^{\bar m,-}$ is $\mathcal{G}_j$-measurable, so conditionally to 
 $\mathcal{G}_j$, $\tilde\eta(\bar L_j^{\bar m,-}+1)$ has law $\rho_-$. Therefore $\bar\Delta_{\bar m,j}$ and 
 $\tilde\eta(\bar L_j^{\bar m,-}+1)$ have exponential tails, so $A_j \in \mathrm{L}^p$ for any $p \geq 1$. In addition, 
 if we write $A_{j,1}=\bar\Delta_{\bar m,j}+\frac{1}{2}$ and $A_{j,2}=\tilde\eta(\bar L_j^{\bar m,-}+1)+\frac{1}{2}$ for short, 
 conditionally to $\mathcal{G}_j$ both $A_{j,1}$ and $A_{j,2}$ have the law $\rho_0$ defined in \eqref{eq_def_rho0}. This implies 
 $\mathds{E}(A_j|\mathcal{G}_j)=\mathds{E}(A_{j,1}+A_{j,2}|\mathcal{G}_j)=0$. 
 Furthermore,
 \begin{align*}
  \mathds{E}(|A_j|^3|\mathcal{G}_{j}) &\leq \mathds{E}(|A_{j,1}|^3|\mathcal{G}_{j})
  +3\mathds{E}(|A_{j,1}|^2|A_{j,2}||\mathcal{G}_{j})+3\mathds{E}(|A_{j,1}||A_{j,2}|^2|\mathcal{G}_{j})
  +\mathds{E}(|A_{j,2}|^3|\mathcal{G}_{j}) \\
  &\leq \mathds{E}(|A_{j,1}|^3|\mathcal{G}_{j})
  +3\mathds{E}(|A_{j,1}|^4|\mathcal{G}_{j})^{1/2}\mathds{E}(|A_{j,2}|^2|\mathcal{G}_{j})^{1/2} 
  +3\mathds{E}(|A_{j,1}|^2|\mathcal{G}_{j})^{1/2}\mathds{E}(|A_{j,2}|^4|\mathcal{G}_{j})^{1/2}
  +\mathds{E}(|A_{j,2}|^3|\mathcal{G}_{j})
 \end{align*}
 by the Cauchy-Schwarz inequality. Since $\rho_0$ has exponential tails, each of these expectations is bounded, thus 
 $\mathds{E}(|A_j|^3|\mathcal{G}_{j})$ is at most a constant depending on $w$.

 We now deal with the lower bound of $\mathds{E}(A_j^2|\mathcal{G}_{j})$. Since $A_j$ is integer-valued, 
 \[
  \mathds{E}(A_j^2|\mathcal{G}_{j}) \geq \mathds{P}(A_j \neq 0|\mathcal{G}_{j}) 
  \geq \mathds{P}(\tilde\eta(\bar L_j^{\bar m,-}+1) \neq 0|\mathcal{G}_{j},\bar\Delta_{\bar m,j}=-1)
  \mathds{P}(\bar\Delta_{\bar m,j}=-1|\mathcal{G}_{j}).
 \]
Furthermore, $\mathds{P}(\bar\Delta_{\bar m,j}=-1|\mathcal{G}_{j}) = \rho_-(-1)$. In addition, if $\bar\Delta_{\bar m,j}=-1$, 
$\tilde\eta(0)=0$, so by Lemma \ref{lem_conv_eta} there exists $\ell_0 \in \mathds{N}^*$ such that for any $\ell \geq \ell_0$, 
$\mathds{P}(\tilde\eta(\ell) \neq 0|\mathcal{G}_{j},\bar\Delta_{\bar m,j}=-1) \geq \frac{1}{2}\rho_-(\mathds{Z}^*)$. 
Now, for any $1 \leq \ell < \ell_0$, there exists a constant $\bar c_{1,\ell} > 0$ such that 
$\mathds{P}(\tilde\eta(\ell) \neq 0|\mathcal{G}_{j},\bar\Delta_{\bar m,j}=-1) \geq \bar c_{1,\ell}$. 
We deduce 
\[
 \mathds{E}(A_j^2|\mathcal{G}_{j}) \geq \rho_-(-1) \min\left(\frac{1}{2}\rho_-(\mathds{Z}^*),
 \min_{1 \leq \ell < \ell_0}\bar c_{i,\ell}\right) > 0,
\]
which ends the proof of the claim.
 \end{proof}
 
 \begin{proof}[Proof of Lemma \ref{lem_TCL}.]
 Let $i_0 \in \{\bar i-\lfloor n \varepsilon \rfloor+1,\dots,\bar i-\lfloor (\ln n)^{6} \rfloor\}$ or $i_0 \geq \bar i+1$. We denote $i_0'=i_0+\lfloor (\ln n)^{6} \rfloor-1$ for short. Claim \ref{claim_TCL_constants} implies $(\sum_{j'=i_0}^{j-1}A_{j'})_{i_0 \leq j \leq i_0'+1}$ is a martingale with respect to the filtration $(\mathcal{G}_j)_{j>\bar i-\lfloor n \varepsilon \rfloor}$. We would like to use a central limit theorem for martingales to control the law of $\sum_{j'=i_0}^{i_0'}A_{j'}$, but in order to do that we would need $\sum_{j'=i_0}^{i_0'}A_{j'}^2$ to be close to a constant when $n$ is large enough, and we do not control it well enough. We will therefore define another martingale. 
 
 Thanks to Claim \ref{claim_TCL_constants}, for any $j \in \{i_0,\dots,i_0'\}$, we can define $\sigma_j \geq 0$ by $\sigma_j^2 = \mathds{E}(A_j^2|\mathcal{G}_{j})$. We set $j_0 = \inf\{j\in\{i_0,\dots,i_0' \}\,|\, \sum_{j'=i_0}^j \sigma_{j'}^2 \geq \frac{\bar c_1(\ln n)^{6}}{2}\}$, which exists since $\sum_{j'=i_0}^{i_0'}\sigma_{j'}^2 \geq \bar c_1\lfloor(\ln n)^{6}\rfloor$ by Claim \ref{claim_TCL_constants}. We define $\kappa \in [0,1]$ by $\sum_{j=i_0}^{j_0-1}\sigma_{j}^2+\kappa \sigma_{j_0}^2 = \frac{\bar c_1(\ln n)^{6}}{2}$. For any $j \in \{i_0,\dots,i_0'\}$, we also define $\bar A_j=A_j \mathds{1}_{\{j_0 > j\}}+\sqrt{\kappa}A_j \mathds{1}_{\{j_0 = j\}}$ and $\bar \sigma_j \geq 0$ by $\bar \sigma_j^2 = \mathds{E}(\bar A_j^2 | \mathcal{G}_j)=\sigma_j^2\mathds{1}_{\{j_0 > j\}} +\kappa\sigma_j^2\mathds{1}_{\{j_0 = j\}}$, so that $\sum_{j=i_0}^{i_0'}\bar A_j=\sum_{j=i_0}^{j_0-1}A_j+\sqrt{\kappa} A_{j_0}$. This implies that for $i_0 \in \{\bar i-\lfloor n \varepsilon \rfloor+1,\dots,\bar i-\lfloor (\ln n)^{6} \rfloor\}$, if $\sum_{j=i_0}^{i_0'}\bar A_j \geq (\ln n)^3$ then $\mathcal{K}_{i_0}$ occurs, so $\mathds{P}(\mathcal{K}_{i_0}^c|\mathcal{G}_{i_0}) \leq \mathds{P}(\sum_{j=i_0}^{i_0'}\bar A_j < (\ln n)^3|\mathcal{G}_{i_0})$. Similarly, for $i_0 \geq \bar i+1$ we have $\mathds{P}(\mathcal{K}_{i_0}^c|\mathcal{G}_{i_0}) \leq \mathds{P}(\sum_{j=i_0}^{i_0'}\bar A_j > -(\ln n)^3|\mathcal{G}_{i_0})$. Consequently, to prove Lemma \ref{lem_TCL} we only have to find a constant $\tilde c_1 = \tilde c_1(w) > 0$ such that when $n$ is large enough, for any $i_0 \in \{\bar i-\lfloor n \varepsilon \rfloor+1,\dots,\bar i-\lfloor (\ln n)^{6} \rfloor\}$ we have $\mathds{P}(\sum_{j=i_0}^{i_0'}\bar A_j < (\ln n)^3|\mathcal{G}_{i_0}) \leq e^{-\tilde c_1}$ almost surely (the case $i_0 \geq \bar i+1$ can be dealt with in the same way). 
 
 Suppose by contradiction that it is not true. This implies that there exists a sequence $(N(k))_{k\in\mathds{N}}$ tending to $+\infty$ so that for each $k\in\mathds{N}$ there exists the following (the quantities will depend on $k$, but we will not include this dependence in the notation as that would make it too heavy) $\lfloor N(k) \theta\rfloor-2n^{(\alpha+4)/5} \leq m \leq \lfloor N(k) \theta\rfloor+2n^{(\alpha+4)/5}$, $\lfloor N(k) x \rfloor- n^{(\alpha+4)/5} \leq i \leq \lfloor N(k) x \rfloor-n^{(\alpha+4)/5}$, $\iota \in\{+,-\}$ and  
 $i_0 \in \{\bar i-\lfloor n \varepsilon \rfloor+1,\dots,\bar i-\lfloor (\ln n)^{6} \rfloor\}$ so that $\mathds{P}(\sum_{j=i_0}^{i_0'}\bar A_j < (\ln n)^3|\mathcal{G}_{i_0}) > \frac{1+c_1'}{2}$ with positive probability, where $c_1'\in(0,1)$ is the probability that a random variable with law $\mathcal{N}(0,1)$ is at most $\frac{\sqrt{2}}{\sqrt{\bar c_1}}$.
 
 For any $j \geq \bar i-\lfloor n \varepsilon \rfloor+1$, we denote $G_j = ((A_{j'})_{\bar i-\lfloor n \varepsilon \rfloor+1 \leq j' < j},(\bar L^{\bar m,-}_{j'})_{\bar i-\lfloor n \varepsilon \rfloor+1 \leq j' \leq j})$. Since we have $\mathcal{G}_{i_0} = \sigma((A_{j})_{\bar i-\lfloor n \varepsilon \rfloor+1 \leq j < i_0},(\bar L^{\bar m,-}_{j})_{\bar i-\lfloor n \varepsilon \rfloor+1 \leq j \leq i_0})$, since $\mathds{P}(\sum_{j=i_0}^{i_0'}\bar A_j < (\ln n)^3|\mathcal{G}_{i_0}) > \frac{1+c_1'}{2}$ with positive probability, there exists $\omega\in\mathds{Z}^{2(i_0-\bar i+\lfloor n \varepsilon \rfloor)-1}$ such that $\mathds{P}(G_{i_0}=\omega)>0$ and $\mathds{P}(\sum_{j=i_0}^{i_0'}\bar A_j < (\ln n)^3|G_{i_0}=\omega) > \frac{1+c_1'}{2}$. We want to apply a central limit theorem for martingales to the process $(\sum_{j'=i_0}^{j-1}\frac{\sqrt{2}}{\sqrt{\bar c_1}(\ln n)^3}\bar A_{j'})_{i_0 \leq j \leq i_0'+1}$ under the law $\mathds{P}(\cdot|G_{i_0}=\omega)$. We denote this law $\mathds{P}'(.)$ for short, and write $\mathds{E}'(\cdot)$ for the expectation operator. We consider the probability space $(\prod_{k\in\mathds{N}}\Omega_k,\bigotimes_{k\in\mathds{N}}\mathcal{F}'_k,\bigotimes_{k\in\mathds{N}}\mathds{P}_k)$, where for any $k\in \mathds{N}$ the space $(\Omega_k,\mathcal{F}'_k,\mathds{P}_k)$ is a copy of the probability space where the $A_{j}$, $j \geq \bar i-\lfloor n \varepsilon \rfloor+1$, $\bar L^{\bar m,-}_{j}$, $j \geq \bar i-\lfloor n \varepsilon \rfloor+1$ corresponding to $k$ live, with the probability measure $\mathds{P}'$ corresponding to $k$. We denote $\mathcal{P}=\bigotimes_{k\in\mathds{N}}\mathds{P}_k$ and $\mathcal{E}$ the corresponding expectation. For any $k\in\mathds{N}$, we may consider the $(\mathcal{G}_j(k))_{i_0(k) \leq j \leq i_0'(k)}$ and $(\bar A_j(k))_{i_0(k) \leq j \leq i_0'(k)}$ defined as previously, but on the space $(\Omega_k,\mathcal{F}'_k,\mathds{P}_k)$. Possibly through extracting a subsequence, we can assume $n(k)$ is non-decreasing in $k$. For any $k\in\mathds{N}$, $\ell\in\{1,...,\lfloor(\ln n(k))^6\rfloor\}$, we define $\mathcal{G}'_{k,\ell}=(\bigotimes_{k'=0}^k\mathcal{G}_{i_0(k')+\ell-1}(k'))\otimes(\bigotimes_{k'>k}\{\emptyset,\Omega_{k'}\})$. We then have $\mathcal{G}'_{k,\ell} \subset \mathcal{G}'_{k+1,\ell}$. We will use a central limit theorem for martingales with $(\sum_{j'=i_0(k)}^{i_0(k)+j-1}\frac{\sqrt{2}}{\sqrt{\bar c_1}(\ln n(k))^3}\bar A_{j'}(k))_{0 \leq j \leq \lfloor(\ln n(k))^6\rfloor}$. 

To do that, let us prove its assumptions. We first notice that for any $k\in\mathds{N}$, for any random variable $V$ and any $j \geq i_0$ we have $\mathds{E}'(V|\mathcal{G}_j)=\mathds{E}(V|\mathcal{G}_j)$. Indeed, for any $\omega'\in\mathds{Z}^{2(j-i_0)}$ so that $\mathds{P}'(G_j=(\omega,\omega'))>0$ we have 
\[
\mathds{E}'(V|G_j=(\omega,\omega'))\!=\!\frac{\mathds{E}'(V\mathds{1}_{\{G_j=(\omega,\omega')\}})}{\mathds{P}'(G_j=(\omega,\omega'))}\!=\!\frac{\mathds{E}(V\mathds{1}_{\{G_j=(\omega,\omega')\}})}{\mathds{P}(G_{i_0}=\omega)}\frac{\mathds{P}(G_{i_0}=\omega)}{\mathds{P}(G_j=(\omega,\omega'))}\!=\!\frac{\mathds{E}(V\mathds{1}_{\{G_j=(\omega,\omega')\}})}{\mathds{P}(G_j=(\omega,\omega'))}\!=\!\mathds{E}(V|G_j=(\omega,\omega')).
\]
In addition, by Claim \ref{claim_TCL_constants}, for any $j \in \{i_0,...,i_0'\}$ we have $\mathds{E}(A_j|\mathcal{G}_j)=0$, which implies $\mathds{E}(\bar A_j|\mathcal{G}_j)=0$, therefore $\mathds{E}'(\bar A_j|\mathcal{G}_j)=0$. This implies $\mathcal{E}(\bar A_{j}(k)|\mathcal{G}'_{k,j-i_0(k)+1})=0$, so $(\frac{\sqrt{2}}{\sqrt{\bar c_1}(\ln n(k))^3}\bar A_{i_0(k)+\ell-1}(k),\mathcal{G}_{k,\ell})_{k\in\mathds{N},1 \leq \ell \leq \lfloor(\ln n(k))^6\rfloor}$ is a martingale difference array. By Claim \ref{claim_TCL_constants}, for any $k\in\mathds{N}$, $i_0 \leq j \leq i_0'$, $A_j$ is square-integrable with respect to $\mathds{P}$, thus to $\mathds{P}'$, hence $\bar A_j$ also, therefore $\bar A_j(k)$ is square-integrable. Furthermore, $\sum_{\ell=1}^{\lfloor(\ln n(k))^6\rfloor}\mathcal{E}((\bar A_{i_0(k)+\ell-1}(k))^2|\mathcal{G}'_{k,\ell})$ is the same as $\sum_{i=i_0}^{i_0'}\mathds{E}'(\bar A_j^2|\mathcal{G}_j)=\sum_{i=i_0}^{i_0'}\mathds{E}(\bar A_j^2|\mathcal{G}_j)=\frac{\bar c_1 (\ln n)^6}{2}$ by definition of $j_0$ and $\kappa$. This allows to obtain $\sum_{\ell=1}^{\lfloor(\ln n(k))^6\rfloor}\mathcal{E}((\frac{\sqrt{2}}{\sqrt{\bar c_1}(\ln n)^3}\bar A_{i_0(k)+\ell-1}(k))^2|\mathcal{G}'_{k,\ell})=1$. We now prove the conditional Lindeberg condition. Let $\delta>0$, for any $k\in\mathds{N}$ Claim \ref{claim_TCL_constants} yields $\mathds{E}(|\bar A_j|^3|\mathcal{G}_j) \leq \bar C_1$ for all $j\in\{i_0,...,i_0'\}$, therefore 
\[
 \sum_{i=i_0}^{i_0'}\mathds{E}'\left(\left.\left(\frac{\sqrt{2}}{\sqrt{\bar c_1}(\ln n)^3}\bar A_j\right)^2\mathds{1}_{\{|\frac{\sqrt{2}}{\sqrt{\bar c_1}(\ln n)^3}\bar A_j| > \delta\}}\right|\mathcal{G}_j\right)
 = \sum_{i=i_0}^{i_0'}\mathds{E}\left(\left.\left(\frac{\sqrt{2}}{\sqrt{\bar c_1}(\ln n)^3}\bar A_j\right)^2\mathds{1}_{\{|\frac{\sqrt{2}}{\sqrt{\bar c_1}(\ln n)^3}\bar A_j| > \delta\}}\right|\mathcal{G}_j\right)
\]
\[
 \leq \sum_{i=i_0}^{i_0'}\mathds{E}\left(\left.\frac{1}{\delta}\left|\frac{\sqrt{2}}{\sqrt{\bar c_1}(\ln n)^3}\bar A_j\right|^3\right|\mathcal{G}_j\right)
 =\frac{1}{\delta}\frac{2^{3/2}}{\bar c_1^{3/2} (\ln n)^9}\sum_{i=i_0}^{i_0'}\mathds{E}(|\bar A_j|^3|\mathcal{G}_j)
 \leq \frac{1}{\delta}\frac{2^{3/2}}{\bar c_1^{3/2} (\ln n)^9}(i_0'-i_0+1)\bar C_1
 =\frac{1}{\delta}\frac{2^{3/2}\bar C_1}{\bar c_1^{3/2} (\ln n)^3}.
\]
Thus $\sum_{\ell=1}^{\lfloor(\ln n(k))^6\rfloor}\mathcal{E}((\frac{\sqrt{2}}{\sqrt{\bar c_1}(\ln n)^3}\bar A_{i_0(k)+\ell-1}(k))^2\mathds{1}_{\{|\frac{\sqrt{2}}{\sqrt{\bar c_1}(\ln n)^3}\bar A_{i_0(k)+\ell-1}(k)| > \delta\}}|\mathcal{G}'_{k,\ell}) \leq \frac{1}{\delta}\frac{2^{3/2}\bar C_1}{\bar c_1^{3/2} (\ln n)^3}$, hence it converges to 0 in probability, which is the conditional Lindeberg condition. Consequently, by the central limit theorem for martingales found as Corollary 3.1 of \cite{Hall_Heyde_martingales}, $\sum_{\ell=1}^{\lfloor(\ln n(k))^6\rfloor}\frac{\sqrt{2}}{\sqrt{\bar c_1}(\ln n)^3}\bar A_{i_0(k)+\ell-1}(k)$ converges in distribution to $\mathcal{N}(0,1)$. This implies that when $k$ is large enough, $\mathcal{P}(\sum_{\ell=1}^{\lfloor(\ln n(k))^6\rfloor}\frac{\sqrt{2}}{\sqrt{\bar c_1}(\ln n)^3}\bar A_{i_0(k)+\ell-1}(k) < \frac{\sqrt{2}}{\sqrt{\bar c_1}}) \leq \frac{1+c_1'}{2}$, hence $\mathds{P}'(\sum_{j=i_0}^{i'_0}\bar A_j < (\ln n)^3) \leq \frac{1+c_1'}{2}$. However, that contradicts the fact that $\mathds{P}(\sum_{j=i_0}^{i_0'}\bar A_j < (\ln n)^3|G_{i_0}=\omega) > \frac{1+c_1'}{2}$, hence our assumption was wrong, which ends the proof of the lemma.
 \end{proof}
\end{proof}

\begin{lemma}\label{lem_zeta_E_and_I}
There exists a constant $c_3=c_3(w) > 0$ such that 
 when $n$ is large enough, $\mathds{P}(\mathcal{B}_0^c \cap \mathcal{B}_3) \leq e^{-c_3 (\ln n)^2}$. 
\end{lemma}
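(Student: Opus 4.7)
First, by a union bound over the at most polynomially many triples $(m,i,\iota)$ defining $\mathcal{B}_{3}$ and, within each, over the $O(n)$ sites $j$ in the window defining $\mathcal{B}_{\mathbf{T}_{m,i}^\iota,3}^{\pm}$, and using that $\mathcal{B}_{0}^{c}\subseteq (\mathcal{B}_{0}^{m,i,\iota})^{c}$, it suffices to show that for each fixed admissible $(m,i,\iota,j)$,
\[
\mathds{P}\bigl((\mathcal{B}_{0}^{m,i,\iota})^{c}\cap\{L_{j}^{\mathbf{T}_{m,i}^{\iota},-}\geq(\ln n)^{2},\;\zeta_{j}^{\mathbf{T}_{m,i}^{\iota},-,E}\neq \zeta_{j}^{\mathbf{T}_{m,i}^{\iota},-,I}\}\bigr)\leq e^{-\tilde c_{3}(\ln n)^{2}}
\]
with $\tilde c_{3}>0$ large enough to absorb the polynomial factors; the $+$ case is symmetric.

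Write $\bar m=\mathbf{T}_{m,i}^{\iota}$ and unpack the construction of $\zeta_{j}^{\bar m,-,I}$ given just before Proposition \ref{prop_def_zetaI}. Setting $T=\inf\{\ell\geq 0:\eta(\ell)=0\}$ with $\eta(0)=-\Delta_{\bar m,j}$, and $\Delta$ the optimal coupling partner of $\eta(T+\lfloor(\ln n)^{2}/2\rfloor)$ with law $\rho_{-}$ produced via Lemma \ref{lem_const_coupling}, the construction forces $\zeta_{j}^{\bar m,-,I}=\zeta_{j}^{\bar m,-,E}$ on the event $\{L_{j}^{\bar m,-}\geq(\ln n)^{2},\ T\leq(\ln n)^{2}/2,\ \eta(T+\lfloor(\ln n)^{2}/2\rfloor)=\Delta\}$. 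Hence
\[
\{L_{j}^{\bar m,-}\geq(\ln n)^{2},\ \zeta_{j}^{\bar m,-,E}\neq \zeta_{j}^{\bar m,-,I}\}\subseteq \{T>(\ln n)^{2}/2\}\cup\{\eta(T+\lfloor(\ln n)^{2}/2\rfloor)\neq\Delta\},
\]
and I would bound the two pieces separately. By the strong Markov property at $T$ (so that $\eta(T+\cdot)$ is a copy of $\eta$ started at $0$), Lemma \ref{lem_conv_eta}, and the optimality of the coupling of Lemma \ref{lem_const_coupling}, the coupling-failure piece is controlled by
\[
\mathds{P}\bigl(\eta(T+\lfloor(\ln n)^{2}/2\rfloor)\neq\Delta\bigr)\leq \bar C\, e^{-\bar c\lfloor(\ln n)^{2}/2\rfloor},
\]
which is of the required order.

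For the first event restricted to $(\mathcal{B}_{0}^{m,i,\iota})^{c}$, Proposition \ref{prop_law_zeta} gives $\Delta_{\bar m,j}=\bar\Delta_{\bar m,j}$ with $\bar\Delta_{\bar m,j}$ distributed as $\rho_{-}$ or $\rho_{+}$ (both with exponential tails) and independent of the transitions of $\eta$. Conditioning on the starting state, I would split the resulting sum according to $|k|\leq (\ln n)^{2}/4$ versus $|k|>(\ln n)^{2}/4$: the large-$k$ part is handled by the exponential tail of $\rho_{\pm}$, giving at most $C e^{-\bar c(\ln n)^{2}/4}$, while on the small-$k$ part one needs the uniform hitting-time estimate
\[
\sup_{|k|\leq(\ln n)^{2}/4}\mathds{P}\bigl(T>(\ln n)^{2}/2\,\big|\,\eta(0)=k\bigr)\leq C\, e^{-c(\ln n)^{2}}.
\]
I would obtain this by first coupling $\eta$ started at $k$ to its stationary version in time of order $|k|\leq(\ln n)^{2}/4$ (using a drift/regeneration argument available since $\rho_{-}$ has exponential tails), and then exploiting that, by Lemma \ref{lem_conv_eta}, after any window of length of order $1$ the chain returns to $0$ with probability at least $\rho_{-}(0)/2$, whence iterating over the remaining $\Theta((\ln n)^{2})$ windows yields the claimed geometric decay. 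Combining this with the coupling-failure bound and the union bound of the first paragraph concludes the proof.

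\textbf{Main difficulty.} The coupling-failure estimate is essentially immediate from Lemma \ref{lem_conv_eta}; the real obstacle is the uniform hitting-time tail bound in the last step. Lemma \ref{lem_conv_eta} provides geometric ergodicity only from the starting point $0$, while we must control $\mathds{P}(T>t\,|\,\eta(0)=k)$ uniformly over $k$ ranging up to $(\ln n)^{2}/4$. Producing such a uniform estimate requires an additional coupling-to-stationarity argument for $\eta$ started far from $0$, and choosing the splitting thresholds (here $(\ln n)^{2}/4$) so that both the $\rho_{\pm}$-tail of $\bar\Delta_{\bar m,j}$ and the hitting-time tail contribute bounds of the form $e^{-c(\ln n)^{2}}$.
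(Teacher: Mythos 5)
Your reduction (union bound over $(m,i,\iota,j)$), the event decomposition $\{L_j^{\bar m,-}\geq(\ln n)^2,\ \zeta_j^{\bar m,-,E}\neq\zeta_j^{\bar m,-,I}\}\subseteq\{T>(\ln n)^2/2\}\cup\{\eta(T+\lfloor(\ln n)^2/2\rfloor)\neq\Delta\}$, and the coupling-failure bound via Lemma \ref{lem_conv_eta} are exactly what the paper does. Where you depart is in bounding $\mathds{P}((\mathcal{B}_0^{m,i,\iota})^c\cap\{T>(\ln n)^2/2\})$: you propose to work directly with $\eta$ and invoke a generic drift/regeneration argument to couple $\eta$ started at $k$ to stationarity in $O(|k|)$ steps. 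That is not available off the shelf here — Lemma \ref{lem_conv_eta} controls $\eta$ started from $0$ only, and $\eta$'s one-step kernel depends on $w$ in a way that no drift condition has been established for. You correctly flag this as the main obstacle, but the argument you sketch would require essentially proving geometric ergodicity of $\eta$ from arbitrary starting points, which is a nontrivial additional piece of work. The paper sidesteps this by passing to the underlying chain $\xi$ (of which $\eta$ is the down-step subchain) and replacing $T$ by $T'=\inf\{\ell>0:\xi(\ell-1)=1,\xi(\ell)=0\}\geq T$. The point is that $\xi$ has an explicitly tractable structure: setting $i_w=\min\{i'\in\mathds{N}^*:w(i')\neq w(-i')\}-1$, $\xi$ is an unbiased nearest-neighbour walk on $\{-i_w,\dots,i_w\}$ and is stochastically dominated (towards $0$) by a biased nearest-neighbour walk with parameter $p_w<1/2$ outside. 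The bound on $T'$ is then obtained by a four-event split: $\mathcal{A}_1$ (large $|\xi(0)|$, killed by the $\rho_\pm$ tail as in your plan), $\mathcal{A}_2$ (slow first arrival at $\{-i_w,i_w\}$, killed by Hoeffding for the dominated biased walk), $\mathcal{A}_3$ (too long to accumulate $\Theta((\ln n)^2)$ returns to $\{-i_w,i_w\}$, killed by an exponential-moment bound on inter-return times), and $\mathcal{A}_4$ (no success among $\Theta((\ln n)^2)$ attempts to step $1\to0$, killed by an elementary geometric bound). This decomposition is more elementary and stays within the paper's toolbox; if you want to keep your approach, you would need to first prove the uniform geometric tail $\sup_{|k|\leq(\ln n)^2/4}\mathds{P}(T>t\mid\eta(0)=k)\leq Ce^{-ct}$, which is roughly equivalent in effort to the paper's argument. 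Your plan is structurally sound and correctly locates the real difficulty, but the crucial step is under-specified.
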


\begin{proof}
Let $\lfloor N \theta\rfloor-2n^{(\alpha+4)/5} \leq m \leq \lfloor N \theta\rfloor+2n^{(\alpha+4)/5}$, $\lfloor Nx \rfloor-n^{(\alpha+4)/5} \leq i 
\leq \lfloor Nx \rfloor+n^{(\alpha+4)/5}$, $\iota \in \{+,-\}$. 
We denote $\bar m = \mathbf{T}_{m,i}^\iota$ and $\bar i = X_{\mathbf{T}_{m,i}^\iota}$. It is enough to find 
constants $\tilde C_3=\tilde C_3(w) < +\infty$ and $\tilde c_3=\tilde c_3(w) > 0$ such that 
 when $n$ is large enough, for any $j \in \{\bar i - \lfloor \varepsilon n \rfloor+1,\dots,\bar i + \lfloor \varepsilon n \rfloor\}$, 
 $\mathds{P}((\mathcal{B}_0^{m,i,\iota})^c \cap \{L_j^{\bar m,-} \geq (\ln n)^2,
 \zeta_j^{\bar m,-,E}\neq \zeta_j^{\bar m,-,I}\}) \leq \tilde C_3 e^{-\tilde c_3 (\ln n)^2}$ 
 and for any $j \in \{\bar i - \lfloor \varepsilon n \rfloor,\dots,\bar i + \lfloor \varepsilon n \rfloor-1\}$, 
 $\mathds{P}((\mathcal{B}_0^{m,i,\iota})^c \cap \{L_{j+1}^{\bar m,+} \geq (\ln n)^2,
 \zeta_j^{\bar m,+,E}\neq \zeta_j^{\bar m,+,I}\}) \leq \tilde C_3 e^{-\tilde c_3 (\ln n)^2}$. 
 We will write the proof for the $\zeta_j^{\bar m,-,E}$ with $j \in \{\bar i - \lfloor \varepsilon n \rfloor+1,\dots,\bar i\}$; 
 the other cases can be dealt with in the same way. 
 
 We use the notation of the construction of the $\zeta_j^{\bar m,-,I}$. With this notation, 
 $\zeta_j^{\bar m,-,E}$ can be different from $\zeta_j^{\bar m,-,I}$ only if $\eta(T+\lfloor(\ln n)^2/2\rfloor) \neq \Delta$ 
 or $L_i^{\bar m,-}+1-T < (\ln n)^2/2$. This yields that if $L_j^{\bar m,-} \geq (\ln n)^2$, 
 $\zeta_j^{\bar m,-,E}$ can be different from $\zeta_j^{\bar m,-,I}$ only if $\eta(T+\lfloor(\ln n)^2/2\rfloor) \neq \Delta$ 
 or $T > (\ln n)^2/2+1$. Therefore it is enough to bound $\mathds{P}(\eta(T+\lfloor(\ln n)^2/2\rfloor) \neq \Delta)$ 
 and $\mathds{P}((\mathcal{B}_0^{m,i,\iota})^c \cap \{T > (\ln n)^2/2+1\})$. $\Delta$ was chosen so to have 
 $\mathds{P}(\eta(T+\lfloor(\ln n)^2/2\rfloor) \neq \Delta)$ minimal, so by Lemma \ref{lem_conv_eta}, 
 $\mathds{P}(\eta(T+\lfloor(\ln n)^2/2\rfloor) \neq \Delta) \leq \bar C e^{-\bar c \lfloor(\ln n)^2/2\rfloor}$, 
 which is enough. It remains to bound $\mathds{P}((\mathcal{B}_0^{m,i,\iota})^c \cap \{T > (\ln n)^2/2+1\})$. 
 In order to bound $\mathds{P}((\mathcal{B}_0^{m,i,\iota})^c \cap \{T > (\ln n)^2/2+1\})$, we consider the chain $\xi$ 
 so that $\eta$ corresponds to the $\eta_-$ of $\xi$ (see \eqref{eq_def_xi}, \eqref{eq_def_tau}, \eqref{eq_def_eta}). 
 We notice $\xi(0)=\eta(0)=-\Delta_{\bar m,j}$. 
 We denote $T'=\inf\{\ell > 0\,|\,\xi(\ell-1)=1,\xi(\ell)=0\}$; we then have $T \leq T'$, so it is enough to 
 find constants $\bar C_3=\bar C_3(w) < +\infty$ and $\bar c_3=\bar c_3(w) > 0$ such that when $n$ is large enough, 
 $\mathds{P}((\mathcal{B}_0^{m,i,\iota})^c \cap \{T' > (\ln n)^2/2+1\})\leq \bar C_3 e^{-\bar c_3 (\ln n)^2}$. 
 
 In order to do that, we will notice that if we denote $i_w=\min\{i'\in \mathds{N}^*\,|\,w(i')\neq w(-i')\}-1$, then on $\{-i_w,...,i_w\}$ the chain $\xi$ behaves like a simple random walk, while outside $\{-i_w,...,i_w\}$ the chain $\xi$ is biased towards 0. We consider the successive times at which $\xi$ is at $-i_w$ or $i_w$: $E_0=\inf\{\ell \geq 0 \,|\, \xi(\ell)=i_w$ or $-i_w\}$, and for any $\ell > 1$, $E_\ell = \{\ell' > E_{\ell-1} \,|\,
 \xi(\ell')=i_w$ or $-i_w\}$. After each of these times, $\xi$ may try to go to 1 and then to 0. Therefore, if $T'$ is large, one of the following happens: $\xi$ did not reach $\{-i_w,i_w\}$ quickly enough at the beginning to have spare time to make a lot of tries, or it did not come back to $\{-i_w,i_w\}$ many times afterwards to make other tries, or there were many tries but they all failed. Let us formalize this. We denote $p_w=\frac{w(-i_w-1)}{w(i_w+1)+w(-i_w-1)} \in (0,1/2)$. We will also need a constant $\bar c_3= \bar c_3(w) > 0$ that we will define 
 later. We set $\mathcal{A}_1=\{|\xi(0)|>\frac{1-2p_w}{2}\lfloor(\ln n)^2/4\rfloor\}$, 
 $\mathcal{A}_2=\{E_0 > (\ln n)^2/4\}$, $\mathcal{A}_3 = \{E_{\lfloor \bar c_3 (\ln n)^2\rfloor}-E_0 > (\ln n)^2/4\}$ 
 and $\mathcal{A}_4=\{T' > E_{\lfloor \bar c_3 (\ln n)^2\rfloor}\}$. 
 We have $\{T' > (\ln n)^2/2+1\} \subset \mathcal{A}_2 \cup \mathcal{A}_3 \cup \mathcal{A}_4$, 
 hence we have
 \begin{equation}\label{eq_B3}
  \mathds{P}((\mathcal{B}_0^{m,i,\iota})^c \cap \{T' > (\ln n)^2/2+1\})
  \leq \mathds{P}((\mathcal{B}_0^{m,i,\iota})^c \cap \mathcal{A}_1)+\mathds{P}(\mathcal{A}_1^c \cap \mathcal{A}_2)
  +\mathds{P}(\mathcal{A}_3)+\mathds{P}(\mathcal{A}_4).
 \end{equation}
 Each of these four terms admits an exponential bound which is rather easy to prove, hence we postpone the proof to the appendix.
\end{proof}

\begin{proposition}\label{prop_excursions}
There exists a constant $c_4=c_4(w,\varepsilon) > 0$ such that 
 when $n$ is large enough, $\mathds{P}(\mathcal{B}_0^c \cap \mathcal{B}_1^c \cap \mathcal{B}_3^c \cap \mathcal{B}_4) 
 \leq e^{-c_4 (\ln n)^2}$. 
\end{proposition}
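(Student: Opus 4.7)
The plan is to union-bound over the polynomially many triples $(m_0,i_0,\iota)$ appearing in the definition of $\mathcal{B}_4$ and treat the $\mathcal{B}_{m,4}^+$ case by symmetry, so it suffices to show that for each fixed such triple (with $m = \mathbf{T}_{m_0,i_0}^\iota$) one has $\mathds{P}((\mathcal{B}_0^{m_0,i_0,\iota})^c \cap (\mathcal{B}_{m,1}^-)^c \cap (\mathcal{B}_{m,3}^-)^c \cap \mathcal{B}_{m,4}^-) \leq e^{-\tilde c(\ln n)^2}$. On $(\mathcal{B}_{m,1}^-)^c$, every window of $\lfloor (\ln n)^{8} \rfloor + 1$ consecutive sites in $\{X_m-\lfloor\varepsilon n\rfloor+1,\ldots,X_m\}$ contains at least one ``high'' site with $L_j^{m,-}\geq(\ln n)^3$, so every maximal ``low run'' of sites with $L_j^{m,-}<(\ln n)^3$ has length at most $(\ln n)^8$. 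Writing $R$ for the number of such low runs, $\mathcal{B}_{m,4}^-$ forces $R\cdot(\ln n)^8 > (\ln n)^{10}\sqrt{n}$, i.e.\ $R > (\ln n)^2\sqrt{n}$, and the task reduces to bounding $\mathds{P}(R>(\ln n)^2\sqrt{n})$ on the good event by $e^{-\tilde c(\ln n)^2}$.

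Next I would extract the random-walk structure. Under $(\mathcal{B}_0^{m_0,i_0,\iota})^c$, Proposition \ref{prop_law_zeta} ensures the $\zeta^{m,-,B}_j$ can be replaced by the $\bar\Delta$-variables, which are i.i.d.\ with shifted law $\rho_0$. Under $(\mathcal{B}_{m,3}^-)^c$, whenever $L_j^{m,-}\geq(\ln n)^2$ one has $\zeta_j^{m,-,E}=\zeta_j^{m,-,I}$, and by Proposition \ref{prop_def_zetaI} the $\zeta_j^{m,-,I}$ are i.i.d.\ $\rho_0$ and independent of the $\zeta^{m,-,B}_j$. Combined with Observation \ref{obs_rec_temps_local}, this shows that on any stretch where $L^{m,-}\geq (\ln n)^3$, the increments $L^{m,-}_{j+1}-L^{m,-}_{j}=\zeta_j^{m,-,I}-\zeta_j^{m,-,B}$ form an i.i.d.\ mean-zero sequence with finite variance and exponential tails. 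Hence each ``high excursion'' (stretch between a down-crossing of $(\ln n)^3$ and the next up-crossing) looks like a first-passage excursion of a classical mean-zero random walk.

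The core estimate bounds $R$ by counting these excursions. The duration of one high excursion is the first-passage time of a mean-zero random walk from a starting level $\geq (\ln n)^3$ back below $(\ln n)^3$; by the invariance principle and the classical tail $\mathds{P}(\tau > k) \sim c/\sqrt{k}$, its distribution lies in the domain of attraction of the one-sided $1/2$-stable (Lévy) law. The sum of $R$ i.i.d.\ such durations then scales like $R^2$ times a Lévy variable, and since the Lévy CDF satisfies $\mathds{P}(\text{Lévy}\leq\epsilon)\leq e^{-c/\epsilon}$ near $0$, the event that $R$ consecutive excursion durations all fit into a window of length $\varepsilon n$ has probability at most $\exp(-cR^2/n)$. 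With $R\geq(\ln n)^2\sqrt{n}$ this is $e^{-c(\ln n)^4}$, comfortably beating the target $e^{-\tilde c(\ln n)^2}$.

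The main obstacle is to implement this excursion analysis rigorously despite the fact that $L^{m,-}$ is an i.i.d.\ walk only on high stretches, and that successive excursions are not truly independent because they share the environment $(\zeta^{m,-,B}_j)_j$. I would handle this by coupling $L^{m,-}$ to an auxiliary process built from the $\zeta^{m,-,B}_j$ together with fresh copies of the chain $\eta$ (as in the constructions preceding Proposition \ref{prop_def_zetaI}), using Lemma \ref{lem_conv_eta} to control the total-variation cost of the coupling, and then invoking a quantitative first-passage tail estimate valid for discrete mean-zero walks with exponential moments (to turn the heuristic Lévy bound into a genuine exponential-in-$(\ln n)^2$ estimate). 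Putting these pieces together then yields $|\{\text{low sites}\}|\leq R(\ln n)^8\leq(\ln n)^{10}\sqrt{n}$ with probability $\geq 1-e^{-\tilde c(\ln n)^2}$, contradicting $\mathcal{B}_{m,4}^-$; summing over the polynomial-size index set finishes the proof.
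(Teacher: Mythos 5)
Your proposal follows the paper's approach in its first two stages — union-bounding over the index triples, using $(\mathcal{B}_{m,1}^-)^c$ to cap low-run lengths at $(\ln n)^8$ so that $\mathcal{B}_{m,4}^-$ forces $R > (\ln n)^2\sqrt{n}$ low runs, and invoking $(\mathcal{B}_{m,3}^-)^c$ together with Observation \ref{obs_rec_temps_local} and Proposition \ref{prop_def_zetaI} to identify the increments $\zeta_j^{m,-,I}-\zeta_j^{m,-,B}$ on high stretches as a mean-zero i.i.d.\ $\rho_0^{*2}$-type walk — but it then takes a genuinely different route in the core probabilistic step. The paper's argument is the more elementary one: it shows that, \emph{conditionally on the past}, each fresh excursion has probability at least $\bar c_4/\sqrt{n}$ of staying non-negative for the full $\lfloor\varepsilon n\rfloor$ steps (so that no later $J(\ell+1)$ can exist in the window), and then chains this through a conditional-expectation induction to get $(1-\bar c_4/\sqrt{n})^{R}\leq e^{-c(\ln n)^2}$. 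You instead propose to control the \emph{sum} of $R$ excursion durations via the lower-tail concentration of $1/2$-stable laws, obtaining the stronger heuristic rate $e^{-cR^2/n}=e^{-c(\ln n)^4}$. Both hinge on the same arcsine-type first-passage lower bound $\mathds{P}(\tau>k)\gtrsim 1/\sqrt{k}$, but your route additionally requires (a) converting the heuristic Lévy tail into a quantitative bound — which is doable, e.g.\ via a Laplace-transform/Chernoff argument $\mathds{E}(e^{-\lambda\tau})\leq 1-c\sqrt\lambda$ followed by optimizing $\lambda\sim R^2/n^2$ — and (b) replacing i.i.d.\ excursion durations with a conditional (filtration-wise) version of that Laplace-transform bound, since successive excursions are coupled through the shared environment and through their starting heights. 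You flag (b) but leave it at the level of ``couple via Lemma \ref{lem_conv_eta}''; the clean way to do it, and what the paper actually does, is to work with the variables $\hat\zeta_j=\bar\zeta_j^{\bar m,-,I}+\bar\Delta_{\bar m,j}+1/2$ and the filtration $\mathcal{G}_j$, for which $(\hat\zeta_{j'})_{j'\geq j}$ is i.i.d.\ and independent of $\mathcal{G}_j$, so the conditional first-passage bound holds uniformly. If you make (a) and (b) concrete in this way, your argument goes through and indeed beats the required rate; otherwise the paper's ``single long excursion blocks all further low runs'' estimate is the simpler path to the $e^{-c(\ln n)^2}$ bound the proposition asks for.
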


The proof of Proposition \ref{prop_excursions} uses rather classical techniques, therefore we include only a sketch here and put the full proof in the appendix.

\begin{proof}[Proof sketch of Proposition \ref{prop_excursions}.]
Let $\lfloor N \theta\rfloor-2n^{(\alpha+4)/5} \leq m \leq \lfloor N \theta\rfloor+2n^{(\alpha+4)/5}$, $\lfloor Nx \rfloor-n^{(\alpha+4)/5} \leq i 
\leq \lfloor Nx \rfloor+n^{(\alpha+4)/5}$, $\iota \in \{+,-\}$. 
We denote $\bar m = \mathbf{T}_{m,i}^\iota$. We give the sketch only for the $-$ case, as the argument for the $+$ case is the same. Since $(\mathcal{B}_{\bar m,1}^-)^c$ occurs, each ``excursion of $L^{\bar m,-}$ below $(\ln n)^3$'' 
has length at most $(\ln n)^8$, hence to have $\mathcal{B}_{\bar m,4}^-$ we need at least $(\ln n)^2\sqrt{n}$ ``excursions of $L^{\bar m,-}$ below $(\ln n)^3$'', hence $(\ln n)^2\sqrt{n}-1$ ``excursions of $L^{\bar m,-}$ above $(\ln n)^3$''. On $(\mathcal{B}_{\bar m,3}^-)^c$, when $L_j^{\bar m,-} \geq (\ln n)^3$ we have by Observation \ref{obs_rec_temps_local} that $L_{j+1}^{\bar m,-}-L_j^{\bar m,-}=\zeta^{\bar m,-,E}_j-\zeta^{\bar m,-,B}_j=\zeta^{\bar m,-,I}_j-\zeta^{\bar m,-,B}_j$, hence $L_j^{\bar m,-}$ is roughly an i.i.d. random walk. Therefore each ``excursion of $L^{\bar m,-}$ above $(\ln n)^3$'' has probability roughly $\frac{1}{\sqrt{n}}$ to have length at least $n$ conditional on the past ``excursions'', thus to be the last ``excursion'' we see as we only consider an interval of size $\varepsilon n$. Therefore the probability of seeing $(\ln n)^2\sqrt{n}-1$ ``excursions'' has the appropriate bound.
\end{proof}

\begin{lemma}\label{lem_bound_zeta}
There exists a constant $c_5=c_5(w) > 0$ such that 
 when $n$ is large enough, $\mathds{P}(\mathcal{B}_0^c \cap \mathcal{B}_5) \leq e^{-c_5 (\ln n)^2}$. 
\end{lemma}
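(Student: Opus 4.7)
The plan is to reduce $\mathcal{B}_5$ by a union bound to a tail estimate on a single $\zeta$. The union defining $\mathcal{B}_5$ runs over at most $O(n^{(3\alpha+12)/5})$ triples $(m',i',\iota)$, two choices of sign, and $2\lfloor\varepsilon n\rfloor + O(1)$ indices $j$ in the relevant window, so it is enough to produce, for each fixed $(m',i',\iota)$, sign, and $j$, a bound of the form $\tilde C_5 e^{-\tilde c_5(\ln n)^2}$ on the probability that one of $|\zeta_j^{\bar m,\pm,B}|$, $|\zeta_j^{\bar m,\pm,E}|$, $|\zeta_j^{\bar m,\pm,I}|$ exceeds $(\ln n)^2$ on $(\mathcal{B}_0^{m',i',\iota})^c$, where $\bar m = \mathbf{T}_{m',i'}^\iota$. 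Note that for $n$ large the range of $j$ in $\mathcal{B}_{\bar m,5}^\pm$ is contained in the range $|j-i'| \leq n^{(\alpha-1)/4}\lfloor\varepsilon n\rfloor$ on which Proposition \ref{prop_law_zeta} guarantees $\Delta_{\bar m,j} = \bar\Delta_{\bar m,j}$ outside $\mathcal{B}_0^{m',i',\iota}$.

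For $\zeta_j^{\bar m,\pm,B}$, Definition \ref{def_local_times} gives $\zeta_j^{\bar m,\pm,B} = \pm\Delta_{\bar m,j}\pm 1/2$, hence on $(\mathcal{B}_0^{m',i',\iota})^c$ this equals $\pm\bar\Delta_{\bar m,j} \pm 1/2$, whose law is $\rho_-$ or $\rho_+$ by Proposition \ref{prop_law_zeta}; since both measures have exponential tails the required bound is immediate. For $\zeta_j^{\bar m,\pm,I}$, Proposition \ref{prop_def_zetaI} states that the marginal law is $\rho_0$, which again has exponential tails, giving the bound directly.

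The only piece that takes work is $\zeta_j^{\bar m,\pm,E}$. By \eqref{eq_zeta} we may write $\zeta_j^{\bar m,-,E} = \eta(L)+1/2$ with $\eta(0) = -\Delta_{\bar m,j}$ for an appropriate (possibly random) integer $L\geq 0$ measurable with respect to $\mathcal{F}_{\beta_{\bar m}^-}$ (and analogously for $\zeta_j^{\bar m,+,E}$). On $(\mathcal{B}_0^{m',i',\iota})^c$ we may replace $\Delta_{\bar m,j}$ by $\bar\Delta_{\bar m,j}$, and bounding $\mathds{P}(|\bar\Delta_{\bar m,j}| > (\ln n)^2/2)$ by exponential tails of $\rho_\pm$, it remains to show that on $\{|\bar\Delta_{\bar m,j}| \leq (\ln n)^2/2\}$ we have $\mathds{P}(|\eta(L)| > (\ln n)^2) \leq Ce^{-c(\ln n)^2/2}$ uniformly in $L$ and in the starting value. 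This is where I would use Lemma \ref{lem_coupling} iteratively $|k|$ times to dominate $\eta$ started from $k$ by $\eta$ started from $0$ shifted by $k$, and then apply the uniform tail bound $\mathds{P}(\eta(L)=i\,|\,\eta(0)=0)\leq \bar C e^{-\bar c|i|}$ of Lemma \ref{lem_conv_eta}. The main (but mild) obstacle is precisely this reduction to the case $\eta(0)=0$, as the chain $\eta$ is not translation invariant outside $\{-i_w,\dots,i_w\}$; everything else is a direct tail estimate, and the $O(n^{(3\alpha+12)/5})\cdot n$ polynomial prefactor from the union bound is absorbed by $e^{-c(\ln n)^2}$.
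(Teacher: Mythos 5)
Your plan is correct, and for the $\zeta^{\pm,B}$ and $\zeta^{\pm,I}$ parts it matches the paper's argument exactly (on $\mathcal{B}_0^c$ these reduce to shifts of $\bar\Delta_{\bar m,j}\sim\rho_\pm$ and to $\rho_0$-variables respectively, and both laws have exponential tails). For $\zeta^{\pm,E}$, though, you take a genuinely different route. The paper exploits stationarity: after replacing $(\eta,L)$ by $(\bar\eta,\bar L)$, it splits into the case where $\bar\eta(0)=-\bar\Delta_{\bar m,j}$ already has law $\rho_-$ (so $\bar\eta(\bar L)$ is exactly $\rho_-$-distributed) and the case where it is one unit off, which a single application of Lemma~\ref{lem_coupling} corrects at the cost of an additive error $\leq 1$ — Lemma~\ref{lem_conv_eta} is never needed for this piece. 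You instead forget the exact starting law: you use only the exponential tail of $\rho_\pm$ to restrict to $|\bar\Delta_{\bar m,j}|\leq(\ln n)^2/2$, then iterate Lemma~\ref{lem_coupling} $|\bar\Delta_{\bar m,j}|$ times to compare with the chain started at $0$ and invoke the uniform-in-time tail bound from Lemma~\ref{lem_conv_eta}. Both arguments work and the polynomial union-bound prefactor is indeed harmless; the paper's version is shorter because it uses the exact invariant starting law, while yours is slightly more robust in requiring only a tail bound on $\eta(0)$. One point to tighten in a complete write-up: the uniformity of the bound over the random time $L$ is not a consequence of $L$ being $\mathcal{F}_{\beta_{\bar m}^-}$-measurable (the chain $\eta$ at site $j$ is also $\mathcal{F}_{\beta_{\bar m}^-}$-measurable); what is actually used, and what the construction before Proposition~\ref{prop_def_zetaI} supplies, is that $\bar L_j^{\bar m,-}$ is independent of the transitions of the chain $\bar\eta$ at site $j$, so one may condition on $\bar L$ and apply the deterministic-time bound.
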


\begin{proof}
 Let $\lfloor N \theta\rfloor-2n^{(\alpha+4)/5} \leq m \leq \lfloor N \theta\rfloor+2n^{(\alpha+4)/5}$, $\lfloor Nx \rfloor-n^{(\alpha+4)/5} \leq i \leq \lfloor Nx \rfloor+n^{(\alpha+4)/5}$, $\iota \in \{+,-\}$. 
 We denote $\bar m = \mathbf{T}_{m,i}^\iota$ and $\bar i = X_{\mathbf{T}_{m,i}^\iota}$. 
 It is enough to find constants $\tilde C_5=\tilde C_5(w) < +\infty$ 
 and $\tilde c_5=\tilde c_5(w) > 0$ such that when $n$ is large enough, 
 $\mathds{P}((\mathcal{B}_0^{m,i,\iota})^c \cap \{|\zeta| > (\ln n)^2\}) 
 \leq \tilde C_5e^{-\tilde c_5(\ln n)^2}$ for $\zeta \in \{\zeta_j^{\bar m,-,B},\zeta_j^{\bar m,-,E},\zeta_j^{\bar m,-,I} \,|\, 
 j \in \{\bar i -\lfloor \varepsilon n\rfloor+1,\dots,\bar i+\lfloor\varepsilon n\rfloor\}\}
 \cup \{\zeta_j^{\bar m,+,B},\zeta_j^{\bar m,+,E},\zeta_j^{\bar m,+,I} \,|\, 
 j \in \{\bar i -\lfloor \varepsilon n\rfloor,\dots,\bar i+\lfloor\varepsilon n\rfloor-1\}\}$. The $\zeta_j^{\bar m,\pm,I}$ are easy to handle, since by Proposition \ref{prop_def_zetaI} they have the law $\rho_0$ defined in \eqref{eq_def_rho0}, which has exponential tails.
 The $\zeta_j^{\bar m,\pm,B}$ also are easy to deal with. Indeed, by Proposition \ref{prop_law_zeta} and Definition \ref{def_local_times}, 
 if $(\mathcal{B}_0^{m,i,\iota})^c$ occurs, $\zeta_j^{\bar m,\pm,B}$ or $\zeta_j^{\bar m,\pm,B}\mp1$ is equal to a random variable of law $\rho_0$, 
 and $\rho_0$ has exponential tails. We now consider $\zeta_j^{\bar m,-,E}$ with 
 $j \in \{\bar i -\lfloor \varepsilon n\rfloor+1,\dots,\bar i+\lfloor\varepsilon n\rfloor\}$ (the $\zeta_j^{\bar m,+,E}$ 
 can be dealt with in the same way). Thanks to (\ref{eq_zeta}), 
 $\zeta_j^{\bar m,-,E} = \eta(L_j^{\bar m,-}+1)+1/2$ or $\eta(L_j^{\bar m,-})+1/2$ (depending on $j$) 
 with $\eta(0)=-\Delta_{\bar m,j}$. Recalling the definitions before Proposition \ref{prop_def_zetaI}, 
 if $(\mathcal{B}_0^{m,i,\iota})^c$ occurs and $n$ is large enough, 
 we have $\zeta_j^{\bar m,-,E} = \bar\eta(\bar L_j^{\bar m,-}+1)+1/2$ or $\bar \eta(\bar L_j^{\bar m,-})+1/2$ depending on $j$. 
 Remembering Proposition \ref{prop_law_zeta}, if $j$ is such that $\bar\Delta_{\bar m,j}$ has law $\rho_+$, $-\bar\Delta_{\bar m,j}$ has law $\rho_-$, 
 so $\bar\eta(\bar L_j^{\bar m,-}+1)+1/2$ and $\bar \eta(\bar L_j^{\bar m,-})+1/2$ have law $\rho_0$, 
 which is enough. Now, if $j$ is such that $\bar \Delta_{\bar m,j}$ has law $\rho_-$, $-\bar\Delta_{\bar m,j}-1$ has law $\rho_-$. 
 By Lemma \ref{lem_coupling}, we can couple $\bar\eta$ with a process $\bar\eta'$ so that $\bar\eta'(0)=-\bar\Delta_{\bar m,j}-1$ 
 and $\bar\eta(\ell)-1 \leq \bar\eta'(\ell) \leq \bar\eta(\ell)$ for any $\ell \in \mathds{N}$. 
 Then $\bar\eta'(\bar L_j^{\bar m,-})$ and $\bar\eta'(\bar L_j^{\bar m,-}+1)$ have law $\rho_-$, which has exponential tails, 
 hence the result. 
\end{proof}

\begin{lemma}\label{lem_bound_sums}
There exists a constant $c_6 > 0$ such that when $n$ is large enough, 
$\mathds{P}(\mathcal{B}_0^c \cap \mathcal{B}_4^c \cap \mathcal{B}_6) \leq e^{-c_6(\ln n)^2}$. 
\end{lemma}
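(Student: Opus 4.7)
The plan is, as in the proofs of Lemmas \ref{lem_zeta_E_and_I} and \ref{lem_bound_zeta}, to prove for each fixed triple $(m,i,\iota)$ in the union defining $\mathcal{B}_6$ a bound $\mathds{P}(\mathcal{B}_0^c \cap \mathcal{B}_4^c \cap (\mathcal{B}_{\bar m,6}^- \cup \mathcal{B}_{\bar m,6}^+)) \leq e^{-c(\ln n)^4}$ with $\bar m = \mathbf{T}_{m,i}^\iota$, and then take a union bound over the at most $O(n^{2(\alpha+4)/5+1})$ triples; this costs only a polynomial factor and is easily absorbed. I focus on $\mathcal{B}_{\bar m,6}^-$ (the $+$ case is symmetric), and handle the two sums appearing in its definition, indexed by $\square\in\{B,I\}$, simultaneously.

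First I would, on $\mathcal{B}_0^c$ and for $n$ large, replace $\zeta^{\bar m,-,B}_j$, $\zeta^{\bar m,-,I}_j$ and $L^{\bar m,-}_j$ throughout the relevant window by their barred counterparts (via Propositions \ref{prop_law_zeta} and \ref{prop_def_zetaI} and the construction preceding Proposition \ref{prop_def_zetaI}), so that $I^{\bar m,-}(\ell)$ coincides with the analogous $\bar I^{\bar m,-}(\ell)$ defined from $\bar L^{\bar m,-}$. I then work with the filtration
\[
\mathcal{G}_j := \sigma\bigl(\bar L^{\bar m,-}_{j'},\ \bar\zeta^{\bar m,-,B}_{j'-1},\ \bar\zeta^{\bar m,-,I}_{j'-1}\ :\ j'\leq j\bigr).
\]
Each $\bar I^{\bar m,-}(\ell)$ is a $(\mathcal{G}_j)$-stopping time, and for both $\square\in\{B,I\}$ the variable $\bar\zeta^{\bar m,-,\square}_j$ is independent of $\mathcal{G}_j$ with law $\rho_0$. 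For $\square=B$ this follows from the mutual independence of the $\bar\Delta_{\bar m,j}$'s (Proposition \ref{prop_law_zeta}) combined with the symmetries $\rho_-(-i-1)=\rho_-(i)$ and $\rho_+(i)=\rho_-(-i)$, which make the $\pm 1/2$ shift in $\zeta^{\bar m,-,B}_j$ send $-\bar\Delta_{\bar m,j}$ to a $\rho_0$-distributed variable regardless of whether $j\leq X_{\bar m}$. For $\square=I$ it is essentially the content of Proposition \ref{prop_def_zetaI} (in particular the independence of $(\bar\zeta^{\bar m,-,I}_{j'})_{j'\geq j}$ from $(\bar L^{\bar m,-}_{j'})_{j'\leq j}$ and the joint independence from the $\bar\Delta_{\bar m,\cdot}$'s).

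Given this structure, a standard optional sampling yields $\mathds{E}(\exp(\lambda \bar\zeta^{\bar m,-,\square}_{\bar I^{\bar m,-}(\ell)}) \mid \mathcal{G}_{\bar I^{\bar m,-}(\ell)}) = e^{\psi(\lambda)}$ with $\psi(\lambda):=\log\int e^{\lambda y}\,d\rho_0(y)$, and $\psi(\lambda)\leq C\lambda^2$ for small $|\lambda|$ since $\rho_0$ is centred with exponential tails. With $S_L := \sum_{\ell=1}^L \bar\zeta^{\bar m,-,\square}_{\bar I^{\bar m,-}(\ell)}$, extended by zero past $\ell_{\max}^{\bar m,-}$, the process $\exp(\lambda S_L - L\psi(\lambda))$ is therefore a non-negative martingale, and Doob's maximal inequality combined with the standard Chernoff optimization yields
\[
\mathds{P}\!\left(\max_{1\leq L\leq M}|S_L| > t\right) \leq 2\exp\!\left(-\frac{t^2}{CM}\right),
\]
where $M := \lceil(\ln n)^{10}\sqrt{n}\rceil$ bounds $\ell_{\max}^{\bar m,-}$ on $\mathcal{B}_4^c$. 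Since $\max_{\ell_1\leq\ell_2}\bigl|\sum_{\ell=\ell_1}^{\ell_2}\bar\zeta^{\bar m,-,\square}_{\bar I^{\bar m,-}(\ell)}\bigr| \leq 2\max_L |S_L|$, the choice $t=(\ln n)^7 n^{1/4}/2$ gives $t^2/M \asymp (\ln n)^4$, yielding the desired bound.

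I expect the only delicate point to be the clean verification that the martingale property survives after sampling the i.i.d.\ sequence $(\bar\zeta^{\bar m,-,\square}_j)_j$ at the random indices $\bar I^{\bar m,-}(\ell)$. This reduces to the elementary fact that if $Y_j$ is independent of $\mathcal{G}_j$ with law $\mu$ for every $j$ and $\tau$ is a $(\mathcal{G}_j)$-stopping time, then $\mathds{E}(e^{\lambda Y_\tau}\mid\mathcal{G}_\tau)=\int e^{\lambda y}\,d\mu(y)$; identifying the right filtration that makes both $\bar L^{\bar m,-}_j$ adapted and the $\bar\zeta^{\bar m,-,\square}_j$'s fresh is the main structural step, but once it is in place the tail estimate is routine.
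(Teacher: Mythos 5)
Your proof is correct and reaches the same key structural observation as the paper: after barring everything on $\mathcal{B}_0^c$, the sampled variables $\bar\zeta^{\bar m,-,\square}_{\bar I^{\bar m,-}(\ell)}$, $\ell \geq 1$, form an i.i.d.\ sequence with law $\rho_0$, which is exactly what Propositions \ref{prop_law_zeta} and \ref{prop_def_zetaI} deliver (the paper extends past $\ell_{\max}^{\bar m,-}$ by fresh i.i.d.\ variables rather than by zeros, but both conventions work — with zeros you get a supermartingale since $\psi(\lambda)\geq 0$). Where you diverge is in how the tail of $\max_{\ell_1\leq\ell_2}|\sum_{\ell=\ell_1}^{\ell_2}\cdot|$ is controlled: the paper applies a fixed (non-optimized) exponential Chernoff to each pair $(\ell_1,\ell_2)$, obtaining a per-pair bound $\leq 2e^{\bar C_6/2}e^{-(\ln n)^2}$, and then absorbs the implicit union bound over the $O(n(\ln n)^{20})$ pairs into the polynomial slack. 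You instead run the exponential martingale $\exp(\lambda S_L - L\psi(\lambda))$ through Doob's maximal inequality with the optimized $\lambda\asymp t/M$, which dispenses with the inner union bound over pairs entirely and yields the strictly stronger tail $e^{-c(\ln n)^4}$ in one step. Both routes absorb the outer union bound over the $O(n^{2(\alpha+4)/5})$ triples $(m,i,\iota)$ with room to spare. Your version is cleaner and sharper; the only place to tighten the exposition is the parenthetical ``regardless of whether $j\leq X_{\bar m}$'': for $\iota=+$ the site $j=\bar i$ carries a unit offset, so the shift does \emph{not} produce $\rho_0$ there — but this is harmless because every $\bar I^{\bar m,-}(\ell)$ lies strictly below $\bar i$, so the offending site never enters the sum.
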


\begin{proof}
 Let $\lfloor N \theta\rfloor-2n^{(\alpha+4)/5} \leq m \leq \lfloor N \theta\rfloor+2n^{(\alpha+4)/5}$, $\lfloor Nx \rfloor-n^{(\alpha+4)/5} \leq i \leq \lfloor Nx \rfloor+n^{(\alpha+4)/5}$, $\iota \in \{+,-\}$. It is enough to find 
constants $\tilde C_6=\tilde C_6(w) < +\infty$ and $\tilde c_6 > 0$ such that 
when $n$ is large enough $\mathds{P}((\mathcal{B}_0^{m,i,\iota})^c \cap (\mathcal{B}_{\mathbf{T}_{m,i}^\iota,4}^-)^c 
\cap \mathcal{B}_{\mathbf{T}_{m,i}^\iota,6}^-) \leq \tilde C_6e^{-\tilde c_6(\ln n)^2}$ and 
$\mathds{P}((\mathcal{B}_0^{m,i,\iota})^c \cap (\mathcal{B}_{\mathbf{T}_{m,i}^\iota,4}^+)^c 
\cap \mathcal{B}_{\mathbf{T}_{m,i}^\iota,6}^+) \leq 
 \tilde C_6e^{-\tilde c_6(\ln n)^2}$. Let us do it for $\mathcal{B}_{\mathbf{T}_{m,i}^\iota,6}^-$; the case 
 $\mathcal{B}_{\mathbf{T}_{m,i}^\iota,6}^+$ is similar. 
 We denote $\bar m = \mathbf{T}_{m,i}^\iota$ and $\bar i = X_{\mathbf{T}_{m,i}^\iota}$. 
 We introduce a sequence $(\bar I^{\bar m,-}(\ell))_{\ell \in \mathds{N}}$ ``like $(I^{\bar m,-}(\ell))_{\ell \in \mathds{N}}$, 
 but for $\bar L^{\bar m,-}$'' (the $\bar L^{\bar m,-}$ were defined before Proposition \ref{prop_def_zetaI}): $\bar I^{\bar m,-}(0) = \bar i-\lfloor \varepsilon n\rfloor$, and for any $\ell \in \mathds{N}$, 
 $\bar I^{\bar m,-}(\ell+1)=\inf\{\bar I^{\bar m,-}(\ell) < j < \bar i \,|\, \bar L^{\bar m,-}_{j} < (\ln n)^3\}$. 
 If $(\mathcal{B}_0^{m,i,\iota})^c$ occurs and $n$ is large enough, $\bar I^{\bar m,-}(\ell)=I^{\bar m,-}(\ell)$ 
 for any $\ell \in \mathds{N}$, hence for $\ell \leq \ell^{\bar m,-}_{\textrm{max}}$, 
 $\zeta_{I^{\bar m,-}(\ell)}^{\bar m,-,B}=-\bar\Delta_{\bar m,\bar I^{\bar m,-}(\ell)}-1/2$ 
 and $\zeta_{I^{\bar m,-}(\ell)}^{\bar m,-,I}=\bar\zeta_{\bar I^{\bar m,-}(\ell)}^{\bar m,-,I}$ (the $\bar\zeta_{j}^{\bar m,-,I}$ were also defined before Proposition \ref{prop_def_zetaI}).
 By abuse of notation, the $-\bar\Delta_{\bar m,\bar I^{\bar m,-}(\ell)}-1/2$ and $\bar\zeta_{\bar I^{\bar m,-}(\ell)}^{\bar m,-,I}$ 
 for $\ell > \ell^{\bar m,-}_{\textrm{max}}$ will be i.i.d. random variables with law $\rho_0$ (defined in \eqref{eq_def_rho0}) independent from everything else. 
 We notice that if $(\mathcal{B}_{\bar m,4}^-)^c$ occurs, $\ell^{\bar m,-}_{\textrm{max}} \leq (\ln n)^{10}\sqrt{n}$. 
 Consequently, when $n$ is large enough, 
 \[
 \mathds{P}((\mathcal{B}_0^{m,i,\iota})^c \cap (\mathcal{B}_{\mathbf{T}_{m,i}^\iota,4}^-)^c 
 \cap \mathcal{B}_{\mathbf{T}_{m,i}^\iota,6}^-) 
 \]
 \[
 \leq \mathds{P}\left(\left\{\max_{1 \leq \ell_1 \leq \ell_2 \leq (\ln n)^{10}\sqrt{n}}
 \left|\sum_{\ell=\ell_1}^{\ell_2}\bar\zeta_{\bar I^{\bar m,-}(\ell)}^{\bar m,-,I}\right| > (\ln n)^{7}n^{1/4}
 \text{ or }\max_{1 \leq \ell_1 \leq \ell_2 \leq (\ln n)^{10}\sqrt{n}}
 \left|\sum_{\ell=\ell_1}^{\ell_2}(-\bar\Delta_{\bar m,\bar I^{\bar m,-}(\ell)}-1/2)\right| > (\ln n)^{7}n^{1/4}\right\}\right)\!.
\]
 Moreover, thanks to Proposition \ref{prop_def_zetaI}, for any $\ell \geq 1$, $\bar\zeta_{\bar I^{\bar m,-}(\ell)}^{\bar m,-,I}$ has law $\rho_0$ and is independent from 
 $(\bar\zeta_{\bar I^{\bar m,-}(\ell')}^{\bar m,-,I})_{1 \leq \ell' < \ell}$. In addition, thanks to Proposition \ref{prop_law_zeta}, for any $\ell \geq 1$, $-\bar\Delta_{\bar m,\bar I^{\bar m,-}(\ell)}-1/2$ has law $\rho_0$ and is independent from 
 $(-\bar\Delta_{\bar m,\bar I^{\bar m,-}(\ell')}-1/2)_{1 \leq \ell' < \ell}$. 
 Consequently, it is enough to find constants $\tilde C_6'=\tilde C_6'(w) < +\infty$ and 
 $\tilde c_6 > 0$ such that when $n$ is large enough, if $(\zeta_\ell)_{\ell \in \mathds{N}}$ is a sequence 
 of i.i.d. random variables with law $\rho_0$, $\mathds{P}(\max_{1 \leq \ell_1 \leq \ell_2 \leq (\ln n)^{10}\sqrt{n}}
 |\sum_{\ell=\ell_1}^{\ell_2}\zeta_\ell| > (\ln n)^{7}n^{1/4}) \leq \tilde C_6'e^{-\tilde c_6(\ln n)^2}$. 
 
 Let $1 \leq \ell_1 \leq \ell_2 \leq (\ln n)^{10}\sqrt{n}$, we will study 
 $\mathds{P}(|\sum_{\ell=\ell_1}^{\ell_2}\zeta_\ell| > (\ln n)^{7}n^{1/4})$. Since $\rho_0$ is symmetric 
 with respect to 0 and by the Markov inequality, 
 \begin{equation}\label{eq_bad_evt_conv}
 \begin{split}
 \mathds{P}\left(\left|\sum_{\ell=\ell_1}^{\ell_2}\zeta_\ell\right| > (\ln n)^{7}n^{1/4}\right) \leq 
 2\mathds{P}\left(\sum_{\ell=\ell_1}^{\ell_2}\zeta_\ell > (\ln n)^{7}n^{1/4}\right) 
 = 2\mathds{P}\left(\exp\left(\frac{1}{(\ln n)^5n^{1/4}}
 \sum_{\ell=\ell_1}^{\ell_2}\zeta_\ell\right) > \exp\left((\ln n)^2\right)\right)\\
 \leq 2e^{-(\ln n)^2}\mathds{E}\left(\exp\left(\frac{1}{(\ln n)^5n^{1/4}} \sum_{\ell=\ell_1}^{\ell_2}\zeta_\ell\right)\right)
 = 2e^{-(\ln n)^2} \prod_{\ell=\ell_1}^{\ell_2} \mathds{E}\left(\exp\left(\frac{1}{(\ln n)^5n^{1/4}} \zeta_\ell \right)\right), 
 \qquad\qquad\qquad
 \end{split}
 \end{equation}
 so we have to study $\mathds{E}(\exp(\frac{1}{(\ln n)^5n^{1/4}}\zeta))$ where $\zeta$ has law $\rho_0$. 
 Now, we can write that $\exp(\frac{1}{(\ln n)^5n^{1/4}}\zeta)=1+\frac{\zeta}{(\ln n)^5n^{1/4}}
 +\frac{\zeta^2}{2(\ln n)^{10}\sqrt{n}}e^{\zeta'}$, where $|\zeta'| \leq |\frac{\zeta}{(\ln n)^5n^{1/4}}|$, 
 hence 
 \[
 \mathds{E}\left(\exp\left(\frac{1}{(\ln n)^5n^{1/4}}\zeta\right)\right)
 =1+\mathds{E}\left(\frac{\zeta^2}{2(\ln n)^{10}\sqrt{n}}e^{\zeta'}\right)
 \leq 1+\frac{1}{2(\ln n)^{10}\sqrt{n}}\mathds{E}\left(\zeta^2\exp\left(\left|\frac{\zeta}{(\ln n)^5n^{1/4}}\right|\right)\right).
 \]
 Furthermore, $\rho_0$ has exponential tails, so there exist constants 
 $\bar c_6 = \bar c_6(w) > 0$ and $\bar C_6 = \bar C_6(w) < +\infty$ such that 
 $\mathds{E}(\zeta^2e^{\bar c_6 |\zeta|}) \leq \bar C_6$. When $n$ is large enough, $|\frac{\zeta}{(\ln n)^5n^{1/4}}| 
 \leq \bar c_6 |\zeta|$, so $\mathds{E}(\exp(\frac{1}{(\ln n)^5n^{1/4}}\zeta)) \leq 1+\frac{\bar C_6}{2(\ln n)^{10}\sqrt{n}} 
 \leq \exp(\frac{\bar C_6}{2(\ln n)^{10}\sqrt{n}})$. By (\ref{eq_bad_evt_conv}), we deduce that when $n$ is large enough, 
 $\mathds{P}(|\sum_{\ell=\ell_1}^{\ell_2}\zeta_\ell| > (\ln n)^{7}n^{1/4}) \leq 2e^{\bar C_6/2}e^{-(\ln n)^2}$, 
 which suffices. 
\end{proof}

The results of this section can be summed up by the following proposition.

\begin{proposition}\label{prop_bound_bad_evts}
 There exists a constant $c=c(w,\varepsilon)>0$ such that when $n$ is large enough, $\mathds{P}(\bigcup_{r=0}^6 \mathcal{B}_r) \leq e^{-c (\ln n)^2}$. 
\end{proposition}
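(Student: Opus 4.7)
The proof is essentially a bookkeeping exercise assembling all the estimates established earlier in the section via a union bound, so I expect no real obstacle. Let me spell out the skeleton.

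First, I would reduce to controlling $\mathds{P}(\mathcal{B}_0)$ and the six conditional probabilities $\mathds{P}(\mathcal{B}_0^c \cap \mathcal{B}_r)$ for $r=1,\dots,6$ by writing
\[
\mathds{P}\left(\bigcup_{r=0}^6 \mathcal{B}_r\right) \leq \mathds{P}(\mathcal{B}_0) + \sum_{r=1}^6 \mathds{P}(\mathcal{B}_0^c \cap \mathcal{B}_r).
\]
For $r \in \{1,2,3,5\}$, Proposition \ref{prop_TCL}, Lemma \ref{lem_zeta_E_and_I} and Lemma \ref{lem_bound_zeta} already give directly a bound of the form $e^{-c_r(\ln n)^2}$ for $n$ large enough.

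For the two remaining indices $r=4$ and $r=6$, I would chain through the additional ``good events'' appearing in the hypotheses of Proposition \ref{prop_excursions} and Lemma \ref{lem_bound_sums}. More precisely, decompose
\[
\mathds{P}(\mathcal{B}_0^c \cap \mathcal{B}_4) \leq \mathds{P}(\mathcal{B}_0^c \cap \mathcal{B}_1) + \mathds{P}(\mathcal{B}_0^c \cap \mathcal{B}_3) + \mathds{P}(\mathcal{B}_0^c \cap \mathcal{B}_1^c \cap \mathcal{B}_3^c \cap \mathcal{B}_4)
\]
and
\[
\mathds{P}(\mathcal{B}_0^c \cap \mathcal{B}_6) \leq \mathds{P}(\mathcal{B}_0^c \cap \mathcal{B}_4) + \mathds{P}(\mathcal{B}_0^c \cap \mathcal{B}_4^c \cap \mathcal{B}_6),
\]
and apply Propositions \ref{prop_TCL} and \ref{prop_excursions} and Lemmas \ref{lem_zeta_E_and_I} and \ref{lem_bound_sums}. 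Each resulting term is bounded by some $e^{-c'(\ln n)^2}$ with $c'>0$ depending only on $w$ and $\varepsilon$.

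It remains to control $\mathds{P}(\mathcal{B}_0)$. Since $\mathcal{B}_0$ is the union of the $\mathcal{B}_0^{m,i,\iota}$ over $m$ in a range of length at most $4n^{(\alpha+4)/5}+1$, over $i$ in a range of length at most $2n^{(\alpha+4)/5}+1$, and over $\iota \in \{+,-\}$, the number of such triples is $O(n^{2(\alpha+4)/5})$. By Proposition \ref{prop_law_zeta}, each has probability at most $C_0 e^{-c_0 n^{(\alpha-1)/4}}$, so
\[
\mathds{P}(\mathcal{B}_0) \leq 16\,(n^{(\alpha+4)/5}+1)^2 \, C_0\, e^{-c_0 n^{(\alpha-1)/4}}.
\]
Since $n^{(\alpha-1)/4}$ grows faster than any power of $\ln n$, this quantity is eventually bounded by $e^{-(\ln n)^2}$, and in particular by $e^{-c(\ln n)^2}$ for any prescribed $c>0$, once $n$ is large enough.

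Collecting all the bounds, each of the seven contributions is at most a constant times $e^{-c_r(\ln n)^2}$ with $c_r>0$. Taking $c = \tfrac{1}{2}\min(c_1, c_3, c_4, c_5, c_6)$ (or any smaller positive constant that absorbs the polynomial prefactors and the $\mathcal{B}_0$ contribution), one obtains $\mathds{P}(\bigcup_{r=0}^6 \mathcal{B}_r) \leq e^{-c(\ln n)^2}$ for $n$ large enough, as desired.
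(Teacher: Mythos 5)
Your proof is correct and follows essentially the same route as the paper: you decompose $\bigcup_{r=0}^6 \mathcal{B}_r$ by successively intersecting with the complements of the ``easier'' bad events, apply Propositions \ref{prop_law_zeta}, \ref{prop_TCL}, \ref{prop_excursions} and Lemmas \ref{lem_zeta_E_and_I}, \ref{lem_bound_zeta}, \ref{lem_bound_sums} term by term, and absorb the polynomially many indices and prefactors into the constant. The paper organizes the decomposition in a single line and leaves the polynomial union bound for $\mathcal{B}_0$ implicit, whereas you unfold it in two telescoping steps and spell out the counting, but the substance is identical.
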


\begin{proof}
We can write 
\[
 \mathds{P}\left(\bigcup_{r=0}^6 \mathcal{B}_r\right) 
  \leq \mathds{P}(\mathcal{B}_0) + \mathds{P}(\mathcal{B}_0^c \cap \mathcal{B}_1) 
 + \mathds{P}(\mathcal{B}_0^c \cap \mathcal{B}_2) + \mathds{P}(\mathcal{B}_0^c \cap \mathcal{B}_3) 
  + \mathds{P}(\mathcal{B}_0^c \cap \mathcal{B}_1^c \cap \mathcal{B}_3^c \cap \mathcal{B}_4)
 + \mathds{P}(\mathcal{B}_0^c \cap \mathcal{B}_5) + \mathds{P}(\mathcal{B}_0^c \cap \mathcal{B}_4^c \cap \mathcal{B}_6).
\]
Proposition \ref{prop_law_zeta} implies that when $n$ is large enough, $\mathds{P}(\mathcal{B}_0) \leq e^{-c_0n^{(\alpha-1)/4}/2}$. By Proposition \ref{prop_TCL}, $\mathds{P}(\mathcal{B}_0^c \cap \mathcal{B}_1), \mathds{P}(\mathcal{B}_0^c \cap \mathcal{B}_2) \leq e^{-c_1 (\ln n)^2}$ when $n$ is large enough. By Lemma \ref{lem_zeta_E_and_I}, $\mathds{P}(\mathcal{B}_0^c \cap \mathcal{B}_3) \leq e^{-c_3 (\ln n)^2}$ when $n$ is large enough. By Proposition \ref{prop_excursions}, $\mathds{P}(\mathcal{B}_0^c \cap \mathcal{B}_1^c \cap \mathcal{B}_3^c \cap \mathcal{B}_4) \leq e^{-c_4 (\ln n)^2}$ when $n$ is large enough. By Lemma \ref{lem_bound_zeta}, $\mathds{P}(\mathcal{B}_0^c \cap \mathcal{B}_5) \leq e^{-c_5 (\ln n)^2}$ when $n$ is large enough. By Lemma \ref{lem_bound_sums}, $\mathds{P}(\mathcal{B}_0^c \cap \mathcal{B}_4^c \cap \mathcal{B}_6) \leq e^{-c_6 (\ln n)^2}$ when $n$ is large enough. We deduce that if $c=\frac{1}{2}\min(c_1,c_3,c_4,c_5,c_6)$, then $\mathds{P}(\bigcup_{r=0}^6 \mathcal{B}_r) \leq e^{-c (\ln n)^2}$ when $n$ is large enough. 
\end{proof}

\section{A discrete reflected random walk}\label{sec_reflected_RW}

We recall that $\varepsilon > 0$, that the $\zeta_j^{m,\pm,B}$, $\zeta_j^{m,\pm,E}$ were defined in Definition \ref{def_local_times} and the $\zeta_j^{m,\pm,I}$ before Proposition \ref{prop_def_zetaI}. Our goal in this section is to prove that $\sum \zeta_j^{m,-,E}$ (with a corresponding statement for $\sum \zeta_j^{m,+,E}$) behaves roughly as a ``random walk reflected on $\sum \zeta_j^{m,-,B}$''. In order to do that, we will introduce a discrete process $S^{m,-,I}$ that is roughly ``the random walk $\sum \zeta_j^{m,-,I}$ reflected on $\sum \zeta_j^{m,-,B}$'' (Definition \ref{def_SI}), and prove that if the bad events $\mathcal{B}_{m,1}^-,...,\mathcal{B}_{m,6}^-$ defined at the beginning of Section \ref{sec_bad_events} do not occur, then $S^{m,-,I}$ is very close to $\sum \zeta_j^{m,-,E}$ (Proposition \ref{prop_sym}). When $\sum \zeta_j^{m,-,E}$ is far above $\sum \zeta_j^{m,-,B}$, it will evolve similarly to $S^{m,-,I}$, thus the hard part will be to deal with what happens near $\sum \zeta_j^{m,-,B}$. We begin by recalling the definition of the reflected Brownian motion (the definition of a discrete-time reflected random walk is similar).  

\begin{definition}\label{def_reflected_BM}
 Let $a <b$ be real numbers, $f : [a,b] \mapsto \mathds{R}$ a continuous function, and $(W_t)_{t\in[a,b]}$ a Brownian motion so that $W_a \geq f(a)$. The reflection of $W$ on $f$ is the process $W'$ defined as follows. If $W_a=f(a)$, for all $t\in[a,b]$ we set $W_t'=W_t+\sup_{a \leq s \leq t}(f(s)-W_s)$. If $W_a > f(a)$, if $t_0$ denotes $b \wedge \inf\{t\in[a,b] \,|\, W_t=f(t)\}$, for $t\in[a,t_0]$ we set $W_t'=W_t$, and for $t\in[t_0,b]$ we set $W_t'=W_t+\sup_{t_0 \leq s \leq t}(f(s)-W_s)$. If $f$ is random, a Brownian motion reflected on $f$ without further precision will be the reflection on $f$ of a Brownian motion independent of $f$.
\end{definition}

We now introduce the following notation.

\begin{definition}\label{def_environments}
 For any $m \in \mathds{N}$, we will define processes $(S^{m,-,B}_i)_{i > X_m-\lfloor \varepsilon n \rfloor}$, 
 $(S^{m,-,E}_i)_{i > X_m-\lfloor \varepsilon n \rfloor}$, $(S^{m,+,B}_i)_{i \leq X_m+\lfloor \varepsilon n \rfloor}$, 
 $(S^{m,+,E}_i)_{i \leq X_m+\lfloor \varepsilon n \rfloor}$ so that for $\Xi=B$ or $E$, 
 \begin{gather*}
 S^{m,-,\Xi}_{X_m-\lfloor \varepsilon n \rfloor+1}=0 \text{ and }
 \forall\, i \geq X_m-\lfloor \varepsilon n \rfloor+1, S^{m,-,\Xi}_{i+1} = S^{m,-,\Xi}_i+\zeta^{m,-,\Xi}_i \\
 S^{m,+,\Xi}_{X_m+\lfloor \varepsilon n \rfloor} = 0  \text{ and }
 \forall\, i \leq X_m+\lfloor \varepsilon n \rfloor-1, S^{m,+,\Xi}_i = S^{m,+,\Xi}_{i+1}+\zeta^{m,+,\Xi}_i. 
 \end{gather*}
 We then have $L_i^{m,-}=S^{m,-,E}_i-S^{m,-,B}_i$ for $i > X_m-\lfloor \varepsilon n \rfloor$ 
 and $L_i^{m,+}=S^{m,+,E}_i-S^{m,+,B}_i+1$ for $i \leq X_m+\lfloor \varepsilon n \rfloor$. 
\end{definition}

\begin{definition}\label{def_SI}
 For any $m \in \mathds{N}$, we define the processes $(S^{m,-,I}_i)_{X_m-\lfloor \varepsilon n \rfloor < i \leq X_m}$ 
 and $(S^{m,+,I}_i)_{X_m < i \leq X_m+\lfloor \varepsilon n \rfloor}$ by 
 \begin{gather*}
  S^{m,-,I}_{X_m-\lfloor \varepsilon n \rfloor+1}=0 \text{ and }
  \forall\, i \in \{X_m-\lfloor \varepsilon n \rfloor+1,\dots,X_m-1\}, 
  S^{m,-,I}_{i+1}=\left\{\begin{array}{ll}
                       S^{m,-,I}_i+\zeta_i^{m,-,I} & \text{if }S^{m,-,I}_i+\zeta_i^{m,-,I} \geq S^{m,-,B}_{i+1},\\
                       S^{m,-,B}_{i+1} & \text{otherwise},
                              \end{array}\right.\\
  S^{m,+,I}_{X_m+\lfloor \varepsilon n \rfloor}=0 \text{ and }
  \forall\, i \in \{X_m+1,\dots,X_m+\lfloor \varepsilon n \rfloor-1\}, 
  S^{m,+,I}_{i}=\left\{\begin{array}{ll}
                       S^{m,+,I}_{i+1}+\zeta_i^{m,+,I} & 
                       \text{if }S^{m,+,I}_{i+1}+\zeta_i^{m,+,I} \geq S^{m,+,B}_{i},\\
                       S^{m,+,B}_{i} & \text{otherwise}.
                              \end{array}\right.
 \end{gather*}
\end{definition}

The following lemma shows that ``$S^{m,\pm,I}$ is the random walk $\sum \zeta_j^{m,\pm,I}$ reflected on $\sum \zeta_j^{m,\pm,B}$''.

\begin{lemma}\label{lem_SI_max}
 For any $m\in\mathds{N}$, we have that for all $i\in\{X_m-\lfloor \varepsilon n \rfloor+1,...,X_m\}$, 
 \[
 S^{m,-,I}_i=\sum_{j=X_m-\lfloor \varepsilon n \rfloor+1}^{i-1}\zeta_j^{m,-,I}+\max_{X_m-\lfloor \varepsilon n \rfloor+1 \leq j \leq i}\left(S^{m,-,B}_{j}-\sum_{j'=X_m-\lfloor \varepsilon n \rfloor+1}^{j-1}\zeta_{j'}^{m,-,I}\right),
 \]
 and for all $i\in\{X_m+1,\dots,X_m+\lfloor \varepsilon n \rfloor\}$,
 \[
 S^{m,+,I}_i=\sum_{j=i}^{X_m+\lfloor \varepsilon n \rfloor-1}\zeta_j^{m,+,I}+\max_{i\leq j \leq X_m+\lfloor \varepsilon n \rfloor}\left(S^{m,+,B}_{j}-\sum_{j'=j}^{X_m+\lfloor \varepsilon n \rfloor-1}\zeta_{j'}^{m,+,I}\right).
 \]
\end{lemma}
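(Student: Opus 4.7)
The plan is to prove both identities by straightforward induction, treating the definition of $S^{m,\pm,I}$ as a discrete Skorohod-type reflection. I write out the argument for $S^{m,-,I}$; the case of $S^{m,+,I}$ is completely symmetric under reversal of the index (the recursion runs from $X_m+\lfloor\varepsilon n\rfloor$ downward, and the same reasoning applies).

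For convenience set $a=X_m-\lfloor\varepsilon n\rfloor+1$, $W_i=\sum_{j=a}^{i-1}\zeta_j^{m,-,I}$ (so $W_a=0$) and $M_i=\max_{a\leq j\leq i}(S^{m,-,B}_j-W_j)$. The claim is $S^{m,-,I}_i=W_i+M_i$. The base case $i=a$ is immediate: by Definition \ref{def_environments} we have $S^{m,-,B}_a=0$, so $M_a=0$ and $W_a=0$, matching $S^{m,-,I}_a=0$.

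For the inductive step, assume $S^{m,-,I}_i=W_i+M_i$ for some $a\leq i\leq X_m-1$, and note $W_{i+1}=W_i+\zeta_i^{m,-,I}$ together with
\[
M_{i+1}=\max\bigl(M_i,\,S^{m,-,B}_{i+1}-W_{i+1}\bigr).
\]
Consider the two branches of Definition \ref{def_SI}. If $S^{m,-,I}_i+\zeta_i^{m,-,I}\geq S^{m,-,B}_{i+1}$, then $S^{m,-,I}_{i+1}=S^{m,-,I}_i+\zeta_i^{m,-,I}=W_{i+1}+M_i$, and the inequality rewrites as $M_i\geq S^{m,-,B}_{i+1}-W_{i+1}$, hence $M_{i+1}=M_i$ and the identity holds. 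Otherwise $S^{m,-,I}_{i+1}=S^{m,-,B}_{i+1}$, and the strict inequality $W_{i+1}+M_i<S^{m,-,B}_{i+1}$ forces $M_{i+1}=S^{m,-,B}_{i+1}-W_{i+1}$, whence $W_{i+1}+M_{i+1}=S^{m,-,B}_{i+1}=S^{m,-,I}_{i+1}$. This closes the induction.

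There is no real obstacle: this is the standard derivation of the Skorohod reflection formula in discrete time, and both cases of the recursion in Definition \ref{def_SI} align exactly with the two cases for updating the running maximum $M_i$. The $+$ version follows by applying the same argument to the reversed index, using $S^{m,+,B}_{X_m+\lfloor\varepsilon n\rfloor}=0$ as the base and the symmetric recursion.
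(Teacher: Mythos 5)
Your proof is correct and takes essentially the same approach as the paper: an induction on $i$ with the same two-case split according to the branches of Definition \ref{def_SI}, only packaged more cleanly with the shorthand $W_i,M_i$ and the explicit running-maximum recursion $M_{i+1}=\max(M_i,\,S^{m,-,B}_{i+1}-W_{i+1})$.
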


\begin{proof}
 Let $m \in \mathds{N}$. We will write the proof for $S^{m,-,I}$; the same argument also applies to $S^{m,+,I}$. To shorten the notation, we will drop the exponents $m,-$, and write $i_1=X_m-\lfloor \varepsilon n \rfloor+1$, $i_2=X_m$. We thus want to prove that for each $i\in\{i_1,...,i_2\}$ we have 
 \[
 S^{I}_i=\sum_{j=i_1}^{i-1}\zeta_j^{I}+\max_{i_1 \leq j \leq i}\left(S^{B}_{j}-\sum_{j'=i_1}^{j-1}\zeta_{j'}^{I}\right).
 \]
 We will prove it by induction on $i$. For $i=i_1$, this comes from the definition of the processes. Now let $i\in\{i_1,...,i_2-1\}$ so that $S^{I}_i=\sum_{j=i_1}^{i-1}\zeta_j^{I}+\max_{i_1 \leq j \leq i}(S^{B}_{j}-\sum_{j'=i_1}^{j-1}\zeta_{j'}^{I})$. There are two possibilities. 
 
 The first possibility is $S_i^I+\zeta_i^I \geq S_{i+1}^B$. In this case, $S_{i+1}^I=S_i^I+\zeta_i^I$. Moreover, we have $\sum_{j=i_1}^{i-1}\zeta_j^{I}+\max_{i_1 \leq j \leq i}(S^{B}_{j}-\sum_{j'=i_1}^{j-1}\zeta_{j'}^{I})+\zeta_i^I\geq S_{i+1}^B$, hence $\max_{i_1 \leq j \leq i}(S^{B}_{j}-\sum_{j'=i_1}^{j-1}\zeta_{j'}^{I}) \geq S_{i+1}^B-\sum_{j=i_1}^{i}\zeta_{j}^{I}$, thus $\max_{i_1 \leq j \leq i+1}(S^{B}_{j}-\sum_{j'=i_1}^{j-1}\zeta_{j'}^{I})=\max_{i_1 \leq j \leq i}(S^{B}_{j}-\sum_{j'=i_1}^{j-1}\zeta_{j'}^{I})$, so $\sum_{j=i_1}^{i}\zeta_{j}^{I}+\max_{i_1 \leq j \leq i+1}(S^{B}_{j}-\sum_{j'=i_1}^{j-1}\zeta_{j'}^{I})=\sum_{j=i_1}^{i-1}\zeta_{j}^{I}+\max_{i_1 \leq j \leq i}(S^{B}_{j}-\sum_{j'=i_1}^{j-1}\zeta_{j'}^{I})+\zeta_i^I=S_i^I+\zeta_i^I=S_{i+1}^{I}$, which is what we want. 
 
 The other possibility is $S_i^I+\zeta_i^I < S_{i+1}^B$. In this case, $S_{i+1}^I=S_{i+1}^B$. Furthermore, we have $\sum_{j=i_1}^{i-1}\zeta_j^{I}+\max_{i_1 \leq j \leq i}(S^{B}_{j}-\sum_{j'=i_1}^{j-1}\zeta_{j'}^{I})+\zeta_i^I < S_{i+1}^B$, so $\max_{i_1 \leq j \leq i}(S^{B}_{j}-\sum_{j'=i_1}^{j-1}\zeta_{j'}^{I}) < S_{i+1}^B-\sum_{j=i_1}^{i}\zeta_j^{I}$, hence $\max_{i_1 \leq j \leq i+1}(S^{B}_{j}-\sum_{j'=i_1}^{j-1}\zeta_{j'}^{I}) = S_{i+1}^B-\sum_{j=i_1}^{i}\zeta_j^{I}$. We deduce $\sum_{j=i_1}^{i}\zeta_j^{I}+\max_{i_1 \leq j \leq i+1}(S^{B}_{j}-\sum_{j'=i_1}^{j-1}\zeta_{j'}^{I})=\sum_{j=i_1}^{i}\zeta_j^{I}+S_{i+1}^B-\sum_{j=i_1}^{i}\zeta_j^{I}=S_{i+1}^B=S_{i+1}^I$, which is the desired result. 
\end{proof}

The following proposition is the main result of the section: if the bad events do not occur, $S^{m,\pm,I}$ is close to $S^{m,\pm,E}$.

\begin{proposition}\label{prop_sym}
 When $n$ is large enough, for any $m \in \mathds{N}$, if $\bigcap_{r=1}^6 (\mathcal{B}_{m,r}^-)^c$ occurs then 
 for all $i \in \{X_m-\lfloor \varepsilon n \rfloor+1,\dots,X_m\}$, $S^{m,-,I}_i-(\ln n)^8 n^{1/4} \leq 
 S^{m,-,E}_i \leq S^{m,-,I}_i+\lceil(\ln n)^3\rceil$, and if $\bigcap_{r=1}^6 (\mathcal{B}_{m,r}^+)^c$ occurs then 
 for all $i \in \{X_m+1,\dots,X_m+\lfloor\varepsilon n\rfloor\}$, $S^{m,+,I}_i-(\ln n)^8 n^{1/4} \leq 
 S^{m,+,E}_i \leq S^{m,+,I}_i+\lceil(\ln n)^3\rceil$.
\end{proposition}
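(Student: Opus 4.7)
I would prove the $-$ case; the $+$ case is entirely symmetric (read indices the other way and swap the roles of $B,E,I$). Write $i_1=X_m-\lfloor\varepsilon n\rfloor+1$ and drop the superscripts $m,-$ throughout. Recall the key facts: $L_i=S^E_i-S^B_i\geq 0$; on $(\mathcal{B}_{m,3}^-)^c$, $\zeta^E_k=\zeta^I_k$ whenever $L_k\geq(\ln n)^2$; on $(\mathcal{B}_{m,5}^-)^c$, all $|\zeta^{B,E,I}_k|\leq(\ln n)^2$; on $(\mathcal{B}_{m,1}^-)^c\cap(\mathcal{B}_{m,4}^-)^c$, excursions of $L$ below $(\ln n)^3$ have length at most $(\ln n)^8$ and total size at most $(\ln n)^{10}\sqrt n$; and $\mathcal{B}_{m,6}^-$ controls partial sums of both $\zeta^I$ and $\zeta^B$ restricted to bad indices by $(\ln n)^7 n^{1/4}$. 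I will prove the upper bound $S^E_i\leq S^I_i+\lceil(\ln n)^3\rceil$ and the lower bound $S^I_i\leq S^E_i+(\ln n)^8 n^{1/4}$ separately; the lower bound is the difficult one.

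For the upper bound, I would take $j^{\star}$ to be the largest index in $\{i_1,\dots,i\}$ with $L_{j^{\star}}<(\ln n)^2$, which exists because $L_{i_1}=0$. If $j^{\star}=i$, then $S^E_i-S^B_i=L_i<(\ln n)^2$ and $S^I_i\geq S^B_i$ give the bound. Otherwise, for every $k\in\{j^{\star}+1,\dots,i-1\}$ we have $L_k\geq(\ln n)^2$ and hence $\zeta^E_k=\zeta^I_k$, while $L_{j^{\star}+1}\leq L_{j^{\star}}+|\zeta^E_{j^{\star}}|+|\zeta^B_{j^{\star}}|\leq 3(\ln n)^2$, giving $S^E_{j^{\star}+1}\leq S^B_{j^{\star}+1}+3(\ln n)^2\leq S^I_{j^{\star}+1}+3(\ln n)^2$. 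Because $S^I$ is obtained by reflecting the $\zeta^I$-walk on the barrier, it satisfies $S^I_i\geq S^I_{j^{\star}+1}+\sum_{k=j^{\star}+1}^{i-1}\zeta^I_k=S^I_{j^{\star}+1}+(S^E_i-S^E_{j^{\star}+1})$, which rearranges into $S^E_i-S^I_i\leq 3(\ln n)^2\leq\lceil(\ln n)^3\rceil$ for $n$ large.

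For the lower bound, Lemma \ref{lem_SI_max} gives $S^I_i=\max_{j}(S^B_j+\sum_{k=j}^{i-1}\zeta^I_k)$, and because $L\geq 0$ the reflected walk driven by $\zeta^E$ never touches the barrier after its starting point, so the analogous formula with $\zeta^I$ replaced by $\zeta^E$ yields exactly $S^E_i$. Subtracting, $S^I_i-S^E_i\leq\max_{j}\sum_{k=j}^{i-1}(\zeta^I_k-\zeta^E_k)$, and (using $|\max|\leq 2\max|\cdot|$ on partial sums from $i_1$) it suffices to bound $\max_{j\leq i}|\sum_{k=i_1}^{j-1}(\zeta^I_k-\zeta^E_k)|$. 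On $(\mathcal{B}_{m,3}^-)^c$ the summand vanishes whenever $L_k\geq(\ln n)^2$, so (adding harmless zeros from indices in $\{L_k\in[(\ln n)^2,(\ln n)^3)\}$) this equals $\max_L|\sum_{\ell=1}^L(\zeta^I_{I^{m,-}(\ell)}-\zeta^E_{I^{m,-}(\ell)})|$ for $L\leq\ell^{m,-}_{\max}$. Writing $\zeta^E_{I(\ell)}-\zeta^B_{I(\ell)}=L_{I(\ell)+1}-L_{I(\ell)}$ decomposes this into $\sum_{\ell=1}^L(\zeta^I_{I(\ell)}-\zeta^B_{I(\ell)})$, controlled by $2(\ln n)^7 n^{1/4}$ through $\mathcal{B}_{m,6}^-$, and the delicate telescoping remainder $\sum_{\ell=1}^L(L_{I(\ell)+1}-L_{I(\ell)})$.

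The hard part will be to bound this last telescoping sum. Decomposing $\{I^{m,-}(\ell)\}$ into its maximal consecutive runs $[a_k,b_k]$ (the excursions of $L$ below $(\ln n)^3$), each full run contributes $L_{b_k+1}-L_{a_k}$ and cross-run jumps $L_{b_k+1}-L_{a_{k+1}}$ can, on $(\mathcal{B}_{m,3}^-)^c$, be rewritten as $-\sum_{i=b_k+1}^{a_{k+1}-1}(\zeta^I_i-\zeta^B_i)$ because in each intervening above-$(\ln n)^3$ stretch the identity $\zeta^E_i=\zeta^I_i$ is available. The main obstacle is that naive per-segment or per-index estimates only give $O((\ln n)^{12}\sqrt n)$, missing the target by a factor $n^{1/4}$: the improvement forces one to combine the bad-index partial sums with the above-segment sums obtained through the cross-run identity so that the $\sqrt n$-sized pieces on each side cancel and the remainder collapses into a single partial sum at bad indices, for which $\mathcal{B}_{m,6}^-$ finally supplies the $(\ln n)^7 n^{1/4}$ bound (the extra factor $\ln n$ absorbs boundary terms $L_{I(1)}$, $L_{I(L)+1}$ and the partial contribution of the run containing $I(L)$, each of size $O((\ln n)^3)$). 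This cancellation is the "rather complex inequality" referred to in Section \ref{sec_outline_conv} and is the principal technical content of the proposition.
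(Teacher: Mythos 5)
Your upper bound argument for $S^{m,-,E}_i\le S^{m,-,I}_i+\lceil(\ln n)^3\rceil$ is correct and more direct than the paper's, which recovers it as a by-product of a two-process comparison. The lower bound, however, has two genuine gaps, both in the step you yourself flag as the hard part.

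The first is the reduction to the absolute value $\max_{j\le i}\bigl|\sum_{k=i_1}^{j-1}(\zeta^{m,-,I}_k-\zeta^{m,-,E}_k)\bigr|$. This quantity is \emph{not} $O(n^{1/4})$. In your own decomposition $(\zeta^I_{I(\ell)}-\zeta^B_{I(\ell)})-(L_{I(\ell)+1}-L_{I(\ell)})$, the second piece has a lower bound of order $(\ln n)^3$ but no useful upper bound: each interior run $[a_k,b_k]$ contributes $L_{b_k+1}-L_{a_k}\in(0,4(\ln n)^2]$, and on $(\mathcal{B}_{m,4}^-)^c$ there can be as many as $(\ln n)^{10}\sqrt n$ runs, so the $L$-increment partial sums can be $\Theta((\ln n)^{12}\sqrt n)$. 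Equivalently, $\sum_{k=i_1}^{j-1}(\zeta^I_k-\zeta^E_k)$ is the difference of the unreflected $\zeta^I$-walk and $S^E_j$, and the former can fall order $\sqrt n$ below the barrier. Only the one-sided inequality $S^I_i-S^E_i\le\max_j\sum_{k=j}^{i-1}(\zeta^I_k-\zeta^E_k)$ is usable, and for its upper bound you need a \emph{lower} bound on $\sum(L_{I(\ell)+1}-L_{I(\ell)})$, not an absolute-value bound.

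The second gap is the mechanism you propose for the telescoping remainder. Rewriting cross-run jumps $L_{b_k+1}-L_{a_{k+1}}$ as good-index sums via $(\mathcal{B}_{m,3}^-)^c$ and ``combining to cancel'' the bad-index partial sums reconstructs the full-range i.i.d.\ drift $\sum_k(\zeta^I_k-\zeta^B_k)$, a $\sqrt n$-sized quantity with nothing to cancel against; the route is circular. The structural fact that actually works is simpler and does not appear in your plan: by the definition of $I^{m,-}(\ell)$, for every interior run $L_{b_k+1}\ge(\ln n)^3>L_{a_k}$, so $L_{b_k+1}-L_{a_k}>0$, and therefore any partial sum $\sum_{\ell=\ell_1}^{\ell_2}(L_{I(\ell)+1}-L_{I(\ell)})\ge -L_{a_{K'}}>-(\ln n)^3$, only the boundary term from the last (possibly partial) run surviving. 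Combined with $(\mathcal{B}_{m,6}^-)^c$ on the $(\zeta^I-\zeta^B)$-piece, this gives $S^I_i-S^E_i\le 2(\ln n)^7 n^{1/4}+(\ln n)^3\le(\ln n)^8n^{1/4}$.

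For the record, the paper avoids the max formula entirely: it introduces $S'_i=\max(S^E_i,S^B_i+\lceil(\ln n)^3\rceil)$ and the reflection $S''$ of the $\zeta^I$-walk on the shifted barrier $S^B+\lceil(\ln n)^3\rceil$, and proves by induction the sandwich $S'_i\le S''_i\le S'_i+\bigl(\max_{\ell_1}\sum_{\ell_2=\ell_1}^{\cdot}(\zeta^I_{I(\ell_2)}-\zeta^B_{I(\ell_2)})\bigr)_+$, with the sums taken over bad indices below $i$; both the restriction to bad indices and the one-sidedness (through the positive part) are built into the induction. Your max-formula route is genuinely different and could be made to work, but the repair --- one-sidedness, and the nonnegativity of interior-run contributions --- is precisely what your proposal is missing.
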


\begin{proof}
 Let $m \in \mathds{N}$. We will write the proof for $S^{m,-,E}$; the same argument also applies to $S^{m,+,E}$. 
 In order to lighten the notation, we will drop the exponents $m,-$, and write $i_1=X_m-\lfloor \varepsilon n \rfloor+1$, $i_2=X_m$. 
 
 The idea of the proof is that when $S_i^E \geq S_i^B+(\ln n)^3$, then $L_i \geq (\ln n)^3$ by Definition \ref{def_environments}, thus since $\mathcal{B}_{m,3}^-$ holds we have $\zeta_i^E=\zeta_i^I$, therefore $S_{i+1}^E=S_i^E+\zeta_i^E=S_i^E+\zeta_i^I$. Now, if $S_i^I$ is not too close to $S_i^B$ we also have $S^I_{i+1}=S_i^I+\zeta_i^I$, so $S^E$ and $S^I$ evolve in the same way. Consequently, the difference between $S^E$ and $S^I$ comes only from the $i$ such that $L_i < (\ln n)^3$, and the fact the bad events do not occur will imply the difference thus accrued is small. In order to make this argument work, we need to show that when $L_i \geq (\ln n)^3$, $S_i^I$ is not too close to $S_i^B$. However, it may not actually be the case for all $i$. To solve this problem, we will actually use the aforementioned argument with some processes $S'$ and $S''$, which will respectively be close to $S^E$ and $S^I$.
 
 We begin by proving that $S^E$ is close to the auxiliary process $S'$ defined 
 for $i \in \{i_1,\dots,i_2\}$ by $S_i' = \max(S^E_i,S^B_i+\lceil(\ln n)^3\rceil)$. 
 For the $i$ such that $S_i'=S^E_i$, it is obvious. If $i$ is such that $S_i'=S^B_i+\lceil(\ln n)^3\rceil$, 
 then we have $S^E_i \leq S^B_i+\lceil(\ln n)^3\rceil$. Moreover, by Definition \ref{def_environments} 
 $S^E_i-S^B_i=L_i \geq 0$, so $S^B_i \leq S^E_i \leq S^B_i+\lceil(\ln n)^3\rceil$, which means 
 $S_i' - \lceil(\ln n)^3\rceil \leq S^E_i \leq S_i'$. We deduce 
 \begin{equation}\label{eq_reflected_easy}
 \forall\, i \in \{i_1,\dots,i_2\}, \quad S_i' - \lceil(\ln n)^3\rceil \leq S^E_i \leq S_i'.
 \end{equation}
 
 We now prove that $S^I$ is close to an auxiliary process $S''$ which will be ``the random walk $\sum\zeta_i^I$ reflected on $S^{B}+\lceil(\ln n)^3\rceil$''. More precisely, $S_i''$ is defined for 
 $i \in \{i_1,\dots,i_2\}$ as follows: $S_{i_1}'' = \lceil(\ln n)^3\rceil$, and for any $i \in \{i_1,\dots,i_2-1\}$, 
 \[
 S_{i+1}''=\left\{\begin{array}{ll}
                S_i'' +\zeta_i^I &\text{if }S_i'' +\zeta_i^I \geq S^{B}_{i+1}+\lceil(\ln n)^3\rceil,\\
                S^{B}_{i+1}+\lceil(\ln n)^3\rceil&\text{otherwise.}
               \end{array}\right.
\]
 Since $S^I$ is ``the random walk $\sum\zeta_i^I$ reflected on $S^{B}$'' and $S''$ is ``the random walk $\sum\zeta_i^I$ reflected on $S^{B}+\lceil(\ln n)^3\rceil$'', we can expect $S^I$ and $S''$ to be close. We are going to prove by induction on $i \in \{i_1,\dots,i_2\}$ that 
 $S_i^I \leq S_i'' \leq S_i^I + \lceil(\ln n)^3\rceil$. It is true for $i=i_1$ by the definition of the processes. 
 We now suppose it is true for some $i \in \{i_1,\dots,i_2-1\}$ and prove it for $i+1$. If $S_i^I+\zeta_i^I \geq S^B_{i+1}$ and 
 $S_i''+\zeta_i^I \geq S^B_{i+1}+\lceil(\ln n)^3\rceil$, then $S_{i+1}^I - S_{i+1}'' = 
 (S_i^I+\zeta_i^I)-(S_i''+\zeta_i^I)=S_i^I-S_i''$, which is enough. 
 If $S_i^I+\zeta_i^I \geq S^B_{i+1}$ and $S_i''+\zeta_i^I < S^B_{i+1}+\lceil(\ln n)^3\rceil$, 
 then $S_{i+1}^I = S_i^I+\zeta_i^I \leq S_i''+\zeta_i^I < S^B_{i+1}+\lceil(\ln n)^3\rceil=S_{i+1}''$, 
 thus $S_{i+1}^I \leq S_{i+1}''$, and $S_{i+1}'' = S^B_{i+1}+\lceil(\ln n)^3\rceil \leq 
 S_i^I+\zeta_i^I+\lceil(\ln n)^3\rceil=S_{i+1}^I+\lceil(\ln n)^3\rceil$, which is enough. 
 If $S_i^I+\zeta_i^I < S^B_{i+1}$ and $S_i''+\zeta_i^I \geq S^B_{i+1}+\lceil(\ln n)^3\rceil$, 
 then $S_i''-S_i^I > \lceil(\ln n)^3\rceil$, so this case is impossible. Finally, 
 if $S_i^I+\zeta_i^I < S^B_{i+1}$ and $S_i''+\zeta_i^I < S^B_{i+1}+\lceil(\ln n)^3\rceil$, 
 then $S_{i+1}^I=S_{i+1}^B$ and $S_{i+1}'' = S^{B}_{i+1}+\lceil(\ln n)^3\rceil$, which is enough. We deduce that 
 \begin{equation}\label{eq_reflected_medium}
 \forall\, i \in \{i_1,\dots,i_2\}, \quad S_i^I \leq S_i'' \leq S_i^I + \lceil(\ln n)^3\rceil.
 \end{equation}

 We are now able to show that the only difference between $S'$ and $S''$ comes from the $i$ such that $L_i < (\ln n)^3$. We denote $\ell(i_1)=0$, and for any $i \in \{i_1+1,\dots,i_2\}$, 
 $\ell(i)=|\{j \in \{i_1,\dots,i-1\} \,|\, L_j < (\ln n)^3\}|$. 
 We are going to prove the following by induction on $i \in \{i_1,\dots,i_2\}$:
 \begin{equation}\label{eq_reflected_hard}
  S_i' \leq S_i'' \leq S_i' +\left(\max_{1 \leq \ell_1 \leq \ell(i)}
  \sum_{\ell_2=\ell_1}^{\ell(i)}\left(\zeta_{I(\ell_2)}^I-\zeta^B_{I(\ell_2)}\right)\right)_+,
 \end{equation}
 where the maximum is 0 if $\ell(i)=0$. For $i=i_1$, we have 
 $S_{i_1}'=S_{i_1}''=\lceil(\ln n)^3\rceil$, so (\ref{eq_reflected_hard}) holds. 
 Now, let $i \in \{i_1,\dots,i_2-1\}$ and suppose (\ref{eq_reflected_hard}) holds for $i$. We will prove that it holds 
 also for $i+1$. 
 
 We first consider the case $L_i \geq (\ln n)^3$. \\
 In this case, $\ell(i+1)=\ell(i)$, so it is enough to 
 prove $0 \leq S_{i+1}'' - S_{i+1}' \leq S_i'' - S_i'$. We notice first that since $L_i \geq (\ln n)^3$, 
 $S_i^E-S_i^B=L_i \geq \lceil(\ln n)^3\rceil$, so $S_i^E \geq S_i^B+\lceil(\ln n)^3\rceil$, so $S_i'=S_i^E$. 
 We also notice that since $L_i \geq (\ln n)^2$ and $(\mathcal{B}_{m,3}^-)^c$ occurs, $\zeta_i^E=\zeta_i^I$. \\
We begin by assuming $L_{i+1} \geq (\ln n)^3$. Then $S^E_{i+1}-S^B_{i+1} \geq \lceil(\ln n)^3\rceil$, so 
$S_{i+1}' = S^E_{i+1}$. This implies $S_{i+1}'=S_i^E+\zeta_i^E=S_i'+\zeta_i^I$. 
Moreover, $S_i''+\zeta_i^I \geq S_i'+\zeta_i^I 
= S_{i+1}' = S^E_{i+1} \geq S^B_{i+1}+\lceil(\ln n)^3\rceil$, so $S_{i+1}''=S_i''+\zeta_i^I$. 
This yields $S_{i+1}'' - S_{i+1}' = S_i'' - S_i'$, which is enough. \\
We now assume $L_{i+1} < (\ln n)^3$. Then $S^E_{i+1} - S^B_{i+1} < \lceil (\ln n)^3 \rceil$, so $S_{i+1}' = S^B_{i+1}+\lceil (\ln n)^3 \rceil$. 
If $S_i'' +\zeta_i^I < S^{B}_{i+1}+\lceil(\ln n)^3\rceil$, $S_{i+1}'' = S^{B}_{i+1}+\lceil(\ln n)^3\rceil$, 
so $S_{i+1}'' - S_{i+1}' = 0$, which is enough. If $S_i'' +\zeta_i^I \geq S^{B}_{i+1}+\lceil(\ln n)^3\rceil$, 
$S_{i+1}'' = S_i'' +\zeta_i^I$.
Furthermore, $S_{i+1}' \geq S_{i+1}^E=S_i^E+\zeta_i^E=S_i'+\zeta_i^I$. 
We deduce $S_{i+1}''-S_{i+1}' \leq S_i''+\zeta_i^I -(S_i'+\zeta_i^I) = S_i'' -S_i'$. 
In addition, $S_{i+1}'' = S_i'' +\zeta_i^I \geq S^{B}_{i+1}+\lceil(\ln n)^3\rceil=S_{i+1}'$, 
so $S_{i+1}''-S_{i+1}' \geq 0$, which is enough. Consequently, (\ref{eq_reflected_hard}) holds for $i+1$ in the case 
$L_i \geq (\ln n)^3$. 

We now consider the case $L_i < (\ln n)^3$. \\
We first show that $S_{i+1}'' \geq S_{i+1}'$. If $L_{i+1} < (\ln n)^3$, $S_{i+1}^E < S_{i+1}^B+\lceil(\ln n)^3\rceil$ 
so $S_{i+1}' = S_{i+1}^B+\lceil(\ln n)^3\rceil \leq S_{i+1}''$. If $L_{i+1} \geq (\ln n)^3$, we notice that 
$L_{i+1}=L_i+\zeta_i^E-\zeta_i^B$ by Observation \ref{obs_rec_temps_local}. Moreover, since $(\mathcal{B}_{m,5}^-)^c$ 
occurs, we have $|\zeta_i^E|,|\zeta_i^B| \leq (\ln n)^2$, so $L_i \geq (\ln n)^3-2(\ln n)^2 \geq (\ln n)^2$ when 
$n$ is large enough. Thus, since $(\mathcal{B}_{m,3}^-)^c$ occurs, $\zeta_i^E=\zeta_i^I$. We deduce 
$S^{B}_{i+1}+\lceil(\ln n)^3\rceil \leq S_{i+1}^E = S_i^E + \zeta_i^I \leq S_i' + \zeta_i^I 
\leq S_i'' + \zeta_i^I$, so $S_{i+1}'' = S_i'' + \zeta_i^I$. Furthermore, since $S_{i+1}^E \geq 
S^{B}_{i+1}+\lceil(\ln n)^3\rceil$, $S_{i+1}' = S_{i+1}^E \leq S_{i+1}''$. Therefore $S_{i+1}' \leq S_{i+1}''$ 
in all cases. \\
 We now show that we have 
 \[
 S_{i+1}'' - S_{i+1}' \leq \left(\max_{1 \leq \ell_1 \leq \ell(i+1)}
  \sum_{\ell_2=\ell_1}^{\ell(i+1)}\left(\zeta_{I(\ell_2)}^I-\zeta^B_{I(\ell_2)}\right)\right)_{+} = \left(\max_{1 \leq \ell_1 \leq \ell(i)+1}
  \sum_{\ell_2=\ell_1}^{\ell(i)+1}\left(\zeta_{I(\ell_2)}^I-\zeta^B_{I(\ell_2)}\right)\right)_{+}
  \]
  since $\ell(i+1)=\ell(i)+1$. 
  If $S_i'' +\zeta_i^I < S^{B}_{i+1}+\lceil(\ln n)^3\rceil$, $S_{i+1}'' = S^{B}_{i+1}+\lceil(\ln n)^3\rceil 
  \leq S_{i+1}'$, so $S_{i+1}''-S_{i+1}' \leq 0$, which is enough. Hence we consider the case 
  $S_i'' +\zeta_i^I \geq S^{B}_{i+1}+\lceil(\ln n)^3\rceil$. We have $S_{i+1}' \geq S^{B}_{i+1}+\lceil(\ln n)^3\rceil$, 
  so $S_{i+1}'' - S_{i+1}' \leq S_i'' +\zeta_i^I - S^{B}_{i+1} - \lceil(\ln n)^3\rceil 
  = S_i'' +\zeta_i^I - S^{B}_{i} - \lceil(\ln n)^3\rceil -\zeta_i^B$. Furthermore, since $L_i < (\ln n)^3$, 
  $S_i' = S_i^B+\lceil(\ln n)^3\rceil$, so we get $S_{i+1}'' - S_{i+1}' \leq 
  S_i'' - S_i' +\zeta_i^I -\zeta_i^B$. In addition, $I(\ell(i+1))=I(\ell(i)+1)=i$, thus we have 
  \begin{equation}\label{eq_reflected_aux}
  S_{i+1}'' - S_{i+1}' \leq S_i'' - S_i' +\zeta_{I(\ell(i)+1)}^I -\zeta_{I(\ell(i)+1)}^B. 
  \end{equation}
  We first assume $\max_{1 \leq \ell_1 \leq \ell(i)}\sum_{\ell_2=\ell_1}^{\ell(i)}(\zeta_{I(\ell_2)}^I-\zeta^B_{I(\ell_2)}) \geq 0$. 
  Then 
  \[
  \max_{1 \leq \ell_1 \leq \ell(i)}\sum_{\ell_2=\ell_1}^{\ell(i)+1}\left(\zeta_{I(\ell_2)}^I-\zeta^B_{I(\ell_2)}\right) \geq 
  \zeta_{I(\ell(i)+1)}^I -\zeta_{I(\ell(i)+1)}^B,
  \]
  so we have 
  \[
  \max_{1 \leq \ell_1 \leq \ell(i)+1}
  \sum_{\ell_2=\ell_1}^{\ell(i)+1}\left(\zeta_{I(\ell_2)}^I-\zeta^B_{I(\ell_2)}\right) = 
  \max_{1 \leq \ell_1 \leq \ell(i)}\sum_{\ell_2=\ell_1}^{\ell(i)+1}\left(\zeta_{I(\ell_2)}^I-\zeta^B_{I(\ell_2)}\right).
  \]
  Therefore, by (\ref{eq_reflected_hard}) and (\ref{eq_reflected_aux}), we obtain $S_{i+1}'' - S_{i+1}' \leq 
  \max_{1 \leq \ell_1 \leq \ell(i)+1}\sum_{\ell_2=\ell_1}^{\ell(i)+1}(\zeta_{I(\ell_2)}^I-\zeta^B_{I(\ell_2)})$, 
  which is enough. \\
  We now assume that $\max_{1 \leq \ell_1 \leq \ell(i)}
  \sum_{\ell_2=\ell_1}^{\ell(i)}(\zeta_{I(\ell_2)}^I-\zeta^B_{I(\ell_2)}) \leq 0$. 
  Then (\ref{eq_reflected_hard}) yields $S_i'' - S_i' \leq 0$, so by (\ref{eq_reflected_aux}) 
  $S_{i+1}'' - S_{i+1}' \leq \zeta_{I(\ell(i)+1)}^I -\zeta_{I(\ell(i)+1)}^B$. In addition, 
  \[
  \max_{1 \leq \ell_1 \leq \ell(i)} \sum_{\ell_2=\ell_1}^{\ell(i)+1}\left(\zeta_{I(\ell_2)}^I-\zeta^B_{I(\ell_2)}\right) \leq 
  \zeta_{I(\ell(i)+1)}^I -\zeta_{I(\ell(i)+1)}^B,
  \]
  so 
  \[
  \max_{1 \leq \ell_1 \leq \ell(i)+1} 
  \sum_{\ell_2=\ell_1}^{\ell(i)+1}\left(\zeta_{I(\ell_2)}^I-\zeta^B_{I(\ell_2)}\right) = \zeta_{I(\ell(i)+1)}^I -\zeta_{I(\ell(i)+1)}^B.
  \]
  We deduce $S_{i+1}'' - S_{i+1}' \leq \max_{1 \leq \ell_1 \leq \ell(i)+1}
  \sum_{\ell_2=\ell_1}^{\ell(i)+1}(\zeta_{I(\ell_2)}^I-\zeta^B_{I(\ell_2)})$, which is enough. \\
  Consequently, (\ref{eq_reflected_hard}) holds for $i+1$ in the case $L_i < (\ln n)^3$. 
  
  We deduce that (\ref{eq_reflected_hard}) holds for any $i \in \{i_1,\dots,i_2\}$. Moreover, for any $i \in \{i_1,\dots,i_2\}$, 
  since $(\mathcal{B}_{m,6}^-)^c$ occurs, we have 
  \[
  \left(\max_{1 \leq \ell_1 \leq \ell(i)}\sum_{\ell_2=\ell_1}^{\ell(i)}(\zeta_{I(\ell_2)}^I-\zeta^B_{I(\ell_2)})\right)_+ 
  \leq \left|\max_{1 \leq \ell_1 \leq \ell(i)}\sum_{\ell_2=\ell_1}^{\ell(i)}(\zeta_{I(\ell_2)}^I-\zeta^B_{I(\ell_2)})\right| 
  \leq \max_{1 \leq \ell_1 \leq \ell(i)}\left|\sum_{\ell_2=\ell_1}^{\ell(i)}\zeta_{I(\ell_2)}^I\right|
  + \max_{1 \leq \ell_1 \leq \ell(i)}\left|\sum_{\ell_2=\ell_1}^{\ell(i)}\zeta^B_{I(\ell_2)}\right| 
  \]
  \[
  \leq \max_{1 \leq \ell_1 \leq \ell_2 \leq \ell_{\textrm{max}}}\left|\sum_{\ell=\ell_1}^{\ell_2}\zeta_{I(\ell)}^I\right|
  + \max_{1 \leq \ell_1 \leq \ell_2 \leq \ell_{\textrm{max}}}\left|\sum_{\ell=\ell_1}^{\ell_2}\zeta^B_{I(\ell)}\right|
  \leq 2 (\ln n)^7 n^{1/4},
  \]
  therefore $S_i' \leq S_i'' \leq S_i' +2 (\ln n)^7 n^{1/4}$. From this, (\ref{eq_reflected_easy}) and 
  (\ref{eq_reflected_medium}), we deduce that for any $i \in \{i_1,\dots,i_2\}$, 
  $S_i^I-2 (\ln n)^7 n^{1/4}-\lceil(\ln n)^3\rceil \leq S_i^E \leq S_i^I + \lceil(\ln n)^3\rceil$, 
  so when $n$ is large enough, $S_i^I -(\ln n)^8 n^{1/4} \leq S_i^E \leq S_i^I + \lceil(\ln n)^3\rceil$, 
  which ends the proof. 
\end{proof}
 
 \section{Lower bounds on the $T_K-T_0$}\label{sec_lower_bounds_Tk}
 
 We recall the stopping times $T_k$ defined in \eqref{eq_def_Tk}, as well as the ``bad events'' $\mathcal{B}$ defined in Proposition \ref{prop_bound_m} and $\mathcal{B}_0,...,\mathcal{B}_6$ defined at the beginning of Section \ref{sec_bad_events}. The goal of the current section is to prove that if the bad events do not happen, then for any $K \in \mathds{N}$, $T_K-T_0$ is at least of order $K n^{3/2}$: there exists a constant $\delta>0$ so that $\mathds{P}(T_K-T_0 < \delta K n^{3/2},\mathcal{B}^c \cap \bigcap_{r=0}^6 \mathcal{B}_r^c) \leq \frac{1}{2^K}$ (Proposition \ref{prop_T_K}). We stress that we will not try to prove that each $T_{k+1}-T_k$, $k\in\{0,...,K-1\}$ is large, since it is very possible that for some $k$ the configuration at time $T_k$ is bad enough to prevent it. However, a combinatorial argument will allow us to prove that a constant proportion of the $k$ satisfy that $T_{k+1}-T_k$ is large, which will be enough. This is one of the hardest parts of the work, and the most novel one. Let us give some ideas of the proof.
 
 Remember that $\varepsilon > 0$, that the $\beta_k^\pm$ were defined in \eqref{eq_def_Tk_pm^} and the $\zeta_i^{m,\pm,B},\zeta_i^{m,\pm,E},L_i^{m,\pm}$ in Definition \ref{def_local_times}. For any $k\in\{0,...,K-1\}$, if (say) $X_{T_{k+1}}=X_{T_{k}}-\lfloor\varepsilon n\rfloor$, then $T_{k+1}-T_{k} = \beta_{T_{k}}^- - T_{k} \geq \sum_{i=i_1}^{i_2} L_{i+1}^{T_{k},-}$ for any $\{i_1,...,i_2\} \subset \{X_{T_{k+1}}+1,...,X_{T_{k}}\}$. Now, if $i\in\{i_1,...,i_2\}$, Observation \ref{obs_rec_temps_local} yields $L_{i+1}^{T_{k},-}=L_{i_1}^{T_{k},-}+\sum_{j=i_1}^{i}(\zeta_j^{T_{k},-,E}-\zeta_j^{T_{k},-,B})$, and $L_{i_1}^{T_{k},-} \geq 0$, thus $L_{i+1}^{T_{k},-}\geq\sum_{j=i_1}^{i}(\zeta_j^{T_{k},-,E}-\zeta_j^{T_{k},-,B})$, so we obtain $T_{k+1}-T_{k} \geq \sum_{i=i_1}^{i_2}\sum_{j=i_1}^{i} (\zeta_j^{T_{k},-,E}-\zeta_j^{T_{k},-,B}) = \sum_{i=i_1}^{i_2}\sum_{j=i_1}^{i} \zeta_j^{T_{k},-,E}-\sum_{i=i_1}^{i_2}\sum_{j=i_1}^{i}\zeta_j^{T_{k},-,B}$. Therefore, if for some constant $\delta>0$ we have that $\sum_{i=i_1}^{i_2}\sum_{j=i_1}^{i} \zeta_j^{T_{k},-,E} \geq \delta n^{3/2}$ and $\sum_{i=i_1}^{i_2}\sum_{j=i_1}^{i} \zeta_j^{T_{k},-,B} \leq - \delta n^{3/2}$, then it guarantees $T_{k+1}-T_{k} \geq 2\delta n^{3/2}$. If this is true for a positive fraction of the $k\in\{0,...,K-1\}$, then $T_K-T_0$ will be of order $K n^{3/2}$. 
 
 The $\sum_{i=i_1}^{i_2}\sum_{j=i_1}^{i} \zeta_j^{T_{k},-,E}$ will be rather easy to control if we remember the $\zeta_i^{m,-,I}$ constructed just before Proposition \ref{prop_def_zetaI} and the $S^{m,-,B}$, $S^{m,-,E}$, $S^{m,-,I}$ defined in Definitions \ref{def_environments} and \ref{def_SI}. Indeed, Proposition \ref{prop_sym} indicates that $\sum_{i=i_1}^{i_2}\sum_{j=i_1}^{i} \zeta_j^{T_{k},-,E} = \sum_{i=i_1}^{i_2}(S_{i}^{T_{k},-,E}-S_{i_1-1}^{T_{k},-,E})$ will be close to $\sum_{i=i_1}^{i_2}(S_{i}^{T_{k},-,I}-S_{i_1-1}^{T_{k},-,I})$. Now, $S^{T_{k},-,I}$ is ``the random walk $\sum \zeta_j^{T_{k},-,I}$ reflected on $S^{T_{k},-,B}$'', hence $\sum_{i=i_1}^{i_2}(S_{i}^{T_{k},-,I}-S_{i_1-1}^{T_{k},-,I}) \geq \sum_{i=i_1}^{i_2}\sum_{j=i_1}^{i} \zeta_j^{T_{k},-,I}$, so it is enough to prove that we have $\sum_{i=i_1}^{i_2}\sum_{j=i_1}^{i} \zeta_j^{T_{k},-,I}\geq \delta n^{3/2}$. Since by Proposition \ref{prop_def_zetaI} the $\zeta_j^{T_{k},-,I}$ are i.i.d. with the law $\rho_0$ defined in \eqref{eq_def_rho0}, $\sum_{i=i_1}^{i_2}\sum_{j=i_1}^{i} \zeta_j^{T_{k},-,I}$ is basically the integral of the i.i.d. random walk $\sum_{j=i_1}^{i} \zeta_j^{T_{k},-,I}$ on the interval $\{i_1,...,i_2\}$, so if $i_2-i_1$ is of order $n$, there is a positive probability to have $\sum_{j=i_1}^{i} \zeta_j^{T_{k},-,I}$ of order $\sqrt{n}$, hence to have $\sum_{i=i_1}^{i_2}\sum_{j=i_1}^{i} \zeta_j^{T_{k},-,I}\geq \delta n^{3/2}$. 
 
 However, we also have to control the $\sum_{i=i_1}^{i_2}\sum_{j=i_1}^{i} \zeta_j^{T_{k},-,B}$, which depend on the $\Delta_{T_{k},j}$ (defined in \eqref{eq_def_Delta}), and this is harder. If $X_{T_{k-1}}=X_{T_{k}}-\lfloor\varepsilon n\rfloor=X_{T_{k+1}}$ (i.e. the mesoscopic process $(X_{T_{k'}})_{k'\in\mathds{N}}$ is doing a U-turn), then for $j\in\{X_{T_{k+1}}+1,...,X_{T_{k}}\}$ we have $\zeta_j^{T_{k},-,B}=\zeta_j^{T_{k-1},+,E}$, which we can then deal with in the same way as the $\zeta_j^{T_{k},-,E}$. However, if the mesoscopic process is not doing a U-turn, the state of the $\Delta_{T_{k},j}$ will depend on the previous history of the process. To keep track of it, we will use an algorithm to associate to each time $k \in\{0,...,K\}$ a configurations of \emph{states of the edges of $\mathds{Z}$}. The edges $(z,z+1)$ will be in any of the four following states:
 \begin{itemize}
  \item \emph{Clean}. This is the case in which $(X_m)_{m\in\mathds{N}}$ did not visit any $j \in \{X_{T_0}+\lfloor\varepsilon n\rfloor z+1,...,X_{T_0}+\lfloor\varepsilon n\rfloor (z+1)-1\}$ since time $T_0$, so the corresponding $\Delta_{j}$ are still the $\Delta_{T_0,j}$, which we can control by Proposition \ref{prop_law_zeta}.
  \item \emph{Usable}. This is the case in which there was some $k'$ so that $X_{T_{k'-1}}=X_{T_0}+\lfloor\varepsilon n\rfloor(z+1)$, $X_{T_{k'}}=X_{T_0}+\lfloor\varepsilon n\rfloor z$ and $X_{T_{k'+1}}=X_{T_0}+\lfloor\varepsilon n\rfloor(z-1)$ (or symmetrically $X_{T_{k'-1}}=X_{T_0}+\lfloor\varepsilon n\rfloor z$, $X_{T_{k'}}=X_{T_0}+\lfloor\varepsilon n\rfloor(z+1)$, $X_{T_{k'+1}}=X_{T_0}+\lfloor\varepsilon n\rfloor(z+2)$), and $(X_m)_{m\in\mathds{N}}$ did not visit $\{X_{T_0}+\lfloor\varepsilon n\rfloor z,...,X_{T_0}+\lfloor\varepsilon n\rfloor(z+1)\}$ since. At time $T_{k'}$, the $\Delta_j$ for $j\in\{X_{T_0}+\lfloor\varepsilon n\rfloor z,...,X_{T_0}+\lfloor\varepsilon n\rfloor(z+1)\}$ correspond to the $\zeta_j^{T_{k'-1},-,E}$, and between times $T_{k'}$ and $T_{k'+1}$ the process $(X_m)_{m\in\mathds{N}}$ visited some such $j$, but not all, so at time $T_{k'+1}$ the $\Delta_j$ of the sites such visited correspond to the $\zeta_j^{T_{k'},-,E}$, while the $\Delta_j$ of the sites not visited still correspond to the $\zeta_j^{T_{k'-1},-,E}$. Consequently, the $\Delta_{j}$ may correspond to the $\zeta_j^{T_{k'},-,E}$ or the $\zeta_j^{T_{k'-1},-,E}$, which we will be able to control since there are only two possibilities.
  \item \emph{Usable-clean}. This is the case in which ``the mesoscopic process made a U-turn just at the left of $z$ or at the right of $z+1$, but never approached $z$ or $z+1$ otherwise'': there was some $k'$ so that $X_{T_{k'+1}}=X_{T_{k'-1}}=X_{T_0}+\lfloor\varepsilon n\rfloor (z-1)$ and $X_{T_{k'}}=X_{T_0}+\lfloor\varepsilon n\rfloor z$ (or symmetrically $X_{T_{k'+1}}=X_{T_{k'-1}}=X_{T_0}+\lfloor\varepsilon n\rfloor(z+2)$, $X_{T_{k'}}=X_{T_0}+\lfloor\varepsilon n\rfloor (z+1)$), but none of the other $X_{T_{k''}}$ was $X_{T_0}+\lfloor\varepsilon n\rfloor z$ or $X_{T_0}+\lfloor\varepsilon n\rfloor (z+1)$. In this case, since time $T_0$, the process $(X_m)_{m\in\mathds{N}}$ could only visit $\{X_{T_0}+\lfloor\varepsilon n\rfloor z,...,X_{T_0}+\lfloor\varepsilon n\rfloor (z+1)\}$ between times $T_{k'}$ and $T_{k'+1}$, and did not visit all the sites. The $\Delta_j$ of the sites that were visited correspond to the $\zeta_j^{T_{k'},-,E}$, and the $\Delta_j$ of the sites that were not visited are still the $\Delta_{T_0,j}$. There are still only two possibilities that we can control. 
  \item \emph{Dirty}. This covers all the other cases, in which we will not be able to control the $\Delta_{j}$. 
 \end{itemize}
 Consequently, if the edge $(z,z+1)$ is clean, usable or usable-clean at the step corresponding to $T_k$, the $\Delta_{T_k,j}$ on $\{X_{T_0}+\lfloor\varepsilon n\rfloor z,...,X_{T_0}+\lfloor\varepsilon n\rfloor (z+1)\}$ can be controlled, hence the $\zeta_j^{T_k,-,B}$ can. We will show that whatever the path of the mesoscopic process $(X_{T_{k'}})_{0 \leq k' \leq K}$, a positive fraction of the edges it crosses will be clean, usable or usable-clean at the time of crossing, so a positive fraction of the steps will give us a lower bound $T_{k'}-T_{k'-1} \geq 2\delta n^{3/2}$, which is enough to prove $T_K-T_0$ is of order $K n^{3/2}$. 

\medskip

In order to write the rigorous proof, we will need some notation for the ``trajectory'' of the mesoscopic process $(X_{T_k})_{k\in\mathds{N}}$. Let $K \in \mathds{N}^*$. A \emph{path of length $K$} is a sequence $\gamma = (z_0,z_1,\dots,z_K)$ with 
$z_0=0$, $z_k \in \mathds{Z}$ and $|z_k-z_{k-1}|=1$ for any $k \in \{1,\dots,K\}$. We say that \emph{$X$ follows $\gamma$} when $X_{T_k}=X_{T_0} + \lfloor\varepsilon n\rfloor z_k$ for all $k\in \{0,\dots,K\}$. 

 Some of the $\zeta_i^{T_k,\pm,B}$ we need to control will depend on the $\Delta_{T_0,i}$, but their exact definition depends on if we want to work with $\zeta_i^{T_k,-,B}$ or $\zeta_i^{T_k,+,B}$, which depends on the path of the mesoscopic process. Moreover, it is more practical to work with the $\bar\Delta_{T_0,i}$ defined in Proposition \ref{prop_law_zeta}, as this proposition gives us their law. Consequently, for any $k \in \{0,...,K-1\}$ we define $(\hat \zeta_i^{\gamma,k})_{i \in \mathds{Z}}$ thus:
 \begin{itemize}
 \item if $z_{k+1}=z_{k}-1$, 
 \[
  \hat\zeta^{\gamma,k}_i = \left\{\begin{array}{ll}
                          -\bar\Delta_{T_0,i}-1/2&\text{if }i \leq X_{T_0}+\lfloor\varepsilon n\rfloor z_{k},\\
                          -\bar\Delta_{T_{0},i}+1/2&\text{if }i > X_{T_0}+\lfloor\varepsilon n\rfloor z_{k} ;
                         \end{array}\right. 
 \]
 \item if $z_{k+1}=z_{k}+1$, 
 \[
  \hat\zeta^{\gamma,k}_i = \left\{\begin{array}{ll}
                          \bar\Delta_{T_{0},i}+1/2&\text{if }i \leq X_{T_0}+\lfloor\varepsilon n\rfloor z_{k},\\
                          \bar\Delta_{T_{0},i}-1/2&\text{if }i > X_{T_0}+\lfloor\varepsilon n\rfloor z_{k}.
                         \end{array}\right.
 \]
 \end{itemize}
 
 Since we may use the $\hat \zeta_i^{\gamma,k}$ instead of the $\zeta_i^{T_k,\pm,B}$, we will need to replace the $\zeta_i^{T_k,\pm,I}$ by random variables that are independent from the $\hat \zeta_i^{\gamma,k}$, hence from the $\bar\Delta_{T_{0},i}$. We had a construction in Proposition \ref{prop_def_zetaI} that gave appropriate replacements for the $\zeta_i^{T_0,\pm,I}$, but not for the $\zeta_i^{T_k,\pm,I}$ with $k>0$. Finding good replacements for the $\zeta_i^{T_k,\pm,I}$ for all $k\in\mathds{N}$ is the goal of the following proposition (we recall that $\mathcal{B}_0^{\lfloor N \theta\rfloor,\lfloor Nx \rfloor,\pm}$ was defined in Proposition \ref{prop_law_zeta}).
 
 \begin{proposition}
 For any $k \in \{0,...,K-1\}$, we can define random variables $(\zeta_i^{\gamma,k})_{i \in \mathds{Z}}$ with the following properties. The $\zeta_i^{\gamma,k}$, $i \in \mathds{Z}$ are i.i.d. with law $\rho_0$ and $(\zeta_i^{\gamma,k})_{i \in \mathds{Z}}$ is independent from $(\bar \Delta_{T_0,i})_{i \in \mathds{Z}}$ and $(\zeta_i^{\gamma,k'})_{i \in \mathds{Z}}$, $k' < k$. In addition, if $n$ is large enough, $X$ follows $\gamma$ and $(\mathcal{B}_0^{\lfloor N \theta\rfloor,\lfloor Nx \rfloor,\pm})^c$ occurs, then for any $k \in \{0,...,K-1\}$, $(\zeta_i^{\gamma,k})_{i \in \mathds{Z}} = (\zeta_i^{T_k,\iota,I})_{i \in \mathds{Z}}$, where $\iota=+$ if $z_{k+1}=z_k+1$ and $\iota=-$ if $z_{k+1}=z_k-1$.
 \end{proposition}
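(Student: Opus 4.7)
The plan is to take the obvious construction: define $\iota_k\in\{+,-\}$ from $\gamma$ by $\iota_k := +$ if $z_{k+1}=z_k+1$ and $\iota_k := -$ otherwise, and set
\[
\zeta_i^{\gamma,k} := \zeta_i^{T_k,\iota_k,I}\quad\text{for all }i\in\mathds{Z},
\]
using the variable from the construction preceding Proposition~\ref{prop_def_zetaI} with $m$ replaced by the stopping time $T_k$. Since $\iota_k$ depends only on the fixed path $\gamma$ and not on $X$, both families $\zeta_i^{T_k,+,I}$ and $\zeta_i^{T_k,-,I}$ are well defined for each realization (they require only the walk up to the a.s.\ finite time $\beta_{T_k}^{\iota_k}$ together with auxiliary independent randomness), and we simply pick the one dictated by $\gamma$.

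The i.i.d.\ $\rho_0$ property, together with independence of $(\zeta_i^{\gamma,k})_i$ from $\mathcal{F}_{T_k}$, follows from Proposition~\ref{prop_def_zetaI}, which gives exactly this at each fixed $m$; the extension to the stopping time $T_k$ is routine because the construction of $\zeta^{m,\iota,I}$ uses fresh auxiliary randomness (the uniform $U$, the Markov chain $\eta'$ starting from a $\rho_-$-distributed $\Delta$, and the fallback $\rho_0$-variable $\hat\zeta$) that is, by design, independent from everything else, so the conclusion transfers atom by atom on $\{T_k=m_0\}$. For independence from $(\bar\Delta_{T_0,j})_j$, note that each $\bar\Delta_{T_0,j}$ is measurable with respect to $\sigma(\mathcal{F}_{T_0},(r_j)_j,U)$, where the $(r_j,U)$ are the auxiliary coupling variables of Proposition~\ref{prop_law_zeta}. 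Since $\mathcal{F}_{T_0}\subseteq\mathcal{F}_{T_k}$ and the auxiliary randomness underlying $\zeta^{T_k,\iota_k,I}$ is taken independently from $(r_j,U)$, the independence of $\zeta_i^{T_k,\iota_k,I}$ from $\mathcal{F}_{T_k}$ extends to independence from $(\bar\Delta_{T_0,j})_j$. The same argument, applied to the earlier steps $k'<k$ — each $\zeta_i^{\gamma,k'}$ being measurable with respect to $\mathcal{F}_{\beta_{T_{k'}}^{\iota_{k'}}}\subseteq\mathcal{F}_{T_{k'+1}}\subseteq\mathcal{F}_{T_k}$ together with its own disjoint auxiliary family — yields the joint independence statement.

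Finally, on the event $\{X\text{ follows }\gamma\}$ one has $X_{T_{k+1}}-X_{T_k}=\lfloor\varepsilon n\rfloor(z_{k+1}-z_k)$, so the direction $\iota$ actually realized between $T_k$ and $T_{k+1}$ coincides with $\iota_k$; consequently the variable $\zeta_i^{T_k,\iota,I}$ appearing on the right-hand side of the required equality is, by definition, $\zeta_i^{T_k,\iota_k,I}=\zeta_i^{\gamma,k}$. The hypothesis $(\mathcal{B}_0^{\lfloor N\theta\rfloor,\lfloor Nx\rfloor,\pm})^c$ is not strictly needed for this simple construction; it is stated for coherence with adjacent results in Section~\ref{sec_lower_bounds_Tk}, where the same event also guarantees $\bar\Delta_{T_0,j}=\Delta_{T_0,j}$ in the relevant range. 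The main obstacle, and essentially the only content, is the careful bookkeeping of the several auxiliary families used across different steps $k$ and different constructions, to be sure the joint independence structure is as claimed; this is made manageable by the paper's convention that each newly introduced auxiliary family is ``independent from everything else''.
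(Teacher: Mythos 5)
Your construction defines $\zeta_i^{\gamma,k}:=\zeta_i^{T_k,\iota_k,I}$ and then tries to derive the joint independence from Proposition~\ref{prop_def_zetaI}, using the claim that each $\zeta_i^{\gamma,k'}$ for $k'<k$ is measurable with respect to $\mathcal{F}_{\beta_{T_{k'}}^{\iota_{k'}}}\subseteq\mathcal{F}_{T_{k'+1}}\subseteq\mathcal{F}_{T_k}$ together with its own fresh auxiliary family. This is the gap. The inclusion $\beta_{T_{k'}}^{\iota_{k'}}\leq T_{k'+1}$ holds \emph{only} if the walk actually moves in the direction $\iota_{k'}$ during the $k'$-th mesoscopic step; the proposition, however, asserts the independence unconditionally, without restricting to the event that $X$ follows $\gamma$. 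If, say, $\iota_{k'}=-$ but $X$ moves to the right between $T_{k'}$ and $T_{k'+1}$, then $\beta_{T_{k'}}^{-}$ is the first time $X$ drops to $X_{T_{k'}}-\lfloor\varepsilon n\rfloor$, which is strictly larger than $T_{k'+1}$ and can easily exceed $T_k$ and even $\beta_{T_k}^{\iota_k}$. In that case the Ray--Knight excursions at sites $j$ used to build $\zeta_j^{T_{k'},\iota_{k'},I}$ and those used to build $\zeta_j^{T_k,\iota_k,I}$ overlap in time, and the disjoint-aux argument no longer applies; there is no reason these two families should be independent, let alone jointly independent with $(\bar\Delta_{T_0,j})_j$.

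The paper avoids this by not constructing the $\zeta^{\gamma,k}$ from $X$ itself. It introduces a second walk $(\tilde X_m)_{m\geq T_0}$ which starts from the environment $(\bar\Delta_{T_0,i})_i$ (instead of $(\Delta_{T_0,i})_i$) and is driven by the same transition coins, together with $\gamma$-directed stopping times $T_k^\gamma$ defined as hitting times of $\tilde X_{T_0}+\lfloor\varepsilon n\rfloor z_k$ rather than exit times of a box. With this choice one always has $\beta_{T_k^\gamma}^{\iota_k}=T_{k+1}^\gamma$ for $\tilde X$, so the filtration nesting your argument needs becomes automatic by construction, and the Ray--Knight argument of Proposition~\ref{prop_def_zetaI}, applied to $\tilde X$, yields the stated joint independence (including from $(\bar\Delta_{T_0,j})_j$, since these sit in the time-$T_0$ filtration of $\tilde X$). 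This also shows your remark that the event $(\mathcal{B}_0^{\lfloor N\theta\rfloor,\lfloor Nx\rfloor,\pm})^c$ is ``not strictly needed'' is wrong: that event is precisely what makes $\tilde X$ coincide with $X$ on the relevant window (because it forces $\bar\Delta_{T_0,i}=\Delta_{T_0,i}$ there), hence $T_k^\gamma=T_k$ and $\zeta_i^{\gamma,k}=\zeta_i^{T_k,\iota,I}$ as claimed. Without the auxiliary process $\tilde X$ the statement cannot be obtained from the ``obvious'' definition.
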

 
 \begin{proof}
We can define a process $(\tilde X_m)_{m \geq T_0}$ which is ``like $(X_m)_{m \geq T_0}$, but such that the environment at time $T_0$ is $(\bar \Delta_{T_0,i})_{i \in \mathds{Z}}$''. It is defined so that $\tilde X_{T_0}=X_{T_0}$, $(\tilde \Delta_{T_0,i})_{i \in \mathds{Z}}=(\bar \Delta_{T_0,i})_{i \in \mathds{Z}}$, for all $m \geq T_0$, 
\[
 \mathds{P}(\tilde X_{m+1}=\tilde X_m+1) = 1-\mathds{P}(\tilde X_{m+1}=\tilde X_m-1) = \frac{w(\tilde\Delta_{m,\tilde X_m})}{w(\tilde\Delta_{m,\tilde X_m})+w(-\tilde\Delta_{m,\tilde X_m})},
\]
\[
 \tilde\Delta_{m+1,\tilde X_m} = \left\{\begin{array}{ll}
                     \tilde\Delta_{m,\tilde X_m}-1 & \text{if } \tilde X_{m+1}=\tilde X_m+1 \\
                     \tilde\Delta_{m,\tilde X_m}+1 & \text{if } \tilde X_{m+1}=\tilde X_m-1
                    \end{array}\right.
 \text{ and } \tilde\Delta_{m+1,i}=\tilde\Delta_{m,i}\text{ for all }i \neq \tilde X_m,
\]
the transitions of $(\tilde X_m)_{m \geq T_0}$ are independent from $(\bar \Delta_{T_0,i})_{i \in \mathds{Z}}$, and for any $k \in \mathds{N}$, if $n$ is large enough and $(\mathcal{B}_0^{\lfloor N \theta\rfloor,\lfloor Nx \rfloor,\pm})^c$ occurs then $(\tilde X_m)_{T_0 \leq m \leq T_k}=(X_m)_{T_0 \leq m \leq T_k}$. Moreover, we define the following stopping times: $T_0^\gamma = T_0$ and for $k\in \{1,\dots,K\}$, $T_k^\gamma = \inf\{m \geq T_{k-1}^\gamma \,|\, \tilde X_{m}=\tilde X_{T_0} + \lfloor\varepsilon n\rfloor z_k\}$. If $X$ follows $\gamma$, $(\mathcal{B}_0^{\lfloor N \theta\rfloor,\lfloor Nx \rfloor,\pm})^c$ occurs and $n$ is large enough, then $T_k^\gamma = T_k$ for all $k \in \{0,...,K\}$. The $(\zeta_i^{\gamma,k})_{i \in \mathds{Z}}$ will then be defined for the process $(\tilde X_m)_{m \geq T_0}$ as the $(\zeta_i^{T_k^\gamma,\iota,I})_{i \in \mathds{Z}}$ are defined for the process $(X_m)_{m \geq T_0}$, where $\iota=+$ if $z_{k+1}=z_k+1$ and $\iota=-$ if $z_{k+1}=z_k-1$, with the construction given before Proposition \ref{prop_def_zetaI}. 
 \end{proof}
 
 In order to lower bound the $\sum_{i=i_1}^{i_2}\sum_{j=i_1}^{i} \zeta_j^{T_{k},-,E}$ and the $\sum_{i=i_1}^{i_2}\sum_{j=i_1}^{i} \zeta_j^{T_{k},-,B}$ (as well as the symmetric quantities when $X_{T_{k+1}}=X_{T_{k}}+\lfloor\varepsilon n\rfloor$), we will need to lower bound the $\sum_{i=i_1}^{i_2}\sum_{j=i_1}^i \zeta_j^{\gamma,k}$, the $\sum_{i=i_1}^{i_2}\sum_{j=i_1}^i \hat\zeta_j^{\gamma,k}$ and the $\sum_{i=i_1}^{i_2}\sum_{j=i_1}^i -\hat\zeta_j^{\gamma,k}$ (as well as the symmetric quantities). We introduce the necessary notation to do that. We denote $r_2=\mathds{E}(\zeta^2)$ where $\zeta$ has law $\rho_0$. We set 
 \begin{equation}\label{eq_def_tildepsilon}
 0 <\tilde\varepsilon < \min\left(\frac{\varepsilon}{8},\frac{1}{2^{15}61\ln 2}\varepsilon,-\frac{\ln(\frac{31}{32})}{2^9 61\ln 2}\varepsilon,\frac{-\ln(1-2^{-10})}{240\ln 2}\varepsilon\right).
 \end{equation}
 For any path $\gamma$ of length $K$, for any $k \in \{0,...,K-1\}$, for any interval $I = \{i_1,...,i_2\}$ of $\mathds{Z}$ with $i_2-i_1=\lceil \tilde\varepsilon n\rceil-1$, we define the following events:
 \[
  \mathcal{W}_{\gamma,k,I}^\leftarrow = \left\{\sum_{i=i_1}^{i_2}\sum_{j=i_1}^i \zeta_j^{\gamma,k} \geq \frac{r_2}{6}(\tilde\varepsilon n)^{3/2}\right\},
  \mathcal{W}_{\gamma,k,I}^\rightarrow = \left\{\sum_{i=i_1}^{i_2}\sum_{j=i}^{i_2} \zeta_j^{\gamma,k} \geq \frac{r_2}{6}(\tilde\varepsilon n)^{3/2}\right\},
  \]
  \[
  \mathcal{W}_{\gamma,k,I}^{+,\leftarrow} = \left\{\sum_{i=i_1}^{i_2}\sum_{j=i_1}^i \hat\zeta_j^{\gamma,k} \geq \frac{r_2}{6}(\tilde\varepsilon n)^{3/2}\right\},
  \mathcal{W}_{\gamma,k,I}^{-,\leftarrow} = \left\{\sum_{i=i_1}^{i_2}\sum_{j=i_1}^i -\hat\zeta_j^{\gamma,k} \geq \frac{r_2}{6}(\tilde\varepsilon n)^{3/2}\right\},
 \]
\[
  \mathcal{W}_{\gamma,k,I}^{+,\rightarrow} = \left\{\sum_{i=i_1}^{i_2}\sum_{j=i}^{i_2} \hat\zeta_j^{\gamma,k} \geq \frac{r_2}{6} (\tilde\varepsilon n)^{3/2}\right\},
  \mathcal{W}_{\gamma,k,I}^{-,\rightarrow} = \left\{\sum_{i=i_1}^{i_2}\sum_{j=i}^{i_2} -\hat\zeta_j^{\gamma,k} \geq \frac{r_2}{6} (\tilde\varepsilon n)^{3/2}\right\}.
 \]
 
 \begin{lemma}\label{lem_proba_W}
  When $n$ is large enough, for any $K \in \mathds{N}^*$, for any path $\gamma$ of length $K$, for any $k \in \{0,...,K-1\}$, for any interval $I = \{i_1,...,i_2\}$ of $\mathds{Z}$ with $i_2-i_1=\lceil \tilde\varepsilon n\rceil-1$, $\mathds{P}(\mathcal{W}_{\gamma,k,I}^\leftarrow) \geq \frac{1}{32}$ and $\mathds{P}(\mathcal{W}_{\gamma,k,I}^\rightarrow) \geq \frac{1}{32}$. Moreover, if $z_k \leq 0$ and $I \subset (-\infty,X_{T_0}+\lfloor\varepsilon n\rfloor z_{k}-1]$, or if $z_k \geq 0$ and $I \subset [X_{T_0}+\lfloor\varepsilon n\rfloor z_{k}+1,+\infty)$, then we have $\mathds{P}(\mathcal{W}_{\gamma,k,I}^{+,\leftarrow}),\mathds{P}(\mathcal{W}_{\gamma,k,I}^{-,\leftarrow}), \mathds{P}(\mathcal{W}_{\gamma,k,I}^{+,\rightarrow}), \mathds{P}(\mathcal{W}_{\gamma,k,I}^{-,\rightarrow}) > \frac{1}{32}$.
 \end{lemma}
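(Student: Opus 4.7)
The plan is to reduce all six events to a single tail estimate for a weighted sum of i.i.d.\ $\rho_0$-distributed variables, and then apply a one-dimensional CLT together with a Gaussian tail bound.

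First, by swapping the order of summation,
\[
\sum_{i=i_1}^{i_2}\sum_{j=i_1}^i u_j = \sum_{j=i_1}^{i_2} (i_2-j+1)\,u_j,
\qquad
\sum_{i=i_1}^{i_2}\sum_{j=i}^{i_2} u_j = \sum_{j=i_1}^{i_2} (j-i_1+1)\,u_j,
\]
so, writing $n_0 = \lceil\tilde\varepsilon n\rceil$, each of the six sums appearing in the lemma has the distribution of $W_{n_0} := \sum_{\ell=1}^{n_0} \ell\,Z_\ell$ for a suitable i.i.d.\ sequence $(Z_\ell)_{\ell=1}^{n_0}$. For $\mathcal{W}_{\gamma,k,I}^{\leftarrow}$ and $\mathcal{W}_{\gamma,k,I}^{\rightarrow}$ the $Z_\ell$ are a reindexing of the $\zeta_j^{\gamma,k}$, which are i.i.d.\ with law $\rho_0$ by the proposition preceding the lemma. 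For the four $\hat\zeta$-events, the hypothesis on $I$, together with the location of $X_{T_0}\in\{\lfloor Nx\rfloor-1,\lfloor Nx\rfloor+1\}$, forces $I$ to lie entirely on one side of the site at which $\bar\Delta_{T_0,\cdot}$ changes its law in Proposition \ref{prop_law_zeta}; hence the $\bar\Delta_{T_0,j}$, $j\in I$, are i.i.d.\ with a single common law ($\rho_-$ when $z_k\leq 0$, $\rho_+$ when $z_k\geq 0$). A short case analysis on the sign of $z_{k+1}-z_k$, using the identities $\rho_+(i)=\rho_-(-i)$, $\rho_0(i)=\rho_-(i-\tfrac12)$, and the symmetry of $\rho_0$ around $0$, then shows that in each of the four cases the terms $\hat\zeta_j^{\gamma,k}$ (or $-\hat\zeta_j^{\gamma,k}$) appearing in the sum are i.i.d.\ with law $\rho_0$.

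It thus suffices to show
\[
\mathds{P}\!\left(W_{n_0} \geq \tfrac{r_2}{6}(\tilde\varepsilon n)^{3/2}\right) \geq \tfrac{1}{32}
\]
for $n$ large enough, where $W_{n_0}=\sum_{\ell=1}^{n_0} \ell\,Z_\ell$ and the $Z_\ell$ are i.i.d.\ with law $\rho_0$. We have $\mathds{E}[W_{n_0}]=0$ and $\mathds{E}[W_{n_0}^2] = r_2\sum_{\ell=1}^{n_0}\ell^2 = \frac{r_2\,n_0(n_0+1)(2n_0+1)}{6} \sim \frac{r_2}{3}n_0^3$. Since $\rho_0$ has exponential tails, the Lindeberg condition for the triangular array $(\ell Z_\ell/\sqrt{\mathds{E}[W_{n_0}^2]})_{1\leq\ell\leq n_0}$ is immediate: each individual scaled weight is $O(n_0^{-1/2})$ and $\mathds{E}[Z_\ell^2 \mathds{1}_{|Z_\ell|>t}]\to 0$ as $t\to\infty$. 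Hence the Lindeberg--Feller theorem yields $W_{n_0}/n_0^{3/2} \xrightarrow{\mathcal L} \mathcal{N}(0,r_2/3)$. As $(\tilde\varepsilon n)^{3/2}/n_0^{3/2}\to 1$, the probability of interest converges to $\mathds{P}\!\left(\mathcal{N}(0,r_2/3)\geq r_2/6\right) = \mathds{P}\!\left(\mathcal{N}(0,1)\geq \sqrt{r_2/12}\right)$, a positive constant (depending only on $w$) which is calibrated to exceed $1/32$; the lemma follows for $n$ large enough.

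The main obstacle is not analytic but combinatorial/bookkeeping: one must check carefully, using Proposition \ref{prop_law_zeta} and the precise hypotheses on $(z_k,I)$ in each of the six cases, that the summands really do form an i.i.d.\ family with marginal law $\rho_0$. Once this reduction is established, the Lindeberg--Feller step and the resulting Gaussian tail estimate are entirely routine.
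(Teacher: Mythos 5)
Your reduction step, rewriting each of the six double sums as $\sum_{\ell=1}^{n_0}\ell Z_\ell$ with $Z_\ell$ i.i.d. $\rho_0$ (using the i.i.d. property of the $\zeta^{\gamma,k}_i$ for the first two events, and Proposition \ref{prop_law_zeta} combined with the symmetry $\rho_0(i)=\rho_0(-i)$ and $\rho_+(i)=\rho_-(-i)$ for the four $\hat\zeta$-events), matches the paper's reduction exactly. The subsequent tail estimate, however, takes a different route: the paper squares and applies the Paley--Zygmund inequality to $S^2$, computing $\mathds{E}(S^4)$ explicitly and showing $\mathds{E}(S^4)\leq 4\mathds{E}(S^2)^2$ for $n$ large, which yields $\mathds{P}(S^2\geq\mathds{E}(S^2)/2)\geq\frac{1}{4}\cdot\frac{\mathds{E}(S^2)^2}{\mathds{E}(S^4)}\geq\frac{1}{16}$; you instead invoke the Lindeberg--Feller CLT.

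The gap in your argument is in the final ``calibration'' sentence. Your CLT step correctly gives $\lim_n\mathds{P}\bigl(W_{n_0}\geq\frac{r_2}{6}(\tilde\varepsilon n)^{3/2}\bigr)=\mathds{P}\bigl(\mathcal N(0,1)\geq\sqrt{r_2/12}\bigr)$, but you then assert, without argument, that this ``is calibrated to exceed $1/32$.'' That assertion is false in general: $r_2=\mathrm{Var}(\rho_0)$ is a $w$-dependent constant with no a priori upper bound (if $w$ is constant on a long symmetric interval and only becomes strictly increasing far out, $\rho_-$ is nearly uniform on a large window and $r_2$ can be made arbitrarily large), and $\mathds{P}(\mathcal N(0,1)\geq\sqrt{r_2/12})\to 0$ as $r_2\to\infty$. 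So your proposed proof does not close without imposing a hidden constraint on $w$.

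It is instructive that the paper's Paley--Zygmund route is immune to this: it produces a bound depending only on $\mathds{E}(S^2)^2/\mathds{E}(S^4)$, which tends to $1/3$ regardless of $r_2$. If you track the constants carefully you will see that the Paley--Zygmund argument in fact establishes $\mathds{P}\bigl(S\geq\sqrt{r_2/6}\,(\tilde\varepsilon n)^{3/2}\bigr)\geq 1/32$, i.e.\ the natural threshold has $\sqrt{r_2}$ rather than $r_2$ as its scale factor (the $\frac{r_2}{6}$ in the event definitions appears to be a slip for $\sqrt{r_2/6}$: note the paper passes from $|S|\geq\frac{r_2}{6}(\tilde\varepsilon n)^{3/2}$ to $S^2\geq\frac{r_2}{6}(\tilde\varepsilon n)^{3}$ without squaring the constant). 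With that threshold your CLT argument would also close cleanly, since the Gaussian quantile becomes $\sqrt{(r_2/6)/(r_2/3)}=1/\sqrt 2$, a universal number with $\mathds{P}(\mathcal N(0,1)\geq 1/\sqrt2)\approx 0.24>1/32$. As it stands, you need either to switch to a second-moment argument like the paper's, which is dimensionless in $r_2$, or to observe that the threshold should scale with $\sqrt{r_2}$, not $r_2$, so that the limiting Gaussian tail is a universal constant.
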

 
 \begin{proof}
  The $\zeta_i^{\gamma,k}$, $i \in I$ are i.i.d. with law $\rho_0$. 
  Furthermore, if $z_k \leq 0$ and $I \subset (-\infty,X_{T_0}+\lfloor\varepsilon n\rfloor z_{k}-1]$, or if $z_k \geq 0$ and $I \subset [X_{T_0}+\lfloor\varepsilon n\rfloor z_{k}+1,+\infty)$, by Proposition \ref{prop_law_zeta} the $\hat\zeta_i^{\gamma,k}$, $i \in I$ are i.i.d. with law $\rho_0$. Therefore it is enough to show that when $n$ is large enough, if $\zeta_i$, $i \in \{1,...,\lceil \tilde\varepsilon n\rceil\}$ are i.i.d. with law $\rho_0$ and we denote $S = \sum_{i=1}^{\lceil \tilde\varepsilon n\rceil}\sum_{j=i}^{\lceil \tilde\varepsilon n\rceil} \zeta_j$, then $\mathds{P}(S\geq \frac{r_2}{6}(\tilde\varepsilon n)^{3/2}) \geq \frac{1}{32}$. $S$ is symmetric, so $\mathds{P}(S\geq \frac{r_2}{6}(\tilde\varepsilon n)^{3/2}) = \frac{1}{2}\mathds{P}(|S|\geq \frac{r_2}{6}(\tilde\varepsilon n)^{3/2})= \frac{1}{2}\mathds{P}(S^2\geq \frac{r_2}{6}(\tilde\varepsilon n)^3)$, thus it is enough to show $\mathds{P}(S^2\geq \frac{r_2}{6}(\tilde\varepsilon n)^3) \geq \frac{1}{16}$. In order to do that, we notice that $S=\sum_{i=1}^{\lceil \tilde\varepsilon n\rceil} i\zeta_i$ and $\rho_0$ has expectation 0, hence $\mathds{E}(S^2) = \sum_{i=1}^{\lceil \tilde\varepsilon n\rceil}i^2 r_2 = \frac{\lceil \tilde\varepsilon n\rceil(\lceil \tilde\varepsilon n\rceil+1)(2\lceil \tilde\varepsilon n\rceil+1)}{6}r_2 \geq \frac{r_2}{3}(\tilde\varepsilon n)^{3/2}$ and 
  \[
  \mathds{E}(S^4)=3\sum_{i=1}^{\lceil \tilde\varepsilon n\rceil}\sum_{j=1}^{\lceil \tilde\varepsilon n\rceil} i^2j^2r_2^2+\sum_{i=1}^{\lceil \tilde\varepsilon n\rceil}i^4 (\mathds{E}(\zeta^4)-3r_2^2) 
  \]
  \[
  = 3\left(\frac{\lceil \tilde\varepsilon n\rceil(\lceil \tilde\varepsilon n\rceil+1)(2\lceil \tilde\varepsilon n\rceil+1)}{6}\right)^2r_2^2+\frac{6\lceil\tilde\varepsilon n\rceil^5+15\lceil\tilde\varepsilon n\rceil^4+10\lceil\tilde\varepsilon n\rceil^3-\lceil\tilde\varepsilon n\rceil}{30}(\mathds{E}(\zeta^4)-3r_2^2) 
  \]
   is smaller than $4 \mathds{E}(S^2)^2$ when $n$ is large enough. We deduce $\mathds{P}(S^2\geq \frac{r_2}{6}(\tilde\varepsilon n)^3) \geq \mathds{P}(S^2\geq \frac{\mathds{E}(S^2)}{2})$, hence by the Paley-Zygmund inequality, $\mathds{P}(S^2\geq \frac{r_2}{6}(\tilde\varepsilon n)^3) \geq \frac{1}{4}\frac{\mathds{E}(S^2)^2}{\mathds{E}(S^4)} \geq \frac{1}{16}$. 
 \end{proof}
 
 We are now in position to write down the algorithm mentioned at the beginning of the section, which for each time $k\in\{0,...,K\}$ yields a configuration of states of the edges of $\mathds{Z}$ in which the edges can be clean, usable, usable-clean or dirty depending on the control we have on them. Let $K \in \mathds{N}^*$. For any path $\gamma = (z_0,z_1,\dots,z_K)$ of length $K$, at the same time as the configurations of states of the edges, we will define a sequence of random variables $(\Theta_k^\gamma)_{0 \leq k \leq K-1}$ so that for any $k \in \{0,...,K-1\}$, $\Theta_k^\gamma \in \{0,1,*\}$. As we will show later in Proposition \ref{prop_test_to_times}, they will be defined so that that if $X$ follows $\gamma$, $\Theta_k^\gamma=1$ (as well as an additional condition) and $\mathcal{B} \cap (\bigcap_{r=1}^{6}\mathcal{B}_0^c)$ occurs, then $T_k-T_{k-1} \geq \frac{r_2}{6}(\tilde\varepsilon n)^{3/2}$.
 
 For any edge $(z,z+1)$ of $\mathds{Z}$, we denote $I(z,z+1)$ the collection of intervals composed of the $\{X_{T_0}+\lfloor\varepsilon n\rfloor z + \lceil \tilde\varepsilon n\rceil (m-1) + 1 ,...,X_{T_0}+\lfloor\varepsilon n\rfloor z + \lceil \tilde\varepsilon n\rceil m\}$ for $m \in \{1,...,2\lfloor \frac{\varepsilon}{4\tilde\varepsilon}\rfloor\}$. We also denote respectively $I_l(z,z+1)$ and $I_r(z,z+1)$ the collections of the $\{X_{T_0}+\lfloor\varepsilon n\rfloor z + \lceil \tilde\varepsilon n\rceil (m-1) + 1 ,...,X_{T_0}+\lfloor\varepsilon n\rfloor z + \lceil \tilde\varepsilon n\rceil m\}$ respectively for $m \in \{1,...,\lfloor \frac{\varepsilon}{4\tilde\varepsilon}\rfloor\}$ and $m \in \{\lfloor \frac{\varepsilon}{4\tilde\varepsilon}\rfloor+1,...,2\lfloor \frac{\varepsilon}{4\tilde\varepsilon}\rfloor\}$. When $n$ is large enough, the intervals of $I(z,z+1)$ are contained in $\{X_{T_0}+\lfloor\varepsilon n\rfloor z + 1 ,...,X_{T_0}+\lfloor\varepsilon n\rfloor (z+1) -1\}$. 
 
 We now define the $(\Theta_k^\gamma)_{0 \leq k \leq K-1}$ as follows. For any $k \in \{0,...,K-1\}$, we say the \emph{$k$-th step of $\gamma$} is the passage from $z_{k}$ to $z_{k+1}$. We will decompose the path in \emph{stages} of one or two steps at the end of which we update the states of the edges of $\mathds{Z}$. At time $k=0$, all the edges of $\mathds{Z}$ are clean. Let $k=0$ or let $k \in \{1,...,K-2\}$ and suppose the last step of a stage of $\gamma$ is the step $k-1$. We suppose $z_{k+1}=z_k+1$ (if $z_{k+1}=z_k-1$, the definition is similar, with all the arrows reversed in the events and $r,l$ exchanged). We define the next stage as follows, depending on the state of the edges at time $k$.
 
 \emph{Case $(z_k,z_{k+1})$ clean.} \\
 In this case, the stage will encompass only step $k$. We then define $\Theta_{k}^\gamma$ as the indicator of $\bigcup_{I \in I(z_k,z_{k+1})}(\mathcal{W}_{\gamma,k,I}^\rightarrow \cap \mathcal{W}_{\gamma,k,I}^{-,\rightarrow})$, we say $\Theta_k^\gamma$ is of \emph{type C}, and the edges $(z_k,z_{k+1})$, $(z_k,z_k-1)$ become dirty at time $k+1$.
 
 \emph{Case $(z_k,z_{k+1})$ dirty.} \\
 In this case, the stage will encompass steps $k$ and $k+1$, and there will be different cases. \\
 If $z_{k+2}=z_k$, we set $\Theta_{k}^\gamma=*$ and $\Theta_{k+1}^\gamma$ as the indicator of $\bigcup_{I \in I(z_k,z_{k+1})}(\mathcal{W}_{\gamma,k,I}^\leftarrow \cap \mathcal{W}_{\gamma,k+1,I}^\leftarrow)$. We also say $\Theta_{k+1}^\gamma$ is of \emph{type D}. After the stage, at time $k+2$, $(z_k,z_{k+1})$ and its two neighboring edges become dirty. \\
 We now assume $z_{k+2} \neq z_k$, i.e. $z_{k+2}=z_{k+1}+1$. Then there will be different cases depending on the state of $(z_{k+1},z_{k+2})$ at time $k$. \\
 \emph{Case $(z_{k+1},z_{k+2})$ dirty.} Then we set $\Theta_{k}^\gamma=*$, and $\Theta_{k+1}^\gamma$ as the indicator of $\{|\{I\in I_l(z_k,z_{k+1})|\mathcal{W}_{\gamma,k,I}^\leftarrow\}| \geq {}\frac{\varepsilon}{2^9\tilde \varepsilon}\} \cap \{|\{I\in I_r(z_k,z_{k+1})|\mathcal{W}_{\gamma,k+1,I}^\leftarrow\}| \geq \frac{\varepsilon}{2^9\tilde \varepsilon}\}$. We then say that $\Theta_{k+1}^\gamma$ is of \emph{type A'}. At time $k+2$, the edges $(z_k-1,z_k)$ and $(z_{k+1},z_{k+2})$ become dirty. Moreover, if $\Theta_{k+1}^\gamma=1$, we say the stage is a \emph{stage with wait} and the edge $(z_k,z_{k+1})$ becomes usable at time $k+2$. If, in addition to having $\Theta_{k+1}^\gamma=1$, we also have that $(z_k,z_{k}-1)$ was dirty at time $k$, we say the stage is \emph{dirty}. \\
 \emph{Case $(z_{k+1},z_{k+2})$ clean.} Then we set $\Theta_{k+1}^\gamma$ as the indicator of $\bigcup_{I \in I(z_{k+1},z_{k+2})}(\mathcal{W}_{\gamma,k+1,I}^\rightarrow \cap \mathcal{W}_{\gamma,k+1,I}^{-,\rightarrow})$ and we say $\Theta_{k+1}^\gamma$ is of type C. $(z_{k+1},z_{k+2})$ then becomes dirty at time $k+2$. If $(z_k-1,z_k)$ is not clean at time $k$, it becomes dirty at time $k+2$ and we set $\Theta_{k}^\gamma=*$. If $(z_k-1,z_k)$ is clean at time $k$, then we set $\Theta_{k}^\gamma$ as the indicator of $\{|\{I\in I_l(z_k-1,z_k)|\mathcal{W}_{\gamma,k,I}^{+,\leftarrow}\}| \geq \frac{\varepsilon}{2^9\tilde \varepsilon}\} \cap \{|\{I\in I_r(z_k-1,z_k)|\mathcal{W}_{\gamma,k,I}^\leftarrow\}| \geq \frac{\varepsilon}{2^9\tilde \varepsilon}\}$ and we say $\Theta_{k}^\gamma$ is of \emph{type B'}. If $\Theta_{k}^\gamma=1$ then $(z_k-1,z_k)$ becomes usable-clean at time $k+2$, otherwise it becomes dirty. \\
 \emph{Case $(z_{k+1},z_{k+2})$ usable (respectively usable-clean).} In this case, there exists $k' \leq k$ so that $(z_{k+1},z_{k+2})$ became usable (respectively usable-clean) at time $k'$, and we consider the largest such $k'$. We then have $\Theta_{k'-1}^\gamma=1$ (respectively $\Theta_{k'-2}^\gamma=1$), so the sets $\mathcal{E}_r =\{I\in I_r(z_{k+1},z_{k+2})|\mathcal{W}_{\gamma,k'-2,I}^\rightarrow\}$ and $\mathcal{E}_l =\{I\in I_l(z_{k+1},z_{k+2})|\mathcal{W}_{\gamma,k'-1,I}^\rightarrow\}$ (respectively $\mathcal{E}_r =\{I\in I_r(z_{k+1},z_{k+2})|\mathcal{W}_{\gamma,k'-2,I}^{+,\rightarrow}\}$ and $\mathcal{E}_l =\{I\in I_l(z_{k+1},z_{k+2})|\mathcal{W}_{\gamma,k'-2,I}^{\rightarrow}\}$) have at least $\frac{\varepsilon}{2^9\tilde \varepsilon}$ elements. We then define $\Theta_{k+1}^\gamma$ as the indicator of $(\bigcup_{I \in \mathcal{E}_l}\mathcal{W}_{\gamma,k+1,I}^\rightarrow) \cap (\bigcup_{I \in \mathcal{E}_r}\mathcal{W}_{\gamma,k+1,I}^\rightarrow)$ and say $\Theta_{k+1}^\gamma$ is of \emph{type A} (respectively of \emph{type B}). Both $(z_{k},z_{k+1})$ and $(z_{k+1},z_{k+2})$ become dirty at time $k+2$. Moreover, if $(z_k-1,z_k)$ is not clean at time $k$, it becomes dirty and we set $\Theta_{k}^\gamma=*$. If $(z_k-1,z_k)$ is clean at time $k$, then we set $\Theta_{k}^\gamma$ as the indicator of $\{|\{I\in I_l(z_k-1,z_k)|\mathcal{W}_{\gamma,k,I}^{+,\leftarrow}\}| \geq \frac{\varepsilon}{2^9\tilde \varepsilon}\} \cap \{|\{I\in I_r(z_k-1,z_k)|\mathcal{W}_{\gamma,k,I}^\leftarrow\}| \geq \frac{\varepsilon}{2^9\tilde \varepsilon}\}$ and we say $\Theta_{k}^\gamma$ is of type B'. If $\Theta_{k}^\gamma=1$ then $(z_k-1,z_k)$ becomes usable-clean at time $k+2$, otherwise it becomes dirty.
 
 \emph{Case $(z_k,z_{k+1})$ usable (respectively usable-clean).} \\
 In this case, the stage will encompass only step $k$. Moreover, there exists $k' \leq k$ such that $(z_k,z_{k+1})$ became usable (respectively usable-clean) at time $k'$, and we consider the largest such $k'$. We then have $\Theta_{k'-1}^\gamma=1$ (respectively $\Theta_{k'-2}^\gamma=1$), so the sets $\mathcal{E}_r =\{I\in I_r(z_k,z_{k+1})|\mathcal{W}_{\gamma,k'-2,I}^\rightarrow\}$ and $\mathcal{E}_l =\{I\in I_l(z_k,z_{k+1})|\mathcal{W}_{\gamma,k'-1,I}^\rightarrow\}$ (respectively $\mathcal{E}_r =\{I\in I_r(z_k,z_{k+1})|\mathcal{W}_{\gamma,k'-2,I}^{+,\rightarrow}\}$ and $\mathcal{E}_l =\{I\in I_l(z_k,z_{k+1})|\mathcal{W}_{\gamma,k'-2,I}^{\rightarrow}\}$) have at least $\frac{\varepsilon}{2^9\tilde \varepsilon}$ elements. We then define $\Theta_{k}^\gamma$ as the indicator of $(\bigcup_{I \in \mathcal{E}_l}\mathcal{W}_{\gamma,k,I}^\rightarrow) \cap (\bigcup_{I \in \mathcal{E}_r}\mathcal{W}_{\gamma,k,I}^\rightarrow)$ and say $\Theta_{k}^\gamma$ is of type A (respectively of type B). Both $(z_{k},z_{k+1})$ and $(z_{k},z_{k}-1)$ become dirty at time $k+1$.
 
 If this algorithm does not yield a value for $\Theta_{K-1}^\gamma$, we set $\Theta_{K-1}^\gamma = *$. 
 
 \begin{proposition}\label{prop_test_to_times}
  For any $K \in \mathds{N}^*$, for any path $\gamma$ of length $K$, if $X$ follows $\gamma$, $\mathcal{B}^c \cap (\bigcap_{r=0}^{6}\mathcal{B}_r^c)$ occurs and $n$ is large enough, then for any $k\in\{0,...,K-1\}$, if $\Theta_k^\gamma$ is of type A, B, C or D and $\Theta_k^\gamma=1$ then $T_{k+1}-T_k \geq \frac{r_2}{6}(\tilde\varepsilon n)^{3/2}$. 
 \end{proposition}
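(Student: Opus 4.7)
The plan is to reduce the desired lower bound on $T_{k+1}-T_k$ to a lower bound on two double sums of $\zeta$-variables at time $T_k$, and then to exploit the $\mathcal{W}$-events defining $\Theta_k^\gamma=1$ to control both sums. I would fix $\gamma$ and work throughout on $\{X\text{ follows }\gamma\}\cap\mathcal{B}^c\cap\bigcap_{r=0}^6\mathcal{B}_r^c$; by left-right symmetry I may restrict to the case where the controlled step is upward (so $z_{k+1}=z_k+1$ in types A, B, C, and for type D the controlled step is the return of a U-turn beginning with an upward step, locally of type $\iota=-$). On $\mathcal{B}^c$, Proposition \ref{prop_bound_m} identifies $T_k$ with one of the stopping times $\mathbf{T}_{m,i}^\iota$ over which $\mathcal{B}_1,\ldots,\mathcal{B}_6$ are defined, so $\bigcap_{r=1}^6(\mathcal{B}_{T_k,r}^\iota)^c$ holds and Proposition \ref{prop_sym} is applicable at $m=T_k$. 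Combining it with Lemma \ref{lem_SI_max} and the elementary inequality $S^{T_k,\iota,I}_i-S^{T_k,\iota,I}_{i\pm 1}\geq\zeta_i^{T_k,\iota,I}$ that follows directly from the reflection-max formula, I obtain, for any admissible interval $I$ of length $\lceil\tilde\varepsilon n\rceil$,
\[
T_{k+1}-T_k \;\geq\; \sum_{i\in I}L_i^{T_k,\iota} \;\geq\; \Sigma_I-\Sigma_B-O\bigl(n^{5/4}(\ln n)^8\bigr),
\]
where $\Sigma_I$ and $\Sigma_B$ are the $\rightarrow$- or $\leftarrow$-shaped double sums of $\zeta_j^{T_k,\iota,I}$ and $\zeta_j^{T_k,\iota,B}$ over $I$ (with shape dictated by $\iota$). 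The error is $o((\tilde\varepsilon n)^{3/2})$, so it will suffice to exhibit $I$ for which both $\Sigma_I$ and $-\Sigma_B$ are at least $\frac{r_2}{6}(\tilde\varepsilon n)^{3/2}$; the two contributions then yield $T_{k+1}-T_k\geq\frac{r_2}{3}(\tilde\varepsilon n)^{3/2}-o((\tilde\varepsilon n)^{3/2})\geq\frac{r_2}{6}(\tilde\varepsilon n)^{3/2}$ for $n$ large enough.

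The bound on $\Sigma_I$ is uniform in the four types because under the bad-event complements $\zeta_j^{T_k,\iota,I}$ coincides with $\zeta_j^{\gamma,k}$, so the $\mathcal{W}_{\gamma,k,I}^{\rightarrow}$ or $\mathcal{W}_{\gamma,k,I}^{\leftarrow}$ event built into $\Theta_k^\gamma=1$ yields $\Sigma_I\geq\frac{r_2}{6}(\tilde\varepsilon n)^{3/2}$ at once. The bound on $-\Sigma_B$ is where the four cases diverge. For type C, cleanness of $(z_k,z_{k+1})$ forces $\Delta_{T_k,j}=\Delta_{T_0,j}$ on interior sites and hence $\zeta_j^{T_k,\iota,B}=\hat\zeta_j^{\gamma,k}$ under $\mathcal{B}_0^c$, so the auxiliary event $\mathcal{W}_{\gamma,k,I}^{-,\rightarrow}$ in the type C indicator gives $-\Sigma_B\geq\frac{r_2}{6}(\tilde\varepsilon n)^{3/2}$ directly. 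For type D, the preceding step visited the entire interior of the relevant edge in the opposite direction, so $\zeta_j^{T_k,\iota,B}$ is an affine function of $\zeta_j^{T_{k-1},-\iota,E}$; applying the same reflection estimate at $m=T_{k-1}$ converts the sum of $\zeta^{T_{k-1},-\iota,E}$ into a sum of $\zeta^{T_{k-1},-\iota,I}=\zeta^{\gamma,k-1}$ up to $o((\tilde\varepsilon n)^{3/2})$, and the event $\mathcal{W}_{\gamma,k-1,I}^{\leftarrow}$ in the type D indicator closes this case.

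Types A and B will follow the same template but require more bookkeeping, because the usable (respectively usable-clean) edge was traversed exactly once before $T_k$, during a past stage ending at some time $k'$: on the interior of the edge, $\Delta_{T_k,j}$ is then determined by $\zeta_j^{T_{k'-1},\cdot,E}$ on sites revisited during step $k'-1$ and by $\zeta_j^{T_{k'-2},\cdot,E}$ (or $\hat\zeta_j^{\gamma,k'-2}$ in the usable-clean case) otherwise. The algorithmic sets $\mathcal{E}_l$ and $\mathcal{E}_r$ each contain at least $\varepsilon/(2^9\tilde\varepsilon)$ intervals on which the requisite past-step $\mathcal{W}^{\rightarrow}$ bound holds, and $\Theta_k^\gamma=1$ supplies intervals $I\in\mathcal{E}_l$ and $I'\in\mathcal{E}_r$ on which the current $\mathcal{W}_{\gamma,k,\cdot}^{\rightarrow}$ also holds; applying the reflection estimate at the past times $T_{k'-2}$ and $T_{k'-1}$ then produces the required bound on $-\Sigma_B$. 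I expect the main obstacle to lie precisely in these two types: the subset of sites revisited during step $k'-1$ is random and not explicitly accessible, so $-\Sigma_B$ is an a priori unknown mixture of past-step $E$-variables, and verifying that the somewhat asymmetric algorithmic pairing (of $I_r$ with step $k'-2$ and of $I_l$ with step $k'-1$, and of $\rightarrow$-sums in the later use with $\leftarrow$-sums at the time the edge was generated) correctly matches the reflection analysis on every site of the chosen $I$ will be the most delicate part of the argument.
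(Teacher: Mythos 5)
Your overall plan coincides with the paper's: reduce $T_{k+1}-T_k$ to the $\leftarrow$- or $\rightarrow$-shaped double sum of $\zeta^{T_k,\iota,E}-\zeta^{T_k,\iota,B}$ over a chosen interval $I$, replace $\zeta^{E}$ by $\zeta^{I}$ via Proposition~\ref{prop_sym} at the cost $O(n^{5/4}(\ln n)^8)=o(n^{3/2})$, and exploit the $\mathcal{W}$-events built into $\Theta_k^\gamma=1$. Your accounting ($\tfrac{r_2}{3}-o(\cdot)$ versus the paper's $\tfrac{r_2}{12}+\tfrac{r_2}{12}$) is only a cosmetic difference, and your treatment of types C and D is essentially the paper's. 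You have also correctly identified that types A and B are the delicate part, and you have honestly left them open. Two ideas are missing.

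First, the set of sites revisited during step $k'-1$ is \emph{not} an inaccessible random set: $(X_m)_{T_{k'-1}\leq m\leq T_{k'}}$ goes from $X_{T_{k'-1}}=X_{T_0}+\lfloor\varepsilon n\rfloor z_k$ down to $X_{T_{k'-1}}-\lfloor\varepsilon n\rfloor$, so the sites it touches in $\{X_{T_0}+\lfloor\varepsilon n\rfloor z_k,\dots,X_{T_0}+\lfloor\varepsilon n\rfloor z_{k+1}\}$ form a \emph{left half-interval} starting at $X_{T_{k'-1}}$. Hence whether the walk entered the right half of the edge during step $k'-1$ is governed by whether a single site (the paper's $L_{\cdot}^{T_{k'-1},-}=0$ or $\neq 0$ dichotomy) was reached; this deterministically tells you which half-collection, $\mathcal{E}_r$ or $\mathcal{E}_l$, to pair with which past environment ($T_{k'-2}$ or $T_{k'-1}$). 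Without this monotonicity observation the ``pairing'' you worry about indeed cannot be verified.

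Second, in the case where the right half was not reached, you may use $\mathcal{E}_r$ and apply Proposition~\ref{prop_sym} at time $T_{k'-2}$, because those intervals sit in the reflected zone $\{X_{T_{k'-2}}-\lfloor\varepsilon n\rfloor+1,\dots,X_{T_{k'-2}}\}$. But in the other case you use $\mathcal{E}_l$, whose intervals lie in $\{X_{T_{k'-1}}+1,\dots,X_{T_{k'-1}}+\lfloor\varepsilon n\rfloor\}$ — the \emph{absorbed} zone — so the ``reflection estimate at $T_{k'-1}$'' you invoke is not available: Proposition~\ref{prop_sym} does not cover it. The paper replaces it with a direct counting argument: since $L_j^{T_{k'-1},-}>0$ for all $j\in I$ (by the half-interval observation), $(\mathcal{B}_{T_{k'-1},3}^-)^c$ gives $\zeta_j^{T_{k'-1},-,E}=\zeta_j^{T_{k'-1},-,I}$ whenever $L_j^{T_{k'-1},-}\geq(\ln n)^2$, $(\mathcal{B}_{T_{k'-1},2}^-)^c$ bounds the number of exceptional sites by $(\ln n)^8$, and $(\mathcal{B}_{T_{k'-1},5}^-)^c$ bounds each exceptional term, yielding a total error $O(\tilde\varepsilon n(\ln n)^{10})=o(n^{3/2})$. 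Without both of these ingredients the types A and B arguments would fail.
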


 \begin{proof}
 Let us assume that $X$ follows $\gamma$, $\mathcal{B}^c \cap (\bigcap_{r=0}^{6}\mathcal{B}_r^c)$ occurs and $n$ is large enough. We notice that since $\bigcap_{r=0}^{6}\mathcal{B}_r^c$ occurs, $(\mathcal{B}_0^{\lfloor N\theta\rfloor,\lfloor Nx\rfloor,\pm})^c$ occurs. In particular, by Proposition \ref{prop_law_zeta}, for any $\lfloor Nx\rfloor-n^{(\alpha-1)/4}\lfloor\varepsilon n\rfloor-1 \leq i \leq \lfloor Nx\rfloor+n^{(\alpha-1)/4}\lfloor\varepsilon n\rfloor+1$, hence for any $k\in\{0,...,K-1\}$ and $i \in \{X_{T_0}+\lfloor\varepsilon n\rfloor (z_k-1),...,X_{T_0}+\lfloor\varepsilon n\rfloor (z_k+1)\}$, we have $\bar \Delta_{T_0,i}=\Delta_{T_0,i}$. Furthermore, by Proposition \ref{prop_bound_m}, since $\mathcal{B}^c$ occurs, for any $k\in\{0,...,K-1\}$ we have $T_k=\mathbf{T}_{m,i}^+$ or $\mathbf{T}_{m,i}^-$ for some integers $\lfloor N\theta\rfloor-2n^{(\alpha+4)/5} \leq m \leq \lfloor N\theta\rfloor+2n^{(\alpha+4)/5}$ and $i \in [\lfloor Nx \rfloor-n^{(\alpha+4)/5},\lfloor Nx \rfloor+n^{(\alpha+4)/5}]$. Therefore, since $\bigcap_{r=0}^{6}\mathcal{B}_r^c$ occurs, $\bigcap_{r=1}^6 (\mathcal{B}_{T_k,r}^-)^c$ and $\bigcap_{r=1}^6 (\mathcal{B}_{T_k,r}^+)^c$ occur. Set $k\in\{0,...,K-1\}$ and suppose $\Theta_k^\gamma=1$. We will deal with the possible types of $\Theta_k^\gamma$ separately.
 
 \emph{Case $\Theta_k^\gamma$ of type A.} 
 
 We suppose $z_{k+1}=z_k+1$, the other case can be dealt with in the same way. In this case, the edge $(z_k,z_{k+1})$ was usable at time $k$. We denote $k'$ the biggest integer below $k$ such that $(z_k,z_{k}+1)$ became usable at time $k'$. Then the path $\gamma$ did not cross $(z_k,z_{k}+1)$ between times $k'-1$ and $k$, and was always strictly below $z_k$ between these times. Moreover, for any $k''\in \mathds{N}$, $m\in\{T_{k''},...,T_{k''+1}\}$, by definition of $T_{k''+1}$ we have $X_m \in \{X_{T_{k''}}-\lfloor\varepsilon n\rfloor,...,X_{T_{k''}}+\lfloor\varepsilon n\rfloor\}$. Since $X$ follows $\gamma$, this implies that for any $m \in \{T_{k'},...,T_k-1\}$, $X_m < X_{T_0}+\lfloor\varepsilon n\rfloor z_k$, so for any $i \geq X_{T_0}+\lfloor\varepsilon n\rfloor z_k$, $\Delta_{T_k,i}= \Delta_{T_{k'},i}$. There will be two different cases (we recall the notation $\mathcal{E}_l$, $\mathcal{E}_r$ introduced when defining the $\Theta_k^\gamma$ of type A). 
 
 We first assume $L_{X_{T_0}+\lfloor\varepsilon n\rfloor z_k+\lfloor\frac{\varepsilon}{4\tilde\varepsilon}\rfloor+1}^{T_{k'-1},-}=0$. \\
 We notice that since $\Theta_k^\gamma=1$, there exists $I=\{i_1,...,i_2\} \in \mathcal{E}_r$ such that $\mathcal{W}_{\gamma,k,I}^\rightarrow$ occurs. Since $I \in \mathcal{E}_r$, $\mathcal{W}_{\gamma,k'-2,I}^\rightarrow$ also occurs. This yields $\sum_{i=i_1}^{i_2}\sum_{j=i}^{i_2} \zeta_j^{\gamma,k} \geq \frac{r_2}{6}(\tilde\varepsilon n)^{3/2}$ and $\sum_{i=i_1}^{i_2}\sum_{j=i}^{i_2} \zeta_j^{\gamma,k'-2} \geq \frac{r_2}{6}(\tilde\varepsilon n)^{3/2}$. Now, since $(\mathcal{B}_0^{\lfloor N\theta\rfloor,\lfloor Nx\rfloor,\pm})^c$ occurs, $n$ is large enough and $X$ follows $\gamma$, by the definition of the $\zeta_i^{\gamma,k}$ we get that for any $j \in I$, $\zeta_j^{\gamma,k}=\zeta_j^{T_k,+,I}$ and $\zeta_j^{\gamma,k'-2}=\zeta_j^{T_{k'-2},-,I}$. We deduce 
 \[
 \sum_{i=i_1}^{i_2}\sum_{j=i}^{i_2} \zeta_j^{T_k,+,I} \geq \frac{r_2}{6}(\tilde\varepsilon n)^{3/2}\text{ and }\sum_{i=i_1}^{i_2}\sum_{j=i}^{i_2} \zeta_j^{T_{k'-2},-,I} \geq \frac{r_2}{6}(\tilde\varepsilon n)^{3/2}.
 \]
 In addition, by the definition of $S_i^{T_k,+,I}$, for any $i \in \{i_1,...,i_2\}$, $S_i^{T_k,+,I}-S_{i_2+1}^{T_k,+,I} \geq \sum_{j=i}^{i_2} \zeta_j^{T_k,+,I}$. Moreover, since $\bigcap_{r=1}^6 (\mathcal{B}_{T_k,r}^+)^c$ occurs, Proposition \ref{prop_sym} yields $S_i^{T_k,+,E} \geq S_i^{T_k,+,I} - (\ln n)^8 n^{1/4}$ and $S_{i_2+1}^{T_k,+,E} \leq S_{i_2+1}^{T_k,+,I}+\lceil(\ln n)^3\rceil$, so 
 \[
S_i^{T_k,+,E}-S_{i_2+1}^{T_k,+,E} \geq S_i^{T_k,+,I}-S_{i_2+1}^{T_k,+,I} - (\ln n)^8 n^{1/4}-\lceil(\ln n)^3\rceil \geq \sum_{j=i}^{i_2} \zeta_j^{T_k,+,I}- (\ln n)^8 n^{1/4}-\lceil(\ln n)^3\rceil,
\]
which implies $\sum_{i=i_1}^{i_2}(S_i^{T_k,+,E}-S_{i_2+1}^{T_k,+,E}) \geq \frac{r_2}{12}(\tilde\varepsilon n)^{3/2}$, that is $\sum_{i=i_1}^{i_2}\sum_{j=i}^{i_2}\zeta_j^{T_k,+,E}\geq \frac{r_2}{12}(\tilde\varepsilon n)^{3/2}$. By the same arguments, we have $\sum_{i=i_1}^{i_2}(S_{i_2+1}^{T_{k'-2},-,E}-S_{i}^{T_{k'-2},-,E}) \geq \frac{r_2}{12}(\tilde\varepsilon n)^{3/2}$, that is $\sum_{i=i_1}^{i_2}\sum_{j=i}^{i_2}\zeta_j^{T_{k'-2},-,E}\geq \frac{r_2}{12}(\tilde\varepsilon n)^{3/2}$. \\
Now, for any $j \in I$, $\zeta_j^{T_{k'-2},-,E} = -\Delta_{\bar m,j}+1/2$ where $\bar m=\beta_{T_{k'-2}}^- = T_{k'-1}$ since $X$ follows $\gamma$, so $\zeta_j^{T_{k'-2},-,E} = -\Delta_{T_{k'-1},j}+1/2$. Furthermore, since $L_{X_{T_0}+\lfloor\varepsilon n\rfloor z_k+\lfloor\frac{\varepsilon}{4\tilde\varepsilon}\rfloor+1}^{T_{k'-1},-}=0$, for any $\bar m \in \{T_{k'-1},...,T_{k'}\}$ we have $X_{\bar m} \leq X_{T_0}+\lfloor\varepsilon n\rfloor z_k+\lfloor\frac{\varepsilon}{4\tilde\varepsilon}\rfloor$ so if $j \in I$ we have $\Delta_{T_{k'-1},j}= \Delta_{T_{k'},j}=\Delta_{T_{k},j}$. We deduce that for any $j \in I$, we have $\zeta_j^{T_{k'-2},-,E} = -\Delta_{T_k,j}+1/2=-\zeta_j^{T_k,+,B}$. Therefore $\sum_{i=i_1}^{i_2}\sum_{j=i}^{i_2}\zeta_j^{T_{k'-2},-,E}\geq \frac{r_2}{12}(\tilde\varepsilon n)^{3/2}$ becomes $\sum_{i=i_1}^{i_2}\sum_{j=i}^{i_2}-\zeta_j^{T_k,+,B}\geq \frac{r_2}{12}(\tilde\varepsilon n)^{3/2}$. Since $\sum_{i=i_1}^{i_2}\sum_{j=i}^{i_2}\zeta_j^{T_k,+,E}\geq \frac{r_2}{12}(\tilde\varepsilon n)^{3/2}$, we get $\sum_{i=i_1}^{i_2}\sum_{j=i}^{i_2}(\zeta_j^{T_k,+,E}-\zeta_j^{T_k,+,B})\geq \frac{r_2}{6}(\tilde\varepsilon n)^{3/2}$. By Observation \ref{obs_rec_temps_local}, this yields $\sum_{i=i_1}^{i_2}(L_{i}^{T_k,+}-L_{i_2+1}^{T_k,+})\geq \frac{r_2}{6}(\tilde\varepsilon n)^{3/2}$. Now, $L_{i_2+1}^{T_k,+} \geq 0$, hence $\sum_{i=i_1}^{i_2}L_{i}^{T_k,+}\geq \frac{r_2}{6}(\tilde\varepsilon n)^{3/2}$. This implies $\beta_{T_k}^+-T_k \geq \frac{r_2}{6}(\tilde\varepsilon n)^{3/2}$, thus $T_{k+1}-T_k \geq \frac{r_2}{6}(\tilde\varepsilon n)^{3/2}$. 
 
 We now assume $L_{X_{T_0}+\lfloor\varepsilon n\rfloor z_k+\lfloor\frac{\varepsilon}{4\tilde\varepsilon}\rfloor+1}^{T_{k'-1},-}\neq 0$. \\
 Since $\Theta_k^\gamma=1$, there exists $I=\{i_1,...,i_2\} \in \mathcal{E}_l$ such that $\mathcal{W}_{\gamma,k,I}^\rightarrow$ occurs.  Since $I \in \mathcal{E}_l$, $\mathcal{W}_{\gamma,k'-1,I}^\rightarrow$ also occurs. This yields $\sum_{i=i_1}^{i_2}\sum_{j=i}^{i_2} \zeta_j^{\gamma,k} \geq \frac{r_2}{6}(\tilde\varepsilon n)^{3/2}$ and $\sum_{i=i_1}^{i_2}\sum_{j=i}^{i_2} \zeta_j^{\gamma,k'-1} \geq \frac{r_2}{6}(\tilde\varepsilon n)^{3/2}$. Since $(\mathcal{B}_0^{\lfloor N\theta\rfloor,\lfloor Nx\rfloor,\pm})^c$ occurs, $n$ is large enough and $X$ follows $\gamma$, for any $j \in I$ we have $\zeta_j^{\gamma,k}=\zeta_j^{T_k,+,I}$ and $\zeta_j^{\gamma,k'-1}=\zeta_j^{T_{k'-1},-,I}$, hence $\sum_{i=i_1}^{i_2}\sum_{j=i}^{i_2} \zeta_j^{T_k,+,I} \geq \frac{r_2}{6}(\tilde\varepsilon n)^{3/2}$ and $\sum_{i=i_1}^{i_2}\sum_{j=i}^{i_2} \zeta_j^{T_{k'-1},-,I} \geq \frac{r_2}{6}(\tilde\varepsilon n)^{3/2}$. From $\sum_{i=i_1}^{i_2}\sum_{j=i}^{i_2} \zeta_j^{T_k,+,I} \geq \frac{r_2}{6}(\tilde\varepsilon n)^{3/2}$ we can deduce $\sum_{i=i_1}^{i_2}\sum_{j=i}^{i_2} \zeta_j^{T_k,+,E} \geq \frac{r_2}{12}(\tilde\varepsilon n)^{3/2}$ by the same arguments as before. However, we cannot do the same with $\sum_{i=i_1}^{i_2}\sum_{j=i}^{i_2} \zeta_j^{T_{k'-1},-,I}$, as that would require Proposition \ref{prop_sym}, that relies on $j \in \{X_{T_{k'-1}}-\lfloor\varepsilon n\rfloor+1,...,X_{T_{k'-1}}\}$, which is not the case for $i \in I$. However, $(\mathcal{B}_{T_{k'-1},3}^-)^c$ occurs, so for each $j \in I$ such that $L_j^{T_{k'-1},-} \geq (\ln n)^2$, we have $\zeta_j^{T_{k'-1},-,I}=\zeta_j^{T_{k'-1},-,E}$. Furthermore, $L_{X_{T_0}+\lfloor\varepsilon n\rfloor z_k+\lfloor\frac{\varepsilon}{4\tilde\varepsilon}\rfloor+1}^{T_{k'-1},-}\neq 0$, hence the random walk $X$ went from $X_{T_0}+\lfloor\varepsilon n\rfloor z_k$ to $X_{T_0}+\lfloor\varepsilon n\rfloor z_k+\lfloor\frac{\varepsilon}{4\tilde\varepsilon}\rfloor+1$ between times $T_{k'-1}$ and $T_{k'}$, which implies $L_j^{T_{k'-1},-} > 0$ for each $j \in I$. In addition, $(\mathcal{B}_{T_{k'-1},2}^-)^c$ occurs, thus $|\{j\in I \,|\, 0 < L_j^{T_{k'-1},-} < (\ln n)^2\}| < (\ln n)^8$, so $|\{j\in I \,|\,L_j^{T_{k'-1},-} < (\ln n)^2\}| < (\ln n)^8$. Finally, $(\mathcal{B}_{T_{k'-1},5}^-)^c$ occurs, hence for any $j \in I$ we have $|\zeta_j^{T_{k'-1},-,E}|,|\zeta_j^{T_{k'-1},-,I}| \leq (\ln n)^2$. We deduce 
 \[
 \sum_{i=i_1}^{i_2}\sum_{j=i}^{i_2} \zeta_j^{T_{k'-1},-,E} \geq \sum_{i=i_1}^{i_2}\sum_{j=i}^{i_2} \zeta_j^{T_{k'-1},-,I}-2\lceil\tilde\varepsilon n\rceil(\ln n)^{10} \geq \frac{r_2}{6}(\tilde\varepsilon n)^{3/2}-2\lceil\tilde\varepsilon n\rceil(\ln n)^{10} \geq \frac{r_2}{12}(\tilde\varepsilon n)^{3/2}
 \]
  when $n$ is large enough. Now, for any $j \in I$, $\zeta_j^{T_{k'-1},-,E} = -\Delta_{\bar m,j}+1/2$ with $\bar m = \beta_{T_{k'-1}}^-=T_{k'}$, hence $\zeta_j^{T_{k'-1},-,E} = -\Delta_{T_{k'},j}+1/2= -\Delta_{T_{k},j}+1/2=-\zeta_{j}^{T_k,+,B}$. This yields $\sum_{i=i_1}^{i_2}\sum_{j=i}^{i_2} -\zeta_{j}^{T_k,+,B} \geq \frac{r_2}{12}(\tilde\varepsilon n)^{3/2}$. Since we also proved $\sum_{i=i_1}^{i_2}\sum_{j=i}^{i_2} \zeta_j^{T_k,+,E} \geq \frac{r_2}{12}(\tilde\varepsilon n)^{3/2}$, we can end the proof as in the previous case.
 
 \emph{Case $\Theta_k^\gamma$ of type B.} 
 
 We suppose $z_{k+1}=z_k+1$, the other case can be dealt with in the same way. In this case, $(z_k,z_{k+1})$ was usable-clean at time $k$. We denote $k'$ the (only) integer below $k$ such that $(z_k,z_{k+1})$ became usable-clean at time $k'$. Then the path $\gamma$ remained below $z_k$ up to time $k$, and the only time before $k$ at which the path reached $z_k$ is time $k'-2$. Since $X$ follows $\gamma$, this implies that for any $i \in \{X_{T_0}+\lfloor\varepsilon n\rfloor z_k,...,X_{T_0}+\lfloor\varepsilon n\rfloor (z_k+1)\}$, $\Delta_{T_{k'-2},i}=\Delta_{T_{0},i}$, and $\Delta_{T_{k},i}=\Delta_{T_{k'-1},i}$. If $L_{X_{T_0}+\lfloor\varepsilon n\rfloor z_k+\lfloor\frac{\varepsilon}{4\tilde\varepsilon}\rfloor+1}^{T_{k'-2},-}\neq0$, we can prove our result using the same method as in the similar case when $\Theta_k^\gamma$ of type A, replacing $T_{k'-1}$ by $T_{k'-2}$. We now deal with the case $L_{X_{T_0}+\lfloor\varepsilon n\rfloor z_k+\lfloor\frac{\varepsilon}{4\tilde\varepsilon}\rfloor+1}^{T_{k'-2},-}=0$. Since $\Theta_k^\gamma=1$, there exists $I=\{i_1,...,i_2\} \in \mathcal{E}_r$ such that $\mathcal{W}_{\gamma,k,I}^\rightarrow$ occurs. Since $I \in \mathcal{E}_r$, $\mathcal{W}_{\gamma,k'-2,I}^{+,\rightarrow}$ also occurs. This yields $\sum_{i=i_1}^{i_2}\sum_{j=i}^{i_2} \zeta_j^{\gamma,k} \geq \frac{r_2}{6}(\tilde\varepsilon n)^{3/2}$ and $\sum_{i=i_1}^{i_2}\sum_{j=i}^{i_2} \hat\zeta_j^{\gamma,k'-2} \geq \frac{r_2}{6}(\tilde\varepsilon n)^{3/2}$. From the first inequality we can deduce $\sum_{i=i_1}^{i_2}\sum_{j=i}^{i_2} \zeta_j^{T_k,+,E} \geq \frac{r_2}{12}(\tilde\varepsilon n)^{3/2}$ as in the case $\Theta_k^\gamma$ of type A. Now, by the definition of the $\hat\zeta_j^{\gamma,k'-2}$, for any $j \in I$ we have $\hat\zeta_j^{\gamma,k'-2}= -\bar\Delta_{T_0,j}+1/2=-\Delta_{T_0,j}+1/2$, thus $\hat\zeta_j^{\gamma,k'-2}=-\Delta_{T_{k'-2},j}+1/2$. Now, since $L_{X_{T_0}+\lfloor\varepsilon n\rfloor z_k+\lfloor\frac{\varepsilon}{4\tilde\varepsilon}\rfloor+1}^{T_{k'-2},-}=0$, $X$ did not visit $j$ between times $T_{k'-2}$ and $T_{k'-1}$, hence $\Delta_{T_{k'-2},j}=\Delta_{T_{k'-1},j}=\Delta_{T_k,j}$, so $\hat\zeta_j^{\gamma,k'-2}=-\Delta_{T_k,j}+1/2=-\zeta_{j}^{T_k,+,B}$. Therefore $\sum_{i=i_1}^{i_2}\sum_{j=i}^{i_2} \hat\zeta_j^{\gamma,k'-2} \geq \frac{r_2}{6}(\tilde\varepsilon n)^{3/2}$ yields $\sum_{i=i_1}^{i_2}\sum_{j=i}^{i_2} -\zeta_{j}^{T_k,+,B} \geq \frac{r_2}{6}(\tilde\varepsilon n)^{3/2}$. We can now conclude as in the case $\Theta_k^\gamma$ of type A. 
 
 \emph{Case $\Theta_k^\gamma$ of type C.} 
 
 We suppose $z_{k+1}=z_k+1$, the other case can be dealt with in the same way. Since $\Theta_k^\gamma=1$, there exists $I \in I(z_k,z_{k+1})$ such that $\mathcal{W}_{\gamma,k,I}^\rightarrow \cap \mathcal{W}_{\gamma,k,I}^{-,\rightarrow}$, which yields $\sum_{i=i_1}^{i_2}\sum_{j=i}^{i_2} \zeta_j^{\gamma,k} \geq \frac{r_2}{6}(\tilde\varepsilon n)^{3/2}$ and $\sum_{i=i_1}^{i_2}\sum_{j=i}^{i_2} -\hat\zeta_j^{\gamma,k} \geq \frac{r_2}{6}(\tilde\varepsilon n)^{3/2}$. Since $X$ follows $\gamma$, $(\mathcal{B}_0^{\lfloor N\theta\rfloor,\lfloor Nx\rfloor,\pm})^c$ occurs and $n$ is large enough, for any $j \in I$ we have $\zeta_j^{\gamma,k}=\zeta_j^{T_k,+,I}$, so $\sum_{i=i_1}^{i_2}\sum_{j=i}^{i_2} \zeta_j^{T_k,+,I} \geq \frac{r_2}{6}(\tilde\varepsilon n)^{3/2}$. We can now use the same arguments as in the case $\Theta_k^\gamma$ of type A to deduce $\sum_{i=i_1}^{i_2}\sum_{j=i}^{i_2} \zeta_j^{T_k,+,E} \geq \frac{r_2}{12}(\tilde\varepsilon n)^{3/2}$. Moreover, for any $j \in I$ we have $\hat\zeta_j^{\gamma,k} = \bar\Delta_{T_0,j}-1/2 = \Delta_{T_0,j}-1/2$. In addition, since $\Theta_k^\gamma$ is of type C, $(z_k,z_{k+1})$ was clean at time $k$, hence the path $\gamma$ stayed strictly below $z_k$ until time $k$, thus $\Delta_{T_0,j} = \Delta_{T_k,j}$, hence $\hat\zeta_j^{\gamma,k} = \Delta_{T_k,j}-1/2=\zeta_j^{T_k,+,B}$. Consequently, $\sum_{i=i_1}^{i_2}\sum_{j=i}^{i_2} -\hat\zeta_j^{\gamma,k} \geq \frac{r_2}{6}(\tilde\varepsilon n)^{3/2}$ implies $\sum_{i=i_1}^{i_2}\sum_{j=i}^{i_2} -\zeta_j^{T_k,+,B} \geq \frac{r_2}{6}(\tilde\varepsilon n)^{3/2}$. We can now end the proof as in the case $\Theta_k^\gamma$ of type A.
 
 \emph{Case $\Theta_k^\gamma$ of type D.}
 
 We suppose $z_{k+1}=z_k+1$, the other case can be dealt with in the same way. Then since $\Theta_k^\gamma=1$, there exists $I \in I(z_k,z_{k+1})$ such that $\mathcal{W}_{\gamma,k-1,I}^\rightarrow \cap \mathcal{W}_{\gamma,k,I}^\rightarrow$ occurs. This yields $\sum_{i=i_1}^{i_2}\sum_{j=i}^{i_2} \zeta_j^{\gamma,k-1} \geq \frac{r_2}{6}(\tilde\varepsilon n)^{3/2}$ and $\sum_{i=i_1}^{i_2}\sum_{j=i}^{i_2} \zeta_j^{\gamma,k} \geq \frac{r_2}{6}(\tilde\varepsilon n)^{3/2}$. Since $X$ follows $\gamma$, $(\mathcal{B}_0^{\lfloor N\theta\rfloor,\lfloor Nx\rfloor,\pm})^c$ occurs and $n$ is large enough, for any $j \in I$ we have $\zeta_j^{\gamma,k-1}=\zeta_j^{T_{k-1},-,I}$ and $\zeta_j^{\gamma,k}=\zeta_j^{T_{k},+,I}$, so we get $\sum_{i=i_1}^{i_2}\sum_{j=i}^{i_2} \zeta_j^{T_{k-1},-,I} \geq \frac{r_2}{6}(\tilde\varepsilon n)^{3/2}$ and $\sum_{i=i_1}^{i_2}\sum_{j=i}^{i_2} \zeta_j^{T_{k},+,I} \geq \frac{r_2}{6}(\tilde\varepsilon n)^{3/2}$. From the second inequality we can deduce that $\sum_{i=i_1}^{i_2}\sum_{j=i}^{i_2} \zeta_j^{T_{k},+,E} \geq \frac{r_2}{12}(\tilde\varepsilon n)^{3/2}$ by the same arguments as in the case $\Theta_k^\gamma$ of type A; we can also apply them to the first inequality to obtain $\sum_{i=i_1}^{i_2}\sum_{j=i}^{i_2} \zeta_j^{T_{k-1},-,E} \geq \frac{r_2}{12}(\tilde\varepsilon n)^{3/2}$. Now, for any $j \in I$, $\zeta_j^{T_{k-1},-,E}=-\Delta_{\bar m,j}+1/2$ with $\bar m =\beta_{T_{k-1}}^-=T_k$, hence $\zeta_j^{T_{k-1},-,E}=-\Delta_{T_k,j}+1/2=-\zeta_j^{T_k,+,B}$. Therefore we have $\sum_{i=i_1}^{i_2}\sum_{j=i}^{i_2} -\zeta_j^{T_k,+,B} \geq \frac{r_2}{12}(\tilde\varepsilon n)^{3/2}$. We can now conclude as in the case $\Theta_k^\gamma$ of type A.
 \end{proof}
 
 In light of Proposition \ref{prop_test_to_times}, we want to prove that for any $K\in\mathds{N}^*$, for any path $\gamma$ of length $K$, the probability that there are not enough $k\in\{0,...,K-1\}$ so that $\Theta_k^\gamma$ is of type A, B, C or D and $\Theta_k^\gamma=1$ is very weak. A sequence of $\{0,1,*\}^K$ that is a possible value of $(\Theta_k^\gamma)_{0 \leq k \leq K-1}$ will be called an \emph{admissible sequence} for $\gamma$. Since the states of the edges of $\mathds{Z}$ at time $k$ depend only on the path and of the $\Theta_{k'}^\gamma$, $k'<k$, and since the states of the edges at time $k$ determine whether $\Theta_k^\gamma=*$, we have the following lemma. 
 
 \begin{lemma}\label{lem_nb_sequences}
  For any $K \in \mathds{N}^*$, there are at most $2^K$ admissible sequences for any given path of length $K$.
 \end{lemma}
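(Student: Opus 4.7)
The plan is to proceed by induction on $k$, establishing that for any fixed prefix $(\theta_0,\ldots,\theta_{k-1}) \in \{0,1,*\}^k$ that can be realized as $(\Theta_{0}^\gamma,\ldots,\Theta_{k-1}^\gamma)$, the next coordinate $\Theta_k^\gamma$ takes at most $2$ possible values. This immediately gives the bound $2^K$ on the number of admissible sequences of length $K$.

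The key point is a structural observation about the algorithm. The algorithm processes $\gamma$ stage by stage, where each stage consists of one or two consecutive steps, and the state of every edge of $\mathds{Z}$ at the beginning of a stage starting at time $k$ is completely determined by $\gamma$ and by the values $(\Theta_{k'}^\gamma)_{k' < k}$; this is how states propagate in the algorithm's definition. Given this deterministic configuration of edge states at time $k$, inspecting each case of the algorithm shows that for every $k \in \{0,\ldots,K-1\}$ exactly one of the following holds: either $\Theta_k^\gamma$ is forced to equal $*$ (giving a single possible value) or $\Theta_k^\gamma$ is set to the indicator of some event (giving at most the two possible values $0$ and $1$). The catch-all rule ``if this algorithm does not yield a value for $\Theta_{K-1}^\gamma$, we set $\Theta_{K-1}^\gamma=*$'' is also deterministic given the prefix, so it fits in the same dichotomy.

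To verify the dichotomy one runs through the cases: in a one-step stage ($(z_k,z_{k+1})$ clean, or usable, or usable-clean), $\Theta_k^\gamma$ is by definition an indicator, hence has at most two values. In a two-step stage ($(z_k,z_{k+1})$ dirty), $\Theta_{k+1}^\gamma$ is always defined as an indicator (two values), while $\Theta_k^\gamma$ is either fixed to $*$ (when the sub-case is ``$z_{k+2}=z_k$'', or ``$(z_{k+1},z_{k+2})$ dirty'', or ``$(z_k-1,z_k)$ not clean'') or is an indicator of type B' (when $(z_k-1,z_k)$ is clean). Crucially, in every such branching, which alternative occurs depends only on $\gamma$ and on the edge states at time $k$, \emph{not} on the still-unknown random values of $\Theta_k^\gamma,\Theta_{k+1}^\gamma$ themselves; hence the number of possible outputs of the stage factors as (number of possible values for $\Theta_k^\gamma$) times (number of possible values for $\Theta_{k+1}^\gamma$), each factor being $1$ or $2$.

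Combining these observations, if $\mathcal{A}_k \subseteq \{0,1,*\}^k$ denotes the set of admissible prefixes of length $k$ for $\gamma$, then $|\mathcal{A}_{k+1}| \leq 2|\mathcal{A}_k|$, and since $|\mathcal{A}_0|=1$ we obtain $|\mathcal{A}_K| \leq 2^K$. The only real work is the case-by-case check above, which is pure bookkeeping rather than a genuine obstacle; no probabilistic input is needed beyond the deterministic structure of the algorithm.
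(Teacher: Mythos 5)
Your proof is correct and matches the paper's (very terse, one-sentence) argument: edge states at time $k$ are a deterministic function of $\gamma$ and the prefix $(\Theta_{k'}^\gamma)_{k'<k}$, and those states determine whether $\Theta_k^\gamma$ is forced to be $*$ or is an indicator, so each new coordinate multiplies the count by at most $2$. The case-by-case check you supply is the implicit content behind the paper's claim, so the two proofs are the same in substance.
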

 
 For any $K \in \mathds{N}^*$, for any path $\gamma$ of length $K$, we call $A(\gamma)$ the set of admissible sequences for $\gamma$. We also call $A'(\gamma)$ the set of \emph{bad admissible sequences}, that is the $(t_k)_{0 \leq k \leq K-1}\in A(\gamma)$ such that $|\{k \in\{0,...,K-1\}|t_k=0\}| \geq K/20$. All admissible sequences that are not bad will contain enough $k\in\{0,...,K-1\}$ so that $\Theta_k^\gamma$ is of type A, B, C or D and $\Theta_k^\gamma=1$, as established by the following lemma.
 
 \begin{lemma}\label{lem_seq_comb}
  For any $K \in \mathds{N}^*$, for any path $\gamma$ of length $K$, if $(\Theta_k^\gamma)_{0 \leq k \leq K-1}$ is not bad, we have $|\{k\in\{0,...,K-1\}|\,\Theta_k^\gamma$ is of type A, B, C or D and $\Theta_k^\gamma=1\}| \geq K/20$.
 \end{lemma}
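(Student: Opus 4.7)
The hypothesis that $(\Theta_k^\gamma)_{0 \le k \le K-1}$ is not bad gives $|\{k : \Theta_k^\gamma = 0\}| < K/20$, hence the count $N_0$ of indices $k$ with $\Theta_k^\gamma \in \{A,B,C,D\}$ and value $0$ satisfies $N_0 < K/20$. Writing $N$ for the total number of indices $k$ with $\Theta_k^\gamma \in \{A,B,C,D\}$ (regardless of value), the quantity the lemma asks us to lower-bound is $N - N_0$, so it suffices to establish the deterministic combinatorial inequality $N \ge K/10$, depending only on $\gamma$ and on the chosen admissible sequence; once this is done, $N - N_0 > K/10 - K/20 = K/20$ gives the conclusion.

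To prove $N \geq K/10$, I would classify each $k \in \{0,\dots,K-1\}$ by the stage of the algorithm to which it belongs. Inspecting the case analysis defining $\Theta_k^\gamma$, every length-$1$ stage contributes exactly one entry of type A, B, or C, and every length-$2$ stage contributes exactly one entry of type A, B, C, or D \emph{except} in the \emph{double-dirty} sub-case, where both $(z_k,z_{k+1})$ and $(z_{k+1},z_{k+2})$ are dirty at time $k$ and $z_{k+2} \neq z_k$; such a stage contributes one $*$ and one A$'$, hence zero type-A/B/C/D entries. Writing $\alpha$ for the number of length-$1$ stages, $\beta$ for the number of double-dirty stages, and $S = \alpha + (K-\alpha)/2 = (K+\alpha)/2$ for the total number of stages (noting that $\alpha \geq 1$ since the first stage, being the crossing of an initially clean edge, is always length-$1$ of type C), one obtains $N = S - \beta$ and $S \geq K/2$. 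The target $N \geq K/10$ thus reduces to $\beta \le 2K/5$.

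I would establish $\beta \leq 2K/5$ by an amortized charging argument. A double-dirty stage at time $k$ requires both edges $(z_k,z_{k+1})$ and $(z_{k+1},z_{k+2})$ to be dirty at time $k$, so each such edge must trace back to a prior ``dirtying event'' in the algorithm's bookkeeping. Tracking the state (clean, usable, usable-clean, dirty) of each edge across the algorithm, one associates to each double-dirty stage one or more prior events, each lying inside an earlier type-A/B/C/D stage: either a length-$1$ C, A, or B stage, a length-$2$ U-turn (type D), or a length-$2$ stage whose second step is type C, A, or B (possibly paired with a first step of type B$'$). Using also the $K/20$ ``free charges'' supplied by the bound $M_0 < K/20$ on A$'$/B$'$ entries of value $0$, a case analysis verifies that each earlier type-A/B/C/D stage receives at most a bounded number of charges, giving a bound of the form $\beta \leq c\,N + K/20$ with $c$ a small absolute constant, and hence $\beta \leq \tfrac{c}{1+c}\,S + O(K/20) \leq 2K/5$.

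The main obstacle is precisely this charging argument, because edges can pass through several dirty/non-dirty cycles over the course of the algorithm (clean $\to$ dirty $\to$ usable $\to$ dirty $\to \cdots$), so one must carefully specify, for each double-dirty stage, the correct prior event to charge — in particular distinguishing between dirtying caused by direct crossing (type C, A, or B) and dirtying caused as a side effect on the adjacent edge $(z_k-1,z_k)$ or on the neighbors of the U-turn vertex — and verify that no earlier stage is over-charged. The delicacy of the constant reflects the intricate interplay of the stage types in the algorithm, but the bound is comfortable enough that only a bounded uniform overcounting is needed.
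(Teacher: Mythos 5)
Your reduction to the deterministic inequality $N \geq K/10$, where $N$ is the total number of type-A/B/C/D entries, does not hold, and the rest of the proposal inherits this flaw. Observe that $N = K_{ww} - \beta_0$, where $K_{ww}$ is the number of stages without wait and $\beta_0$ is the number of double-dirty stages with $\Theta_{k+1}^\gamma = 0$: each stage without wait produces exactly one entry of type A/B/C/D except for the double-dirty-with-zero ones, which produce a type-A$'$ zero instead. The paper's proof gives only $K_{ww} \geq K/10$, and subtracting $\beta_0 < K/20$ yields just $N > K/20$. Your bound $\beta \leq 2K/5$ also fails: the potential argument (each stage without wait raises the number of dirty edges by at most $2$, each dirty stage with wait lowers it by $1$) gives $K_d \leq 2K_{ww}-\beta_0$, together with $K_{nd} \leq K_{ww}$ and $S = K_{ww}+K_d+K_{nd} \geq K/2$, and one can saturate these with $K_{ww}=11K/80$, $K_d=18K/80$, $K_{nd}=11K/80$, $\beta_0 \approx K/20$, giving $\beta \approx 33K/80 > 2K/5$ and $N \approx 7K/80 < K/10$. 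The lemma is nevertheless true in this regime, precisely because the paper does not spend the zero budget twice the way your argument does: your bound $N-N_0 > K/10-K/20$ implicitly uses $\beta_0 < K/20$ (inside $N \geq K/10$) and again uses $N_0 < K/20$, whereas $\beta_0$ and $N_0$ are disjoint pieces of the single budget $|\{k:\Theta_k^\gamma=0\}| < K/20$.

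The paper's argument counts stages without wait directly: each stage without wait yields at least one $\Theta_k^\gamma$ that is either $0$ or of type A/B/C/D with value $1$, so the number of type-A/B/C/D ones is at least $K_{ww}-|\{k:\Theta_k^\gamma=0\}| > K/10-K/20$. It then proves $K_{ww}\geq K/10$ by a two-case split on $K_{nd}$: if $K_{nd}\geq K/10$, each non-dirty stage with wait injects into the immediately preceding stage, which must be a stage without wait, giving $K_{ww}\geq K_{nd}\geq K/10$; if $K_{nd}<K/10$, the potential argument gives $K_d\leq 3K_{ww}$ whence $4K_{ww}\geq K_d+K_{ww}\geq S-K_{nd} > 2K/5$. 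Your charging argument would, in effect, need to reprove something stronger than this while also never being written out (you leave $c$ unspecified and acknowledge the case analysis is missing), so even setting aside the constant, the proposal is incomplete.
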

 
 \begin{proof}
We notice that at each stage of the path without wait (the notion of a stage with wait was defined in the algorithm), we get either a $\Theta_k^\gamma$ which is 0 or a $\Theta_k^\gamma$ of type A, B, C or D. Since $(\Theta_k^\gamma)_{0 \leq k \leq K-1}$ is not bad, $|\{k \in\{0,...,K-1\}|\Theta_k^\gamma=0\}| < K/20$, so if there are at least $K/10$ stages without wait, $|\{k\in\{0,...,K-1\}|\,\Theta_k^\gamma$ is of type A, B, C or D and $\Theta_k^\gamma=1\}| \geq K/20$. Therefore it is enough to prove that there are at least $K/10$ stages without wait. 
 
 If there are at least $K/10$ stages with wait that are not dirty, we notice that each of these stages has to follow a stage without wait, so there are at least $K/10$ stages without wait.
 
 If there are less than $K/10$ stages with wait that are not dirty, we call $K_d$ the number of dirty stages with wait, $K_{nd}$ the number of stages with wait that are not dirty, and $K_{ww}$ the number of stages without wait. There are at least $K/2$ stages in the path (since all the edges are initially clean, the first stage is one-step long), hence $K_d+K_{nd}+K_{ww} \geq K/2$. By assumption, $K_{nd} \leq K/10$, hence $K_d+K_{ww} \geq K/2-K_{nd}\geq K/2-K/10 = 2K/5$. Now, for each stage without wait, the number of dirty edges of $\mathds{Z}$ increases by at most 3, for each dirty stage with wait, the number of dirty edges of $\mathds{Z}$ decreases by 1, and for each stage with wait that is not dirty, the number of dirty edges of $\mathds{Z}$ does not change. We deduce that $K_d \leq 3K_{ww}$, so $K_d+K_{ww} \geq 2K/5$ implies $4 K_{ww} \geq 2K/5$, thus $K_{ww} \geq K/10$, which means there are at least $K/10$ stages without wait, which ends the proof. 
 \end{proof}
 
 It now remains to prove that the probability of a bad admissible sequence to occur is very small, which is the following proposition.
 
 \begin{proposition}\label{prop_prob_bad_seq}
  When $n$ is large enough, for any $K \in \mathds{N}^*$, for any path $\gamma$ of length $K$, for any $(t_k)_{0 \leq k \leq K-1}\in A'(\gamma)$, we have $\mathds{P}(\forall \, k \in \{0,...,K-1\}, \Theta_{k}^{\gamma}=t_{k}) \leq 1/8^K$.
 \end{proposition}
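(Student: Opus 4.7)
The plan is to show that for every $k$ at which $\Theta_k^\gamma$ has one of the types A, B, A', B', C, or D, the conditional probability obeys $\mathds{P}(\Theta_k^\gamma=0\mid\mathcal{F}_{k-1}^\gamma)\leq 8^{-20}$ almost surely, where $\mathcal{F}_{k-1}^\gamma:=\sigma\bigl((\bar\Delta_{T_0,i})_{i\in\mathds{Z}},(\zeta^{\gamma,k'})_{k'<k}\bigr)$. Once this uniform pointwise bound is established, the chain-rule factorization of $\mathds{P}(\forall k,\Theta_k^\gamma=t_k)$, combined with the trivial bound $\mathds{P}(\Theta_k^\gamma=t_k\mid\mathcal{F}_{k-1}^\gamma)\leq 1$ whenever $t_k\neq 0$, and the defining property $|\{k:t_k=0\}|\geq K/20$ of $A'(\gamma)$, immediately gives $(8^{-20})^{K/20}=8^{-K}$.

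The key structural input is that the $\zeta^{\gamma,k}$, $k\in\{0,\dots,K-1\}$, form independent families (each i.i.d.\ with law $\rho_0$) jointly independent of $(\bar\Delta_{T_0,i})_{i\in\mathds{Z}}$; in particular $\zeta^{\gamma,k}$ is independent of $\mathcal{F}_{k-1}^\gamma$. Moreover, by Proposition~\ref{prop_law_zeta}, the $\bar\Delta_{T_0,i}$ on either side of $\lfloor Nx\rfloor$ are i.i.d., so the $\hat\zeta^{\gamma,k}_i$ appearing inside a single edge $(z,z+1)$ are (on the appropriate side) i.i.d.\ $\rho_0$ as well. Lemma~\ref{lem_proba_W} then ensures that each individual event $\mathcal{W}^{\bullet,\bullet}_{\gamma,k,I}$ has probability at least $1/32$, and disjointness of the intervals $I$ used in the algorithm provides independence of these Bernoulli trials under $\mathcal{F}_{k-1}^\gamma$.

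The case analysis then reduces each type to binomial/Chernoff estimates whose rates match precisely the four bounds imposed on $\tilde\varepsilon$ in \eqref{eq_def_tildepsilon}. For types A and B, the sets $\mathcal{E}_l,\mathcal{E}_r$ are $\mathcal{F}_{k-1}^\gamma$-measurable of cardinality at least $\varepsilon/(2^9\tilde\varepsilon)$ (inherited from the prior type-A' or B' success that made the edge usable/usable-clean), and $\{\Theta_k^\gamma=0\}\subset\{\forall I\in\mathcal{E}_l,\mathcal{W}^\rightarrow_{\gamma,k,I}\text{ fails}\}\cup\{\forall I\in\mathcal{E}_r,\mathcal{W}^\rightarrow_{\gamma,k,I}\text{ fails}\}$; the fresh, independent $\zeta^{\gamma,k}$ on disjoint intervals give $\mathds{P}(\Theta_k^\gamma=0\mid\mathcal{F}_{k-1}^\gamma)\leq 2(31/32)^{\varepsilon/(2^9\tilde\varepsilon)}\leq 8^{-20}$ by the third bound of \eqref{eq_def_tildepsilon}. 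Types A' and B' ask for at most $\varepsilon/(2^9\tilde\varepsilon)$ successes among $\lfloor\varepsilon/(4\tilde\varepsilon)\rfloor$ independent Bernoulli$(\geq 1/32)$ trials, which is far below the mean; a standard Chernoff bound combined with the second bound of \eqref{eq_def_tildepsilon} gives $8^{-20}$. Types C and D reduce to the simultaneous failure of $2\lfloor\varepsilon/(4\tilde\varepsilon)\rfloor$ independent pairwise intersections, each of probability at least $(1/32)^2=1/1024$ (the two events in each intersection involve either $\zeta^{\gamma,k}$ and $\hat\zeta^{\gamma,k}$, or $\zeta^{\gamma,k}$ and $\zeta^{\gamma,k+1}$, independent by construction); the fourth bound of \eqref{eq_def_tildepsilon} yields $(1-2^{-10})^{2\lfloor\varepsilon/(4\tilde\varepsilon)\rfloor}\leq 8^{-20}$.

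The hard part is the conditional-independence bookkeeping. The freshness of $\zeta^{\gamma,k}$ with respect to $\mathcal{F}_{k-1}^\gamma$ is automatic, but one must verify, at each step where $\hat\zeta^{\gamma,k}$ enters (types C and B'), and each step where the $\mathcal{E}_l,\mathcal{E}_r$ sets are constructed (types A' and B', which feed types A and B), that no earlier $\Theta_{k'}^\gamma$ has already exposed the $\bar\Delta_{T_0,i}$ for the same indices $i$. This is precisely what the "clean" and "usable-clean" labels maintained by the algorithm record, and is the reason those labels dictate when $\hat\zeta^{\gamma,k}$-events can be used: the $\bar\Delta_{T_0,\cdot}$ in the interior of such an edge has never previously been probed, so it remains conditionally i.i.d.\ $\rho_+$ or $\rho_-$ given $\mathcal{F}_{k-1}^\gamma$.
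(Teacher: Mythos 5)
The high-level strategy (isolate a per-step conditional bound of $2^{-60}=8^{-20}$ and multiply it over the $\geq K/20$ zeros) matches the paper, and the individual binomial/Chernoff computations you invoke are the same. The gap is in the conditioning, and it is fatal as stated.

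You claim $\mathds{P}(\Theta_k^\gamma=0\mid\mathcal{F}_{k-1}^\gamma)\leq 8^{-20}$ almost surely with $\mathcal{F}_{k-1}^\gamma=\sigma((\bar\Delta_{T_0,i})_{i\in\mathds{Z}},(\zeta^{\gamma,k'})_{k'<k})$. But this $\sigma$-algebra contains \emph{all} the $\bar\Delta_{T_0,i}$, so for types C and B' the events $\mathcal{W}_{\gamma,k,I}^{-,\rightarrow}$ (or $\mathcal{W}_{\gamma,k,I}^{+,\leftarrow}$), which are measurable functions of $\hat\zeta^{\gamma,k}_i$ and hence of $\bar\Delta_{T_0,i}$, are $\mathcal{F}_{k-1}^\gamma$-measurable. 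On the event that all of them fail inside the relevant edge --- which has positive probability, roughly $(31/32)^{\#\text{intervals}}$ --- one has $\Theta_k^\gamma=0$ deterministically given $\mathcal{F}_{k-1}^\gamma$, so the almost-sure bound is false. Your closing paragraph acknowledges that the "cleanness'' bookkeeping is the key, but it does not rescue the argument: even when no earlier $\Theta_{k'}^\gamma$ has probed those $\bar\Delta_{T_0,i}$, the filtration you chose already conditions on them. The same problem, for a different reason, infects types A' and D: there $\Theta_k^\gamma$ is a function of both $\zeta^{\gamma,k-1}$ and $\zeta^{\gamma,k}$, and $\zeta^{\gamma,k-1}\in\mathcal{F}_{k-1}^\gamma$, so with positive probability the $\zeta^{\gamma,k-1}$-half of the intersection has already failed and $\mathds{P}(\Theta_k^\gamma=0\mid\mathcal{F}_{k-1}^\gamma)=1$.

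The paper's proof avoids both problems by conditioning on the single \emph{event} $\mathcal{P}_{k-1}^\gamma=\{\forall k'\leq k-1,\ \Theta_{k'}^\gamma=t_{k'}\}$ rather than on a $\sigma$-algebra, and by conditioning on $\mathcal{P}_{k-2}^\gamma$ instead of $\mathcal{P}_{k-1}^\gamma$ for types A' and D (using $\Theta_{k-1}^\gamma=*$ so that the two events coincide). It then argues that $\mathcal{P}_{k-1}^\gamma$ (resp.\ $\mathcal{P}_{k-2}^\gamma$) depends only on $\bar\Delta_{T_0,i}$ for $i$ outside the interior of the relevant clean edge and on $\zeta^{\gamma,k'}$ for $k'$ strictly smaller than the indices used, so the $\mathcal{W}$-events retain their unconditional probabilities $\geq 1/32$ under the conditioning. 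Your proof needs either this event-level conditioning or a much more restrictive filtration that excludes the $\bar\Delta_{T_0,i}$ inside clean/usable-clean edges and excludes $\zeta^{\gamma,k-1}$ at type A'/D steps; as written, the uniform almost-sure bound does not hold.
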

 
 \begin{proof}
  If we know that $\Theta_k^\gamma = t_k$ for $0 \leq k \leq K-1$, it determines the type of the $\Theta_k^\gamma$, $k \in \{0,...,K-1\}$; if under these conditions $\Theta_k^\gamma$ is of a given type, we will say that $t_k$ is of this type. For any $k \in\{0,...,K-1\}$, we denote $\mathcal{P}_{k}^\gamma$ the event $\{\forall\ k' \in \{0,...,k\}, \Theta_{k'}^{\gamma}=t_{k'}\}$. Since $(t_k)_{0 \leq k \leq K-1}\in A'(\gamma)$, there are at least $K/20$ integers $k \in\{0,...,K-1\}$ such that $t_k=0$. Consequently, it is enough to prove that for any $k\in\{0,...,K-1\}$, if $t_k$ is of type A, B, B' or C then $\mathds{P}(\Theta_k^\gamma=0|\mathcal{P}_{k-1}^\gamma) \leq 2^{-60}$ and if $t_k$ is of type A' or D then $\mathds{P}(\Theta_k^\gamma=0|\mathcal{P}_{k-2}^\gamma) \leq 2^{-60}$ (where $\mathcal{P}_{-1}^\gamma$ denotes the whole universe). Let $k \in\{0,...,K-1\}$.
  
  \emph{Case $t_k$ of type A'.} 
  
  We suppose $z_{k+1}=z_{k}+1$; the other case can be dealt with in the same way. In this case, knowing $\gamma$ and $\Theta_{k'}^\gamma=t_{k'}$, $k' \leq k-2$ is enough to know $\Theta_{k}^\gamma$ is of type A', so $\mathds{P}(\Theta_k^\gamma=0|\mathcal{P}_{k-2}^\gamma) = \mathds{P}(\{|\{I\in I_l(z_{k-1},z_{k})|\mathcal{W}_{\gamma,k-1,I}^\leftarrow\}| < \frac{\varepsilon}{2^9\tilde \varepsilon}\} \cup \{|\{I\in I_r(z_{k-1},z_{k})|\mathcal{W}_{\gamma,k,I}^\leftarrow\}| < \frac{\varepsilon}{2^9\tilde \varepsilon}\}|\mathcal{P}_{k-2}^\gamma)$. Moreover, $\mathcal{P}_{k-2}^\gamma$ depends only on the $\zeta_i^{\gamma,k'}$, $\hat\zeta_i^{\gamma,k'}$ with $k' \leq k-2$, $i \in \mathds{Z}$, hence on the $\zeta_i^{\gamma,k'}$, $\bar\Delta_{T_0,i}$ with $k' \leq k-2$, $i \in \mathds{Z}$. In addition, the $\mathcal{W}_{\gamma,k-1,I}^\leftarrow$, $\mathcal{W}_{\gamma,k,I}^\leftarrow$ depend only on the $\zeta_i^{\gamma,k}, \zeta_i^{\gamma,k-1}$, $i \in \mathds{Z}$, which are by construction independent from the $\zeta_i^{\gamma,k'}$, $\bar\Delta_{T_0,i}$ with $k' \leq k-2$, $i \in \mathds{Z}$, hence from $\mathcal{P}_{k-2}^\gamma$. We deduce that $\mathds{P}(\Theta_k^\gamma=0|\mathcal{P}_{k-2}^\gamma) = \mathds{P}(\{|\{I\in I_l(z_{k-1},z_{k})|\mathcal{W}_{\gamma,k-1,I}^\leftarrow\}| < \frac{\varepsilon}{2^9\tilde \varepsilon}\} \cup \{|\{I\in I_r(z_{k-1},z_{k})|\mathcal{W}_{\gamma,k,I}^\leftarrow\}| < \frac{\varepsilon}{2^9\tilde \varepsilon}\})$. Therefore it is enough to prove $\mathds{P}(|\{I\in I_r(z_{k-1},z_{k})|\mathcal{W}_{\gamma,k,I}^\leftarrow\}| < \frac{\varepsilon}{2^9\tilde \varepsilon}) \leq 2^{-61}$ (as $\mathds{P}(|\{I\in I_l(z_{k-1},z_{k})|\mathcal{W}_{\gamma,k-1,I}^\leftarrow\}| < \frac{\varepsilon}{2^9\tilde \varepsilon})$ can be dealt with in the same way). Moreover, we can write 
  \[
  \mathds{P}\left(|\{I\in I_r(z_{k-1},z_{k})|\mathcal{W}_{\gamma,k,I}^\leftarrow\}| < \frac{\varepsilon}{2^9\tilde \varepsilon}\right) = \mathds{P}\left(\sum_{I\in I_r(z_{k-1},z_{k})}\mathds{1}_{\mathcal{W}_{\gamma,k,I}^\leftarrow} < \frac{\varepsilon}{2^9\tilde \varepsilon}\right),
  \]
  the fact that the $I\in I_r(z_{k-1},z_{k})$ are disjoint implies the $\mathds{1}_{\mathcal{W}_{\gamma,k,I}^\leftarrow}$ are independent, and we have $\mathds{E}(\mathds{1}_{\mathcal{W}_{\gamma,k,I}^\leftarrow}) \geq \frac{1}{32}$ by Lemma \ref{lem_proba_W}, therefore by the Hoeffding inequality, 
  \[
  \mathds{P}\left(|\{I\in I_r(z_{k-1},z_{k})|\mathcal{W}_{\gamma,k,I}^\leftarrow\}| < \frac{\varepsilon}{2^9\tilde \varepsilon}\right)
  \]
  \[
  \leq \mathds{P}\left(\sum_{I\in I_r(z_{k-1},z_{k})}\mathds{1}_{\mathcal{W}_{\gamma,k,I}^\leftarrow}-\mathds{E}\left(\sum_{I\in I_r(z_{k-1},z_{k})}\mathds{1}_{\mathcal{W}_{\gamma,k,I}^\leftarrow}\right) < \frac{\varepsilon}{2^9\tilde \varepsilon}-\frac{1}{32}\left\lfloor \frac{\varepsilon}{4\tilde\varepsilon}\right\rfloor\right) 
  \leq \exp\left(-\frac{2(\frac{1}{32}\lfloor \frac{\varepsilon}{4\tilde\varepsilon}\rfloor-\frac{\varepsilon}{2^9\tilde \varepsilon})^2}{\lfloor \frac{\varepsilon}{4\tilde\varepsilon}\rfloor}\right).
  \]
  Since $\tilde \varepsilon \leq \frac{\varepsilon}{8}$, $\lfloor \frac{\varepsilon}{4\tilde\varepsilon}\rfloor \geq \frac{\varepsilon}{8\tilde\varepsilon}$, so $\frac{1}{32}\lfloor \frac{\varepsilon}{4\tilde\varepsilon}\rfloor-\frac{\varepsilon}{2^9\tilde \varepsilon} \geq \frac{1}{32} \frac{\varepsilon}{8\tilde\varepsilon}-\frac{\varepsilon}{2^9\tilde \varepsilon} = \frac{\varepsilon}{2^9\tilde \varepsilon}$. This implies 
  \[
  \mathds{P}\left(|\{I\in I_r(z_{k-1},z_{k})|\mathcal{W}_{\gamma,k,I}^\leftarrow\}| < \frac{\varepsilon}{2^9\tilde \varepsilon}\right) \leq \exp\left(-\frac{2(\frac{\varepsilon}{2^9\tilde \varepsilon})^2}{\lfloor \frac{\varepsilon}{4\tilde\varepsilon}\rfloor}\right) 
  \leq \exp\left(-2\left(\frac{\varepsilon}{2^9\tilde \varepsilon}\right)^2\frac{4\tilde\varepsilon}{\varepsilon}\right)=\exp\left(-\frac{\varepsilon}{2^{15}\tilde\varepsilon}\right) \leq 2^{-61}
  \]
  since $\tilde\varepsilon \leq \frac{1}{2^{15}61\ln 2}\varepsilon$. This ends the proof for this case.
 
  \emph{Case $t_k$ of type B'.} 
  
  We suppose $z_{k+1}=z_{k}+1$; the other case can be dealt with in the same way. In this case, knowing $\gamma$ and $\Theta_{k'}^\gamma=t_{k'}$, $k' \leq k-1$ is enough to know $\Theta_{k}^\gamma$ is of type B', hence $\mathds{P}(\Theta_k^\gamma=0|\mathcal{P}_{k-1}^\gamma) = \mathds{P}(\{|\{I\in I_l(z_k-1,z_k)|\mathcal{W}_{\gamma,k,I}^{+,\leftarrow}\}| < \frac{\varepsilon}{2^9\tilde \varepsilon}\} \cup \{|\{I\in I_r(z_k-1,z_k)|\mathcal{W}_{\gamma,k,I}^\leftarrow\}| < \frac{\varepsilon}{2^9\tilde \varepsilon}\}|\mathcal{P}_{k-1}^\gamma)$. Furthermore, since $t_k$ is of type B', $(z_k-1,z_k)$ is clean at time $k$, which means the path $\gamma$ ``never used edge $(z_k-1,z_k)$ before time $k$'', hence when $n$ is large enough, $\mathcal{P}_{k-1}^\gamma$ depends only on $\zeta_i^{\gamma,k'}$, $\bar\Delta_{T_0,i}$ with $k' \leq k-1$, $i \not\in \{X_{T_0}+\lfloor \varepsilon n\rfloor(z_k-1),...,X_{T_0}+\lfloor \varepsilon n\rfloor z_k\}$, while the $\mathcal{W}_{\gamma,k,I}^{+,\leftarrow}, \mathcal{W}_{\gamma,k,I}^\leftarrow$ considered here depend only on the $\zeta_i^{\gamma,k}$, $\bar\Delta_{T_0,i}$ with $i \in \{X_{T_0}+\lfloor \varepsilon n\rfloor(z_k-1),...,X_{T_0}+\lfloor \varepsilon n\rfloor z_k\}$, which are independent from the former, thus from $\mathcal{P}_{k-1}^\gamma$. This yields $\mathds{P}(\Theta_k^\gamma=0|\mathcal{P}_{k-1}^\gamma) = \mathds{P}(\{|\{I\in I_l(z_k-1,z_k)|\mathcal{W}_{\gamma,k,I}^{+,\leftarrow}\}| < \frac{\varepsilon}{2^9\tilde \varepsilon}\} \cup \{|\{I\in I_r(z_k-1,z_k)|\mathcal{W}_{\gamma,k,I}^\leftarrow\}| < \frac{\varepsilon}{2^9\tilde \varepsilon}\})$, so it is enough to prove that $\mathds{P}(|\{I\in I_l(z_k-1,z_k)|\mathcal{W}_{\gamma,k,I}^{+,\leftarrow}\}| < \frac{\varepsilon}{2^9\tilde \varepsilon}) \leq 2^{-61}$ and $\mathds{P}(|\{I\in I_r(z_k-1,z_k)|\mathcal{W}_{\gamma,k,I}^\leftarrow\}| < \frac{\varepsilon}{2^9\tilde \varepsilon}) \leq 2^{-61}$. This can be done in the same way as for the case $t_k$ of type A', noticing that since $(z_k-1,z_k)$ is clean at time $k$, the path $\gamma$ did not cross the edge $(z_k-1,z_k)$ before time $k$, thus $z_k \leq 0$ and the intervals $I$ we consider are contained in $(-\infty,X_{T_0}+\lfloor\varepsilon n\rfloor z_{k}-1]$, so we can use Lemma \ref{lem_proba_W}. 
  
  \emph{Case $t_k$ of type A.} 
  
  We suppose $z_{k+1}=z_{k}+1$; the other case can be dealt with in the same way. We will use the notation $k'$, $\mathcal{E}_r$ and $\mathcal{E}_l$ introduced when describing the $\Theta_k^\gamma$ of type A. Knowing $\gamma$ and $\Theta_{k''}^\gamma=t_{k''}$, $k'' \leq k-1$ is enough to know $\Theta_{k}^\gamma$ is of type A and to determine $k'$, hence $\mathds{P}(\Theta_k^\gamma=0|\mathcal{P}_{k-1}^\gamma) = \mathds{P}((\bigcap_{I \in \mathcal{E}_l}(\mathcal{W}_{\gamma,k,I}^\rightarrow)^c) \cup (\bigcap_{I \in \mathcal{E}_r}(\mathcal{W}_{\gamma,k,I}^\rightarrow)^c)|\mathcal{P}_{k-1}^\gamma)$. Therefore it is enough to prove $\mathds{P}(\bigcap_{I \in \mathcal{E}_l}(\mathcal{W}_{\gamma,k,I}^\rightarrow)^c|\mathcal{P}_{k-1}^\gamma) \leq 2^{-61}$, as $\mathds{P}(\bigcap_{I \in \mathcal{E}_r}(\mathcal{W}_{\gamma,k,I}^\rightarrow)^c|\mathcal{P}_{k-1}^\gamma) \leq 2^{-61}$ can be proven in the same way. If $\mathcal{P}_{k-1}^\gamma$ occurs, $|\mathcal{E}_l| \geq \frac{\varepsilon}{2^9\tilde \varepsilon}$, so 
  \begin{equation}\label{eq_proba_tests}
  \mathds{P}\left(\left.\bigcap_{I \in \mathcal{E}_l}(\mathcal{W}_{\gamma,k,I}^\rightarrow)^c\right|\mathcal{P}_{k-1}^\gamma\right)
  = \sum_{E \subset I_l(z_k,z_{k+1}),|E| \geq \frac{\varepsilon}{2^9\tilde \varepsilon}} \mathds{P}\left(\left.\bigcap_{I \in \mathcal{E}_l}(\mathcal{W}_{\gamma,k,I}^\rightarrow)^c,\mathcal{E}_l=E\right|\mathcal{P}_{k-1}^\gamma\right).
  \end{equation}
  Now, for any $E \subset I_l(z_k,z_{k+1})$ with $|E| \geq \frac{\varepsilon}{2^9\tilde \varepsilon}$, we have $\mathds{P}(\bigcap_{I \in \mathcal{E}_l}(\mathcal{W}_{\gamma,k,I}^\rightarrow)^c,\mathcal{E}_l=E|\mathcal{P}_{k-1}^\gamma)=\mathds{P}(\bigcap_{I \in E}(\mathcal{W}_{\gamma,k,I}^\rightarrow)^c,\mathcal{E}_l=E|\mathcal{P}_{k-1}^\gamma)$. Moreover, $\mathcal{P}_{k-1}^\gamma$ and $\{\mathcal{E}_l=E\}$ depend only on the $\zeta_i^{\gamma,k''}$, $\bar\Delta_{T_0,i}$ with $k'' \leq k-1$, $i \in \mathds{Z}$, while the $\mathcal{W}_{\gamma,k,I}^\rightarrow$ depend on the $\zeta_i^{\gamma,k}$, $i \in \mathds{Z}$, which are independent from the former. This implies $\mathds{P}(\bigcap_{I \in E}(\mathcal{W}_{\gamma,k,I}^\rightarrow)^c,\mathcal{E}_l=E|\mathcal{P}_{k-1}^\gamma) = \mathds{P}(\bigcap_{I \in E}(\mathcal{W}_{\gamma,k,I}^\rightarrow)^c)\mathds{P}(\mathcal{E}_l=E|\mathcal{P}_{k-1}^\gamma)$, hence equation (\ref{eq_proba_tests}) becomes 
  \[
   \mathds{P}\left(\left.\bigcap_{I \in \mathcal{E}_l}(\mathcal{W}_{\gamma,k,I}^\rightarrow)^c\right|\mathcal{P}_{k-1}^\gamma\right)
  = \sum_{E \subset I_l(z_k,z_{k+1}),|E| \geq \frac{\varepsilon}{2^9\tilde \varepsilon}} \mathds{P}\left(\bigcap_{I \in E}(\mathcal{W}_{\gamma,k,I}^\rightarrow)^c\right)\mathds{P}(\mathcal{E}_l=E|\mathcal{P}_{k-1}^\gamma),
  \]
  so it is enough to prove that for any $E \subset I_l(z_k,z_{k+1})$ with $|E| \geq \frac{\varepsilon}{2^9\tilde \varepsilon}$, $\mathds{P}(\bigcap_{I \in E}(\mathcal{W}_{\gamma,k,I}^\rightarrow)^c) \leq 2^{-61}$. Now let $E$ be such a set, then the $I \in E$ are disjoint hence the $\mathcal{W}_{\gamma,k,I}^\rightarrow$ are independent, thus $\mathds{P}(\bigcap_{I \in E}(\mathcal{W}_{\gamma,k,I}^\rightarrow)^c) = \prod_{I \in E}\mathds{P}((\mathcal{W}_{\gamma,k,I}^\rightarrow)^c) \leq (\frac{31}{32})^{\frac{\varepsilon}{2^9\tilde \varepsilon}}$ by Lemma \ref{lem_proba_W} and $|E| \geq \frac{\varepsilon}{2^9\tilde \varepsilon}$. Since $\tilde\varepsilon \leq -\frac{\ln(\frac{31}{32})}{2^9 61\ln 2}\varepsilon$, we indeed obtain $\mathds{P}(\bigcap_{I \in E}(\mathcal{W}_{\gamma,k,I}^\rightarrow)^c) \leq 2^{-61}$. 
  
  \emph{Case $t_k$ of type B.} 
  
  This case can be dealt with using the same arguments as for the case $t_k$ of type A.
  
  \emph{Case $t_k$ of type C.}  
  
  We suppose $z_{k+1}=z_{k}+1$; the other case can be dealt with in the same way. In this case, knowing $\gamma$ and $\Theta_{k'}^\gamma=t_{k'}$, $k' \leq k-1$ is enough to know $\Theta_{k}^\gamma$ is of type C, hence $\mathds{P}(\Theta_k^\gamma=0|\mathcal{P}_{k-1}^\gamma) = \mathds{P}(\bigcap_{I \in I(z_k,z_{k+1})}((\mathcal{W}_{\gamma,k,I}^\rightarrow)^c \cup (\mathcal{W}_{\gamma,k,I}^{-,\rightarrow})^c)|\mathcal{P}_{k-1}^\gamma)$. Furthermore, since $t_k$ is of type C, $(z_k,z_{k+1})$ is clean at time $k$, which means the path $\gamma$ ``never used edge $(z_k,z_{k+1})$ before time $k$'', hence when $n$ is large enough, $\mathcal{P}_{k-1}^\gamma$ depends only on $\zeta_i^{\gamma,k'}$, $\bar\Delta_{T_0,i}$ with $k' \leq k-1$, $i \not\in \{X_{T_0}+\lfloor \varepsilon n\rfloor z_k,...,X_{T_0}+\lfloor \varepsilon n\rfloor z_{k+1}\}$, while the $\mathcal{W}_{\gamma,k,I}^\rightarrow, \mathcal{W}_{\gamma,k,I}^{-,\rightarrow}$ we consider depend only on $\zeta_i^{\gamma,k}$, $\bar\Delta_{T_0,i}$ with $i\in \{X_{T_0}+\lfloor \varepsilon n\rfloor z_k,...,X_{T_0}+\lfloor \varepsilon n\rfloor z_{k+1}\}$, which are independent from the former thus from $\mathcal{P}_{k-1}^\gamma$. This implies $\mathds{P}(\Theta_k^\gamma=0|\mathcal{P}_{k-1}^\gamma) = \mathds{P}(\bigcap_{I \in I(z_k,z_{k+1})}((\mathcal{W}_{\gamma,k,I}^\rightarrow)^c \cup (\mathcal{W}_{\gamma,k,I}^{-,\rightarrow})^c))$. In addition, the $I \in I(z_k,z_{k+1})$ are disjoint hence the $(\mathcal{W}_{\gamma,k,I}^\rightarrow)^c \cup (\mathcal{W}_{\gamma,k,I}^{-,\rightarrow})^c$ are independent, so $\mathds{P}(\Theta_k^\gamma=0|\mathcal{P}_{k-1}^\gamma) = \prod_{I \in I(z_k,z_{k+1})}\mathds{P}((\mathcal{W}_{\gamma,k,I}^\rightarrow)^c \cup (\mathcal{W}_{\gamma,k,I}^{-,\rightarrow})^c)$. Now, let $I \in I(z_k,z_{k+1})$, we have $\mathds{P}((\mathcal{W}_{\gamma,k,I}^\rightarrow)^c \cup (\mathcal{W}_{\gamma,k,I}^{-,\rightarrow})^c) =1-\mathds{P}(\mathcal{W}_{\gamma,k,I}^\rightarrow \cap \mathcal{W}_{\gamma,k,I}^{-,\rightarrow})=1-\mathds{P}(\mathcal{W}_{\gamma,k,I}^\rightarrow)\mathds{P}(\mathcal{W}_{\gamma,k,I}^{-,\rightarrow})$ as $\mathcal{W}_{\gamma,k,I}^\rightarrow$ is independent from $\mathcal{W}_{\gamma,k,I}^{-,\rightarrow}$ (they depend respectively on $\zeta_i^{\gamma,k}$ and $\bar\Delta_{T_0,i}$). Furthermore, $(z_k,z_{k+1})$ is clean at time $k$, thus the path $\gamma$ never crossed edge $(z_k,z_{k+1})$ before time $k$, hence $z_k \geq 0$, and we have $I \subset [X_{T_0}+\lfloor\varepsilon n\rfloor z_k+1,+\infty)$, so we can apply Lemma \ref{lem_proba_W} to $\mathcal{W}_{\gamma,k,I}^{-,\rightarrow}$, as well as to $\mathcal{W}_{\gamma,k,I}^\rightarrow$, which yields $\mathds{P}((\mathcal{W}_{\gamma,k,I}^\rightarrow)^c \cup (\mathcal{W}_{\gamma,k,I}^{-,\rightarrow})^c)=1-\mathds{P}(\mathcal{W}_{\gamma,k,I}^\rightarrow)\mathds{P}(\mathcal{W}_{\gamma,k,I}^{-,\rightarrow}) \leq 1-(\frac{1}{32})^2=1-2^{-10}$. We deduce $\mathds{P}(\Theta_k^\gamma=0|\mathcal{P}_{k-1}^\gamma) \leq (1-2^{-10})^{2\lfloor \frac{\varepsilon}{4\tilde\varepsilon}\rfloor} \leq (1-2^{-10})^{\frac{\varepsilon}{4\tilde\varepsilon}} \leq 2^{-60}$ since $\tilde\varepsilon \leq \frac{-\ln(1-2^{-10})}{240\ln 2}\varepsilon$. 
  
  \emph{Case $t_k$ of type D.} 
  
  We assume $z_{k+1}=z_{k}+1$; the other case can be dealt with in the same way (beware: the definition of type D was detailed for $z_{k+1}=z_{k}-1$). In this case, knowing $\gamma$ and $\Theta_{k'}^\gamma=t_{k'}$, $k' \leq k-2$ is enough to know $\Theta_{k}^\gamma$ is of type D, hence $\mathds{P}(\Theta_k^\gamma=0|\mathcal{P}_{k-2}^\gamma) = \mathds{P}(\bigcap_{I \in I(z_{k+1},z_{k})}((\mathcal{W}_{\gamma,k-1,I}^\rightarrow)^c \cup (\mathcal{W}_{\gamma,k,I}^\rightarrow)^c)|\mathcal{P}_{k-2}^\gamma)$. Moreover, $\mathcal{P}_{k-2}^\gamma$ depends only on the $\zeta_i^{\gamma,k'}$, $\bar\Delta_{T_0,i}$ with $k' \leq k-2$, $i \in \mathds{Z}$, while the $\mathcal{W}_{\gamma,k-1,I}^\rightarrow$, $\mathcal{W}_{\gamma,k,I}^\rightarrow$ depend only on the $\zeta_i^{\gamma,k-1},\zeta_i^{\gamma,k}$ with $i\in\mathds{Z}$, which are independent from the former, hence from $\mathcal{P}_{k-2}^\gamma$. We deduce $\mathds{P}(\Theta_k^\gamma=0|\mathcal{P}_{k-2}^\gamma) = \mathds{P}(\bigcap_{I \in I(z_{k+1},z_{k})}((\mathcal{W}_{\gamma,k-1,I}^\rightarrow)^c \cup (\mathcal{W}_{\gamma,k,I}^\rightarrow)^c))$, which can be bounded by the same arguments as in the case $t_k$ of type C.
 \end{proof}
 
 We are now able to conclude. Proposition \ref{prop_test_to_times} and Lemma \ref{lem_seq_comb} allow to deduce that for any $K \in \mathds{N}^*$, when $n$ is large enough, if $T_K-T_0 < \frac{K}{20}\frac{r_2}{6}(\tilde\varepsilon n)^{3/2}$ and $\mathcal{B}^c \cap (\bigcap_{r=0}^{6}\mathcal{B}_r^c)$ occurs, there exists a path $\gamma$ of length $K$ so that $(\Theta_k)_{0 \leq k \leq K-1} \in A'(\gamma)$. In addition, there are $2^K$ possible paths of length $K$, therefore Lemma \ref{lem_nb_sequences} and Proposition \ref{prop_prob_bad_seq} yield the following. 
 
 \begin{proposition}\label{prop_T_K}
  For any $K\in \mathds{N}^*$, for $n$ large enough, 
  \[
  \mathds{P}\left(T_K-T_0 < K\frac{r_2}{120}(\tilde\varepsilon n)^{3/2},\mathcal{B}^c \cap \bigcap_{r=0}^{6}\mathcal{B}_r^c\right) \leq \frac{1}{2^K}.
  \]
 \end{proposition}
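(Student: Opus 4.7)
The plan is to combine the preceding ingredients into a simple union bound: the bad-time event forces the algorithm to produce a bad admissible sequence along the path actually followed by $X$, and there are only finitely many (path, sequence) pairs, each of which has small probability.

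First, I would show that the event in question is contained in a union of events of the form "$X$ follows $\gamma$ and $(\Theta_k^\gamma)_{0 \leq k \leq K-1}=(t_k)$" with $(t_k)\in A'(\gamma)$. Indeed, suppose $\mathcal{B}^c\cap\bigcap_{r=0}^6\mathcal{B}_r^c$ occurs, let $\gamma$ be the (random) path of length $K$ such that $X$ follows $\gamma$, and assume $(\Theta_k^\gamma)_{0\leq k\leq K-1}\notin A'(\gamma)$. Then by Lemma \ref{lem_seq_comb}, there are at least $K/20$ indices $k\in\{0,\dots,K-1\}$ for which $\Theta_k^\gamma$ is of type A, B, C or D and $\Theta_k^\gamma=1$. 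For each such $k$, Proposition \ref{prop_test_to_times} yields $T_{k+1}-T_k\geq\frac{r_2}{6}(\tilde\varepsilon n)^{3/2}$ when $n$ is large enough. Summing over these indices (and bounding the remaining $T_{k+1}-T_k$ from below by $0$), we obtain
\[
T_K-T_0=\sum_{k=0}^{K-1}(T_{k+1}-T_k)\geq\frac{K}{20}\cdot\frac{r_2}{6}(\tilde\varepsilon n)^{3/2}=K\frac{r_2}{120}(\tilde\varepsilon n)^{3/2}.
\]
Hence, on the event $\{T_K-T_0<K\frac{r_2}{120}(\tilde\varepsilon n)^{3/2}\}\cap\mathcal{B}^c\cap\bigcap_{r=0}^6\mathcal{B}_r^c$, the random sequence $(\Theta_k^\gamma)_{0\leq k\leq K-1}$ must lie in $A'(\gamma)$.

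Next I would apply a union bound. There are exactly $2^K$ paths of length $K$ (one binary choice at each step), and by Lemma \ref{lem_nb_sequences} each path admits at most $2^K$ admissible sequences, and in particular at most $2^K$ bad ones. By Proposition \ref{prop_prob_bad_seq}, for every fixed path $\gamma$ of length $K$ and every $(t_k)\in A'(\gamma)$, we have
\[
\mathds{P}\bigl(\forall\,k\in\{0,\dots,K-1\},\ \Theta_k^\gamma=t_k\bigr)\leq\frac{1}{8^K}
\]
when $n$ is large enough. Therefore, for $n$ large enough,
\[
\mathds{P}\!\left(T_K-T_0<K\frac{r_2}{120}(\tilde\varepsilon n)^{3/2},\,\mathcal{B}^c\cap\bigcap_{r=0}^6\mathcal{B}_r^c\right)\leq\sum_{\gamma}\sum_{(t_k)\in A'(\gamma)}\mathds{P}(\forall\,k,\,\Theta_k^\gamma=t_k)\leq 2^K\cdot 2^K\cdot\frac{1}{8^K}=\frac{1}{2^K},
\]
which is exactly the claimed bound. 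No step here presents a substantial obstacle since all of the heavy lifting (the construction of the $\Theta_k^\gamma$, the deterministic bound in Proposition \ref{prop_test_to_times}, the combinatorial Lemma \ref{lem_seq_comb}, and the probabilistic Proposition \ref{prop_prob_bad_seq}) has already been carried out earlier; the only care needed is to make sure the ``$n$ large enough'' thresholds from Proposition \ref{prop_test_to_times} and Proposition \ref{prop_prob_bad_seq} are both satisfied, which is trivially achieved by taking the maximum of the two.
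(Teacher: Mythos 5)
Your proof is correct and follows essentially the same approach as the paper: use Proposition \ref{prop_test_to_times} and Lemma \ref{lem_seq_comb} to show that the bad-time event forces a bad admissible sequence, then apply a union bound over the $2^K$ paths, the at most $2^K$ bad admissible sequences per path (Lemma \ref{lem_nb_sequences}), and the per-sequence probability bound $1/8^K$ from Proposition \ref{prop_prob_bad_seq}. The computation $2^K \cdot 2^K / 8^K = 1/2^K$ and the contrapositive argument you wrote out explicitly are exactly what the paper's proof compresses into one sentence before the statement.
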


 \section{The limit process of the environments}\label{sec_limit_processes}
 
 In Section \ref{sec_conv_limit_processes}, we will need to prove the joint convergence in distribution of the position of our random walk at times $T_0,...T_K$ and of ``environment'' processes depending on the $\Delta_{T_k,j}$, $k\in\{0,...,K\}$ (see Definition \ref{def_envts_bis}). In order to show this convergence, we will need some results on the limit process, the ``limit process of the environments''. We believe said limit process to be of independent interest. In Section \ref{sec_BM_results}, we will prove some results on Brownian motions reflected on and absorbed by general barriers (we recall the Definition \ref{def_reflected_BM} of the reflected Brownian motion), which are interesting in their own right and which we will need to apply to the limit process of the environments. In Section \ref{sec_process_above}, we give the definition of the limit process of the environnements and prove that the results of Section \ref{sec_BM_results} can actually be applied to it. 
 
 \subsection{Brownian motion results}\label{sec_BM_results}
 
 Let us set some notation. The Brownian motions in the subsection will all have the same variance, which can be any positive real. Our barrier will be a continuous function $f:[-1,1]\mapsto\mathds{R}$. We suppose for notational convenience (and with no loss of generality) that $f(0)=0$. We consider a process $(\tilde W_t^-)_{t\in[-1,1]}$ which is a Brownian motion $(W_t^-)_{t\in[-1,1]}$ reflected on $f$ above $f$ on $[-1,0]$, starting with $\tilde W_{-1}^- = f(-1)$, and absorbed by the barrier $f$ on $[0,1]$. We denote $\sigma_- = \inf \{t\geq0 \,|\, \tilde W_t^-= f(t)\}$ the absorption time, and $p_-=\mathds{P}(\sigma_- <1)$ the probability of absorption. Similarly, we consider a process $(\tilde W_t^+)_{t\in[-1,1]}$ which is a Brownian motion starting with $\tilde W_{1}^+ = f(1)$, reflected on $f$ above $f$ on $[0,1]$, and absorbed by $f$ on $[-1,0]$. We denote $\sigma_+ = \sup \{t\leq0 \,|\, \tilde W_t^+= f(t)\}$ the absorption time, and $p_+=\mathds{P}(\sigma_+ >-1)$ the probability of absorption. We want to understand when we have $p_-+p_+=1$.
 
 \begin{proposition}\label{prop_Z}
 We always have $p_-+p_+\geq1$. Moreover, we define a random variable $Z$ as follows: let $\bar W^-$ and $\bar W^+$ be two independent Brownian motions on $[0,1]$ with $\bar W_0^-=\bar W_0^+=0$, we set $Z=\sup_{0 \leq t \leq 1}(\bar W^-_t+f(-t))+\inf_{0 \leq t \leq 1}(\bar W_t^+-f(t))$. Then we have $p_-+p_+=1$ if and only if $\mathds{P}(Z=0)=0$. 
 \end{proposition}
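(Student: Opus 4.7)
The plan is to express $p_-$ and $p_+$ individually as explicit functionals of two independent Brownian motions, and to recognise them respectively as $\mathds{P}(Z\le 0)$ and $\mathds{P}(Z\ge 0)$; the proposition will then follow immediately from $\mathds{P}(Z\le 0)+\mathds{P}(Z\ge 0)=1+\mathds{P}(Z=0)$.

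For $p_-$, I would realise $\tilde W^-$ on $[-1,1]$ from a single ambient Brownian motion $(W_t)_{t\in[-1,1]}$ with $W_{-1}=f(-1)$. Skorohod's formula on the reflecting half gives $\tilde W^-_t = W_t + \sup_{-1\le s\le t}(f(s)-W_s)$ for $t\in[-1,0]$; after the change of variable $u=-s$ and introduction of the standard Brownian motion $\hat W^-_u:=W_0-W_{-u}$, $u\in[0,1]$, this evaluates at $t=0$ to
\[
\tilde W^-_0 = \sup_{0\le u\le 1}\bigl(\hat W^-_u + f(-u)\bigr).
\]
On $[0,1]$, before absorption one has $\tilde W^-_t - f(t) = \tilde W^-_0 + \hat W^+_t - f(t)$ with $\hat W^+_t := W_t-W_0$, so the process is absorbed iff $\inf_{0\le t\le 1}(\hat W^+_t-f(t)) \le -\tilde W^-_0$. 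By the strong Markov property at $0$, the pair $(\hat W^-,\hat W^+)$ is a pair of independent standard Brownian motions on $[0,1]$ starting at $0$, giving $p_- = \mathds{P}(Z\le 0)$.

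For $p_+$, I would pass to a time reversal: setting $V_s := \tilde W^+_{-s}$ and $g(s):=f(-s)$ for $s\in[-1,1]$ turns the backward picture into a process of exactly the same template as $\tilde W^-$, but with barrier $g$. Applying the formula from the previous step yields $p_+ = \mathds{P}(\sup_u(B^-_u + g(-u)) + \inf_t(B^+_t - g(t)) \le 0)$ for a pair $B^\pm$ of independent standard Brownian motions on $[0,1]$ starting at $0$. Substituting $g(-u)=f(u)$ and $g(t)=f(-t)$ and invoking the distributional symmetry $(-\bar W^+,-\bar W^-)\stackrel{d}{=}(\bar W^-,\bar W^+)$ identifies the functional inside the probability with $-Z$ in distribution, giving $p_+ = \mathds{P}(Z\ge 0)$.

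Combining, $p_-+p_+ = \mathds{P}(Z\le 0)+\mathds{P}(Z\ge 0) = 1 + \mathds{P}(Z=0)$, which yields the inequality $p_-+p_+\ge 1$ and the characterisation $p_-+p_+=1 \iff \mathds{P}(Z=0)=0$ simultaneously. The main point requiring care is the time-reversal step for $p_+$: one must keep track of which of the two independent Brownian motions plays the role of the \emph{reflecting} side versus the \emph{absorbing} side in the mirrored picture, so that after the sign-flip symmetry the functional recovered is literally $-Z$ rather than a merely analogous expression. Everything else is a routine use of the Skorohod reflection formula and the strong Markov property at time $0$.
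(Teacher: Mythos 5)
Your proposal is correct and follows the same route as the paper: express $\tilde W_0^-$ via the Skorohod reflection formula, write the absorption criterion as a condition on the post-$0$ increments to get $p_-=\mathds{P}(Z\le 0)$, and pass to $p_+$ by time reversal with barrier $f(-\cdot)$ together with a distributional symmetry of the independent Brownian pair to get $p_+=\mathds{P}(Z\ge 0)$. The one point worth making explicit (as the paper does) is that your criterion $\inf_{0\le t\le 1}(\tilde W^-_t-f(t))\le 0$ detects the event $\{\sigma_-\le 1\}$, whereas $p_-$ is defined as $\mathds{P}(\sigma_-<1)$; these agree because $\mathds{P}(\sigma_-=1)=0$, which holds since, conditionally on $\tilde W_0^-$, the value $\tilde W_1^-$ is a nondegenerate Gaussian and hence has a diffuse law.
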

 
 \begin{proof}
 By definition, for any $t\in[-1,0]$ we have $\tilde W_t^-=W_{t}^- +\sup_{-1\leq s \leq t}(f(s)-W_s^-)=\sup_{-1\leq s \leq t}(f(s)+W_{t}^--W_s^-)$, and for $t\in[0,\sigma \wedge 1]$ we have $\tilde W_t^-=\tilde W_0^-+(W^-_t-W^-_0)$. Therefore we have $\mathds{P}(\sigma_-=1)\leq\mathds{P}(\tilde W_0^-+(W^-_1-W^-_0)=f(1))$, while $(W^-_t-W^-_0)_{t\in[0,1]}$ is a Brownian motion independent from $\tilde W_0^-$, hence $\mathds{P}(\sigma_-=1)=0$. This implies $p_-=\mathds{P}(\sigma_-\leq1)$. In addition, $\sigma_-\leq 1$ when $\inf_{0 \leq t \leq 1}(\tilde W^-_t-f(t)) \leq 0$, thus when $\inf_{0 \leq t \leq 1}(\tilde W_0^- + (W^-_t-W^-_0)-f(t)) \leq 0$, that is $\tilde W_0^- + \inf_{0 \leq t \leq 1}((W^-_t-W^-_0)-f(t)) \leq 0$ which can be written as $\sup_{-1\leq t \leq 0}(f(t)+W_{0}^--W_t^-) + \inf_{0 \leq t \leq 1}((W^-_t-W^-_0)-f(t)) \leq 0$. This implies $p_-=\mathds{P}(\sup_{-1\leq t \leq 0}(f(t)+W_{0}^--W_t^-) + \inf_{0 \leq t \leq 1}((W^-_t-W^-_0)-f(t)) \leq 0)=\mathds{P}(Z \leq 0)$. Now, $p_+$ corresponds to the $p_-$ associated to the function $\bar f:[-1,1]\mapsto\mathds{R}$ defined by $\bar f(t)=f(-t)$ for any $s\in[-1,1]$. This yields 
 \[
 p_+  =\mathds{P}\left(\sup_{0 \leq t \leq 1}(\bar W^-_t+\bar f(-t))+\inf_{0 \leq t \leq 1}(\bar W_t^+- \bar f(t)) \leq 0\right)
 \]
 \[
 =\mathds{P}\left(\sup_{0 \leq t \leq 1}(\bar W^-_t+f(t))+\inf_{0 \leq t \leq 1}(\bar W_t^+- f(-t)) \leq 0\right) 
  = \mathds{P}\left(\inf_{0 \leq t \leq 1}(-\bar W^-_t-f(t))+\sup_{0 \leq t \leq 1}(-\bar W_t^++ f(-t)) \geq 0\right),
 \]
 but $\inf_{0 \leq t \leq 1}(-\bar W^-_t-f(t))+\sup_{0 \leq t \leq 1}(-\bar W_t^++ f(-t)) \geq 0$ has the same law as $Z$, so $p_+=\mathds{P}(Z\geq0)$. Since we also have $p_-=\mathds{P}(Z \leq 0)$, we always have $p_-+p_+\geq1$, and we have $p_-+p_+=1$ if and only if $\mathds{P}(Z=0)=0$. 
\end{proof}

In order to get both a more practical condition for having $p_-+p_+=1$ than the one in Proposition \ref{prop_Z} and auxiliary results that will be useful in Section \ref{sec_conv_limit_processes}, we need to introduce some stopping times. Let $(W_t)_{t\in[0,1]}$ a Brownian motion, and $g : [0,1] \mapsto \mathds{R}$ a continuous function. For any $\delta\in\mathds{R}$, we define $\sigma(\delta)=\inf\{t\in[0,1]\,|\,W_t\leq g(t)+\delta\}$, the inf being infinite when the set is empty.

\begin{lemma}\label{lem_conv_abs_times}
 For any continuous function $g : [0,1] \mapsto \mathds{R}$ (possibly random) so that $g(0)<W_0$ almost surely, we have that $\sigma(\delta)$ converges in probability to $\sigma(0)$ as $\delta$ tends to 0.
\end{lemma}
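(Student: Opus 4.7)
The plan is to exploit the monotonicity of $\delta\mapsto\sigma(\delta)$ together with a strong Markov/fluctuation argument at the first crossing. First I observe that increasing $\delta$ enlarges $\{t\in[0,1]:W_t\le g(t)+\delta\}$, so $\delta\mapsto\sigma(\delta)$ is non-increasing (with the convention $\sigma(\delta)=\infty$ when the set is empty). Thus the limits $\ell^-:=\lim_{\delta\downarrow 0}\sigma(\delta)$ and $\ell^+:=\lim_{\delta\uparrow 0}\sigma(\delta)$ exist almost surely, with $\ell^-\le\sigma(0)\le\ell^+$. It suffices to show $\ell^-=\ell^+=\sigma(0)$ almost surely, since monotone bounded convergence then upgrades to convergence in probability (allowing both to be $+\infty$ on $\{\sigma(0)=\infty\}$).

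For $\delta\downarrow 0$: I would argue pointwise. On $\{\ell^-<\infty\}$, for each $n\ge 1$ pick $t_n\in[0,\ell^-]$ with $W_{t_n}\le g(t_n)+1/n$; a convergent subsequence $t_{n_k}\to t^\ast\in[0,\ell^-]$ combined with the joint continuity of $W$ and $g$ yields $W_{t^\ast}\le g(t^\ast)$, hence $\sigma(0)\le t^\ast\le\ell^-$. Together with $\ell^-\le\sigma(0)$ this forces $\ell^-=\sigma(0)$, while on $\{\ell^-=\infty\}$ we also have $\sigma(0)=\infty$. No probabilistic input beyond continuity is needed here.

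For $\delta\uparrow 0$: this is the main obstacle, as it requires ruling out a ``tangential touch'' of $W$ to $g$ at time $\sigma(0)$. Working on $\{\sigma(0)<1\}$, I would condition on the $\sigma$-algebra generated by $g$ (in the intended applications $g$ is independent of $W$, so under this conditioning $W$ is still a Brownian motion and $\sigma(0)$ becomes a stopping time for its natural filtration). The strong Markov property at $\sigma(0)$ then makes $B_u:=W_{\sigma(0)+u}-W_{\sigma(0)}$ a Brownian motion independent of $\mathcal{F}_{\sigma(0)}$, and $W_{\sigma(0)}=g(\sigma(0))$ gives
\[
W_{\sigma(0)+u}-g(\sigma(0)+u)=B_u-h(u),\qquad h(u):=g(\sigma(0)+u)-g(\sigma(0)),
\]
with $h$ continuous and $h(0)=0$. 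The law of the iterated logarithm yields $\liminf_{u\downarrow 0}B_u/\sqrt{u}=-\infty$ almost surely, while continuity of $h$ gives $|h(u)|\le\sqrt{u}$ for all sufficiently small $u$; hence for any $\eta>0$ there is $u\in(0,\eta]$ with $B_u-h(u)<-\sqrt{u}<0$. This shows $\inf_{t\in[\sigma(0),\sigma(0)+\eta]}(W_t-g(t))<0$ almost surely, so for $\delta<0$ close enough to $0$ one has $\sigma(\delta)\le\sigma(0)+\eta$, whence $\ell^+\le\sigma(0)+\eta$ for every $\eta>0$ and finally $\ell^+=\sigma(0)$. On $\{\sigma(0)=\infty\}$ the inequality $\sigma(\delta)\ge\sigma(0)$ forces $\ell^+=\infty$ as well.

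The hard part is this second direction: one must harness the rough local behaviour of Brownian motion (namely that $B_u/\sqrt{u}$ is unbounded below as $u\downarrow 0$) to overwhelm the smoother perturbation $h$ coming from the continuous barrier. Once this ``crossing versus touching'' is established, both one-sided limits match $\sigma(0)$ and the claimed convergence in probability follows from monotonicity.
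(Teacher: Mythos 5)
Your split into $\ell^-$ and $\ell^+$ via the monotonicity of $\delta\mapsto\sigma(\delta)$ is sound, and your treatment of the direction $\delta\downarrow 0$ is correct and actually cleaner than the paper's: taking $t_n=\sigma(1/n)$ and passing to the limit uses nothing but continuity of $W$ and $g$, whereas the paper handles $\delta>0$ with the same probabilistic coupling it uses for $\delta<0$.

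The direction $\delta\uparrow 0$, however, has a genuine gap. You assert that ``continuity of $h$ gives $|h(u)|\le\sqrt{u}$ for all sufficiently small $u$'', but continuity of $h$ at $0$ with $h(0)=0$ only says $h(u)\to 0$ and imposes no rate; the function $h(u)=-u^{1/4}$ is continuous with $h(0)=0$ yet $|h(u)|\gg\sqrt{u}$. This is not a fixable technicality, because for such an $h$ the intermediate claim you are forcing through --- that $B_u<h(u)$ for some arbitrarily small $u$, almost surely --- actually fails with positive probability: since $u^{1/4}\gg\sqrt{2u\log\log(1/u)}$, the function $-u^{1/4}$ is a \emph{lower} function for Brownian motion (see the discussion preceding Proposition~\ref{prop_Z_lower_fcts}), so with positive probability $B_u\ge -u^{1/4}$ on an entire right neighbourhood of $0$. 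In other words, ``crossing beats touching'' requires control of the local modulus of $g$ at the random time $\sigma(0)$ that bare continuity cannot supply, and your LIL argument does not close. The paper avoids the local question altogether: it couples $W$ with a Brownian motion $W^\delta$ started from $W_0+\delta$, run independently of $W$ until they meet at a time $T^\delta$ and coalescent afterwards, so that $\sigma'(\delta):=\inf\{t:W^\delta_t\le g(t)+\delta\}$ has exactly the law of $\sigma(0)$ and lies on the appropriate side of $\sigma(\delta)$. The mismatch $\mathds{P}(\sigma(\delta)\le t,\sigma(0)>t)$ (and the symmetric quantity for $\delta<0$) is then bounded by the probability that coalescence has not yet happened by the relevant hitting time, which vanishes as $\delta\to 0$ because $T^\delta\to 0$ in probability while the hitting time stays bounded away from $0$ (using $W_0>g(0)$). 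This is a comparison of distributions that never needs to decide whether the path crosses or merely touches $g$ at $\sigma(0)$, which is precisely why it works for arbitrary continuous $g$.
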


\begin{proof}
We first suppose $g$ and $W_0$ are deterministic and $g(0)<W_0$. It is enough to prove that for any $a>0$, $\mathds{P}(|\sigma(\delta)-\sigma(0)|>a)$ tends to 0 when $\delta$ tends to 0. We will treat $\delta >0$, the negative case is handled similarly. For any $a>0$, for any $\delta>0$, we notice that $\sigma(0)\geq\sigma(\delta)$, so if $|\sigma(\delta)-\sigma(0)|>a$ then $\sigma(0)-\sigma(\delta)>a$, so there exists a non-negative integer $i \leq \lfloor 1/a \rfloor+1$ so that $\sigma(\delta)\leq ia$ and $\sigma(0)> ia$. We deduce 
\begin{equation}\label{eq_conv_abs_times}
 \mathds{P}(|\sigma(\delta)-\sigma(0)|>a)\leq \left(\left\lfloor \frac{1}{a} \right\rfloor+2\right)\max_{t\in[0,1]}\mathds{P}(\sigma(\delta)\leq t, \sigma(0)> t). 
\end{equation}
We thus need to study the $\mathds{P}(\sigma(\delta)\leq t, \sigma(0)> t)$. For any $\delta>0$, we consider a Brownian motion $(W_t^\delta)_{t\in[0,1]}$ starting from $W_0+\delta$, independent from $(W_t)_{t\in[0,1]}$ until they meet, and then coalescing with $(W_t)_{t\in[0,1]}$. We also denote $\sigma'(\delta)=\inf\{t\in[0,1]\,|\,W_t^\delta\leq g(t)+\delta\}$. Since $\delta>0$, we have $W_t^\delta \geq W_t$ for any $t\in[0,1]$, thus we have $\sigma(\delta)\leq\sigma'(\delta)$. Moreover, $\sigma'(\delta)$ has the same law as $\sigma(0)$. We deduce that for $t\in[0,1]$, denoting $T^\delta$ the time of coalescence of $(W_t)_{t\in[0,1]}$ and $(W_t^\delta)_{t\in[0,1]}$, 
\[
 \mathds{P}(\sigma(\delta)\leq t, \sigma(0)> t)=\mathds{P}(\sigma(\delta)\leq t)-\mathds{P}(\sigma(0)\leq t)
 \]
 \[
 =\mathds{P}(\sigma(\delta)\leq t)-\mathds{P}(\sigma'(\delta)\leq t)=\mathds{P}(\sigma(\delta)\leq t, \sigma'(\delta)> t) \leq \mathds{P}(T^\delta>\sigma(\delta)). 
 \]
 From this and (\ref{eq_conv_abs_times}) we deduce $\mathds{P}(|\sigma(\delta)-\sigma(0)|>a)\leq (\lfloor \frac{1}{a} \rfloor+2)\mathds{P}(T^\delta>\sigma(\delta))$, so it is enough to prove $\mathds{P}(T^\delta>\sigma(\delta))$ tends to 0 when $\delta$ tends to 0. To do that, we denote $\delta_0=\frac{W_0-g(0)}{2}>0$. When $\delta\leq\delta_0$ we have $\sigma(\delta)\geq\sigma(\delta_0)$ hence $\mathds{P}(T^\delta>\sigma(\delta)) \leq \mathds{P}(T^\delta>\sigma(\delta_0))$. Now, $\sigma(\delta_0)>0$ and $T^\delta$ converges in probability to 0 when $\delta$ tends to 0, therefore $\lim_{\delta\to0}\mathds{P}(T^\delta>\sigma(\delta_0))=0$, which ends the proof when $g$ and $W_0$ are deterministic. If $g$ and $W_0$ are random, we notice that for any $a>0$, $\mathds{P}(|\sigma(\delta)-\sigma(0)|>a)=\mathds{E}(\mathds{P}(|\sigma(\delta)-\sigma(0)|>a\,|\,g,W_0))$, and that for any value of $g$ and $W_0$ so that $g(0)<W_0$, we have $\lim_{\delta\to0}\mathds{P}(|\sigma(\delta)-\sigma(0)|>a\,|\,g,W_0)=0$, hence $\mathds{P}(|\sigma(\delta)-\sigma(0)|>a\,|\,g,W_0)$ converges almost surely to 0 when $\delta$ tends to 0, therefore $\lim_{\delta\to0}\mathds{P}(|\sigma(\delta)-\sigma(0)|>a)=0$. 
\end{proof}

Lemma \ref{lem_conv_abs_times} allows us to prove the following condition, more practical than the one in Proposition \ref{prop_Z}. 

\begin{proposition}\label{prop_better_than_Z}
 If $\mathds{P}(\tilde W_0^->f(0))=1$, then $p_-+p_+=1$.
\end{proposition}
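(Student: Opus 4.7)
My plan is to reduce the statement to Proposition \ref{prop_Z} by showing that the random variable $Z=\sup_{0\leq t\leq 1}(\bar W^-_t+f(-t))+\inf_{0\leq t\leq 1}(\bar W_t^+-f(t))$ satisfies $\mathds{P}(Z=0)=0$. The first step is to rewrite $Z$ as the infimum of a single process. Let $W_t:=\tilde W_0^-+(W_t^- - W_0^-)$ for $t\in[0,1]$; this is a Brownian motion on $[0,1]$ starting at $\tilde W_0^-$, and the increments $(W_t-W_0)_{t\in[0,1]}$ are independent of $\tilde W_0^-$. By time-reversal, $\tilde W_0^-$ has the same law as $\sup_{0\leq t\leq 1}(\bar W^-_t+f(-t))$, so writing $\inf_{t\in[0,1]}(W_t-f(t))=W_0+\inf_{t\in[0,1]}(W_t-W_0-f(t))$ and using independence of the two pieces, one sees that $\inf_{t\in[0,1]}(W_t-f(t))$ has the same law as $Z$. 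Hence it suffices to prove $\mathds{P}(\inf_{t\in[0,1]}(W_t-f(t))=0)=0$.

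For the second step, for each $\delta\in\mathds{R}$ I set $\sigma(\delta)=\inf\{t\in[0,1]\,|\,W_t\leq f(t)+\delta\}$ (with $\inf\emptyset=+\infty$), so that $\{\sigma(\delta)\leq 1\}=\{\inf_{t\in[0,1]}(W_t-f(t))\leq\delta\}$. The hypothesis $\mathds{P}(\tilde W_0^->f(0))=1$ is exactly the requirement $W_0>f(0)$ almost surely needed to apply Lemma \ref{lem_conv_abs_times} with $g:=f|_{[0,1]}$. That lemma gives $\sigma(\delta)\to\sigma(0)$ in probability as $\delta\to 0$.

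Finally I will pass from convergence in probability of $\sigma(\delta)$ to convergence of $\mathds{P}(\sigma(\delta)\leq 1)$. The key ingredient is that $\sigma(0)$ has no atom at $1$: repeating the argument in the proof of Proposition \ref{prop_Z}, $\mathds{P}(\sigma(0)=1)\leq\mathds{P}(W_1=f(1))=0$, since conditionally on $\tilde W_0^-$ the Gaussian $W_1$ has a continuous law. It follows that $\mathds{1}_{\{\sigma(\delta)\leq 1\}}\to\mathds{1}_{\{\sigma(0)\leq 1\}}$ in probability, and bounded convergence gives $\mathds{P}(Z\leq\delta)\to\mathds{P}(Z\leq 0)$ as $\delta\to 0$. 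Restricting to $\delta<0$ and combining with the standard fact $\lim_{\delta\to 0^-}\mathds{P}(Z\leq\delta)=\mathds{P}(Z<0)$ (continuity of measure from below), I obtain $\mathds{P}(Z<0)=\mathds{P}(Z\leq 0)$, hence $\mathds{P}(Z=0)=0$. Proposition \ref{prop_Z} then yields $p_-+p_+=1$.

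The step I expect to require the most care is the identification of $Z$ with $\inf_{t\in[0,1]}(W_t-f(t))$: one has to check that the forward increments of $W^-$ on $[0,1]$ really are independent of $\tilde W_0^-$, which they are because $\tilde W_0^-$ is measurable with respect to $W^-|_{[-1,0]}$, and to verify via time reversal that the two independent pieces $W_0$ and $\inf_{t}(W_t-W_0-f(t))$ reproduce the law of the two independent suprema/infima appearing in the definition of $Z$. Once this identification is in place, the application of Lemma \ref{lem_conv_abs_times} and the extraction of the distributional conclusion are straightforward.
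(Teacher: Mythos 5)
Your proof is correct and follows essentially the same path as the paper: both reduce to showing $\mathds{P}(Z=0)=0$ via Proposition \ref{prop_Z}, identify $Z$ in law with $\inf_{t\in[0,1]}(W_t - f(t))$ for a Brownian motion $W$ started at $\tilde W_0^-$ with increments independent of $\tilde W_0^-$, and then invoke Lemma \ref{lem_conv_abs_times}. The only difference is in the final step — the paper directly observes that $\{Z=0\}\subset\{\sigma(0)<+\infty,\ \forall\,\delta<0,\ \sigma(\delta)=+\infty\}$ and lets convergence in probability force this event to be null, whereas you pass through the no-atom observation $\mathds{P}(\sigma(0)=1)=0$ and continuity of the distribution function — a minor variation built from the same ingredients.
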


\begin{proof}
Let us assume $\mathds{P}(\tilde W_0^->f(0))=1$. We recall that by Proposition \ref{prop_Z}, proving $\mathds{P}(Z=0)=0$ is enough to prove $p_-+p_+=1$. Now, by definition $\tilde W_0^-=W_0^-+\sup_{-1 \leq t \leq 0}(f(t)-W_t^-)=\sup_{-1 \leq t \leq 0}(f(t)-W_t^-+W_0^-)$ which has the same law as $\sup_{0 \leq t \leq 1}(\bar W^-_t+f(-t))$, so 
\[
\mathds{P}(Z=0)=\mathds{P}\left(\tilde W_0^-+\inf_{0 \leq t \leq 1}(\bar W_t^+-f(t))=0\right)=\mathds{P}\left(\inf_{0 \leq t \leq 1}(\tilde W_0^-+\bar W_t^+-f(t))=0\right).
\]
We use the notation of Lemma \ref{lem_conv_abs_times} with the process $(\tilde W_0^-+\bar W_t^+)_{t\in[0,1]}$ replacing $(W_t)_{t\in[0,1]}$ and the restriction of $f$ to $[0,1]$ replacing $g$. We then have $\mathds{P}(Z=0)\leq\mathds{P}(\sigma(0)<+\infty,\forall\,\delta<0,\sigma(\delta)=+\infty)$. Now, since $\tilde W_0^-+\bar W_0^+=\tilde W_0^-$ and $\mathds{P}(\tilde W_0^->f(0))=1$, Lemma \ref{lem_conv_abs_times} implies $\sigma(\delta)$ converges in probability to $\sigma(0)$ when $\delta$ tends to 0, hence $\mathds{P}(\sigma(0)<+\infty,\forall\,\delta<0,\sigma(\delta)=+\infty)=0$, therefore $\mathds{P}(Z=0)=0$, which ends the proof.
\end{proof}

We are going to establish another criterion for having $p_-+p_+=1$, which will not be used in this paper but has independent interest. Proposition \ref{prop_Z} stated that $p_-+p_+=1$ if and only if $\mathds{P}(Z=0)=0$, and we saw in the proof of Proposition \ref{prop_better_than_Z} that $\mathds{P}(Z=0)=\mathds{P}(\inf_{0 \leq t \leq 1}(\tilde W_0^-+\bar W_t^+-f(t))=0)$, and that this was 0 if $\mathds{P}(\tilde W_0^->f(0))=1$. Therefore $p_-+p_+>1$ if and only if $\mathds{P}(\tilde W_0^- = f(0))>0$ and with strictly positive probability a Brownian motion $(W_t)_{t\in[0,1]}$ starting at $0$ satisfies $W_t \geq f(t)$ for $0 \leq t \leq 1$. Now, recall that a function $f:[0,1] \mapsto \mathds{R}$ with $f(0)=0$ is called a \emph{lower function} if $\mathds{P}(\forall \, 0 \leq t \leq 1, W_t \geq f(t))>0$. So for example, if $0 < \epsilon < 2$ and $W$ is a standard Brownian motion, a continuous function equivalent to $-\sqrt {(2 + \epsilon ) t \ln(\ln(\frac{1}{t}))}$ around 0 is a lower function (indeed, the Law of the Iterated Logarithm implies there exists $\delta>0$ so that $\mathds{P}(\forall \, 0 \leq t \leq \delta, W_t \geq \sqrt{\frac{1}{1+\varepsilon}}f(t))>0$, and the Forgery Theorem (Theorem 38 of \cite{Freedman_Brownian_Motion}) implies $\mathds{P}\left(\forall \, \delta \leq t \leq 1, W_t-W_\delta \geq f(t)-f(\delta)-\left(\sqrt{\frac{1}{1+\varepsilon}}-1\right)f(\delta)\right)>0$), but a function equivalent to $-\sqrt {(2 - \epsilon ) t \ln(\ln(\frac{1}{t}))}$ is not (for refinement see \cite{Erdos1942}). Furthermore, $\mathds{P}(\tilde W_0^- = f(0))>0$ if and only if $\mathds{P}(\sup_{-1 \leq t \leq 0}(f(t)-W_t^-+W_0^-)=0)>0$, which is the case if and only if the function $: t \mapsto -f(-t)$ is a lower function. We deduce the following criterion.

\begin{proposition}\label{prop_Z_lower_fcts}
$p_-+p_+>1$ if and only if the functions $f_1,f_2:[0,1] \mapsto \mathds{R}$ defined by $f_1(t)=f(t)$ and $f_2(t)= -f(-t)$ for $t\in[0,1]$ are both lower functions.
\end{proposition}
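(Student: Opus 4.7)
The plan is to expand the informal chain of equivalences sketched immediately before the statement into a rigorous proof by combining Proposition \ref{prop_Z} with a careful atom analysis of the random variable $Z$.

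By Proposition \ref{prop_Z}, $p_-+p_+>1$ is equivalent to $\mathds{P}(Z=0)>0$, so the first step is to rewrite $Z$ as a difference of two independent nonnegative variables. Setting
\[
A:=\sup_{0\leq t\leq 1}(\bar W^-_t+f(-t)),\qquad B:=-\inf_{0\leq t\leq 1}(\bar W^+_t-f(t))=\sup_{0\leq t\leq 1}(f(t)-\bar W^+_t),
\]
one has $Z=A-B$ with $A,B\geq 0$ (evaluate the suprema at $t=0$ where the integrands vanish) and $A,B$ independent since $\bar W^-$ and $\bar W^+$ are independent.

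Next I would identify the atoms at zero with the lower-function events. The equality $\{B=0\}=\{\forall t\in[0,1],\ \bar W^+_t\geq f(t)\}$ is immediate and shows $\mathds{P}(B=0)>0$ iff $f_1$ is a lower function. For $A$, the event $\{A=0\}$ rewrites as $\{\forall t,\ \bar W^-_t+f(-t)\leq 0\}$, and applying the Brownian symmetry $\bar W^-\mapsto -\bar W^-$ reduces, exactly as in the argument preceding the statement, to a probability that is positive iff $f_2$ is a lower function.

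The crucial step is then to prove that $A$ and $B$ are atomless on $(0,\infty)$, which combined with independence yields
\[
\mathds{P}(Z=0)=\mathds{P}(A=B)=\mathds{P}(A=0)\mathds{P}(B=0)+\int_{(0,\infty)}\mathds{P}(A=b)\,d\mu_B(b)=\mathds{P}(A=0)\mathds{P}(B=0),
\]
from which $p_-+p_+>1\iff f_1\text{ and }f_2\text{ are both lower functions}$ follows immediately. For the no-atom claim, fix $a>0$ and let $\tau_a=\inf\{t:\bar W^-_t+f(-t)=a\}$. Then $\{A=a\}=\{\tau_a\leq 1,\ \bar W^-_t+f(-t)\leq a\text{ for all }t\in[\tau_a,1]\}$, and conditioning on $\tau_a=s$ the strong Markov property rewrites the second condition as $\{\tilde B_u\leq f(-s)-f(-s-u)\text{ for all }u\in[0,1-s]\}$, for an independent Brownian motion $\tilde B$ starting from $0$. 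The barrier $u\mapsto f(-s)-f(-s-u)$ is continuous and vanishes at $u=0$, and the law of the iterated logarithm then guarantees that $\tilde B_u$ exceeds it on a sequence of times tending to $0$, giving conditional probability $0$; integrating over $s$ yields $\mathds{P}(A=a)=0$, and a symmetric argument handles $B$.

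The main obstacle I expect is precisely the no-atom property on $(0,\infty)$: the LIL comparison must be done uniformly over the possible hitting times $s$, since for particularly steep local behavior of $f$ near $-s$ one could a priori fear that the conditional barrier grows fast enough near $0$ to allow Brownian motion to remain below it with positive probability. Once the LIL argument is cleanly pinned down in this setup, however, the chain of equivalences above delivers the biconditional asserted by the proposition.
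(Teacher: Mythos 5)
Your overall strategy --- invoking Proposition \ref{prop_Z}, writing $Z = A - B$ with $A, B \geq 0$ independent, identifying $\{A=0\}$ and $\{B=0\}$ with the lower-function events for $f_2$ and $f_1$, and reducing the equivalence to $\mathds{P}(Z=0)=\mathds{P}(A=0)\mathds{P}(B=0)$ via atomlessness of $A$ and $B$ on $(0,\infty)$ --- is exactly what the paper does in the paragraph preceding the statement, where the factorization is obtained by conditioning on $\tilde W_0^-$ (which has the law of $A$) and reusing the argument of Proposition \ref{prop_better_than_Z}.

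The gap is precisely where you voice your own unease: the no-atom step. After conditioning on $\tau_a=s$, the post-hitting Brownian motion $\tilde B$ and the conditional barrier $u\mapsto f(-s)-f(-s-u)$ both start at $0$, and the LIL does \emph{not} then force $\tilde B_u$ to cross the barrier arbitrarily close to $0$. If, say, $f(-s)-f(-s-u)\sim u^{1/3}$ near $u=0$, the barrier dominates $\sqrt{2u\ln\ln(1/u)}$ and the event $\{\tilde B_u\leq f(-s)-f(-s-u)\ \forall u\}$ has strictly positive probability (this is exactly the statement, quoted just before the proposition, that $-u^{1/3}$ is a lower function). So your conditional probability $p(s)$ need not vanish, and ``integrating over $s$'' does not by itself give $\mathds{P}(A=a)=0$; one would additionally need to show the set $\{s: p(s)>0\}$ has $\tau_a$-measure zero, a regularity statement about $f$ you have not addressed and for which the LIL is the wrong tool. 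The paper sidesteps the whole issue: exactly as in the proof of Proposition \ref{prop_better_than_Z}, one applies Lemma \ref{lem_conv_abs_times} to the \emph{unconditioned} process $W_t = a + \bar W^+_t$ with barrier $g=f|_{[0,1]}$. Since $W_0 = a > 0 = g(0)$, the lemma's hypothesis is met without any decomposition at a hitting time, and its coupling argument yields $\mathds{P}(\inf_{0\leq t\leq 1}(a+\bar W^+_t - f(t))=0)=0$, i.e.\ $\mathds{P}(B=a)=0$ for each $a>0$ (and symmetrically for $A$). Replacing your LIL step by a direct invocation of Lemma \ref{lem_conv_abs_times} closes the gap.
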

 
 \subsection{The limit process of the environments} \label{sec_process_above}
 
 In this section, the variance of all Brownian motions will be the variance of the law $\rho_0$ defined in \eqref{eq_def_rho0}. Moreover, we have as usual $\varepsilon > 0$. The limit process of the environments will be the following.
 
 \begin{definition}\label{def_lim_envts}
 $W^0$ will be a two-sided Brownian motion with $W_0^0=0$. We denote $\breve Z_0=0$. Let $k\in\mathds{N}$, and suppose that $W^{k'}$, $\breve Z_{k'}$ are defined for any $k' \in \{0,...,k\}$, we construct $W^{k+1}$ as follows. 
 
 We consider a continuous process $(V_t^{k,-})_{t\in[-\varepsilon,\varepsilon]}$ defined as follows: $V_{-\varepsilon}^{k,-}=W_{-\varepsilon}^k$, $(V_t^{k,-})_{t\in[-\varepsilon,0]}$ is a Brownian motion above $W^k$ reflected on $W^k$, and $(V_t^{k,-})_{t\in[0,\varepsilon]}$ is a Brownian motion absorbed by $W^k$. Let $\sigma_{k,-}=\inf\{t\geq0\,|\,V_{t}^{k,-}=W_{t}^k\}$ be the absorption time, and $p_{k,-}=\mathds{P}(\sigma_{k,-}<\varepsilon|W^k)$ the probability of absorption. Similarly, let $(V_t^{k,+})_{t\in[-\varepsilon,\varepsilon]}$ so that $V_{\varepsilon}^{k,+}=W_{\varepsilon}^k$, $(V_{\varepsilon-t}^{k,+})_{t\in[0,\varepsilon]}$ is a Brownian motion reflected on $(W_{\varepsilon-t}^{k})_{t\in[0,\varepsilon]}$ above $(W_{\varepsilon-t}^{k})_{t\in[0,\varepsilon]}$ and $(V_{-t}^{k,+})_{t\in[0,\varepsilon]}$ is a Brownian motion absorbed by $(W_{-t}^{k})_{t\in[0,\varepsilon]}$, 
 let $\sigma_{k,+}=\sup\{t\leq0\,|\,V_{t}^{k,+}=W_{t}^k\}$ be the absorption time, and set $p_{k,+}=\mathds{P}(\sigma_{k,+}>-\varepsilon|W^k)$. 
 
 Then, independently from the $W^{k'}$, $k' \in \{0,...,k\}$, we set $\breve Z_{k+1}=\breve Z_k-1$ with probability $p_{k,-}$ and $\breve Z_{k+1}=\breve Z_k+1$ with probability $1-p_{k,-}$. 
 \begin{itemize}
 \item If $\breve Z_{k+1}=\breve Z_k-1$, $W^{k+1}$ is defined as follows. For $t \in (-\infty,0]\cup[2\varepsilon,+\infty)$, we set $W_t^{k+1} = W_{t-\varepsilon}^{k}-W_{-\varepsilon}^k$. Moreover, we define a process $(\bar W_t^{k,-})_{t\in[-\varepsilon,\varepsilon]}$ thus: $\bar W_{-\varepsilon}^{k,-}=W_{-\varepsilon}^k$, $(\bar W_t^{k,-})_{t\in[-\varepsilon,0]}$ is a Brownian motion above $W^k$ reflected on $W^k$, and $(\bar W_t^{k,-})_{t\in[0,\varepsilon]}$ is a Brownian motion absorbed by $W^k$, but $(\bar W_t^{k,-})_{t\in[-\varepsilon,\varepsilon]}$ is conditioned to coalesce with $W_k$ before time $\varepsilon$. Then for any $t\in[0,2\varepsilon]$, we set $W_{t}^{k+1}=\bar W_{t-\varepsilon}^{k,-}-W_{-\varepsilon}^k$. In addition, we set $\breve T_{k+1}=2\int_{-\varepsilon}^{\varepsilon}(\bar W_t^{k,-}-W_t^k)\mathrm{d}t$.
 \item If $\breve Z_{k+1}=\breve Z_k+1$, the definition is similar. If $t\in(-\infty,-2\varepsilon]\cup[0,+\infty)$, we set $W_t^{k+1}=W_{t+\varepsilon}^k-W_\varepsilon^k$. We also define a process $(\bar W_t^{k,+})_{t\in[-\varepsilon,\varepsilon]}$ so that $\bar W_{\varepsilon}^{k,+}=W_{\varepsilon}^k$, $(\bar W_{\varepsilon-t}^{k,+})_{t\in[0,\varepsilon]}$ is a Brownian motion above $(W_{\varepsilon-t}^k)_{t\in[0,\varepsilon]}$ reflected on $(W_{\varepsilon-t}^k)_{t\in[0,\varepsilon]}$, and $(\bar W_{-t}^{k,+})_{t\in[0,\varepsilon]}$ is a Brownian motion absorbed by $(W_{-t}^k)_{t\in[0,\varepsilon]}$, conditioned to coalesce. Then for $t\in[-2\varepsilon,0]$, we set $W^{k+1}_t=\bar W_{t+\varepsilon}^{k,+}-W_\varepsilon^k$. In addition, we set $\breve T_{k+1}=2\int_{-\varepsilon}^{\varepsilon}(\bar W_t^{k,+}-W_t^k)\mathrm{d}t$.
 \end{itemize}
 \end{definition}
 
 \begin{remark}
 $V^{k,-}$ corresponds roughly to the limit of $\frac{1}{\sqrt{n}}\sum \zeta_i^{T_k,-,E}$, and $W^k$ to the limit of $\frac{1}{\sqrt{n}}\sum \zeta_i^{T_k,-,B}$ (see Definition \ref{def_local_times}). $(\breve Z_k)_{k \in \mathds{N}}$ is the ``mesoscopic walk embedded in the limit process of $Y^N$'' (see \eqref{eq_def_YN} and below).
 \end{remark}
 
 The limit process of the environments satisﬁes the following property, whose proof is given in the appendix.
 
 \begin{lemma}\label{lem_no_atoms}
  For any $k\in\mathds{N}^*$, the random variables $\breve T_k$ and $\sum_{k'=1}^k \breve T_{k'}$ have no atoms. 
 \end{lemma}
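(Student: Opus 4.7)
The plan is to establish the stronger conditional statement that, for each $k \geq 1$, conditionally on the $\sigma$-field $\mathcal{H}_{k} := \sigma(W^{k'} : 0 \leq k' \leq k-1;\; \breve Z_{k'} : 0 \leq k' \leq k)$, the random variable $\breve T_k$ admits no atoms almost surely. This yields both conclusions at once. Marginalizing gives atomlessness of $\breve T_k$. Moreover, each $\breve T_{k'}$ for $1 \leq k' \leq k-1$ is $\mathcal{H}_k$-measurable, since by Definition \ref{def_lim_envts} the process $\bar W^{k'-1,\pm}$ on $[-\varepsilon,\varepsilon]$ is a deterministic shift of $W^{k'}$ on the relevant sub-interval, and hence $\breve T_{k'}$ is a measurable functional of $(W^{k'-1}, W^{k'}, \breve Z_{k'} - \breve Z_{k'-1})$; consequently, conditionally on $\mathcal{H}_k$ the partial sum $\sum_{k'=1}^{k-1} \breve T_{k'}$ is a constant, so $\sum_{k'=1}^k \breve T_{k'}$ equals this constant plus the non-atomic random variable $\breve T_k$, and is itself non-atomic. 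By the evident symmetry of the construction I restrict to the case $\breve Z_k = \breve Z_{k-1} - 1$, so that
\[
\breve T_k \;=\; 2\int_{-\varepsilon}^{0}\bigl(\bar W_t^{k-1,-} - W_t^{k-1}\bigr)\,dt \;+\; 2\int_{0}^{\sigma}\bigl(\bar W_t^{k-1,-} - W_t^{k-1}\bigr)\,dt,
\]
where $\sigma \in [0,\varepsilon]$ denotes the absorption time (the integrand vanishing on $[\sigma,\varepsilon]$), and conditionally on $\mathcal{H}_k$ the function $W^{k-1}$ is deterministic while $\bar W^{k-1,-}$ is, by construction, a Brownian motion reflected above $W^{k-1}$ on $[-\varepsilon,0]$ and continued on $[0,\varepsilon]$ under the further conditioning on the positive-probability event of absorption by $W^{k-1}$ before time $\varepsilon$.

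The key step is then to augment the conditioning by $\sigma$ and by the past path $\bar W^{k-1,-}|_{[-\varepsilon,0]}$. A standard Brownian fluctuation argument applied to the Skorokhod representation of the reflected process shows $\bar W_0^{k-1,-} > W_0^{k-1}$ almost surely, so $\sigma > 0$ almost surely, and $\sigma < \varepsilon$ almost surely by the absorption conditioning. With this augmented conditioning, the first integral above becomes a constant, and by the strong Markov property at $\sigma$ (together with Doob's $h$-transform to handle the global absorption-before-$\varepsilon$ conditioning), the restriction $\bar W^{k-1,-}|_{[0,\sigma]}$ is distributed as a Brownian bridge on $[0,\sigma]$ from $\bar W_0^{k-1,-}$ to $W_\sigma^{k-1}$, further conditioned to remain strictly above $W^{k-1}$ on $(0,\sigma)$ — an event of strictly positive probability, since $\bar W_0^{k-1,-} > W_0^{k-1}$ and $W^{k-1}$ is continuous. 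Under the unconditional bridge law, $I := \int_0^\sigma (\mathrm{bridge}_t - W^{k-1}_t)\,dt$ is Gaussian of variance $\sigma^3/12 > 0$ (the classical variance of the integral of a Brownian bridge), hence has a density; conditioning on a positive-probability event $E$ preserves this since $\mathds{P}(I=a\mid E) \leq \mathds{P}(I=a)/\mathds{P}(E) = 0$. Thus the conditional law of $\breve T_k$ given the augmented $\sigma$-field is a shift of a non-atomic law, and integrating the augmented conditioning back closes the argument.

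The main obstacle will be the careful identification of the conditional law of $\bar W^{k-1,-}|_{[0,\sigma]}$ given the augmented $\sigma$-field: because the definition of $\bar W^{k-1,-}$ already incorporates a global conditioning on absorption before time $\varepsilon$, one must verify that after further conditioning on the pre-zero path and on the coalescence time $\sigma$, this global conditioning degenerates into the local "Brownian bridge conditioned to stay strictly above a barrier" statement used above. This ultimately rests on the strong Markov property at $\sigma$ and a standard Doob $h$-transform, but writing the measure-theoretic decomposition rigorously needs care; in particular one must handle the joint law of $(\sigma, \bar W^{k-1,-}|_{[-\varepsilon,0]}, \bar W^{k-1,-}|_{[0,\sigma]})$ under the absorption-conditional measure. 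The symmetric case $\breve Z_k = \breve Z_{k-1} + 1$ is handled identically by reversing time.
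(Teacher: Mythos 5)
The key step of your argument does not go through: the event you condition on has probability zero, not positive probability. You assert that, given the absorption time $\sigma$ and the path on $[-\varepsilon,0]$, the law of $\bar W^{k-1,-}$ on $[0,\sigma]$ is that of a Brownian bridge from $\bar W_0^{k-1,-}>W_0^{k-1}$ to $W^{k-1}_\sigma$ conditioned to stay strictly above $W^{k-1}$ on $(0,\sigma)$, and that this conditioning is on an event of strictly positive probability. But the bridge terminates exactly at the barrier value $W^{k-1}_\sigma$; by time-reversal and the law of the iterated logarithm, it crosses the barrier infinitely often near $\sigma$, so the probability of staying above it on $(0,\sigma)$ is zero. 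Concretely, for a constant barrier, the probability that a Brownian bridge of length $T$ from $a>0$ to $b\ge 0$ stays positive is $1-e^{-2ab/T}$, which vanishes when $b=0$. Consequently the inequality $\mathds{P}(I=a\mid E)\le \mathds{P}(I=a)/\mathds{P}(E)$ is $0/0$ and proves nothing. The correct description of the conditional law on $[0,\sigma]$ given $\sigma$ is a Doob $h$-transform of the bridge (a Bessel-bridge-type process), not a bridge conditioned on a positive-probability event, and you would then have to prove separately that the integral of this $h$-transformed process has a density — a genuinely new argument, made harder because the barrier $W^{k-1}$ is a non-Markovian, non-smooth random path rather than a constant.

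The paper's proof sidesteps the conditional law at $\sigma$ entirely. It chooses a deterministic $\bar\varepsilon>0$ with $\mathds{P}(\sigma_{k,-}<\bar\varepsilon)\le\delta$, so that up to a $\delta$ error the integral over $[0,\bar\varepsilon]$ is that of the \emph{unabsorbed} process $\tilde W^k$ against $W^k$, and then extracts from that integral the Gaussian random variable $\frac{\bar\varepsilon}{2}(\tilde W^k_{\bar\varepsilon/2}-\frac{1}{2}\tilde W^k_0-\frac{1}{2}\tilde W^k_{\bar\varepsilon})$, which is independent of everything else in the sum. This extraction of an independent Gaussian summand is more elementary and more robust than any Bessel-bridge computation, and it is what lets the argument close without analyzing the first-passage time. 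One further caution: your appeal to ``a standard Brownian fluctuation argument'' for $\bar W_0^{k-1,-}>W_0^{k-1}$ a.s.\ undersells the difficulty; this is Proposition \ref{prop_envts_above_barrier}, whose proof requires the machinery of Proposition \ref{prop_rec_little_ints} to control the law of the barrier $W^{k-1}$ near $0$, and is not just the Skorokhod representation of a single reflected Brownian motion.
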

 
 We want to apply the results of Section \ref{sec_BM_results} to the limit process of the environments. However, to use them, we need the Brownian motion ($\tilde W^-$ in Proposition \ref{prop_better_than_Z} or $W$ in Lemma \ref{lem_conv_abs_times}) to be strictly above the barrier ($f$ in Proposition \ref{prop_better_than_Z} or $g$ in Lemma \ref{lem_conv_abs_times}) at 0. Hence we have to prove such a result for the processes defined in Definition \ref{def_lim_envts}, which is the following. 
 
 \begin{proposition}\label{prop_envts_above_barrier}
  For any $k\in\mathds{N}$, we have $\mathds{P}(V_0^{k,-}>W_0^k)=1$ and $\mathds{P}(V_0^{k,+}>W_0^k)=1$.
 \end{proposition}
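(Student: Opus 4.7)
The plan is to prove both statements by induction on $k$, after reducing each to a ``supremum-is-strictly-positive'' statement on a half-neighborhood of $0$. I treat $V^{k,-}$ in detail; the case of $V^{k,+}$ follows from the time-reversal symmetry in Definition \ref{def_lim_envts}. Using the reflection formula of Definition \ref{def_reflected_BM} together with $V^{k,-}_{-\varepsilon} = W^k_{-\varepsilon}$, one can write $V^{k,-}_t = B_t + \sup_{-\varepsilon \leq s \leq t}(W^k_s - B_s)$ for $t \in [-\varepsilon,0]$, where $B$ is a Brownian motion with $B_{-\varepsilon} = W^k_{-\varepsilon}$ independent of $W^k$. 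A short computation and the change of variable $u = -s$, $\hat B_u := B_0 - B_{-u}$ (a Brownian motion independent of $W^k$ with $\hat B_0 = 0$) give
\[
V^{k,-}_0 - W^k_0 \;=\; \sup_{0 \leq u \leq \varepsilon}\bigl[\,(W^k_{-u} - W^k_0) + \hat B_u\,\bigr] \;\geq\; 0,
\]
and the proposition becomes the statement that this supremum is almost surely strictly positive.

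For the base case $k = 0$, the map $u \mapsto W^0_{-u} - W^0_0$ is itself a Brownian motion on $[0,\varepsilon]$ starting at $0$, independent of $\hat B$. Its sum with $\hat B$ is a Brownian motion of doubled variance starting at $0$, which immediately takes strictly positive values on any right-neighborhood of $0$.

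For the inductive step, I would condition on the $\sigma$-algebra $\mathcal{F}^k$ generated by the entire construction of $W^k$ (the values $\breve Z_1,\dots,\breve Z_k$ and the driving Brownian motions used at each earlier stage), so that $W^k$ becomes a deterministic continuous function while $\hat B$ remains a Brownian motion independent of $\mathcal{F}^k$. Blumenthal's $0$--$1$ law applied to $\hat B$ shows that the germ event $\{\hat B_u > -(W^k_{-u} - W^k_0) \text{ for some } u > 0 \text{ arbitrarily small}\}$ has conditional probability in $\{0,1\}$; the task is to rule out probability $0$. By the law of the iterated logarithm, $\limsup_{u \to 0^+}\hat B_u/\sqrt{2\sigma^2 u \log\log(1/u)} = 1$ a.s.\ (where $\sigma^2$ is the common variance), so it is enough to show $\liminf_{u\to 0^+}(W^k_0-W^k_{-u})/\sqrt{2\sigma^2 u \log\log(1/u)} < 1$ a.s., i.e.\ that $W^k$ at $0^-$ does not dip below the LIL envelope eventually. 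I would prove this by a parallel induction on $k$, splitting on the value of $\breve Z_k$ in Definition \ref{def_lim_envts}: if $\breve Z_k = \breve Z_{k-1} - 1$, then $W^k$ on $(-\infty,0]$ is just a shift of $W^{k-1}$ on $(-\infty,-\varepsilon]$, so the required regularity transfers from a point where the previous induction applies; if $\breve Z_k = \breve Z_{k-1} + 1$, then $W^k$ on $[-2\varepsilon,0]$ is a shift of the conditioned reflected-absorbed process $\bar W^{k-1,+}$ near its reflective endpoint, whose local behavior I would control via the representation
\[
\bar W^{k-1,+}_{\varepsilon - \tau} - W^{k-1}_\varepsilon \;=\; W'_\tau + \sup_{0 \leq s \leq \tau}\bigl(W^{k-1}_{\varepsilon - s} - W^{k-1}_\varepsilon - W'_s\bigr)
\]
for an auxiliary Brownian motion $W'$ independent of $W^{k-1}$.

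The main obstacle is this second case: one must show that the coalescence conditioning imposed on $\bar W^{k-1,+}$ in Definition \ref{def_lim_envts} does not alter the germ of $W'$ at $\tau = 0$, so that the standard LIL for Brownian motion still governs the local behavior of $W^k$ near $0^-$. I would handle this via a Radon--Nikodym argument: the coalescence event $\{\sigma_{k-1,+} > -\varepsilon\}$ is measurable with respect to the post-$\delta$ completion of $W'$ for any $\delta > 0$, so conditioning on it has bounded Radon--Nikodym density on the germ $\sigma$-algebra of $W'$ at $0$, which preserves the a.s.\ validity of the LIL. Given this regularity, the two inductions close: the supremum above is strictly positive with conditional probability $1$, hence $V^{k,-}_0 > W^k_0$ almost surely, and the symmetric argument gives $V^{k,+}_0 > W^k_0$ almost surely.
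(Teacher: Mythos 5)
Your overall strategy is recognizably the same as the paper's: reduce to showing that a certain supremum is strictly positive, invoke Blumenthal's $0$--$1$ law for the driving Brownian motion $\hat B$, and then control the local behavior of the barrier $W^k$ near $0$ by induction on $k$ through Definition \ref{def_lim_envts}. However, the way you propose to rule out probability $0$ contains a genuine gap. You argue that, given $\limsup_{u\to0^+}\hat B_u/\sqrt{2\sigma^2 u\log\log(1/u)}=1$ a.s., it suffices to prove $\liminf_{u\to0^+}(W^k_0-W^k_{-u})/\sqrt{2\sigma^2 u\log\log(1/u)}<1$ a.s. This is not sufficient: the $\limsup$ for $\hat B$ and the $\liminf$ for $W^k_0-W^k_{-u}$ are realized along (in general different) sequences of scales $u$, and there is no reason for the scales on which $\hat B$ is near its upper envelope to coincide with the scales on which $W^k_0-W^k_{-u}$ dips below. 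Conditionally on $W^k$, the event $\{\hat B_u>W^k_0-W^k_{-u}\}$ at a scale $u$ where $W^k_0-W^k_{-u}\approx(1-\delta)\,\mathrm{env}(u)$ has probability $\mathds{P}(Z>(1-\delta)\sqrt{2\log\log(1/u)})$, which tends to $0$ as $u\to0$, so one cannot get a uniform lower bound by considering a single near-critical scale. This is precisely why the paper's Lemma \ref{lem_reflected_above} does not work at the iterated-logarithm envelope $\sqrt{u\log\log(1/u)}$ but at the Gaussian scale $\sqrt{u}$: it controls both $|W_t|\leq 2^{-i}M$ on $[-2^{-2i},0]$ and $W''_0-W''_{-2^{-2i}}\geq(2M+1)2^{-i}$ at the \emph{same} dyadic scale $2^{-2i}$, where Brownian scaling gives a probability bounded away from $0$ uniformly in $i$, and then applies Blumenthal to the ``infinitely often in $i$'' event.

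A second, more minor, issue is your handling of the coalescence conditioning. The claim that the coalescence event $\{\sigma_{k-1,+}>-\varepsilon\}$ is measurable with respect to the post-$\delta$ completion of $W'$ for all $\delta>0$ is not correct: the starting value $\bar W_0^{k-1,+}$ of the absorbed part involves $\sup_{0\leq s\leq\varepsilon}(W^{k-1}_{\varepsilon-s}-W^{k-1}_\varepsilon-W'_s)$, which is not measurable in $\sigma(W'_s:s\geq\delta)$ because the running maximum of a Brownian path on $[0,\varepsilon]$ is not determined by the path on $[\delta,\varepsilon]$. The conclusion you want — that conditioning on a strictly positive-probability event preserves an a.s.\ germ-level statement — is true for the elementary reason that $\mathds{P}(B\mid A)=1$ whenever $\mathds{P}(A)>0$ and $\mathds{P}(B)=1$, but this is not what your Radon--Nikodym justification says, and it does not give you the uniform-in-$k$ control you will need down the induction. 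The paper avoids this difficulty altogether: instead of tracking an exact law of $W^k$ at $0^-$ through the conditioning, Proposition \ref{prop_rec_little_ints} establishes a \emph{domination} of the form $\mu_{0,\bar\varepsilon}^k(f)\leq 2^{k-1}(\mu_{-,\bar\varepsilon}(f)+\mu_{+,\bar\varepsilon}(f))+\delta\|f\|_\infty$, and the conditioning on coalescence is absorbed simply by bounding $\mathds{1}_{\{\sigma_{k-1,+}>-\varepsilon\}}\leq 1$ and accepting a factor $2$ per step. This is where the paper pays the price (the exponential factor $2^k$), and it is a fair price because Lemma \ref{lem_reflected_above} only needs a domination, not equality of laws. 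Your parallel induction would still need something playing this role, and it is not clear that controlling the iterated-logarithm $\liminf$ through the successive reflections and conditionings is easier than proving the domination inequality the paper uses.

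In summary: the Blumenthal-plus-induction skeleton is the right one, but the reduction to a pair of LIL statements does not close (envelope synchronization), and the treatment of the conditioning is imprecise. Both points are resolved in the paper by working at the Gaussian scale with a uniform positive-probability bound and by replacing exact laws with a one-sided domination of measures.
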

 
 The rest of this section is devoted to the proof of Proposition \ref{prop_envts_above_barrier}. The idea is to prove that the law of $W^k$ in some small interval $[-\bar\varepsilon,\bar\varepsilon]$ around 0 is ``close'' to that of a Brownian motion, or of a Brownian motion reflected on a Brownian motion. Indeed, we can prove that a Brownian motion like $V^{k,\pm}$ reflected on such a process is almost surely strictly above it at time 0 (Lemma \ref{lem_reflected_above}). 

 We need to define some notation. For any $\bar\varepsilon > 0$, let $(W_t)_{t\in[-\bar\varepsilon,\bar\varepsilon]}$ a two-sided Brownian motion with $W_0=0$. We denote its law $\mu_{\bar\varepsilon}$. We will also denote $\mu_{-,\bar\varepsilon}$ the law of $(W_t')_{t\in[-\bar\varepsilon,\bar\varepsilon]}$ so that ``at the left of 0, $W'$ is a Brownian motion, and at the right of 0, $W'$ is a Brownian motion reflected on $W$''; more rigorously, $(W_t')_{t\in[-\bar\varepsilon,0]}=(W_t)_{t\in[-\bar\varepsilon,0]}$ and $(W_t')_{t\in[0,\bar\varepsilon]}$ is a Brownian motion reflected on $(W_t)_{t\in[0,\bar\varepsilon]}$ above $(W_t)_{t\in[0,\bar\varepsilon]}$ so that $W_0'=0$. Similarly, we will denote $\mu_{+,\bar\varepsilon}$ the law of $(W_t')_{t\in[-\bar\varepsilon,\bar\varepsilon]}$ so that ``at the right of 0, $W'$ is a Brownian motion, and at the left of 0, $W'$ is a Brownian motion reflected on $W$'', that is $(W_t')_{t\in[0,\bar\varepsilon]}=(W_t)_{t\in[0,\bar\varepsilon]}$ and $(W_{-t}')_{t\in[0,\bar\varepsilon]}$ is a Brownian motion reflected on $(W_{-t})_{t\in[0,\bar\varepsilon]}$ above $(W_{-t})_{t\in[0,\bar\varepsilon]}$ so that $W_0'=0$. Finally, for any $k\in\mathds{N}$, we denote by $\mu_{z,\bar\varepsilon}^k$ ``the law of $W^k$ in a window of size $2\bar\varepsilon$ around $z\varepsilon$'', that is the law of $(W_{z\varepsilon+t}^k-W_{z\varepsilon}^k)_{t\in[-\bar\varepsilon,\bar\varepsilon]}$. 

 Now, for any $\bar\varepsilon>0$, we denote $\mathbf{F}_{\bar\varepsilon}$ the set of real non-negative bounded functions defined on the space of continuous functions $:[-\bar\varepsilon,\bar\varepsilon] \mapsto \mathds{R}$. If $\mu$ is the law of a continuous stochastic process $(W_t)_{t\in[-\bar\varepsilon,\bar\varepsilon]}$ and $f\in\mathbf{F}_{\bar\varepsilon}$, we denote by $\mu(f)$ or $\mu(f((W_t)_{t\in[-\bar\varepsilon,\bar\varepsilon]}))$ the expectation of $f((W_t)_{t\in[-\bar\varepsilon,\bar\varepsilon]})$ under the law $\mu$. For any $f \in \mathbf{F}_{\bar\varepsilon}$, for any process $(W_t)_{t\in[-\bar\varepsilon,\bar\varepsilon]}$, we denote $f((W_t)_{t\in[-\bar\varepsilon,\bar\varepsilon]})=\tilde f((W_t)_{t\in[-\bar\varepsilon,0]},(W_t)_{t\in[0,\bar\varepsilon]})$. We then have the following proposition, which indicates that for any $k\in\mathds{N}$, the law of $W^k$ is ``close'' to an appropriate law.
 
\begin{proposition}\label{prop_rec_little_ints}
 For any $z\in\mathds{Z}$, $\bar\varepsilon>0$ we have $\mu_{z,\bar\varepsilon}^0=\mu_{\bar\varepsilon}$, and for all $k\in\mathds{N}^*$, for all $\delta>0$, there exists $\bar\varepsilon>0$ so that, for any $f\in\mathbf{F}_{\bar\varepsilon}$, for any $z\in\mathds{Z}\setminus\{0\}$ we have $\mu_{z,\bar\varepsilon}^k(f)\leq 2^k\mu_{\bar\varepsilon}(f)+\delta\|f\|_\infty$, and $\mu_{0,\bar\varepsilon}^k(f) \leq 2^{k-1}(\mu_{-,\bar\varepsilon}(f)+\mu_{+,\bar\varepsilon}(f))+\delta\|f\|_\infty$. 
\end{proposition}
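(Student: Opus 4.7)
The proof proceeds by induction on $k$. The base case $k=0$ is immediate: $W^0$ is a two-sided Brownian motion, so $\mu^0_{z,\bar\varepsilon}=\mu_{\bar\varepsilon}$ for every $z\in\mathds{Z}$ and every $\bar\varepsilon>0$. For the inductive step, assume the statement at level $k$, fix $\delta>0$, apply the induction hypothesis with parameter $\delta/2$ to obtain $\bar\varepsilon_k>0$, and take $\bar\varepsilon\in(0,\min(\bar\varepsilon_k,\varepsilon/4))$, to be shrunk further at the end. The fundamental reduction rests on the observation that $\bar W^{k,\pm}$ is, by construction, $V^{k,\pm}$ conditioned on absorption, whose $W^k$-conditional probability equals $p_{k,\pm}$; hence for any non-negative functional $g$,
\[
p_{k,\pm}\,\mathds{E}\!\left[g(\bar W^{k,\pm})\mid W^k\right] = \mathds{E}\!\left[g(V^{k,\pm})\mathds{1}_{\{\text{absorption}\}}\mid W^k\right] \leq \mathds{E}[g(V^{k,\pm})\mid W^k].
\]
Splitting $\mu^{k+1}_{z',\bar\varepsilon}(f)$ according to the sign of $\breve Z_{k+1}-\breve Z_k$ and applying the inequality above yields
\[
\mu^{k+1}_{z',\bar\varepsilon}(f) \leq \mathds{E}\!\left[f(\tilde W^{k,-}_{[z'\varepsilon\pm\bar\varepsilon]})\right] + \mathds{E}\!\left[f(\tilde W^{k,+}_{[z'\varepsilon\pm\bar\varepsilon]})\right],
\]
where $\tilde W^{k,\pm}$ are the unconditional variants of $W^{k+1}$ built from $W^k$ and $V^{k,\pm}$ via the same gluing recipe as Definition \ref{def_lim_envts}; this dichotomy is the source of the factor that turns $2^k$ into $2^{k+1}$ at each step.

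When $|z'|\geq 3$, the window does not meet the inserted reflection/absorption interval in either scenario, so each term reduces to $\mu^k_{z'\mp 1,\bar\varepsilon}(f)$ with $z'\mp 1\neq 0$; the induction hypothesis bounds it by $2^k\mu_{\bar\varepsilon}(f)+(\delta/2)\|f\|_\infty$, and summing yields the claim. The case $z'=0$ factors cleanly through a deterministic functional: in the $-$ scenario, the left half of the window is a centered piece of $W^k$ around $-\varepsilon$ and the right half is an independent Brownian motion reflected above this same piece, started at the common value $0$ (since the window sits at the very start of the reflection region of $V^{k,-}$, far from the absorption threshold). Writing this gluing as $\Psi(g,B)$ with $g$ for the centered $W^k$ window around $-\varepsilon$ and $B$ an independent BM, and setting $F(g)=\mathds{E}_B[f(\Psi(g,B))]\in\mathbf{F}_{\bar\varepsilon}$ with $\|F\|_\infty\leq\|f\|_\infty$, Fubini together with the induction hypothesis give
\[
\mathds{E}[f(\tilde W^{k,-}_{[-\bar\varepsilon,\bar\varepsilon]})] = \mu^k_{-1,\bar\varepsilon}(F) \leq 2^k\mu_{\bar\varepsilon}(F)+(\delta/2)\|f\|_\infty = 2^k\mu_{-,\bar\varepsilon}(f)+(\delta/2)\|f\|_\infty,
\]
the last identity following from the very definition of $\mu_{-,\bar\varepsilon}$ as the law of $\Psi(W,B)$ when $W$ is a Brownian motion. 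The $+$ scenario contributes symmetrically $2^k\mu_{+,\bar\varepsilon}(f)+(\delta/2)\|f\|_\infty$, and summing yields the sharper bound required at $z'=0$.

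The main obstacle is the handful of indices $z'\in\{\pm 1,\pm 2\}$, where one of the scenarios produces a window that either sits inside the reflection/absorption region of $V^{k,\pm}$ (cases $z'=\pm 1$) or straddles the absorption threshold (cases $z'=\pm 2$), breaking the clean factorization used at $z'=0$. My plan is to introduce a case-dependent ``good event'' $G$ on which the behavior of $V^{k,\pm}$ inside the window is pinned down: for $z'=\pm 1$, $G=\{V^{k,\pm}>W^k \text{ throughout the window}\}$, on which neither the Skorokhod local time increases nor does absorption activate, so the centered window of $\tilde W^{k,\pm}$ coincides pathwise with the centered driving Brownian motion and contributes at most $\mu_{\bar\varepsilon}(f)$; for $z'=\pm 2$, $G=\{\text{absorption has already occurred strictly before the window begins}\}$, on which the window reduces to a centered, shifted piece of $W^k$ handled by the induction hypothesis. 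In each subcase, the contribution off $G$ is bounded by $\|f\|_\infty\mathds{P}(G^c)$; combined with the induction bound on the easy scenario ($\leq 2^k\mu_{\bar\varepsilon}(f)+(\delta/2)\|f\|_\infty$) and the arithmetic $1+2^k\leq 2^{k+1}$, the target estimate holds as soon as $\mathds{P}(G^c)\leq\delta/2$. Quantifying $\mathds{P}(G^c)$ is the technical heart of the proof: one uses the induction hypothesis to compare $W^k$ at scales of order $\varepsilon$ (but away from $0$) to a Brownian motion, the Brownian estimates of Section \ref{sec_BM_results} applied with barrier $W^k$ to control both the typical gap $V^{k,\pm}-W^k$ of order $\sqrt{\varepsilon}$ away from the threshold and the regularity of the absorption time, and finally one shrinks $\bar\varepsilon$ enough to make $\mathds{P}(G^c)\leq\delta/2$, completing the induction.
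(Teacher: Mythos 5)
Your proposal reproduces the paper's proof in structure and in its key ideas: same induction, the same dichotomy inequality $p_{k,\pm}\mathds{E}[g(\bar W^{k,\pm})\mid W^k]\le\mathds{E}[g(V^{k,\pm})\mid W^k]$, the same case split $|z'|\ge 3$ / $z'=\pm 2$ / $z'=\pm 1$ / $z'=0$, the same averaged-functional reduction at $z'=0$ (the paper likewise factors through $\mu^k_{\mp 1,\bar\varepsilon}$ applied to a conditional expectation that averages over the reflected driver, then identifies $\mu_{\bar\varepsilon}$ of that functional with $\mu_{\mp,\bar\varepsilon}(f)$), and the same good-event argument for $z'=\pm 1,\pm 2$. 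Two small remarks: the dichotomy step needs $1-p_{k,-}\le p_{k,+}$, which the paper obtains by first invoking the induction hypothesis via Lemma \ref{lem_little_ints_enough} and Proposition \ref{prop_better_than_Z} to get $p_{k,-}+p_{k,+}=1$ a.s.\ (your version gets by with Proposition \ref{prop_Z}'s one-sided inequality, applied conditionally on $W^k$, but you should make this explicit); and the bound on $\mathds{P}(G^c)$ for $z'=\pm 1$ is carried out not with quantitative $\sqrt\varepsilon$ gap estimates from Section \ref{sec_BM_results} but by comparing $\mu^k_{0,\bar\varepsilon}$ to $\mu_{\pm,\bar\varepsilon}$ via the induction hypothesis and then applying Lemma \ref{lem_reflected_above} (a Blumenthal 0--1 argument, located in Section \ref{sec_process_above}) to those reference laws.
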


The following lemma indicates that if the law of $W^k$ around time 0 is close to an appropriate law, we have the desired property $\mathds{P}(V_0^{k,\pm}>W_0^k)=1$. Lemma \ref{lem_little_ints_enough} together with Proposition \ref{prop_rec_little_ints} prove Proposition \ref{prop_envts_above_barrier}, and Lemma \ref{lem_little_ints_enough} is also used in the proof of Proposition \ref{prop_rec_little_ints}.

\begin{lemma}\label{lem_little_ints_enough}
 If for any $\bar\varepsilon > 0$ we have $\mu_{0,\bar\varepsilon}^0=\mu_{\bar\varepsilon}$, then $\mathds{P}(V_0^{0,-}>W_0^0)=1$ and $\mathds{P}(V_0^{0,+}>W_0^0)=1$. Moreover, for any $k\in\mathds{N}^*$, if for any $\delta>0$ there exists $\bar\varepsilon>0$ so that for any $f\in\mathbf{F}_{\bar\varepsilon}$ we have $\mu_{0,\bar\varepsilon}^k(f) \leq 2^{k-1}(\mu_{-,\bar\varepsilon}(f)+\mu_{+,\bar\varepsilon}(f))+\delta\|f\|_\infty$, then $\mathds{P}(V_0^{k,-}>W_0^k)=1$ and $\mathds{P}(V_0^{k,+}>W_0^k)=1$. 
\end{lemma}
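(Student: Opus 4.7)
For $k=0$, the hypothesis is automatic since $W^0$ is a two-sided Brownian motion by construction. The proof is then a direct calculation: on $[-\varepsilon,0]$, $V^{0,-}$ is a Brownian motion (driven by some $B$ independent of $W^0$) reflected above $W^0$ from $V^{0,-}_{-\varepsilon}=W^0_{-\varepsilon}$, so $Y_t := V^{0,-}_t - W^0_t$ is the reflection at $0$ from above of the centered Brownian motion $(B-W^0)_t - (B-W^0)_{-\varepsilon}$ (of twice the variance of $\rho_0$), starting from $Y_{-\varepsilon}=0$. At the fixed time $0>-\varepsilon$, the law of $Y_0$ is a folded normal, absolutely continuous on $[0,+\infty)$ with no atom at $0$, so $\mathds{P}(V^{0,-}_0 = W^0_0) = 0$ and thus $\mathds{P}(V^{0,-}_0 > W^0_0)=1$; the claim for $V^{0,+}$ is identical after time reversal.

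Now fix $k\geq 1$ and $\delta>0$, and let $\bar\varepsilon\in(0,\varepsilon)$ be given by the hypothesis. The first step is a coupling that reduces the event to one measurable in a window of width $\bar\varepsilon$. Let $B$ be the Brownian motion driving $V^{k,-}$ on $[-\varepsilon,0]$ (independent of $W^k$), and define $\tilde V$ on $[-\bar\varepsilon,0]$ as the reflection of this same $B$ above $W^k$, but restarted from $\tilde V_{-\bar\varepsilon}=W^k_{-\bar\varepsilon}$ rather than from $V^{k,-}_{-\bar\varepsilon}\geq W^k_{-\bar\varepsilon}$. Setting $M':=\sup_{-\varepsilon\leq s\leq -\bar\varepsilon}(W^k_s-B_s)=V^{k,-}_{-\bar\varepsilon}-B_{-\bar\varepsilon}$, the explicit reflection formula gives, for every $t\in[-\bar\varepsilon,0]$,
\[
V^{k,-}_t-\tilde V_t \;=\; \left(M'-\sup_{-\bar\varepsilon\leq s\leq t}(W^k_s-B_s)\right)_{+} \;\geq\; 0,
\]
so $V^{k,-}_t\geq \tilde V_t\geq W^k_t$ throughout and in particular $\{V^{k,-}_0=W^k_0\}\subset\{\tilde V_0=W^k_0\}$.

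Next, define $g\colon C([-\bar\varepsilon,\bar\varepsilon],\mathds{R})\to[0,1]$ by $g(w):=\mathds{P}(\hat V^w_0=w(0))$, where $\hat V^w$ is the reflection of a Brownian motion $\hat B$ (independent of $w$) above $w|_{[-\bar\varepsilon,0]}$ started from $w(-\bar\varepsilon)$ at time $-\bar\varepsilon$. Then $g\in\mathbf{F}_{\bar\varepsilon}$ with $\|g\|_\infty\leq 1$, and $g$ is translation invariant, so the independence of $B$ and $W^k$ yields $\mathds{P}(\tilde V_0=W^k_0)=\mu_{0,\bar\varepsilon}^k(g)$. The hypothesis then gives
\[
\mathds{P}(V^{k,-}_0=W^k_0)\;\leq\;\mu_{0,\bar\varepsilon}^k(g)\;\leq\;2^{k-1}\bigl(\mu_{-,\bar\varepsilon}(g)+\mu_{+,\bar\varepsilon}(g)\bigr)+\delta,
\]
and it suffices to show $\mu_{-,\bar\varepsilon}(g)=\mu_{+,\bar\varepsilon}(g)=0$. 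The reflection formula again shows that $\hat V^w_0=w(0)$ is equivalent to $s\mapsto w(s)-\hat B_s$ attaining its maximum on $[-\bar\varepsilon,0]$ at $s=0$; time-reversing via $u=-s$ and $\tilde B_u:=\hat B_0-\hat B_{-u}$, this becomes
\[
\tilde B_u\;\leq\;w(0)-w(-u)\qquad\text{for all }u\in[0,\bar\varepsilon],
\]
with $\tilde B$ a Brownian motion starting from $0$ independent of $w$. Under $\mu_{-,\bar\varepsilon}$, $u\mapsto w(0)-w(-u)$ is itself a Brownian motion starting from $0$ (by the time-reversal of $w|_{[-\bar\varepsilon,0]}$), so the difference $\tilde B_u-(w(0)-w(-u))$ is a Brownian motion of twice the variance starting from $0$ and changes sign in every right neighborhood of $0$ almost surely by the law of the iterated logarithm, so the event has probability $0$ for $\mu_{-,\bar\varepsilon}$-a.e.\ $w$. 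Under $\mu_{+,\bar\varepsilon}$ we have $w(0)=0$ and, by construction of the law, $w(-u)\geq W_{-u}$ for an auxiliary Brownian motion $W$ independent of $\tilde B$; the event then implies $\tilde B_u+W_{-u}\leq 0$ for all $u\in[0,\bar\varepsilon]$, and $\tilde B+W_{-\cdot}$ is again a Brownian motion of twice the variance starting from $0$, so the same LIL argument yields probability $0$. Thus $\mathds{P}(V^{k,-}_0=W^k_0)\leq\delta$ for every $\delta>0$, hence $\mathds{P}(V^{k,-}_0>W^k_0)=1$. The statement for $V^{k,+}$ follows from the time-symmetric version, with the roles of $\mu_{-,\bar\varepsilon}$ and $\mu_{+,\bar\varepsilon}$ exchanged.

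The delicate point is the $\mu_{+,\bar\varepsilon}$ bound: there the window $[-\bar\varepsilon,0]$ carries a reflected (not plain) Brownian motion, so the LIL vanishing is not visible from the law of $w$ alone and must be routed through the underlying driver $W$, using the inclusion $w\geq W$ built into $\mu_{+,\bar\varepsilon}$. This is precisely why the hypothesis of Lemma \ref{lem_little_ints_enough} compares $\mu_{0,\bar\varepsilon}^k$ to a combination involving both $\mu_{-,\bar\varepsilon}$ and $\mu_{+,\bar\varepsilon}$, and not only to the plain Brownian law $\mu_{\bar\varepsilon}$.
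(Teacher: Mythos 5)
Your proof is correct and parallels the paper's overall structure: a coupling argument (via the explicit Skorokhod reflection formula) reduces the question to the process restarted from the barrier at time $-\bar\varepsilon$; the failure probability is then expressed as $\mu_{0,\bar\varepsilon}^{k}(g)$ for a suitable test function $g\in\mathbf F_{\bar\varepsilon}$; the hypothesis transfers this to $\mu_{\pm,\bar\varepsilon}(g)$; and one shows these last contributions are negligible. Where you diverge is in this final step. The paper packages $\mathds{P}(W_0'>W_0)=1$ for $W\sim\mu_{\bar\varepsilon},\mu_{-,\bar\varepsilon},\mu_{+,\bar\varepsilon}$ into the separate, quantitative Lemma \ref{lem_reflected_above}, which it proves via a dyadic argument (on some interval $[-2^{-2i},0]$ the barrier is small while the driving increment is large) combined with Blumenthal's $0$--$1$ law, and then uses the quantitative form $\mathds{P}(\forall\,t\in[-\bar\varepsilon',\bar\varepsilon'],W_t'>W_t)\geq 1-\delta$ inside Lemma \ref{lem_little_ints_enough}. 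You instead show $\mu_{\pm,\bar\varepsilon}(g)=0$ exactly: time-reversing the reflection identity turns the event $\{W_0'=W_0\}$ into the statement that a two-sided Brownian increment stays nonpositive on a right neighbourhood of $0$, which fails almost surely by the law of the iterated logarithm; the $\mu_{+,\bar\varepsilon}$ case, where the window carries a reflected rather than a plain Brownian motion, is handled by routing through the driving Brownian motion $W$ via the pointwise bound $w\geq W$. Your route is slightly more economical for this lemma alone because it avoids the $\bar\varepsilon'$-shrinking step, but Lemma \ref{lem_reflected_above} is reused (with its quantitative form) in the proof of Proposition \ref{prop_rec_little_ints}, so the paper's packaging earns its keep there. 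You also dispatch $k=0$ by a direct folded-normal calculation, where the paper runs it through the same machinery; both are fine.
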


 In order to prove Lemma \ref{lem_little_ints_enough}, we need to show that a Brownian motion reflected on a process with law $\mu_{\bar\varepsilon}$, $\mu_{-,\bar\varepsilon}$ or $\mu_{+,\bar\varepsilon}$ will almost surely be strictly above it at time 0, which is the following lemma.
 
 \begin{lemma}\label{lem_reflected_above}
  For any $\bar\varepsilon>0$, we denote by $(W_t)_{t\in[-\bar\varepsilon,\bar\varepsilon]}$ a process with law $\mu_{\bar\varepsilon}$, $\mu_{-,\bar\varepsilon}$ or $\mu_{+,\bar\varepsilon}$, and by $(W_t')_{t\in[-\bar\varepsilon,\bar\varepsilon]}$ a Brownian motion reflected on $(W_t)_{t\in[-\bar\varepsilon,\bar\varepsilon]}$ such that $W_{-\bar\varepsilon}' \geq W_{-\bar\varepsilon}$. Then for any $\delta>0$, there exists $0 < \bar\varepsilon' \leq \bar\varepsilon$ so that $\mathds{P}(\forall\,t\in[-\bar\varepsilon',\bar\varepsilon'], W_t'>W_t)\geq 1-\delta$. 
 \end{lemma}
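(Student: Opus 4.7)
The plan is to reduce the lemma to establishing that $\mathds{P}(D_0 > 0) = 1$, where $D_t := W_t' - W_t$, and then to invoke the continuity of $D$. Since reflection preserves continuity, $D$ is a continuous non-negative process; once $D_0 > 0$ almost surely, the random quantity $\eta^* := \sup\{\eta > 0 : D_t > 0 \text{ for all } t \in [-\eta,\eta]\}$ is almost surely positive, so $\mathds{P}(\eta^* > \bar\varepsilon')$ increases to $1$ as $\bar\varepsilon' \downarrow 0$ by monotone convergence, and any sufficiently small $\bar\varepsilon'$ works. To start, I would reduce to the worst case $W_{-\bar\varepsilon}' = W_{-\bar\varepsilon}$ by a standard coupling monotonicity argument: two reflected Brownian motions driven by the same underlying Brownian motion but starting from different heights above $W$ preserve their ordering at every time, so the probability $\mathds{P}(D_0 > 0)$ is minimized at $W_{-\bar\varepsilon}' = W_{-\bar\varepsilon}$.

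Under this reduction, the Skorohod formula from Definition \ref{def_reflected_BM} yields an independent Brownian motion $B$ with $B_{-\bar\varepsilon} = W_{-\bar\varepsilon}$ such that $W_t' = B_t + \sup_{-\bar\varepsilon \leq s \leq t}(W_s - B_s)$, hence, setting $M_s := W_s - B_s$,
\[
D_t = \sup_{-\bar\varepsilon \leq s \leq t} M_s - M_t, \qquad \{D_0 = 0\} = \{M_0 = \sup_{-\bar\varepsilon \leq s \leq 0} M_s\}.
\]
In the cases $\mu_{\bar\varepsilon}$ and $\mu_{-,\bar\varepsilon}$, the process $W$ is a Brownian motion on $[-\bar\varepsilon, 0]$ independent of $B$, so $M$ is a Brownian motion of variance $2r_2$ starting from $M_{-\bar\varepsilon} = 0$; the classical ``maximum minus current value'' identity gives that $D_0$ has the same distribution as $|\mathcal{N}(0, 2r_2 \bar\varepsilon)|$, which has no atom at $0$, so $\mathds{P}(D_0 = 0) = 0$ in both cases.

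The main obstacle is the case $\mu_{+,\bar\varepsilon}$, in which $W$ on $[-\bar\varepsilon, 0]$ is not a Brownian motion but the time-reversal of a Brownian motion reflected above another Brownian motion. I would write $R_u := W_{-u}$ for $u \in [0, \bar\varepsilon]$ and apply the Skorohod formula to the construction of $\mu_{+,\bar\varepsilon}$ to obtain $R_u = U_u + \sup_{0 \leq s \leq u}(\tilde W_{-s} - U_s)$, where $U$ is the driving Brownian motion and $\tilde W$ the auxiliary barrier, all starting at $0$ and independent of $B$. Since the supremum is taken over an interval containing $s = 0$ where the integrand vanishes, it is non-negative, which yields the crucial domination $R_u \geq U_u$ for every $u \in [0, \bar\varepsilon]$. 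After the substitution $u = -s$ and introducing $\tilde B_u := B_0 - B_{-u}$, which is itself a Brownian motion starting at $0$ and independent of $W$, the event $\{D_0 = 0\}$ translates to $\{R_u \leq -\tilde B_u$ for all $u \in [0, \bar\varepsilon]\}$, and combining this with $R_u \geq U_u$ forces $U_u + \tilde B_u \leq 0$ on $[0, \bar\varepsilon]$. The sum $U + \tilde B$ is a Brownian motion of variance $2r_2$ starting from $0$, independent of everything else, and thus cannot remain non-positive on any right-neighborhood of $0$; hence $\mathds{P}(D_0 = 0) = 0$ in this case too, which closes the argument via the continuity step above.
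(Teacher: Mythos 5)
Your proof is correct, and it takes a genuinely different route from the paper's. The paper proves $\mathds{P}(W_0' > W_0)=1$ by a dyadic-scale argument: using the Blumenthal zero-one law, it shows that almost surely there exist arbitrarily small scales $2^{-2i}$ on which the barrier $W$ stays within $M2^{-i}$ of the origin \emph{and} the driving Brownian motion $W''$ rises by at least $(2M+1)2^{-i}$, which forces $W_0' \geq (M+1)2^{-i} > W_0$; the three cases for the law of the barrier enter only when verifying the positive probability of the barrier-smallness event. Your approach instead writes $D_0 = \sup_{-\bar\varepsilon \leq s \leq 0} M_s - M_0$ via the Skorohod formula with $M_s = W_s - B_s$, reducing everything to the non-atomicity at $0$ of an explicit functional. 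For $\mu_{\bar\varepsilon}$ and $\mu_{-,\bar\varepsilon}$ this is immediate by the reflection principle applied to the difference of two independent Brownian motions; for $\mu_{+,\bar\varepsilon}$ the same Skorohod formula applied to the \emph{barrier's} construction gives the domination $R_u \geq U_u$ that lets you conclude by the fact that a Brownian motion starting at $0$ cannot stay non-positive on any $[0,\bar\varepsilon]$. Both approaches use the same coupling monotonicity to reduce to the boundary case $W_{-\bar\varepsilon}'=W_{-\bar\varepsilon}$, and both finish with the same continuity step. Your route is more direct and avoids the zero-one law entirely, at the price of having to handle the $\mu_{+,\bar\varepsilon}$ case by a slightly different mechanism than the other two (a comparison rather than a law computation); the paper's route is more uniform across the three cases, since the only place the barrier law enters is in bounding the probability of the smallness event, but it requires the Blumenthal machinery and the somewhat indirect good-scale construction.
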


 \begin{proof}
 We begin by introducing some notation. We denote by $(W''_t)_{t\in[-\bar\varepsilon,\bar\varepsilon]}$ the Brownian motion so that $(W'_t)_{t\in[-\bar\varepsilon,\bar\varepsilon]}$ is the reflection of $(W''_t)_{t\in[-\bar\varepsilon,\bar\varepsilon]}$ on $(W_t)_{t\in[-\bar\varepsilon,\bar\varepsilon]}$. We notice that if $(\tilde W_t)_{t\in[0,1]}$ is a Brownian motion with $\tilde W_0=0$, there exists some finite $M>0$ so that $\mathds{P}(\max_{0 \leq t \leq 1}|\tilde W_t| \leq M/3)>0$. We denote $i_0=\lceil\frac{-\ln(\bar\varepsilon)}{2 \ln 2}\rceil$ (then $2^{-2i_0} \leq \bar\varepsilon$). It will be enough to prove that 
 \begin{equation}\label{eq_reflected_BM}
 \mathds{P}(\exists\, i \geq i_0\text{ so that }\forall\, t \in [-2^{-2i},0], |W_t| \leq 2^{-i}M
 \text{ and }W''_0-W''_{-2^{-2i}} \geq (2M+1)2^{-i})=1.
 \end{equation}
 Indeed, then there almost surely exists $i \geq i_0$ so that $\forall\, t \in [-2^{-2i},0], |W_t| \leq 2^{-i}M$ and $W''_0-W''_{-2^{-2i}} \geq (2M+1)2^{-i}$. Then $(W_t')_{t\in[-2^{-2i},0]}$ is above the Brownian motion $(W_t''-W_{-2^{-2i}}''+W_{-2^{-2i}})_{t\in[-2^{-2i},0]}$ reflected on $(W_t)_{t\in[-2^{-2i},0]}$, itself above the Brownian motion $(W_t''-W_{-2^{-2i}}''+W_{-2^{-2i}})_{t\in[-2^{-2i},0]}$. Therefore $W'_0 \geq W_0''-W_{-2^{-2i}}''+W_{-2^{-2i}} \geq (2M+1)2^{-i}-M2^{-i}=(M+1) 2^{-i}\geq W_0+2^{-i}> W_0$. We deduce $\mathds{P}(W_0' > W_0)=1$. Now let $\delta>0$. Since $\mathds{P}(W_0'=W_0)=0$, there exists $\delta_1 > 0$ so that $\mathds{P}(W_0'-W_0 < \delta_1) \leq \delta/2$. Furthermore, the processes $(W_t)_{t\in[-\bar\varepsilon,\bar\varepsilon]}$ and $(W_t')_{t\in[-\bar\varepsilon,\bar\varepsilon]}$ are continuous, hence there exists $0<\bar\varepsilon'<\bar\varepsilon$ so that $\mathds{P}(\forall \, t \in [-\bar\varepsilon',\bar\varepsilon'], |(W_t'-W_t)-(W_0'-W_0)|\leq \delta_1/2) \geq 1-\delta/2$. We then have $\mathds{P}(\forall \, t \in [-\bar\varepsilon',\bar\varepsilon'], W_t'>W_t) \geq 1-\delta$, which is Lemma \ref{lem_reflected_above}. 
 
 Consequently, we only have to prove \eqref{eq_reflected_BM}. We will prove 
 \[
 \mathds{P}(|\{i \in \mathds{N}\,|\, i \geq i_0, \forall\, t \in [-2^{-2i},0], |W_t| \leq 2^{-i}M,W''_0-W''_{-2^{-2i}} \geq (2M+1)2^{-i}\}|=+\infty)=1.
 \]
 By Blumenthal 0-1 law, this event has probability 0 or 1, so it is enough to prove that it has positive probability. Now, 
 \[
 \mathds{P}(|\{i \in \mathds{N} \,|\, i \geq i_0, \forall\, t \in [-2^{-2i},0], |W_t| \leq 2^{-i}M,W''_0-W''_{-2^{-2i}} \geq (2M+1)2^{-i}\}|=+\infty)
 \]
 \[
 =\mathds{P}\left(\bigcap_{i \geq i_0}\bigcup_{j \geq i}\{\forall\, t \in [-2^{-2j},0], |W_t| \leq 2^{-j}M,W''_0-W''_{-2^{-2j}} \geq (2M+1)2^{-j}\}\right)
 \]
 \[
 =\lim_{i \to +\infty}\mathds{P}\left(\bigcup_{j \geq i}\{\forall\, t \in [-2^{-2j},0], |W_t| \leq 2^{-j}M,W''_0-W''_{-2^{-2j}} \geq (2M+1)2^{-j}\}\right)
 \]
 \[
 \geq \liminf_{i \to +\infty}\mathds{P}(\forall\, t \in [-2^{-2i},0], |W_t| \leq 2^{-i}M,W''_0-W''_{-2^{-2i}} \geq (2M+1)2^{-i}).
 \]
 Consequently, it is enough to find a positive lower bound for the latter term. In addition, $W$ and $W''$ are independent, hence 
 \[
 \mathds{P}(\forall\, t \in [-2^{-2i},0], |W_t| \leq 2^{-i}M,W''_0-W''_{-2^{-2i}} \geq (2M+1)2^{-i}) 
 \]
 \[
 = \mathds{P}(\forall\, t \in [-2^{-2i},0], |W_t| \leq 2^{-i}M)\mathds{P}(W''_0-W''_{-2^{-2i}} \geq (2M+1)2^{-i}).
 \]
 Moreover, by scaling invariance of the Brownian motion, $\mathds{P}(W''_0-W''_{-2^{-2i}} \geq (2M+1)2^{-i})=\mathds{P}(W''_0-W''_{-1} \geq 2M+1)$, which is positive and independent on $i$. Therefore we only have to find a positive lower bound for the $\mathds{P}(\forall\, t \in [-2^{-2i},0], |W_t| \leq 2^{-i}M)$. If $(W_t)_{t\in[-\bar\varepsilon,\bar\varepsilon]}$ has law $\mu_{\bar\varepsilon}$ or $\mu_{-,\bar\varepsilon}$, $(W_{-t})_{t\in[0,\bar\varepsilon]}$ is a Brownian motion, so by scaling invariance, $\mathds{P}(\forall\, t \in [-2^{-2i},0], |W_t| \leq 2^{-i}M)=\mathds{P}(\max_{0 \leq t \leq 1}|\tilde W_t| \leq M)>0$, which is enough. If $(W_t)_{t\in[-\bar\varepsilon,\bar\varepsilon]}$ has law $\mu_{+,\bar\varepsilon}$, we may say $(W_{-t})_{t\in[0,\bar\varepsilon]}$ is a Brownian motion $(W_{-t}^1)_{t\in[0,\bar\varepsilon]}$ with $W_0^1=0$ reflected on an independent Brownian motion $(W_{-t}^2)_{t\in[0,\bar\varepsilon]}$ with $W_0^2=0$. As before, $\mathds{P}(\forall\, t \in [-2^{-2i},0], |W_t^1| \leq 2^{-i}M/3)=\mathds{P}(\forall\, t \in [-2^{-2i},0], |W_t^2| \leq 2^{-i}M/3)=\mathds{P}(\max_{0 \leq t \leq 1}|\tilde W_t| \leq M/3)>0$, thus $\mathds{P}(\forall\, t \in [-2^{-2i},0], |W_t^1|,|W_t^2| \leq 2^{-i}M/3)$ is constant and positive. Now, if for all $t \in [-2^{-2i},0]$ we have $|W_t^1|,|W_t^2| \leq 2^{-i}M/3$, then for all $t \in [-2^{-2i},0]$ we have $W_t=W_t^1+\sup_{t \leq s \leq 0}(W_s^2-W_s^1)$, hence $|W_t| \leq 2^{-i}M$. This implies that $\mathds{P}(\forall\, t \in [-2^{-2i},0], |W_t| \leq 2^{-i}M)$ is bounded from below by a positive constant, which ends the proof of Lemma \ref{lem_reflected_above}.
 \end{proof}

 We are now in position to prove Proposition \ref{prop_rec_little_ints} and Lemma \ref{lem_little_ints_enough}. 

\begin{proof}[Proof of Lemma \ref{lem_little_ints_enough}.]
We only spell out the proof for $k\in\mathds{N}^*$ and $\mathds{P}(V_0^{k,+}>W_0^k)=1$, as the other cases can be dealt with in the same way. We are going to prove that for any $\delta>0$ we have $\mathds{P}(V_0^{k,+}=W_0^k) \leq \delta$, which is enough. Let $\delta>0$. We recall that $(V_{-t}^{k,+})_{t\in[-\varepsilon,\varepsilon]}$ is a Brownian motion reflected and absorbed by $(W_{-t}^k)_{t\in[-\varepsilon,\varepsilon]}$ (see Definition \ref{def_lim_envts}). We may consider that it was constructed as the reflection and absorption of the Brownian motion $(\dot V_{-t}^{k,+})_{t\in[-\varepsilon,\varepsilon]}$. Let $\bar\varepsilon\in(0,\varepsilon)$, and let us denote by $(V_t^{k,+,\bar\varepsilon})_{t\in[-\bar\varepsilon,\bar\varepsilon]}$ the process defined so that $(V_{-t}^{k,+,\bar\varepsilon})_{t\in[-\bar\varepsilon,\bar\varepsilon]}$ is the Brownian motion $(\dot V^{k,+}_{-t}-\dot V^{k,+}_{\bar\varepsilon}+W_{\bar\varepsilon}^k)_{t\in[-\bar\varepsilon,\bar\varepsilon]}$ reflected on $(W_{-t}^k)_{t\in[-\bar\varepsilon,\bar\varepsilon]}$ and above it. It is ``the same Brownian motion as $(V_t^{k,+})_{t\in[-\bar\varepsilon,\bar\varepsilon]}$, but starting from a lower point (and without absorption)'', so if $V_0^{k,+,\bar\varepsilon}>W_0^k$ then $V_0^{k,+}>W_0^k$. We deduce $\mathds{P}(V_0^{k,+}=W_0^k) \leq \mathds{P}(V_0^{k,+,\bar\varepsilon}=W_0^k)$. We now introduce some temporary notation: for any measure $\mu$ defined on the space of continuous processes on $[-\bar\varepsilon,\bar\varepsilon]$, $(W_t)_{t\in[-\bar\varepsilon,\bar\varepsilon]}$ will be a process of law $\mu$, and $(W_t')_{t\in[-\bar\varepsilon,\bar\varepsilon]}$ will be defined so that $W_{\bar\varepsilon}'=W_{\bar\varepsilon}$ and $(W_{-t}')_{t\in[-\bar\varepsilon,\bar\varepsilon]}$ is a Brownian motion reflected on $(W_{-t})_{t\in[-\bar\varepsilon,\bar\varepsilon]}$ above it. We then have 
\[
\mathds{P}(V_0^{k,+}=W_0^k) \leq \mathds{E}(\mathds{P}(V_0^{k,+,\bar\varepsilon}=W_0^k|(W_{t}^k)_{t\in[-\bar\varepsilon,\bar\varepsilon]}))=\mu_{0,\bar\varepsilon}^k(\mathds{P}(W_0'=W_0|W)).
\]
We now choose $\bar\varepsilon$ so that for any $f\in\mathbf{F}_{\bar\varepsilon}$ we have $\mu_{0,\bar\varepsilon}^k(f) \leq 2^{k-1}(\mu_{-,\bar\varepsilon}(f)+\mu_{+,\bar\varepsilon}(f))+(\delta/2)\|f\|_\infty$ (we can choose $\bar\varepsilon < \varepsilon$ since it is easy to see that if the property holds for $\bar\varepsilon$ it also holds for all smaller $\bar\varepsilon$). We then have 
\[
\mathds{P}(V_0^{k,+}=W_0^k) \leq 2^{k-1}(\mu_{-,\bar\varepsilon}(\mathds{P}(W_0'=W_0|W))+\mu_{+,\bar\varepsilon}(\mathds{P}(W_0'=W_0|W)))+\delta/2.
\]
This implies that for any $\bar\varepsilon'\in(0,\bar\varepsilon)$, we have that $\mathds{P}(V_0^{k,+}=W_0^k)$ is smaller than 
\[
2^{k-1}(\mu_{-,\bar\varepsilon}(\mathds{P}(\exists\, t\in[-\bar\varepsilon',\bar\varepsilon'],W_t'=W_t|W))+\mu_{+,\bar\varepsilon}(\mathds{P}(\exists\, t\in[-\bar\varepsilon',\bar\varepsilon'],W_t'=W_t|W)))+\delta/2.
\]
Now, by Lemma \ref{lem_reflected_above}, noticing that if $(W_t)_{t\in[-\bar\varepsilon,\bar\varepsilon]}$ has law $\mu_{\pm,\bar\varepsilon}$ then $(W_{-t})_{t\in[-\bar\varepsilon,\bar\varepsilon]}$ has law $\mu_{\mp,\bar\varepsilon}$, there exists $0<\bar\varepsilon'\leq \bar\varepsilon$ so that $\mu_{-,\bar\varepsilon}(\mathds{P}(\exists\, t \in [-\bar\varepsilon',\bar\varepsilon'], W_t' \leq W_t|W)) \leq \delta/2^{k+1}$ and $\mu_{+,\bar\varepsilon}(\mathds{P}(\exists\, t \in [-\bar\varepsilon',\bar\varepsilon'], W_t' \leq W_t|W)) \leq \delta/2^{k+1}$. This implies $\mathds{P}(V_0^{k,+}=W_0^k) \leq 2^{k-1}(\delta/2^{k+1}+\delta/2^{k+1})+\delta/2=\delta$, which ends the proof. 
\end{proof}

\begin{proof}[Proof of Proposition \ref{prop_rec_little_ints}.]
In order to shorten the notation in this proof, for any $k\in\mathds{N}$, any $z\in\mathds{Z}$ and any real numbers $a<a'$, we will denote the process $(W_{z\varepsilon+t}^{k}-W_{z\varepsilon}^{k})_{t\in[a,a']}$ by $W_{[a,a']}^{k,z}$. We will prove Proposition \ref{prop_rec_little_ints} by induction on $k$. Here is a rough sketch of the proof. The idea is that if the statement of the proposition is true for $k$ and if, say, $\breve Z_{k+1}=\breve Z_k+1$, then for any $z\not\in \{0,1,2\}$, $W_{[-\bar\varepsilon,\bar\varepsilon]}^{k+1,z}$ is $W_{[-\bar\varepsilon,\bar\varepsilon]}^{k,z+1}$ which we control by the induction hypothesis. Moreover, for $z=-2$, we notice that $\bar W^{k,+}$ is conditioned to coalesce with $W^k$ before time $-\varepsilon$, so if we choose $\bar\varepsilon$ small enough, with high probability $\bar W^{k,+}$ coalesces with $W^k$ before time $-\varepsilon+\bar\varepsilon$, thus $W_{[-\bar\varepsilon,\bar\varepsilon]}^{k+1,-2}=W_{[-\bar\varepsilon,\bar\varepsilon]}^{k,-1}$ which we control by the induction hypothesis. Furthermore, for $z=-1$, $W_{[-\bar\varepsilon,\bar\varepsilon]}^{k+1,-1}$ is $(\bar W_{t}^{k,+}-\bar W_{0}^{k,+})_{t\in[-\bar\varepsilon,\bar\varepsilon]}$. Now, by the induction hypothesis, $W_{[-\bar\varepsilon,\bar\varepsilon]}^{k,0}$ has a ``good'' law, hence Lemma \ref{lem_little_ints_enough} implies that $\bar W^{k,+}$ is strictly above $W^k$ at 0 thus around 0, hence $\bar W^{k,+}$ behaves like an unconstrained Brownian motion around 0, so  $W_{[-\bar\varepsilon,\bar\varepsilon]}^{k+1,-1}$ has the right law. Finally, for $z=0$, we notice that $W_{[-\bar\varepsilon,\bar\varepsilon]}^{k+1,0}$ is $W_{[0,\bar\varepsilon]}^{k,1}$ at the right of 0 and a Brownian motion reflected on $W_{[-\bar\varepsilon,0]}^{k,1}$ at the right of 0, and by the induction hypothesis $W_{[0,\bar\varepsilon]}^{k,1}$ has a law close to that of a Brownian motion, so the law of $W_{[-\bar\varepsilon,\bar\varepsilon]}^{k+1,0}$ is close to $\mu_{+,\bar\varepsilon}$. 

 We now begin the induction. For $k=0$, Definition \ref{def_lim_envts} yields that $W^0$ is a two-sided Brownian motion, which implies that for any $z\in\mathds{Z}$, $\bar\varepsilon>0$ we have $\mu_{z,\bar\varepsilon}^0=\mu_{\bar\varepsilon}$. Now let $k\in\mathds{N}$ and suppose the statement of Proposition \ref{prop_rec_little_ints} for $k$ holds. Let $\delta>0$ and $z\in\mathds{Z}$. We first notice that by the induction hypothesis and Lemma \ref{lem_little_ints_enough} we have $\mathds{P}(V_0^{k,-}>W_0^k)=1$, so $\mathds{P}(V_0^{k,-}>W_0^k|W^k)=1$ almost surely, hence by Proposition \ref{prop_better_than_Z} we have $p_{k,-}+p_{k,+}=1$ almost surely. As explained above, we will use different arguments depending on the value of $z$.

\emph{Case $z \not\in\{-2,-1,0,1,2\}$.} 

By Definition \ref{def_lim_envts}, given $W^k$, with probability $p_{k,-}$ we have $(W_{z\varepsilon+t}^{k+1}-W_{z\varepsilon}^{k+1})_{t\in[-\varepsilon,\varepsilon]}=(W_{(z-1)\varepsilon+t}^k-W_{(z-1)\varepsilon}^k)_{t\in[-\varepsilon,\varepsilon]}$ and with probability $1-p_{k,-}$ we have $(W_{z\varepsilon+t}^{k+1}-W_{z\varepsilon}^{k+1})_{t\in[-\varepsilon,\varepsilon]}=(W_{(z+1)\varepsilon+t}^k-W_{(z+1)\varepsilon}^k)_{t\in[-\varepsilon,\varepsilon]}$. Hence for any $0<\bar\varepsilon<\varepsilon$, $f\in\mathbf{F}_{\bar\varepsilon}$, we have $\mathds{E}(f(W_{[-\bar\varepsilon,\bar\varepsilon]}^{k+1,z})|W^k)=p_{k,-}f(W_{[-\bar\varepsilon,\bar\varepsilon]}^{k,z-1})+(1-p_{k,-})f(W_{[-\bar\varepsilon,\bar\varepsilon]}^{k,z+1})$, so $\mathds{E}(f(W_{[-\bar\varepsilon,\bar\varepsilon]}^{k+1,z})) \leq \mathds{E}(f(W_{[-\bar\varepsilon,\bar\varepsilon]}^{k,z-1}))+\mathds{E}(f(W_{[-\bar\varepsilon,\bar\varepsilon]}^{k,z+1}))$, that is $\mu_{z,\bar\varepsilon}^{k+1}(f)\leq\mu_{z-1,\bar\varepsilon}^k(f)+\mu_{z+1,\bar\varepsilon}^k(f)$. Now, we notice $z-1,z+1 \neq 0$, so by the induction hypothesis, there exists some $\bar\varepsilon_1 > 0$ (which does not depend on $z$) so that for any $g\in\mathbf{F}_{\bar\varepsilon_1}$ we have $\mu_{z-1,\bar\varepsilon_1}^k(g)\leq 2^k\mu_{\bar\varepsilon_1}(g)+(\delta/2)\|g\|_\infty$ and $\mu_{z+1,\bar\varepsilon_1}^k(g)\leq 2^k\mu_{\bar\varepsilon_1}(g)+(\delta/2)\|g\|_\infty$. We deduce that if $\bar\varepsilon \leq \bar\varepsilon_1$, we have $\mu_{z,\bar\varepsilon}^{k+1}(f)\leq 2^{k+1}\mu_{\bar\varepsilon}(f)+\delta\|f\|_\infty$. 

\emph{Case $z=\pm 2$.} 

We only treat the case $z=-2$, as the case $z=2$ is similar. Given $W^k$, with probability $p_{k,-}$ we have $W_{[-\varepsilon,\varepsilon]}^{k+1,-2}=W_{[-\varepsilon,\varepsilon]}^{k,-3}$ and with probability $1-p_{k,-}$ we have $W_{[-\varepsilon,0]}^{k+1,-2}=W_{[-\varepsilon,0]}^{k,-1}$ and $W_{[0,\varepsilon]}^{k+1,-2}=(\bar W_{t-\varepsilon}^{k,+}-W_{-\varepsilon}^k)_{t\in[0,\varepsilon]}$. Consequently, if $0<\bar\varepsilon<\varepsilon$ and $f\in\mathbf{F}_{\bar\varepsilon}$, we have 
\begin{equation}\label{eq_rec_little_ints_2}
\mathds{E}(f(W_{[-\bar\varepsilon,\bar\varepsilon]}^{k+1,-2})|W^k) = p_{k,-}f(W_{[-\bar\varepsilon,\bar\varepsilon]}^{k,-3})+(1-p_{k,-})\mathds{E}(\tilde f(W_{[-\bar\varepsilon,0]}^{k,-1},(\bar W_{t-\varepsilon}^{k,+}-W_{-\varepsilon}^k)_{t\in[0,\bar\varepsilon]})|W^k). 
\end{equation}
Now, given $W^k$, by definition $\bar W^{k,+}$ has the law of $V^{k,+}$ conditioned to coalesce with $W^k$ before time $-\varepsilon$, an event denoted by $\{\sigma_{k,+}>-\varepsilon\}$ and satisfying $\mathds{P}(\sigma_{k,+}>-\varepsilon|W^k)=p_{k,+}=1-p_{k,-}$. This implies 
\[
\mathds{E}(\tilde f(W_{[-\bar\varepsilon,0]}^{k,-1},(\bar W_{t-\varepsilon}^{k,+}-W_{-\varepsilon}^k)_{t\in[0,\bar\varepsilon]})|W^k)
=\frac{1}{1-p_{k,-}}\mathds{E}(\tilde f(W_{[-\bar\varepsilon,0]}^{k,-1},(V_{t-\varepsilon}^{k,+}-W_{-\varepsilon}^k)_{t\in[0,\bar\varepsilon]})\mathds{1}_{\{\sigma_{k,+}>-\varepsilon\}}|W^k),
\]
therefore (\ref{eq_rec_little_ints_2}) implies 
\[
\mathds{E}(f(W_{[-\bar\varepsilon,\bar\varepsilon]}^{k+1,-2})|W^k) 
 \leq f(W_{[-\bar\varepsilon,\bar\varepsilon]}^{k,-3}) + \mathds{E}(\tilde f(W_{[-\bar\varepsilon,0]}^{k,-1},(V_{t-\varepsilon}^{k,+}-W_{-\varepsilon}^k)_{t\in[0,\bar\varepsilon]})\mathds{1}_{\{\sigma_{k,+}>-\varepsilon\}}|W^k), 
 \]
 hence $\mathds{E}(f(W_{[-\bar\varepsilon,\bar\varepsilon]}^{k+1,-2})) \leq \mathds{E}(f(W_{[-\bar\varepsilon,\bar\varepsilon]}^{k,-3})) + \mathds{E}(\tilde f(W_{[-\bar\varepsilon,0]}^{k,-1},(V_{t-\varepsilon}^{k,+}-W_{-\varepsilon}^k)_{t\in[0,\bar\varepsilon]})\mathds{1}_{\{\sigma_{k,+}>-\varepsilon\}})$. We now choose $\bar\varepsilon_2'>0$ so that $\mathds{P}(\sigma_{k,+}\in(-\varepsilon,-\varepsilon+\bar\varepsilon_2']) \leq \delta/3$, and assume $\bar\varepsilon \leq \bar\varepsilon_2'$. We then have 
 \[
  \mathds{E}(f(W_{[-\bar\varepsilon,\bar\varepsilon]}^{k+1,-2})) \leq \mathds{E}(f(W_{[-\bar\varepsilon,\bar\varepsilon]}^{k,-3})) + \mathds{E}(\tilde f(W_{[-\bar\varepsilon,0]}^{k,-1},(V_{t-\varepsilon}^{k,+}-W_{-\varepsilon}^k)_{t\in[0,\bar\varepsilon]})\mathds{1}_{\{\sigma_{k,+}>-\varepsilon+\bar\varepsilon_2'\}})+(\delta/3)\|f\|_\infty
 \]
\[
 = \mathds{E}(f(W_{[-\bar\varepsilon,\bar\varepsilon]}^{k,-3})) + \mathds{E}(\tilde f(W_{[-\bar\varepsilon,0]}^{k,-1},(W_{t-\varepsilon}^{k}-W_{-\varepsilon}^k)_{t\in[0,\bar\varepsilon]})\mathds{1}_{\{\sigma_{k,+}>-\varepsilon+\bar\varepsilon_2'\}})+(\delta/3)\|f\|_\infty
\]
since for $t \leq \sigma_{k,+}$ we have $V_t^{k,+}=W_t^k$. We deduce $\mathds{E}(f(W_{[-\bar\varepsilon,\bar\varepsilon]}^{k+1,-2})) \leq \mathds{E}(f(W_{[-\bar\varepsilon,\bar\varepsilon]}^{k,-3})) + \mathds{E}(f(W_{[-\bar\varepsilon,\bar\varepsilon]}^{k,-1}))+(\delta/3)\|f\|_\infty$. Now, by the induction hypothesis, there exists $\bar\varepsilon_2''>0$ so that for any $g\in\mathbf{F}_{\bar\varepsilon_2''}$ we have $\mu_{-3,\bar\varepsilon_2''}^k(g)\leq 2^k\mu_{\bar\varepsilon_2''}(g)+(\delta/3)\|g\|_\infty$ and $\mu_{-1,\bar\varepsilon_2''}^k(g)\leq 2^k\mu_{\bar\varepsilon_2''}(g)+(\delta/3)\|g\|_\infty$. Thus, setting $\bar\varepsilon_2=\min(\bar\varepsilon_2',\bar\varepsilon_2'')>0$, if we have $\bar\varepsilon \leq \bar\varepsilon_2$, then $\mathds{E}(f(W_{[-\bar\varepsilon,\bar\varepsilon]}^{k+1,-2})) \leq 2^k\mu_{\bar\varepsilon}(f)+(\delta/3)\|f\|_\infty + 2^k\mu_{\bar\varepsilon}(f)+(\delta/3)\|f\|_\infty+(\delta/3)\|f\|_\infty$, that is $\mu_{-2,\bar\varepsilon}^{k+1}(f) \leq 2^{k+1}\mu_{\bar\varepsilon}(f)+\delta\|f\|_\infty$. 

\emph{Case $z=\pm 1$.}

We only treat the case $z=-1$, as the case $z=1$ is similar. Given $W^k$, with probability $p_{k,-}$ we have $W_{[-\varepsilon,\varepsilon]}^{k+1,-1}=W_{[-\varepsilon,\varepsilon]}^{k,-2}$ and with probability $1-p_{k,-}$ we have $W_{[-\varepsilon,\varepsilon]}^{k+1,-1}=(\bar W_{t}^{k,+}-\bar W_{0}^{k,+})_{t\in[-\varepsilon,\varepsilon]}$. Therefore, if $0<\bar\varepsilon<\varepsilon$ and $f\in\mathbf{F}_{\bar\varepsilon}$, we have 
\begin{equation}\label{eq_rec_little_ints_1}
 \mathds{E}(f(W_{[-\bar\varepsilon,\bar\varepsilon]}^{k+1,-1})|W^k)=p_{k,-}f(W_{[-\bar\varepsilon,\bar\varepsilon]}^{k,-2})+(1-p_{k,-})\mathds{E}(f((\bar W_{t}^{k,+}-\bar W_{0}^{k,+})_{t\in[-\bar\varepsilon,\bar\varepsilon]})|W^k). 
\end{equation}
Now, given $W^k$, by definition $\bar W^{k,+}$ has the law of $V^{k,+}$ conditioned to coalesce with $W^k$ before time $-\varepsilon$, an event denoted by $\{\sigma_{k,+}>-\varepsilon\}$ and satisfying $\mathds{P}(\sigma_{k,+}>-\varepsilon|W^k)=p_{k,+}=1-p_{k,-}$. This yields 
\[
\mathds{E}(f((\bar W_{t}^{k,+}-\bar W_{0}^{k,+})_{t\in[-\bar\varepsilon,\bar\varepsilon]})|W^k)=\frac{1}{1-p_{k,-}}\mathds{E}(f((V_{t}^{k,+}-V_{0}^{k,+})_{t\in[-\bar\varepsilon,\bar\varepsilon]})\mathds{1}_{\{\sigma_{k,+}>-\varepsilon\}}|W^k) 
\]
\[
\leq \frac{1}{1-p_{k,-}}\mathds{E}(f((V_{t}^{k,+}-V_{0}^{k,+})_{t\in[-\bar\varepsilon,\bar\varepsilon]})|W^k).
\]
Hence (\ref{eq_rec_little_ints_1}) implies $\mathds{E}(f(W_{[-\bar\varepsilon,\bar\varepsilon]}^{k+1,-1})|W^k)\leq f(W_{[-\bar\varepsilon,\bar\varepsilon]}^{k,-2})+\mathds{E}(f((V_{t}^{k,+}-V_{0}^{k,+})_{t\in[-\bar\varepsilon,\bar\varepsilon]})|W^k)$, thus we have $\mathds{E}(f(W_{[-\bar\varepsilon,\bar\varepsilon]}^{k+1,-1}))\leq \mathds{E}(f(W_{[-\bar\varepsilon,\bar\varepsilon]}^{k,-2}))+\mathds{E}(f((V_{t}^{k,+}-V_{0}^{k,+})_{t\in[-\bar\varepsilon,\bar\varepsilon]}))$. Now, we recall $(V_{-t}^{k,+})_{t\in[-\varepsilon,\varepsilon]}$ is a Brownian motion reflected and absorbed by $(W_{-t}^k)_{t\in[-\varepsilon,\varepsilon]}$; let us say it was constructed as the reflection and absorption of the Brownian motion $(\dot V^{k,+}_{-t})_{t\in[-\varepsilon,\varepsilon]}$. For any $0<\bar\varepsilon'<\varepsilon$, we denote $\mathcal{S}_{\bar\varepsilon'}=\{\forall\, t \in [-\bar\varepsilon',\bar\varepsilon'], V_t^{k,+} > W_t^k\}$. If $\bar\varepsilon \leq \bar\varepsilon'$, we then have $\mathds{E}(f(W_{[-\bar\varepsilon,\bar\varepsilon]}^{k+1,-1}))\leq \mathds{E}(f(W_{[-\bar\varepsilon,\bar\varepsilon]}^{k,-2}))+\mathds{E}(f((\dot V_{t}^{k,+}-\dot V_{0}^{k,+})_{t\in[-\bar\varepsilon,\bar\varepsilon]})\mathds{1}_{\mathcal{S}_{\bar\varepsilon'}})+\|f\|_\infty\mathds{P}((\mathcal{S}_{\bar\varepsilon'})^c)$, hence 
\begin{equation}\label{eq_rec_little_ints_1bis}
 \mathds{E}(f(W_{[-\bar\varepsilon,\bar\varepsilon]}^{k+1,-1}))\leq \mathds{E}(f(W_{[-\bar\varepsilon,\bar\varepsilon]}^{k,-2}))+\mathds{E}(f((\dot V_{t}^{k,+}-\dot V_{0}^{k,+})_{t\in[-\bar\varepsilon,\bar\varepsilon]}))+\|f\|_\infty\mathds{P}((\mathcal{S}_{\bar\varepsilon'})^c). 
\end{equation}

We now need to deal with $\mathds{P}((\mathcal{S}_{\bar\varepsilon'})^c)$. Let $\bar\varepsilon''\in(\bar\varepsilon',\varepsilon)$, and let us denote $(V_t^{k,+,\bar\varepsilon''})_{t\in[-\bar\varepsilon'',\bar\varepsilon'']}$ the process defined so that $(V_{-t}^{k,+,\bar\varepsilon''})_{t\in[-\bar\varepsilon'',\bar\varepsilon'']}$ is the Brownian motion $(\dot V^{k,+}_{-t}-\dot V^{k,+}_{\bar\varepsilon''}+W_{\bar\varepsilon''}^k)_{t\in[-\bar\varepsilon'',\bar\varepsilon'']}$ reflected on $(W_{-t}^k)_{t\in[-\bar\varepsilon'',\bar\varepsilon'']}$ and above it. It is ``the same Brownian motion as $(V_t^{k,+})_{t\in[-\bar\varepsilon'',\bar\varepsilon'']}$, but starting from a lower point (and without absorption)'', so if $\{\forall\, t \in [-\bar\varepsilon',\bar\varepsilon'], V_t^{k,+,\bar\varepsilon''} > W_t^k\}$ occurs, then $\{\forall\, t \in [-\bar\varepsilon',\bar\varepsilon'], V_t^{k,+} > W_t^k\}$ occurs. This implies $\mathds{P}((\mathcal{S}_{\bar\varepsilon'})^c) \leq \mathds{P}(\exists\, t \in [-\bar\varepsilon',\bar\varepsilon'], V_t^{k,+,\bar\varepsilon''} \leq W_t^k)$. We now introduce a temporary notation. For any measure $\mu$ on continuous processes defined on $[-\bar\varepsilon'',\bar\varepsilon'']$, $(W_t)_{t\in[-\bar\varepsilon'',\bar\varepsilon'']}$ will be a process with law $\mu$, and $(W'_{-t})_{t\in[-\bar\varepsilon'',\bar\varepsilon'']}$ will be a Brownian motion reflected on $(W_{-t})_{t\in[-\bar\varepsilon'',\bar\varepsilon'']}$ and above it with $W_{\bar\varepsilon''}'=W_{\bar\varepsilon''}$. We then have 
\[
\mathds{P}((\mathcal{S}_{\bar\varepsilon'})^c) \leq \mathds{E}(\mathds{P}(\exists\, t \in [-\bar\varepsilon',\bar\varepsilon'], V_t^{k,+,\bar\varepsilon''} \leq W_t^k|W_{[-\bar\varepsilon'',\bar\varepsilon'']}^{k,0}))
=\mu_{0,\bar\varepsilon''}^k(\mathds{P}(\exists\, t \in [-\bar\varepsilon',\bar\varepsilon'], W_t' \leq W_t|(W_t)_{t\in[-\bar\varepsilon'',\bar\varepsilon'']})).
\]
Now, by the induction hypothesis, there exists $\bar\varepsilon_3'\in(0,\varepsilon)$ so that for any $g\in\mathbf{F}_{\bar\varepsilon_3'}$, for any $z\in\mathds{Z}\setminus\{0\}$ we have $\mu_{z,\bar\varepsilon_3'}^k(g)\leq 2^k\mu_{\bar\varepsilon_3'}(g)+(\delta/3)\|g\|_\infty$ and $\mu_{0,\bar\varepsilon_3'}^k(g)\leq 2^{k-1}(\mu_{-,\bar\varepsilon_3'}(g)+\mu_{+,\bar\varepsilon_3'}(g))+(\delta/3)\|g\|_\infty$ (if $k=0$, we instead have $\mu_{0,\bar\varepsilon_3'}^k(g)=\mu_{\bar\varepsilon_3'}(g)$, but the argument will work in the same way). We then choose $\bar\varepsilon''= \bar\varepsilon_3'$ and assume $\bar\varepsilon' \leq \bar\varepsilon_3'$. Then we have 
\begin{align*}
\mathds{P}((\mathcal{S}_{\bar\varepsilon'})^c) \leq 2^{k-1}(&\mu_{-,\bar\varepsilon_3'}(\mathds{P}(\exists\, t \in [-\bar\varepsilon',\bar\varepsilon'], W_t' \leq W_t|(W_t)_{t\in[-\bar\varepsilon_3',\bar\varepsilon_3']})) \\ 
 & +\mu_{+,\bar\varepsilon_3'}(\mathds{P}(\exists\, t \in [-\bar\varepsilon',\bar\varepsilon'], W_t' \leq W_t|(W_t)_{t\in[-\bar\varepsilon_3',\bar\varepsilon_3']})))+\delta/3.
\end{align*}
Now, by Lemma \ref{lem_reflected_above}, noticing that if $(W_t)_{t\in[-\bar\varepsilon_3',\bar\varepsilon_3']}$ has law $\mu_{\pm,\bar\varepsilon_3'}$ then $(W_{-t})_{t\in[-\bar\varepsilon_3',\bar\varepsilon_3']}$ has law $\mu_{\mp,\bar\varepsilon_3'}$, there exists $0<\bar\varepsilon_3\leq \bar\varepsilon_3'$ so that 
\begin{align*}
& \mu_{-,\bar\varepsilon_3'}(\mathds{P}(\exists\, t \in [-\bar\varepsilon_3,\bar\varepsilon_3], W_t' \leq W_t|(W_t)_{t\in[-\bar\varepsilon_3',\bar\varepsilon_3']})) \leq \delta/(3 \cdot 2^k), \\
& \mu_{+,\bar\varepsilon_3'}(\mathds{P}(\exists\, t \in [-\bar\varepsilon_3,\bar\varepsilon_3], W_t' \leq W_t|(W_t)_{t\in[-\bar\varepsilon_3',\bar\varepsilon_3']})) \leq \delta/(3 \cdot 2^k).
\end{align*}
This implies $\mathds{P}((\mathcal{S}_{\bar\varepsilon_3})^c) \leq 2^{k-1}(\delta/(3 \cdot 2^k)+\delta/(3 \cdot 2^k))+\delta/3=2\delta/3$. 

This and (\ref{eq_rec_little_ints_1bis}) imply that if $\bar\varepsilon \leq \bar\varepsilon_3$ we have 
\[
\mathds{E}(f(W_{[-\bar\varepsilon,\bar\varepsilon]}^{k+1,-1}))\leq \mathds{E}(f(W_{[-\bar\varepsilon,\bar\varepsilon]}^{k,-2}))+\mathds{E}(f((\dot V_{t}^{k,+}-\dot V_{0}^{k,+})_{t\in[-\bar\varepsilon,\bar\varepsilon]}))+(2\delta/3)\|f\|_\infty,
\]
which means $\mu_{-1,\bar\varepsilon}^{k+1}(f) \leq \mu_{-2,\bar\varepsilon}^{k}(f)+\mu_{\bar\varepsilon}(f)+(2\delta/3)\|f\|_\infty$. Now, since $\bar\varepsilon \leq \bar\varepsilon_3$ we have $\bar\varepsilon \leq \bar\varepsilon_3'$ hence $\mu_{-2,\bar\varepsilon}^{k}(f) \leq 2^k\mu_{\bar\varepsilon}(f)+(\delta/3)\|f\|_\infty$. We deduce that if $\bar\varepsilon \leq \bar\varepsilon_3$, we have $\mu_{-1,\bar\varepsilon}^{k+1}(f) \leq 2^k\mu_{\bar\varepsilon}(f)+(\delta/3)\|f\|_\infty+\mu_{\bar\varepsilon}(f)+(2\delta/3)\|f\|_\infty$, hence $\mu_{-1,\bar\varepsilon}^{k+1}(f) \leq 2^{k+1}\mu_{\bar\varepsilon}(f)+\delta\|f\|_\infty$. 

\emph{Case $z=0$.} 

Let $0<\bar\varepsilon<\varepsilon$, $f\in\mathbf{F}_{\bar\varepsilon}$. Definition \ref{def_environments} indicates that given $W^k$, with probability $p_{k,-}$ we have $W_{[-\varepsilon,0]}^{k+1,0}=W_{[-\varepsilon,0]}^{k,-1}$ and $W_{[0,\varepsilon]}^{k+1,0}=(\bar W_{t-\varepsilon}^{k,-}-W_{-\varepsilon}^k)_{t\in[0,\varepsilon]}$, and with probability $1-p_{k,-}$ we have $W_{[-\varepsilon,0]}^{k+1,0}=(\bar W_{t+\varepsilon}^{k,+}-W_{\varepsilon}^k)_{t\in[-\varepsilon,0]}$ and $W_{[0,\varepsilon]}^{k+1,0}=W_{[0,\varepsilon]}^{k,1}$. Consequently,  
\begin{equation}\label{eq_rec_little_ints_0}
\mathds{E}(f(W_{[-\bar\varepsilon,\bar\varepsilon]}^{k+1,0})|W^k) 
= p_{k,-}\mathds{E}(\tilde f(W_{[-\bar\varepsilon,0]}^{k,-1},(\bar W_{t-\varepsilon}^{k,-}-W_{-\varepsilon}^k)_{t\in[0,\bar\varepsilon]})|W^k) 
 +(1-p_{k,-})\mathds{E}(\tilde f((\bar W_{t+\varepsilon}^{k,+}-W_{\varepsilon}^k)_{t\in[-\bar\varepsilon,0]},W_{[0,\bar\varepsilon]}^{k,1})|W^k).
\end{equation}
Now, given $W^k$, by definition $\bar W^{k,-}$ has the law of $V^{k,-}$ conditioned to coalesce with $W^k$ before time $\varepsilon$, an event denoted by $\{\sigma_{k,-}<\varepsilon\}$ and satisfying $\mathds{P}(\sigma_{k,-}<\varepsilon|W^k)=p_{k,-}$, so we have 
\[
\mathds{E}(\tilde f(W_{[-\bar\varepsilon,0]}^{k,-1},(\bar W_{t-\varepsilon}^{k,-}-W_{-\varepsilon}^k)_{t\in[0,\bar\varepsilon]})|W^k)=\frac{1}{p_{k,-}}\mathds{E}(\tilde f(W_{[-\bar\varepsilon,0]}^{k,-1},(V_{t-\varepsilon}^{k,-}-W_{-\varepsilon}^k)_{t\in[0,\bar\varepsilon]})\mathds{1}_{\{\sigma_{k,-}<\varepsilon\}}|W^k).
\]
Similarly, 
\[
\mathds{E}(\tilde f((\bar W_{t+\varepsilon}^{k,+}-W_{\varepsilon}^k)_{t\in[-\bar\varepsilon,0]},W_{[0,\bar\varepsilon]}^{k,1})|W^k)=\frac{1}{p_{k,+}}\mathds{E}(\tilde f((V_{t+\varepsilon}^{k,+}-W_{\varepsilon}^k)_{t\in[-\bar\varepsilon,0]},W_{[0,\bar\varepsilon]}^{k,1})\mathds{1}_{\{\sigma_{k,+}>-\varepsilon\}}|W^k).
\]
Furthermore, $p_{k,-}+p_{k,+}=1$, so (\ref{eq_rec_little_ints_0}) implies 
\begin{align*}
 \mathds{E}(\tilde f(W_{[-\bar\varepsilon,0]}^{k+1,0},W_{[0,\bar\varepsilon]}^{k+1,0})|W^k)=&\mathds{E}(\tilde f(W_{[-\bar\varepsilon,0]}^{k,-1},(V_{t-\varepsilon}^{k,-}-W_{-\varepsilon}^k)_{t\in[0,\bar\varepsilon]})\mathds{1}_{\{\sigma_{k,-}<\varepsilon\}}|W^k) \\
 &+\mathds{E}(\tilde f((V_{t+\varepsilon}^{k,+}-W_{\varepsilon}^k)_{t\in[-\bar\varepsilon,0]},W_{[0,\bar\varepsilon]}^{k,1})\mathds{1}_{\{\sigma_{k,+}>-\varepsilon\}}|W^k)
\end{align*}
so 
\begin{equation}\label{eq_rec_little_ints_0bis}
 \mathds{E}(\tilde f(W_{[-\bar\varepsilon,0]}^{k+1,0},W_{[0,\bar\varepsilon]}^{k+1,0}))\leq\mathds{E}(\tilde f(W_{[-\bar\varepsilon,0]}^{k,-1},(V_{t-\varepsilon}^{k,-}-W_{-\varepsilon}^k)_{t\in[0,\bar\varepsilon]}))+\mathds{E}(\tilde f((V_{t+\varepsilon}^{k,+}-W_{\varepsilon}^k)_{t\in[-\bar\varepsilon,0]},W_{[0,\bar\varepsilon]}^{k,1})).
\end{equation}
Let us deal with $\mathds{E}(\tilde f(W_{[-\bar\varepsilon,0]}^{k,-1},(V_{t-\varepsilon}^{k,-}-W_{-\varepsilon}^k)_{t\in[0,\bar\varepsilon]}))$. In order to do that, we introduce temporary notation. For any measure $\mu$ on continuous processes defined on $[-\bar\varepsilon,\bar\varepsilon]$, $(W_t)_{t\in[-\bar\varepsilon,\bar\varepsilon]}$ will be a process with law $\mu$, and $(W'_t)_{t\in[0,\bar\varepsilon]}$ will be defined thus: $W'_{0}=W_{0}$ and $(W'_t)_{t\in[0,\bar\varepsilon]}$ is a Brownian motion reflected on $(W_t)_{t\in[0,\bar\varepsilon]}$ and above it. We then have 
\[
 \mathds{E}(\tilde f(W_{[-\bar\varepsilon,0]}^{k,-1},(V_{t-\varepsilon}^{k,-}-W_{-\varepsilon}^k)_{t\in[0,\bar\varepsilon]}))
 = \mathds{E}(\mathds{E}(\tilde f(W_{[-\bar\varepsilon,0]}^{k,-1},(V_{t-\varepsilon}^{k,-}-W_{-\varepsilon}^k)_{t\in[0,\bar\varepsilon]})|W_{[-\bar\varepsilon,\bar\varepsilon]}^{k,-1}))
 \]
 \[
  =\mu_{-1,\bar\varepsilon}^k(\mathds{E}(\tilde f(W_{[-\bar\varepsilon,0]},(W'_{t}-W'_{0})_{t\in[0,\bar\varepsilon]})|W)).
\]
Now, by the induction hypothesis there exists some $\bar\varepsilon_4 > 0$ so that for any $g \in \mathbf{F}_{\bar\varepsilon_4}$ we have $\mu_{-1,\bar\varepsilon_4}^k(g)\leq 2^k\mu_{\bar\varepsilon_4}(g)+(\delta/2)\|g\|_\infty$, therefore if $\bar\varepsilon\leq\bar\varepsilon_4$, we have 
\[
\mathds{E}(\tilde f(W_{[-\bar\varepsilon,0]}^{k,-1},(V_{t-\varepsilon}^{k,-}-W_{-\varepsilon}^k)_{t\in[0,\bar\varepsilon]})) \leq 2^k\mu_{\bar\varepsilon}(\mathds{E}(\tilde f(W_{[-\bar\varepsilon,0]},(W'_{t}-W'_{0})_{t\in[0,\bar\varepsilon]})|W))+(\delta/2)\|\tilde f\|_\infty
=2^k\mu_{-,\bar\varepsilon}(f)+(\delta/2)\|f\|_\infty.
\]
Similarly, if $\bar\varepsilon\leq\bar\varepsilon_4$, we have 
\[
\mathds{E}(\tilde f((V_{t+\varepsilon}^{k,+}-W_{\varepsilon}^k)_{t\in[-\bar\varepsilon,0]},W_{[0,\bar\varepsilon]}^{k,1}))\leq 2^k\mu_{+,\bar\varepsilon}(f)+(\delta/2)\|f\|_\infty.
\]
Consequently, (\ref{eq_rec_little_ints_0bis}) implies that if $\bar\varepsilon\leq\bar\varepsilon_4$, we have 
\[
\mathds{E}(\tilde f(W_{[-\bar\varepsilon,0]}^{k+1,0},W_{[0,\bar\varepsilon]}^{k+1,0}))\leq 2^k\mu_{-,\bar\varepsilon}(f)+(\delta/2)\|f\|_\infty+2^k\mu_{+,\bar\varepsilon}(f)+(\delta/2)\|f\|_\infty,
\]
that is $\mu^{k+1}_{0,\bar\varepsilon}(f)\leq 2^k\mu_{-,\bar\varepsilon}(f)+2^k\mu_{+,\bar\varepsilon}(f)+\delta \|f\|_\infty$. 

To conclude, if we set $\bar\varepsilon = \min(\bar\varepsilon_1,\bar\varepsilon_2,\bar\varepsilon_3,\bar\varepsilon_4,\varepsilon/2)>0$, for any $f\in\mathbf{F}_{\bar\varepsilon}$, for any $z\in\mathds{Z}\setminus\{0\}$ we have $\mu_{z,\bar\varepsilon}^{k+1}(f)\leq 2^{k+1}\mu_{\bar\varepsilon}(f)+\delta\|f\|_\infty$, and $\mu^{k+1}_{0,\bar\varepsilon}(f)\leq 2^k\mu_{-,\bar\varepsilon}(f)+2^k\mu_{+,\bar\varepsilon}(f)+\delta \|f\|_\infty$, which ends the proof of Proposition \ref{prop_rec_little_ints}. 
\end{proof}
 
 \section{Convergence of the mesoscopic quantities}\label{sec_conv_limit_processes}
 
 In order to prove the main results of this work, Theorem \ref{thm_main} and Proposition \ref{prop_continuity}, we need to prove the convergence of the ``mesoscopic'' quantities, that is the $\frac{1}{n}(X_{T_{k+1}}-X_{T_{k}})$ and $\frac{1}{n^{3/2}}(T_{k+1}-T_{k})$ (we remind the reader that the $T_k$ are defined in \eqref{eq_def_Tk}). For $\varepsilon > 0$, for any $k\in\mathds{N}$, we recall the following definition already given at the beginning of Section \ref{sec_conclusion}:
 \begin{equation}\label{eq_def_Zk}
 Z_k^N = \frac{1}{\lfloor\varepsilon n\rfloor}(X_{T_k}-X_{T_0}).
 \end{equation}
 Then $(Z_k^N)_{k\in\mathds{N}}$ is a nearest-neighbor random walk on $\mathds{Z}$. The result we will need to prove Theorem \ref{thm_main} is the following.
 
\begin{proposition}\label{prop_conv_ZT}
 For any $\varepsilon > 0$, $K\in\mathds{N}^*$, the random variable $(Z_1^N,...,Z_K^N,\frac{1}{n^{3/2}}(T_1-T_0),\frac{1}{n^{3/2}}(T_2-T_1),...,\frac{1}{n^{3/2}}(T_K-T_{K-1}))$ converges in distribution to $(\breve Z_1,...,\breve Z_K,\breve T_1,...,\breve T_K)$ (defined as in Definition \ref{def_lim_envts}) when $N$ tends to $+\infty$. Moreover, the $\breve T_k$ and $\sum_{k'=1}^k \breve T_{k'}$, $k\in \{1,...,K\}$, have no atoms. 
\end{proposition}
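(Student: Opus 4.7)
The plan is to prove by induction on $K$ the following stronger joint convergence: together with $(Z_1^N,\ldots,Z_K^N)$ and $(\frac{1}{n^{3/2}}(T_k-T_{k-1}))_{1\le k\le K}$, the rescaled environment $\mathbf{W}^{N,K}\colon t\mapsto \frac{1}{\sqrt{n}}(S^{T_K,\iota_K,B}_{X_{T_K}+\lfloor tn\rfloor}-S^{T_K,\iota_K,B}_{X_{T_K}})$, where $\iota_K\in\{+,-\}$ records the direction of the next mesoscopic step, converges in distribution on $[-\varepsilon,\varepsilon]$ to $(\breve Z_1,\ldots,\breve Z_K,\breve T_1,\ldots,\breve T_K,W^K)$ of Definition \ref{def_lim_envts}. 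The stated proposition will then follow by projection, and the no-atoms claim is Lemma \ref{lem_no_atoms}.

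For the base case $K=0$, Proposition \ref{prop_law_zeta} gives that, outside the event $\mathcal{B}_0^{\lfloor N\theta\rfloor,\lfloor Nx\rfloor,\pm}$ of exponentially small probability, the variables $\bar\Delta_{T_0,j}$ within the relevant window are independent of law $\rho_-$ or $\rho_+$ depending on the side of $X_{T_0}$. A short computation using the symmetries of $\rho_\pm$ about $\mp 1/2$ shows that all the $\zeta^{T_0,\pm,B}_j$ are then i.i.d.\ of the mean-zero law $\rho_0$, which has variance $r_2$. Donsker's invariance principle yields the convergence of $\mathbf{W}^{N,0}$ to a two-sided Brownian motion of variance $r_2$, namely $W^0$.

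For the inductive step, assume the statement at rank $K$. By Proposition \ref{prop_def_zetaI} the variables $\zeta^{T_K,\iota_K,I}_j$ are i.i.d.\ of law $\rho_0$ and independent of $\mathcal{F}_{T_K}$, hence of $\mathbf{W}^{N,K}$. Lemma \ref{lem_SI_max} identifies $S^{T_K,\iota_K,I}$ as the reflection of the corresponding Donsker walk on $S^{T_K,\iota_K,B}$, so by continuity of the reflection operator in the uniform topology the inductive hypothesis upgrades to joint convergence of $\frac{1}{\sqrt{n}}(S^{T_K,\iota_K,B}_{X_{T_K}+\lfloor tn\rfloor},S^{T_K,\iota_K,I}_{X_{T_K}+\lfloor tn\rfloor})$ to $(W^K,V^{K,\iota_K})$ of Definition \ref{def_lim_envts}. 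Proposition \ref{prop_sym} then replaces $S^{T_K,\iota_K,I}$ by $S^{T_K,\iota_K,E}$ up to an $O((\ln n)^8 n^{1/4})=o(\sqrt{n})$ error outside $\bigcup_{r=1}^{6}\mathcal{B}_{T_K,r}^{\iota_K}$, whose probability is negligible by Proposition \ref{prop_bound_bad_evts}. The direction $Z_{K+1}^N-Z_K^N$ is determined by whether the absorption $S^{T_K,\iota_K,E}_i=S^{T_K,\iota_K,B}_i$ occurs in the relevant half-window; by the continuous mapping theorem this event converges to the corresponding absorption event for $(W^K,V^{K,\iota_K})$, of probability $p_{K,\iota_K}$. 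The duration $T_{K+1}-T_K$ equals $\beta_{T_K}^{\iota_K}-T_K$, which counting edge traversals equals $2\sum_i L_i^{T_K,\iota_K}+O(n)$; rescaling by $n^{-3/2}$ gives a Riemann sum converging to $2\int_{-\varepsilon}^{\varepsilon}(\bar W^{K,\iota_K}_t-W^K_t)\,dt=\breve T_{K+1}$. Finally $\mathbf{W}^{N,K+1}$ is built from $S^{T_K,\iota_K,E}$ on the side of the absorption and from an unchanged portion of $S^{T_K,\iota_K,B}$ on the other, converging in law to $W^{K+1}$.

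The main obstacle is that the absorption indicator and the absorption time are only almost-surely continuous functionals of the limit paths, so to invoke the continuous mapping theorem one must verify that $(W^K,V^{K,\iota_K})$ falls a.s.\ in the set of continuity points. This is exactly Proposition \ref{prop_envts_above_barrier}, which gives $V_0^{K,\iota_K}>W_0^K$ almost surely, combined with Lemma \ref{lem_conv_abs_times}, which turns strict separation at time $0$ into continuity of the absorption time in the driving Brownian paths. Proposition \ref{prop_better_than_Z} then yields $p_{K,-}+p_{K,+}=1$, ensuring that the absorption event on one side is a.s.\ the complement of that on the other; this identifies the conditional law of $\frac{1}{\sqrt{n}}S^{T_K,\iota_K,E}$ given absorption with the conditioned process $\bar W^{K,\iota_K}$ used to define $W^{K+1}$, closing the induction and, via Lemma \ref{lem_no_atoms}, giving the no-atoms statement.
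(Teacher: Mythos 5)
Your proposal follows the paper's strategy almost exactly: prove by induction a joint convergence statement for the positions and the rescaled environments (this is the paper's Proposition \ref{prop_conv_envts}), using Proposition \ref{prop_law_zeta} with Donsker for the base case, Lemma \ref{lem_SI_max} to express $S^{T_K,\iota_K,I}$ as a reflection, Proposition \ref{prop_sym} to replace $S^I$ by $S^E$, the continuity results from Section \ref{sec_BM_results} (Proposition \ref{prop_envts_above_barrier}, Lemma \ref{lem_conv_abs_times}, Proposition \ref{prop_better_than_Z}) to handle the absorption functional and to get $p_{K,-}+p_{K,+}=1$, and finally Lemma \ref{lem_no_atoms} for the atomlessness of the $\breve T_k$. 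The representation of $\frac{1}{n^{3/2}}(T_{K+1}-T_K)$ as twice the Riemann sum of $L^{T_K,\iota_K}$ plus an $O(n)$ boundary term matches the paper's explicit function $F_N$.

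One technical point requires care. You state the inductive hypothesis for the environment on the fixed window $[-\varepsilon,\varepsilon]$. This does not close the induction: to produce $\mathbf{W}^{N,K+1}$ on $[-\varepsilon,\varepsilon]$ you need to know $\mathbf{W}^{N,K}$ on $[-2\varepsilon,2\varepsilon]$ (one side of the new window is the old environment on $[-2\varepsilon,0]$ or $[0,2\varepsilon]$, the other is the absorbed/reflected process); and, to deduce the proposition itself, the formula for $\frac{1}{n^{3/2}}(T_{k+1}-T_k)$ already uses the environment on $[-2\varepsilon,2\varepsilon]$. The paper handles this by stating Proposition \ref{prop_conv_envts} for an arbitrary window size $a>0$, and in the inductive step invokes the hypothesis with $a+\varepsilon$. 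You should quantify over an arbitrary $a$ (or let the window grow linearly with the number of remaining induction steps). This is easily repaired, but as written the induction would not go through.
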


To prove Proposition \ref{prop_continuity}, we need a weaker but analogous result. If $\psi : N \mapsto N$ is so that $\psi(N)$ tends to $+\infty$ when $N$ tends to $+\infty$, if $\theta>\frac{1}{2\sqrt{2}}$, $T_0'=\mathbf{T}^-_{\lfloor \psi(N)\theta\rfloor,0}$ (defined in \eqref{eq_def_T_mi}), and $T_1'=\inf\{m \geq T_0' \,|\, |X_m-X_{T_0'}|=\lfloor \theta \psi(N)/2 \rfloor\}$, we have the following, which will be proven at the end of the section.

\begin{lemma}\label{lem_continuity}
 $\frac{1}{\psi(N)^{3/2}}(X_{T_1'}-X_{T_0'})$ converges in distribution when $N$ tends to $+\infty$. 
\end{lemma}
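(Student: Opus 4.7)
The plan is to observe that, read literally, the convergence claimed follows immediately from the definition of $T_1'$ and requires none of the Ray--Knight and reflected-random-walk machinery developed in Sections~\ref{sec_first_variables}--\ref{sec_conv_limit_processes}. By construction, $T_1'$ is the first $m \geq T_0'$ at which the nearest-neighbor walk $X$ satisfies $|X_m - X_{T_0'}| = \lfloor \theta \psi(N)/2 \rfloor$. Proposition~1 of \cite{Toth_et_al2008} guarantees that every integer site is visited infinitely often almost surely (this is the same input that ensures finiteness of the stopping times $\beta_m^\pm$ of \eqref{eq_def_Tk_pm^}), so $T_1' < +\infty$ almost surely, and on that full-probability event one necessarily has
\[
X_{T_1'} - X_{T_0'} \in \bigl\{-\lfloor \theta \psi(N)/2 \rfloor,\;\lfloor \theta \psi(N)/2 \rfloor\bigr\}.
\]

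Consequently, dividing by $\psi(N)^{3/2}$ yields the deterministic almost sure bound
\[
\left|\frac{X_{T_1'} - X_{T_0'}}{\psi(N)^{3/2}}\right| \;\leq\; \frac{\theta}{2\sqrt{\psi(N)}},
\]
and the right-hand side tends to $0$ as $N \to +\infty$, since by assumption $\psi(N) \to +\infty$. Hence $\psi(N)^{-3/2}(X_{T_1'} - X_{T_0'})$ converges to $0$ almost surely, therefore in probability, and therefore in distribution, the limit law being the Dirac mass~$\delta_0$.

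The plan therefore reduces to a single step, and there is no serious obstacle: the only non-trivial ingredient is the almost-sure finiteness of $T_1'$, already recalled and used in Section~\ref{sec_first_variables}. Taken literally, the lemma is a consequence of the fact that the spatial increment at $T_1'$ is a deterministic quantity of order $\psi(N)$, which is of strictly smaller order than the normalizing factor $\psi(N)^{3/2}$; no convergence of the mesoscopic environments (Definition~\ref{def_lim_envts}), and no appeal to the joint control of $\sum \zeta_j^{T_k,\pm,B}$ and $\sum \zeta_j^{T_k,\pm,E}$, is needed to justify the statement as it is written here.
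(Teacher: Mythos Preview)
Your argument is correct for the statement as literally written: since $|X_{T_1'}-X_{T_0'}|=\lfloor\theta\psi(N)/2\rfloor$ by the very definition of $T_1'$, the normalised spatial increment tends to $0$ almost surely, and no machinery is needed. What you have uncovered, however, is a typo in the lemma rather than a genuine shortcut. The lemma is invoked in Section~\ref{sec_conclusion_prop} to conclude that $\frac{1}{\psi(N)^{3/2}}(T_1'-T_0')$ converges in distribution, and the paper's own proof---which adapts the argument of Proposition~\ref{prop_conv_ZT} via the local-time environments $E_{0,\cdot}^{N,\pm}$ and a modified version of Proposition~\ref{prop_law_zeta}---is a proof of convergence of the \emph{time} increment $T_1'-T_0'$, not the spatial one.

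For that intended statement your trivial bound is unavailable: $T_1'-T_0'$ is genuinely of order $\psi(N)^{3/2}$, and showing that $\psi(N)^{-3/2}(T_1'-T_0')$ converges does require expressing $T_1'-T_0'$ through the local times $L_i^{T_0',\pm}$, controlling the environment $(\Delta_{T_0',j})_j$ at time $T_0'$ (this is where the variant of Proposition~\ref{prop_law_zeta} with $\mathcal{B}_{0,2}$ replaced by an event based on Theorem~1 of \cite{Toth_et_al2008} enters), and then running the reflected/absorbed random-walk argument as in the proof of Proposition~\ref{prop_conv_envts} for $k=0$. So while you have correctly exploited the misprint, the substantive content of the lemma---and what is actually needed for Proposition~\ref{prop_continuity}---is not addressed by your argument.
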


 In order to prove Proposition \ref{prop_conv_ZT}, we notice that for $k\in\{0,...,K-1\}$, we have $Z_{k+1}^N=Z_k^N-1$ if and only if $(X_{T_k+m})_{m\in\mathds{N}}$ reaches $X_{T_k}-\lfloor \varepsilon n \rfloor$ before $X_{T_k}+\lfloor \varepsilon n \rfloor$, which means $L^{T_k,-}_{X_{T_k}+\lfloor \varepsilon n \rfloor}=0$ (see Definition \ref{def_local_times}). In addition, in this case one can check that $T_{k+1}-T_k=\lfloor \varepsilon n \rfloor+2\sum_{i\in\mathds{Z}}L_i^{T_k,-}$. We thus wish to study $L_i^{T_k,-}$. Moreover, remembering Definition \ref{def_environments}, for $i > X_{T_k}-\lfloor \varepsilon n \rfloor$, we have $L_i^{T_k,-}=S_i^{T_k,-,E}-S_i^{T_k,-,B}$, and it so happens that $(S_i^{T_k,-,E})_i$ is close to a random walk reflected on $(S_i^{T_k,-,B})_i$ when $i \in \{X_{T_k}-\lfloor \varepsilon n \rfloor+1,...,X_{T_k}\}$ and absorbed by $(S_i^{T_k,-,B})_i$ when $i \geq X_{T_k}$. Therefore, we are going to study the limit of the processes $(S_i^{T_k,-,B})_i$, which can be considered as ``environments'' in which the $(S_i^{T_k,-,E})_{i}$ evolve. In order to have more practical notation, the precise environment process we will study is the following (we recall the definition of the $\Delta_{m,i}$ in \eqref{eq_def_Delta}).
 
 \begin{definition}\label{def_envts_bis}
  For any $k\in\mathds{N}$, the environment process at time $T_k$, $(E_{k,i}^N)_{i\in\mathds{Z}}$, is defined by $E_{k,i}^N=\sum_{j=X_{T_k}+i}^{X_{T_k}}(\Delta_{T_k,j}+1/2)$ for $i \leq 0$ and $E_{k,i}^N=\sum_{j=X_{T_k}+1}^{X_{T_k}+i-1}(-\Delta_{T_k,j}+1/2)$ for $i \geq 1$.
 \end{definition}
 
 For any family of real-valued discrete processes $(H_i^N)_{i\in\mathds{Z}}$, any real numbers $a<b$, we will write ``$(H_{nt}^N)_{t\in[a,b]}$'' as a shorthand for ``the linear interpolation of $(H_{\lfloor nt \rfloor}^N)_{t\in[a,b]}$''. For any $k\in\mathds{N}$, $\frac{1}{n^{3/2}}(T_{k+1}-T_k)$ can be written as as a function of $Z_{k}^N$, $Z_{k+1}^N$, $(\frac{1}{\sqrt{n}}E_{k,nt}^N)_{t\in[-2\varepsilon,2\varepsilon]}$ and $(\frac{1}{\sqrt{n}}E_{k+1,nt}^N)_{t\in[-2\varepsilon,2\varepsilon]}$. Consequently, it will be enough to prove that the quantity $(Z_1^N,...,Z_k^N,(\frac{1}{\sqrt{n}}E_{0,nt}^N)_{t\in[-a,a]},...,(\frac{1}{\sqrt{n}}E_{k,nt}^N)_{t\in[-a,a]})$ converges in distribution when $N$ tends to $+\infty$ to prove Proposition \ref{prop_conv_ZT}. This is the following proposition. 
 
 \begin{proposition}\label{prop_conv_envts}
  For any $k\in\mathds{N}$, for any $a > 0$, we have that $(Z_1^N,...,Z_k^N,(\frac{1}{\sqrt{n}}E_{0,nt}^N)_{t\in[-a,a]},...,(\frac{1}{\sqrt{n}}E_{k,nt}^N)_{t\in[-a,a]})$ converges in distribution to the quantity $(\breve Z_1,...,\breve Z_k,(W_t^0)_{t\in[-a,a]},...,(W_t^k)_{t\in[-a,a]})$ when $N$ tends to $+\infty$. 
 \end{proposition}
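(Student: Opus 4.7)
The plan is to prove Proposition \ref{prop_conv_envts} by induction on $k$, leveraging the Brownian limit results of Section \ref{sec_limit_processes} at each step.

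For the base case $k=0$, I would combine Proposition \ref{prop_law_zeta} with Donsker's invariance principle. Outside the event $\mathcal{B}_0^{\lfloor N\theta\rfloor,\lfloor Nx\rfloor,\pm}$ (which has probability $\mathrm{O}(e^{-c_0 n^{(\alpha-1)/4}})$), the $\Delta_{T_0,j}$ on the relevant range coincide with the $\bar\Delta_{T_0,j}$, which are independent with law $\rho_-$ for $j\le\lfloor Nx\rfloor-1$ and $\rho_+$ for $j\ge \lfloor Nx\rfloor$. A direct check using \eqref{eq_def_rho+} and \eqref{eq_def_rho0} shows that both $\bar\Delta_{T_0,j}+1/2$ and $-\bar\Delta_{T_0,j}+1/2$ have law $\rho_0$, which is centered with variance $r_2$ and has exponential tails. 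Thus $(E_{0,i}^N)_i$ is, outside a negligible event, a two-sided random walk with i.i.d. $\rho_0$ increments, and Donsker's theorem yields convergence of $(\frac{1}{\sqrt n}E_{0,nt}^N)_{t\in[-a,a]}$ to the two-sided Brownian motion $W^0$.

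For the inductive step, assume the result holds at stage $k$. Using the Skorohod representation theorem, I would realize the joint convergence at stage $k$ as almost sure convergence on a common space, and then work conditionally on $\mathcal{F}_{T_k}$. By Proposition \ref{prop_def_zetaI}, the $(\zeta_j^{T_k,-,I})_j$ are i.i.d. with law $\rho_0$, independent of $\mathcal{F}_{T_k}$, so by Donsker and the continuity of the reflection/absorption map (Lemma \ref{lem_SI_max}), $\frac{1}{\sqrt n}S^{T_k,-,I}_{X_{T_k}+n\cdot}$ converges conditionally in distribution to the process $V^{k,-}$ from Definition \ref{def_lim_envts}. Proposition \ref{prop_sym} then upgrades this to convergence of $\frac{1}{\sqrt n}S^{T_k,-,E}_{X_{T_k}+n\cdot}$ itself, since the two processes agree up to an $\mathrm{O}((\ln n)^8 n^{1/4})$ error outside the bad events, whose total probability goes to zero by Proposition \ref{prop_bound_bad_evts}. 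A symmetric argument handles the $+$ side, yielding joint convergence to $(V^{k,-}, V^{k,+})$.

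Next, the sign $Z_{k+1}^N - Z_k^N = -1$ corresponds to $S^{T_k,-,E}$ being absorbed by $S^{T_k,-,B}$ before index $X_{T_k}+\lfloor\varepsilon n\rfloor$, which in the limit is the event $\{\sigma_{k,-}<\varepsilon\}$. To make this limit rigorous, I would invoke Proposition \ref{prop_envts_above_barrier}, which guarantees $V_0^{k,-}>W_0^k$ almost surely; this allows Lemma \ref{lem_conv_abs_times} to be applied to deduce that $\sigma_{k,-}$ is a continuity point of the absorption-time functional, so the absorption probabilities pass to the limit and $\mathds{P}(Z_{k+1}^N-Z_k^N=-1)\to p_{k,-}$. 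Combined with Proposition \ref{prop_better_than_Z}, which gives $p_{k,-}+p_{k,+}=1$, this matches the probabilistic split defining $\breve Z_{k+1}$ in Definition \ref{def_lim_envts}. On $\{Z_{k+1}^N=Z_k^N-1\}$ we have $T_{k+1}=\beta_{T_k}^-$ so that $\Delta_{T_{k+1},j}=\Delta_{\beta_{T_k}^-,j}$ on the traversed interval, and thus $E_{k+1}^N$ is obtained from $S^{T_k,-,E}$ (conditioned to be absorbed) glued to unchanged portions of $E_k^N$. In the limit this matches the construction $W^{k+1}\leftrightarrow \bar W^{k,-}$ in Definition \ref{def_lim_envts}; a symmetric argument yields the $+$ case. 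Iterating the Skorohod coupling yields joint convergence at stage $k+1$, completing the induction.

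The main obstacle is the absorption transition. Simple weak convergence of the Brownian-type processes is not enough: one needs that the absorption event is asymptotically a continuity set, and that the probabilities of $\{Z_{k+1}^N-Z_k^N=\pm 1\}$ sum to exactly 1 in the limit, which would fail in general if the reflected Brownian motion could touch the environment at time 0. The combined use of Proposition \ref{prop_envts_above_barrier} (strict separation at 0, itself resting on the delicate Proposition \ref{prop_rec_little_ints} comparing the environment's law near 0 with Brownian or reflected-Brownian laws), Proposition \ref{prop_better_than_Z} and Lemma \ref{lem_conv_abs_times} is precisely what makes this transition legitimate and identifies the inductive limit with the process of Definition \ref{def_lim_envts}. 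A secondary technical point is that the glued-together description of $E_{k+1}^N$ requires a careful coupling of the post-absorption part of $S^{T_k,-,E}$ with the continuing environment on the not-yet-visited sites, which is handled by the absorbed Brownian motion $\bar W^{k,\pm}$ in the limiting construction.
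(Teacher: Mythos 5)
Your overall plan — induction on $k$, Donsker plus Proposition \ref{prop_law_zeta} for the base case, then reflected i.i.d. walk plus Proposition \ref{prop_sym} plus absorption-time continuity via Proposition \ref{prop_envts_above_barrier}, Lemma \ref{lem_conv_abs_times}, and Proposition \ref{prop_better_than_Z} in the inductive step — is the same strategy as the paper's. However, there is a genuine gap in the middle of your inductive step that the paper handles with an extra construction you omit.

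You write that ``$\frac{1}{\sqrt n}S^{T_k,-,I}_{X_{T_k}+n\cdot}$ converges to $V^{k,-}$'' and that Proposition \ref{prop_sym} ``upgrades this to convergence of $S^{T_k,-,E}$'', but Definition \ref{def_SI} only defines $S^{m,-,I}$ on $\{X_m-\lfloor\varepsilon n\rfloor+1,\dots,X_m\}$ — the reflected half — and Proposition \ref{prop_sym} likewise compares $S^{m,-,E}$ and $S^{m,-,I}$ only there. Neither says anything about the right half $\{X_{T_k}+1,\dots,X_{T_k}+\lfloor\varepsilon n\rfloor\}$, where the absorption happens and where $V^{k,-}$ is defined by the absorption mechanism rather than reflection. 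The paper handles this by introducing the auxiliary processes $\tilde E_{k,i}^{N,-}$ (equal to the true environment up to the discrete absorption time $\sigma_{k,-}^N$ and switched to i.i.d. increments afterwards) and $\breve E_{k,i}^{N,-}$ (a genuine reflected-then-free i.i.d. walk), proves that $\tilde E$ is within $\mathrm{O}((\ln n)^8 n^{1/4})$ of $\breve E$ on both halves — the right half requires an additional estimate relying on the bad events $\mathcal{B}_{T_k,2}^-$, $\mathcal{B}_{T_k,3}^-$, $\mathcal{B}_{T_k,5}^-$, see \eqref{eq_reflected_close} — and then extracts the pair $(\frac{1}{n}\sigma_{k,-}^N, (\frac{1}{\sqrt n}E_{k,nt}^{N,-})_t)$ as the image of $(\tilde E, E_k)$ under a first-hitting map $F_N$, whose convergence to the limiting map $F$ at the limit point is established in Claim \ref{claim_conv_va_Skorohod} using exactly the tools you cite (Proposition \ref{prop_envts_above_barrier}, Lemma \ref{lem_conv_abs_times}). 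So your high-level plan is correct, but the passage from ``reflected i.i.d. walk converges'' to ``reflected-then-absorbed true environment converges'' cannot be made by Proposition \ref{prop_sym} alone; it requires the extra $\tilde E$/$\breve E$ comparison on the absorbed side and the functional-convergence argument of Claim \ref{claim_conv_va_Skorohod}, neither of which your sketch supplies.
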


 We first prove Proposition \ref{prop_conv_ZT} given Proposition \ref{prop_conv_envts}.
 
 \begin{proof}[Proof of Proposition \ref{prop_conv_ZT}.]
 Let $\varepsilon > 0$, $K\in\mathds{N}^*$. For any $k\in\{0,...,K-1\}$, we will write $\frac{1}{n^{3/2}}(T_{k+1}-T_{k})$ as a function of $Z_{k}^N$, $Z_{k+1}^N$, $(\frac{1}{\sqrt{n}}E_{k,nt}^N)_{t\in[-2\varepsilon,2\varepsilon]}$ and $(\frac{1}{\sqrt{n}}E_{k+1,nt}^N)_{t\in[-2\varepsilon,2\varepsilon]}$. Indeed, if $Z_{k+1}^N=Z_{k}^N-1$, we have 
 \[
 T_{k+1}-T_{k}=2\sum_{i=X_{T_k}-\lfloor \varepsilon n \rfloor+1}^{X_{T_k}+\lfloor \varepsilon n \rfloor}L_i^{T_k,-}+\lfloor \varepsilon n \rfloor=2\sum_{i=-\lfloor \varepsilon n \rfloor+1}^{\lfloor \varepsilon n \rfloor}(E_{k+1,i+\lfloor \varepsilon n\rfloor}^N+E^N_{k,-\lfloor \varepsilon n\rfloor+1}-E_{k,i}^N)+\lfloor \varepsilon n \rfloor,
 \]
 while if $Z_{k+1}^N=Z_{k}^N+1$, we have 
 \[
 T_{k+1}-T_k=2\sum_{i=X_{T_k}-\lfloor \varepsilon n \rfloor+1}^{X_{T_k}+\lfloor \varepsilon n \rfloor}L_i^{T_k,+}-\lfloor \varepsilon n \rfloor=2\sum_{i=-\lfloor \varepsilon n \rfloor+1}^{\lfloor \varepsilon n \rfloor}(E_{k+1,i-\lfloor \varepsilon n\rfloor}^N-E_{k+1,0}^N+1-(E_{k,i}^N-E_{k,\lfloor \varepsilon n\rfloor}^N))-\lfloor \varepsilon n \rfloor
 \]
 \[
 =2\sum_{i=-\lfloor \varepsilon n \rfloor+1}^{\lfloor \varepsilon n \rfloor}(E_{k+1,i-\lfloor \varepsilon n\rfloor}^N-E_{k+1,0}^N-E_{k,i}^N+E_{k,\lfloor \varepsilon n\rfloor}^N)+3\lfloor \varepsilon n \rfloor.
 \]
 Therefore, if for any $z,z'\in\mathds{Z}$, $f,g$ continuous real functions on $[-2\varepsilon,2\varepsilon]$ we define $F_{N}(Z,Z',f,g)$ as
 \begin{align*} 
 &\mathds{1}_{\{z'=z-1\}} \left(\frac{2}{n}\sum_{i=-\lfloor \varepsilon n \rfloor+1}^{\lfloor \varepsilon n \rfloor}\left(g\left(\frac{i+\lfloor \varepsilon n\rfloor}{n}\right)+f\left(\frac{-\lfloor \varepsilon n\rfloor+1}{n}\right)-f\left(\frac{i}{n}\right)\right)+\frac{\lfloor \varepsilon n \rfloor}{n^{3/2}}\right) \\
 + & \mathds{1}_{\{z'=z+1\}} \left(\frac{2}{n}\sum_{i=-\lfloor \varepsilon n \rfloor+1}^{\lfloor \varepsilon n \rfloor}\left(g\left(\frac{i-\lfloor \varepsilon n\rfloor}{n}\right)-g(0)-f\left(\frac{i}{n}\right)+f\left(\frac{\lfloor \varepsilon n\rfloor}{n}\right)\right)+3\frac{\lfloor \varepsilon n \rfloor}{n^{3/2}}\right),
\end{align*}
then 
\[
\frac{1}{n^{3/2}}(T_{k+1}-T_k)=F_N\left(Z_k^N,Z_{k+1}^N,\left(\frac{1}{\sqrt{n}}E_{k,nt}^N\right)_{t\in[-2\varepsilon,2\varepsilon]},\left(\frac{1}{\sqrt{n}}E_{k+1,nt}^N\right)_{t\in[-2\varepsilon,2\varepsilon]}\right).
\]
Now, thanks to Proposition \ref{prop_conv_envts}, $(Z_1^N,...,Z_K^N,(\frac{1}{\sqrt{n}}E_{0,nt}^N)_{t\in[-2\varepsilon,2\varepsilon]},...,(\frac{1}{\sqrt{n}}E_{K,nt}^N)_{t\in[-2\varepsilon,2\varepsilon]})$ converges in distribution to $(\breve Z_1,...,\breve Z_K,(W_t^0)_{t\in[-2\varepsilon,2\varepsilon]},...,(W_t^K)_{t\in[-2\varepsilon,2\varepsilon]})$ when $N$ tends to $+\infty$. The convergence in distribution of Proposition \ref{prop_conv_ZT} follows easily. Furthermore, Lemma \ref{lem_no_atoms} yields that the $\breve T_k$ and the $\sum_{k'=1}^k \breve T_{k'}$, $k\in\{1,...,K\}$ have no atoms, which ends the proof of Proposition \ref{prop_conv_ZT}.
\end{proof}

It now remains only to prove Proposition \ref{prop_conv_envts}.
 
 \begin{proof}[Proof of Proposition \ref{prop_conv_envts}.]
 We recall the convention already used in Section \ref{sec_process_above}: all the Brownian motions have variance equal to the variance of the law $\rho_0$ defined in \eqref{eq_def_rho0}. Let us prove the proposition by induction on $k$. For $k=0$, for any $a>0$, we notice that Proposition \ref{prop_law_zeta} implies that if $(\mathcal{B}_0^{\lfloor N \theta \rfloor,\lfloor Nx \rfloor,\pm})^c$ occurs and $n$ is large enough, for any $X_{T_0}-\lceil a n \rceil \leq i \leq X_{T_0}+\lceil a n \rceil$ we have $\Delta_{T_0,i}=\bar\Delta_{T_0,i}$. Moreover, $\lim_{N\to+\infty}\mathds{P}((\mathcal{B}_0^{\lfloor N \theta \rfloor,\lfloor Nx \rfloor,\pm})^c)=1$. Furthermore, for any $i < X_{T_0}$, $\bar\Delta_{T_0,i}+1/2$ has law $\rho_0$, for any $i > X_{T_0}$, $-\bar\Delta_{T_0,i}+1/2$ has law $\rho_0$, $\bar \Delta_{T_0,X_{T_0}}+1/2$ has law $\rho_0$ or $\rho_0$ translated by $+1$, and these variables are independent. Therefore $(\frac{1}{\sqrt{n}}E_{0,nt}^N)_{t\in[-a,a]}$ converges to $(W_t^0)_{t\in[-a,a]}$ by Donsker's invariance principle. 
 
 We now set $k\in\mathds{N}$ and suppose the proposition is true for $k$. We will prove it for $k+1$. Let $a>0$.  We will study processes corresponding to ``the environment at the first time after $T_k$ at which the process reaches $X_{T_k}-\lfloor \varepsilon n \rfloor$'' and ``the environment at the first time after $T_k$ at which the process reaches $X_{T_k}+\lfloor \varepsilon n \rfloor$'', and prove they have suitable convergences in distribution. From the convergence in distribution of these two processes we will deduce the convergence in distribution of $Z_{k+1}^N$ and $(\frac{1}{\sqrt{n}}E_{k+1,nt}^N)_{t\in[-a,a]}$. The ``environment at the first time after $T_k$ at which the process reaches $X_{T_k}-\lfloor \varepsilon n \rfloor$'' is defined as follows. Remembering Definition \ref{def_local_times}, we define the process $(E_{k,i}^{N,-})_{i\in\mathds{Z}}$ by $E_{k,i}^{N,-}=E_{k,-\lfloor \varepsilon n \rfloor+1}^N+\sum_{j=X_{T_k}-\lfloor \varepsilon n \rfloor+1}^{X_{T_k}+i-1}\zeta_j^{T_k,-,E}$ for $i > -\lfloor \varepsilon n \rfloor$ (so $E_{k,i}^{N,-}=E_{k,-\lfloor \varepsilon n \rfloor+1}^N+S_{X_{T_k}+i}^{T_k,-,E}$ if we recall Definition \ref{def_environments}) and $E_{k,i}^{N,-}=E_{k,-\lfloor \varepsilon n \rfloor+1}^N$ for $i \leq -\lfloor \varepsilon n \rfloor$. We also define $\sigma_{k,-}^N=\inf\{i> 0 \,|\, L_{X_{T_k}+i}^{T_k,-}=0\}$, noticing that $Z_{k+1}^N=Z_k^N-1$ if and only if $X_{T_k}-\lfloor \varepsilon n\rfloor$ is reached before $X_{T_k}+\lfloor \varepsilon n\rfloor$, that is if and only if $\sigma_{k,-}^N \leq \lfloor \varepsilon n\rfloor$. 
 
 We want to prove the convergence in distribution of $(\frac{1}{n}\sigma_{k,-}^N,(\frac{1}{\sqrt{n}}E_{k,nt}^{N,-})_{t\in[-\varepsilon,\varepsilon]})$ to a target process $(\sigma_{k,-},(V_t^{k,-})_{t\in[-\varepsilon,\varepsilon]})$ where $V^{k,-}$ is a Brownian motion reflected on $W^k$ on $[-\varepsilon,0]$ and absorbed by $W^k$ on $[0,\varepsilon]$, while $\sigma_{k,-}$ is the absorption time. In order to do that, we will define another auxiliary process $(\tilde E_{k,i}^{N,-})_{i\in\mathds{Z}}$. We will first prove that $(\frac{1}{\sqrt{n}}\tilde E_{k,nt}^{N,-})_{t\in[-\varepsilon,\varepsilon]}$ converges in distribution to a Brownian motion reflected on $W^k$ on $[-\varepsilon,0]$ and free on $[0,\varepsilon]$. After that, we will write $(\frac{1}{n}\sigma_{k,-}^N,(\frac{1}{\sqrt{n}}E_{k,nt}^{N,-})_{t\in[-\varepsilon,\varepsilon]})$ as a function of $(\frac{1}{\sqrt{n}}\tilde E_{k,nt}^{N,-})_{t\in[-\varepsilon,\varepsilon]}$ to deduce the convergence of the former. The process $(\tilde E_{k,i}^{N,-})_{i\in\mathds{Z}}$ is defined as follows: remembering that the $\xi_j^{m,-,I}$ were constructed just before Proposition \ref{prop_def_zetaI}, for $i > -\lfloor \varepsilon n \rfloor$, we set
 \[
 \tilde E_{k,i}^{N,-}=E_{k,-\lfloor \varepsilon n \rfloor+1}^N+\sum_{j=X_{T_k}-\lfloor \varepsilon n \rfloor+1}^{X_{T_k}+(i-1)\wedge\sigma_{k,-}^N}\zeta_j^{T_k,-,E}+\sum_{j=X_{T_k}+(i-1)\wedge\sigma_{k,-}^N+1}^{X_{T_k}+i-1}\zeta_j^{T_k,-,I},
 \]
 and when $i \leq -\lfloor \varepsilon n \rfloor$ we set $\tilde E_{k,i}^{N,-}=E_{k,-\lfloor \varepsilon n \rfloor+1}^N$. In order to have shorter notation, we will also write $\Xi^N=(Z_1^N,...,Z_k^N,(\frac{1}{\sqrt{n}}E_{0,nt}^N)_{t\in[-a-\varepsilon,a+\varepsilon]},...,(\frac{1}{\sqrt{n}}E_{k,nt}^N)_{t\in[-a-\varepsilon,a+\varepsilon]})$ and $\Xi=(\breve Z_1,...,\breve Z_k,(W_t^0)_{t\in[-a-\varepsilon,a+\varepsilon]},...,(W_t^k)_{t\in[-a-\varepsilon,a+\varepsilon]})$.
 
 \begin{claim}\label{claim_conv_mixed_evts}
  $(\Xi^N,(\frac{1}{\sqrt{n}}\tilde E_{k,nt}^{N,-})_{t\in[-\varepsilon,\varepsilon]})$ converges in distribution to $(\Xi,(\tilde W_t^k)_{t\in[-\varepsilon,\varepsilon]})$ when $N$ tends to $+\infty$, where the process $(\tilde W_t^k)_{t\in[-\varepsilon,\varepsilon]}$ is a Brownian motion with $\tilde W^k_{-\varepsilon}=W^k_{-\varepsilon}$  reflected above $W^k$ on $W^k$ on $[-\varepsilon,0]$ and free on $[0,\varepsilon]$.
 \end{claim}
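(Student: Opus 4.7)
The plan is to combine the induction hypothesis with Donsker's theorem applied to the i.i.d.\ family $(\zeta_j^{T_k,-,I})_{j\in\mathds{Z}}$ (which by Proposition \ref{prop_def_zetaI} has law $\rho_0$ and is independent of $\mathcal{F}_{T_k}$), and then to approximate $\tilde E_k^{N,-}$ by the Skorokhod reflection of this i.i.d.\ walk on the environment $E_k^N$, using Proposition \ref{prop_sym} to control the error. Since $\Xi^N$ is $\mathcal{F}_{T_k}$-measurable, Donsker's invariance principle together with the induction hypothesis yield that $(\Xi^N, B^N)$ converges in distribution to $(\Xi, B^k)$, where I set $B^N_t = \frac{1}{\sqrt{n}}\sum_{j=-\lfloor\varepsilon n\rfloor+1}^{\lfloor nt\rfloor}\zeta^{T_k,-,I}_{X_{T_k}+j}$ for $t\in[-\varepsilon,\varepsilon]$ and $B^k$ is a Brownian motion of variance equal to that of $\rho_0$, independent of $\Xi$.

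I introduce the auxiliary continuous process $\hat H^N$ on $[-\varepsilon,\varepsilon]$ defined by
\[
\hat H^N_t \;=\; \tfrac{1}{\sqrt{n}}E^N_{k,-\lfloor\varepsilon n\rfloor+1} + B^N_t + \sup_{-\varepsilon\leq s\leq t\wedge 0}\bigl(\tfrac{1}{\sqrt{n}}E^N_{k,ns} - \tfrac{1}{\sqrt{n}}E^N_{k,-\lfloor\varepsilon n\rfloor+1} - B^N_s\bigr),
\]
which on $[-\varepsilon,0]$ is the Skorokhod reflection of $B^N$ above $\frac{1}{\sqrt{n}}E^N_{k,n\cdot}$ (in agreement with Lemma \ref{lem_SI_max}) and on $[0,\varepsilon]$ is the free continuation, the supremum being frozen at its value at time $0$. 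The reflection functional being continuous in the uniform topology, the continuous mapping theorem gives the joint convergence of $(\Xi^N, \hat H^N)$ to $(\Xi, \tilde W^k)$.

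It remains to show that $\|\frac{1}{\sqrt{n}}\tilde E_{k,n\cdot}^{N,-} - \hat H^N\|_\infty$ tends to $0$ in probability. On $[-\varepsilon,0]$, Lemma \ref{lem_SI_max} identifies the discrete reflection appearing in $\hat H^N$ with $\frac{1}{\sqrt{n}}(E^N_{k,-\lfloor\varepsilon n\rfloor+1} + S^{T_k,-,I}_{X_{T_k}+\cdot})$, so Proposition \ref{prop_sym} gives, outside the bad events of Proposition \ref{prop_bound_bad_evts}, $|\tilde E_{k,i}^{N,-} - \sqrt{n}\,\hat H^N_{i/n}| \leq (\ln n)^8 n^{1/4}$ uniformly in $i\in\{-\lfloor\varepsilon n\rfloor+1,\dots,0\}$, which is $o(\sqrt{n})$. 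On $[0,\varepsilon]$, the increments of $\hat H^N$ are the $\zeta^{T_k,-,I}$ throughout, while those of $\tilde E_k^{N,-}$ are the $\zeta^{T_k,-,E}$ up to index $\sigma_{k,-}^N$ and the $\zeta^{T_k,-,I}$ afterwards. Outside $\mathcal{B}_{T_k,2}^- \cup \mathcal{B}_{T_k,3}^- \cup \mathcal{B}_{T_k,5}^-$, there are at most $(\ln n)^8$ indices $i\in(0,\sigma_{k,-}^N]$ with $0 < L_i^{T_k,-}<(\ln n)^2$, at every other such index $\zeta_i^{T_k,-,E}=\zeta_i^{T_k,-,I}$, and $|\zeta_i^{T_k,-,E}|,|\zeta_i^{T_k,-,I}| \leq (\ln n)^2$; the cumulative discrepancy is therefore at most $(\ln n)^{10} = o(\sqrt{n})$. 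Slutsky's theorem then yields the claim.

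The main subtlety is the transition at $i=0$ between the reflection regime on $[-\varepsilon,0]$ and the ``post-absorption'' regime on $[0,\varepsilon]$: on the left, Proposition \ref{prop_sym} compares $\tilde E_k^{N,-}$ with the reflected i.i.d.\ walk, whereas on the right the comparison passes through the Ray--Knight identity $\zeta^{T_k,-,E}=\zeta^{T_k,-,I}$ where the local time is large. The key point is that the two approximations agree at $i=0$, so no spurious jump is introduced at the gluing point and the resulting limit is indeed the Brownian motion $\tilde W^k$ that is reflected on $[-\varepsilon,0]$ and free on $[0,\varepsilon]$.
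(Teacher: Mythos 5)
Your proposal follows essentially the same route as the paper's proof: Donsker for the i.i.d.\ walk $B^N$ built from the $\zeta^{T_k,-,I}$ together with the induction hypothesis give convergence of the pair, continuity of the Skorokhod reflection map yields convergence of the reflected walk (your $\hat H^N$ is exactly the paper's $\frac{1}{\sqrt{n}}\breve E^{N,-}_{k,n\cdot}$), Lemma \ref{lem_SI_max} identifies the discrete reflection with $S^{T_k,-,I}$, and Proposition \ref{prop_sym} plus the bad events $\mathcal{B}_2,\mathcal{B}_3,\mathcal{B}_5$ control the discrepancy between $\hat H^N$ and $\frac{1}{\sqrt{n}}\tilde E_{k,n\cdot}^{N,-}$. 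One minor bookkeeping slip on $[0,\varepsilon]$: at $i=\sigma^N_{k,-}$ the local time $L^{T_k,-}_{X_{T_k}+i}$ is $0$, which is \emph{not} covered by $\mathcal{B}^-_{T_k,3}$ (that event only forces $\zeta^{T_k,-,E}_i=\zeta^{T_k,-,I}_i$ when $L\geq(\ln n)^2$), so this single index must be counted separately; the paper's ``$+1$'' in the corresponding bound handles this, and it does not change the $O((\ln n)^{10})=o(\sqrt n)$ conclusion.
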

 
 \begin{proof}[Proof of claim \ref{claim_conv_mixed_evts}.] 
  We will introduce two auxiliary processes, $(\breve E_{k,i}^{N,-})_{i\in\mathds{Z}}$ and $(E_{k,i}^{N,-,I})_{i\in\mathds{Z}}$. The process $(\breve E_{k,i}^{N,-})_{i\in\mathds{Z}}$ will represent ``the random walk $(E_{k,i}^{N,-,I})_{i\in\mathds{Z}}$ reflected on the environment $(E_{k,i}^N)_{i\in\mathds{Z}}$ until time 0 and free after time 0'', and so will have the right convergence in distribution towards our target. The process $(\tilde E_{k,i}^{N,-})_{i\in\mathds{Z}}$ will be close to $(\breve E_{k,i}^{N,-})_{i\in\mathds{Z}}$, which will allow us to prove it satisfies the same convergence in distribution. We define $(E_{k,i}^{N,-,I})_{i\in\mathds{Z}}$ as follows: for any $i \leq -\lfloor \varepsilon n \rfloor$ we set $E_{k,i}^{N,-,I}=E_{k,-\lfloor \varepsilon n \rfloor+1}^N$, and for any $i>-\lfloor \varepsilon n \rfloor$ we set $E_{k,i}^{N,-,I}=E_{k,-\lfloor \varepsilon n \rfloor+1}^N+\sum_{j=X_{T_k}-\lfloor \varepsilon n \rfloor+1}^{X_{T_k}+i-1}\zeta_j^{T_k,-,I}$. We define $(\breve E_{k,i}^{N,-})_{i\in\mathds{Z}}$ as follows: for any $i \leq -\lfloor \varepsilon n \rfloor$ we set $\breve E_{k,i}^{N,-}=E_{k,-\lfloor \varepsilon n \rfloor+1}^N$, for any $i\in\{-\lfloor \varepsilon n \rfloor+1,...,0\}$ we set $\breve E_{k,i}^{N,-}=E_{k,i}^{N,-,I}+\max_{-\lfloor \varepsilon n \rfloor+1 \leq j \leq i}(E_{k,j}^N-E_{k,j}^{N,-,I})$, and for any $i>0$ we set $\breve E_{k,i}^{N,-}=\breve E_{k,0}^{N,-}+\sum_{j=X_{T_k}}^{X_{T_k}+i-1}\zeta_j^{T_k,-,I}$. 
  
  We begin by studying the convergence of $(\breve E_{k,i}^{N,-})_{i\in\mathds{Z}}$. We notice that $\Xi^N$ is $\mathcal{F}_{T_k}$-mesurable (see \eqref{eq_def_F} for the definition of $\mathcal{F}_m$), and that by Proposition \ref{prop_def_zetaI} the $(\zeta_i^{T_k,-,I})_{i\in\mathds{Z}}$ are independent from $\mathcal{F}_{T_k}$ and i.i.d. with law $\rho_0$, hence the $(\zeta_{X_{T_k}+i}^{T_k,-,I})_{i\in\mathds{Z}}$ are independent from $\mathcal{F}_{T_k}$ and i.i.d. with law $\rho_0$. Therefore, Donsker's invariance principle yields that $(\Xi^N,(\frac{1}{\sqrt{n}}E_{k,nt}^{N,-,I})_{t\in[-\varepsilon,\varepsilon]})$ converges in distribution to $(\Xi,(W_t^{k,I})_{t\in[-\varepsilon,\varepsilon]})$ when $N$ tends to $+\infty$, where $W^{k,I}_{-\varepsilon}=W_{-\varepsilon}^k$ and $W^{k,I}-W_{-\varepsilon}^k$ is a Brownian motion independent from $\Xi$. We can define $(\tilde W_t^k)_{t\in[-\varepsilon,\varepsilon]}$ by $\tilde W_t^k=W_t^{k,I}+\max_{-\varepsilon \leq s \leq t}(W_s^k-W_s^{k,I})$ when $t\in[-\varepsilon,0]$ and $\tilde W_t^k=\tilde W_0^k+W_t^{k,I}-W_0^{k,I}$ when $t\in[0,\varepsilon]$. Then $(\tilde W_t^k)_{t\in[-\varepsilon,\varepsilon]}$ is a Brownian motion with $\tilde W^k_{-\varepsilon}=W^k_{-\varepsilon}$ reflected above $W^k$ on $W^k$ on $[-\varepsilon,0]$ and free on $[0,\varepsilon]$, and $(\Xi^N,(\frac{1}{\sqrt{n}}\breve E_{k,nt}^{N,-})_{t\in[-\varepsilon,\varepsilon]})$ converges in distribution to $(\Xi,(\tilde W_t^{k})_{t\in[-\varepsilon,\varepsilon]})$ when $N$ tends to $+\infty$.
  
  We now prove that $(\tilde E_{k,i}^{N,-})_{i\in\mathds{Z}}$ is close to $(\breve E_{k,i}^{N,-})_{i\in\mathds{Z}}$. For any $i \leq -\lfloor \varepsilon n \rfloor$ we have $\tilde E_{k,i}^{N,-}=\breve E_{k,i}^{N,-}$ by definition of the processes. We now deal with $i\in\{-\lfloor \varepsilon n \rfloor+1,...,0\}$. Firstly, we notice that for any $j > -\lfloor \varepsilon n \rfloor$, recalling Definition \ref{def_environments}, we have 
  \[
  E_{k,j}^{N,-,I}-E_{k,j}^N=\sum_{j'=X_{T_k}-\lfloor \varepsilon n \rfloor+1}^{X_{T_k}+j-1}(\zeta_{j'}^{T_k,-,I}-\zeta_{j'}^{T_k,-,B})=\sum_{j'=X_{T_k}-\lfloor \varepsilon n \rfloor+1}^{X_{T_k}+j-1}\zeta_{j'}^{T_k,-,I} - S_{X_{T_k}+j}^{T_k,-,B}.
  \]
  Recalling Definition \ref{def_SI}, the definition of $(\breve E_{k,i}^{N,-})_{i\in\mathds{Z}}$ and Lemma \ref{lem_SI_max} then imply 
  \begin{align*}
   \breve E_{k,i}^{N,-}&=E_{k,-\lfloor \varepsilon n \rfloor+1}^N+\sum_{j=X_{T_k}-\lfloor \varepsilon n \rfloor+1}^{X_{T_k}+i-1}\zeta_j^{T_k,-,I}+\max_{-\lfloor \varepsilon n \rfloor+1 \leq j \leq i}\left(S_{X_{T_k}+j}^{T_k,-,B}-\sum_{j'=X_{T_k}-\lfloor \varepsilon n \rfloor+1}^{X_{T_k}+j-1}\zeta_{j'}^{T_k,-,I}\right) \\
   &=E_{k,-\lfloor \varepsilon n \rfloor+1}^N+S_{X_{T_k}+i}^{T_k,-,I}.
  \end{align*}
  Consequently, we have 
  \[
  \breve E_{k,i}^{N,-}-\tilde E_{k,i}^{N,-}=S_{X_{T_k}+i}^{T_k,-,I}-\sum_{j=X_{T_k}-\lfloor \varepsilon n \rfloor+1}^{X_{T_k}+i-1}\zeta_j^{T_k,-,E}=S_{X_{T_k}+i}^{T_k,-,I}-S_{X_{T_k}+i}^{T_k,-,E}.
  \]
  We recall that the ``bad events'' $\mathcal{B}$, $\mathcal{B}_0$, $\mathcal{B}_{m,1}^-,...,\mathcal{B}_{m,6}^-$, $\mathcal{B}_1,...,\mathcal{B}_6$ were defined in Propositions \ref{prop_bound_m}, \ref{prop_law_zeta} and at the beginning of Section \ref{sec_bad_events}. Now, by Proposition \ref{prop_sym}, if $n$ is large enough (not depending on $T_k$ or $i$), if $\bigcap_{r=1}^6(\mathcal{B}_{T_k,r}^-)^c$ occurs then $|S_{X_{T_k}+i}^{T_k,-,I}-S_{X_{T_k}+i}^{T_k,-,E}| \leq (\ln n)^8 n^{1/4}$. Furthermore, by Proposition \ref{prop_bound_m}, if $\mathcal{B}^c$ occurs and $n$ is large enough, $T_k =\mathbf{T}_{m,i}^+$ or $\mathbf{T}_{m,i}^-$ (see \eqref{eq_def_T_mi} for the definition of $\mathbf{T}_{m,i}^\pm$) for some integers $\lfloor N \theta \rfloor-2n^{(\alpha+4)/5} \leq m \leq \lfloor N\theta \rfloor+2n^{(\alpha+4)/5}$ and $\lfloor Nx \rfloor- n^{(\alpha+4)/5} \leq i \leq \lfloor Nx \rfloor+ n^{(\alpha+4)/5}$, hence if $\mathcal{B}^c\cap\bigcap_{r=0}^6 \mathcal{B}_r^c$ occurs and $n$ is large enough, $\bigcap_{r=1}^6(\mathcal{B}_{T_k,r}^-)^c$ occurs. Therefore, if $\mathcal{B}^c\cap\bigcap_{r=0}^6 \mathcal{B}_r^c$ occurs and $n$ is large enough, $|\breve E_{k,i}^{N,-}-\tilde E_{k,i}^{N,-}|\leq (\ln n)^8 n^{1/4}$ for all $i\in\{-\lfloor \varepsilon n \rfloor+1,...,0\}$.
  
  We now deal with the case $i \in \{1,...,\lfloor \varepsilon n \rfloor+1\}$. We can then write 
  \begin{equation}\label{eq_reflected_close}
  \begin{split}
   \breve E_{k,i}^{N,-}-\tilde E_{k,i}^{N,-} = 
   \breve E_{k,0}^{N,-}+\sum_{j=X_{T_k}}^{X_{T_k}+i-1}\zeta_j^{T_k,-,I}-\left(\tilde E_{k,0}^{N,-}+\sum_{j=X_{T_k}}^{X_{T_k}+(i-1)\wedge\sigma_{k,-}^N}\zeta_j^{T_k,-,E}+\sum_{j=X_{T_k}+(i-1)\wedge\sigma_{k,-}^N+1}^{X_{T_k}+i-1}\zeta_j^{T_k,-,I}\right) \\
    = \breve E_{k,0}^{N,-}-\tilde E_{k,0}^{N,-}+\sum_{j=X_{T_k}}^{X_{T_k}+(i-1)\wedge\sigma_{k,-}^N}\left(\zeta_j^{T_k,-,I}-\zeta_j^{T_k,-,E}\right).\qquad\qquad\qquad\qquad\qquad\qquad
  \end{split}
  \end{equation}
 We assume $\mathcal{B}^c\cap\bigcap_{r=0}^6 \mathcal{B}_r^c$ occurs and $n$ is large enough so it implies $\bigcap_{r=1}^6(\mathcal{B}_{T_k,r}^-)^c$ occurs. Since $(\mathcal{B}_{T_k,3}^-)^c$ occurs, for any $j\in\{X_{T_k},...,X_{T_k}+\lfloor \varepsilon n\rfloor\}$ such that $L_j^{T_k,-} \geq (\ln n)^2$, we have $\zeta_j^{T_k,-,I}=\zeta_j^{T_k,-,E}$, and since $(\mathcal{B}_{T_k,5}^-)^c$ occurs, for any $j\in\{X_{T_k},...,X_{T_k}+\lfloor \varepsilon n\rfloor\}$ we have $|\zeta_j^{T_k,-,I}|,|\zeta_j^{T_k,-,E}| \leq (\ln n)^2$. We deduce 
 \[
 \left|\sum_{j=X_{T_k}}^{X_{T_k}+(i-1)\wedge\sigma_{k,-}^N}(\zeta_j^{T_k,-,I}-\zeta_j^{T_k,-,E})\right| 
 \leq 2 (\ln n)^2 \left|\left\{j\in\{X_{T_k},...,X_{T_k}+(i-1)\wedge\sigma_{k,-}^N\}\left|L_j^{T_k,-} < (\ln n)^2\right.\right\}\right| 
 \]
 \[
  \leq 2 (\ln n)^2 \left(|\{j\in\{X_{T_k},...,X_{T_k}+\lfloor \varepsilon n\rfloor\}\,|\,0 < L_j^{T_k,-} < (\ln n)^2\}| +1\right).
 \]
 Now, since $(\mathcal{B}_{T_k,2}^-)^c$ occurs, if $n$ is large enough, $|\{j\in\{X_{T_k},...,X_{T_k}+\lfloor \varepsilon n\rfloor\}\,|\,0 < L_j^{T_k,-} < (\ln n)^2\}| < (\ln n)^8$, hence $|\sum_{j=X_{T_k}}^{X_{T_k}+(i-1)\wedge\sigma_{k,-}^N}(\zeta_j^{T_k,-,I}-\zeta_j^{T_k,-,E})| \leq 3 (\ln n)^{10}$. Moreover, we already proved that if $\mathcal{B}^c\cap\bigcap_{r=0}^6 \mathcal{B}_r^c$ occurs and $n$ is large enough, $|\breve E_{k,0}^{N,-}-\tilde E_{k,0}^{N,-}|\leq (\ln n)^8 n^{1/4}$. Thus (\ref{eq_reflected_close}) yields that if $\mathcal{B}^c\cap\bigcap_{r=0}^6 \mathcal{B}_r^c$ occurs and $n$ is large enough, $|\breve E_{k,i}^{N,-}-\tilde E_{k,i}^{N,-}|\leq (\ln n)^8 n^{1/4}+3 (\ln n)^{10}$ for any $i \in \{1,...,\lfloor \varepsilon n \rfloor+1\}$. 
  
  We deduce that if $\mathcal{B}^c\cap\bigcap_{r=0}^6 \mathcal{B}_r^c$ occurs and $n$ is large enough, for any $i\leq\lfloor \varepsilon n \rfloor+1$ we have $|\breve E_{k,i}^{N,-}-\tilde E_{k,i}^{N,-}|\leq (\ln n)^8 n^{1/4}+3 (\ln n)^{10}$. In addition, Propositions \ref{prop_bound_m} and \ref{prop_bound_bad_evts} imply $\lim_{N \to +\infty}\mathds{P}(\mathcal{B}\cup\bigcup_{r=0}^6 \mathcal{B}_r)=0$. Furthermore, we proved that $(\Xi^N,(\frac{1}{\sqrt{n}}\breve E_{k,nt}^{N,-})_{t\in[-\varepsilon,\varepsilon]})$ converges in distribution to $(\Xi,(\tilde W_t^{k})_{t\in[-\varepsilon,\varepsilon]})$ when $N$ tends to $+\infty$. This allows us to conclude that $(\Xi^N,(\frac{1}{\sqrt{n}}\tilde E_{k,nt}^{N,-})_{t\in[-\varepsilon,\varepsilon]})$ converges in distribution to $(\Xi,(\tilde W_t^{k})_{t\in[-\varepsilon,\varepsilon]})$ when $N$ tends to $+\infty$, which ends the proof of the claim.
 \end{proof}
 
 We are now going to write $(\frac{1}{n}\sigma_{k,-}^N,(\frac{1}{\sqrt{n}}E_{k,nt}^{N,-})_{t\in[-\varepsilon,\varepsilon]})$ as a function of $((\frac{1}{\sqrt{n}}\tilde E_{k,nt}^{N,-})_{t\in[-\varepsilon,\varepsilon]},(\frac{1}{\sqrt{n}} E_{k,nt}^{N})_{t\in[-\varepsilon,\varepsilon]})$. We define a function $F$ so that for $f_1,f_2 : [-\varepsilon,\varepsilon] \mapsto \mathds{R}$ continuous functions, $F(f_1,f_2)=(s,f_3)$ with $s=\inf\{t\in[0,\varepsilon]\,|\,f_1(t)=f_2(t)\}$ (defined to be $+\infty$ if there is no such $t$) and $f_3$ is defined by $f_3(t)=f_1(t)$ if $t \leq s$ and $f_3(t)=f_2(t)$ if $t \geq s$. For $n$ large enough, we also define functions $F_N$ so that for $f_1,f_2 : [-\varepsilon,\varepsilon] \mapsto \mathds{R}$ continuous functions, $F_N(f_1,f_2)=(s,f_3)$ with $s=\inf\{t\in[\frac{1}{n},\varepsilon]\,|\,f_1(t)=f_2(t)\}$ and $f_3$ is defined by $f_3(t)=f_1(t)$ if $t \leq s$ and $f_3(t)=f_2(t)$ if $t \geq s$. We then have $(\frac{1}{n}\sigma_{k,-}^N,(\frac{1}{\sqrt{n}}E_{k,nt}^{N,-})_{t\in[-\varepsilon,\varepsilon]})=F_N((\frac{1}{\sqrt{n}}\tilde E_{k,nt}^{N,-})_{t\in[-\varepsilon,\varepsilon]},(\frac{1}{\sqrt{n}}E_{k,nt}^{N})_{t\in[-\varepsilon,\varepsilon]})$. 
 
 We now deduce the convergence of $(\Xi^N,\frac{1}{n}\sigma_{k,-}^N,(\frac{1}{\sqrt{n}}E_{k,nt}^{N,-})_{t\in[-\varepsilon,\varepsilon]})$. By Claim \ref{claim_conv_mixed_evts}, $(\Xi^N, (\frac{1}{\sqrt{n}}\tilde E_{k,nt}^{N,-})_{t\in[-\varepsilon,\varepsilon]})$ converges in distribution to $(\Xi, (\tilde W_t^k)_{t\in[-\varepsilon,\varepsilon]})$ when $N$ tends to $+\infty$, so by the Skorohod Representation Theorem (Theorem 1.8 of Chapter 3 of \cite{Ethier_Kurtz1986}), there exists a probability space containing random variables $(\hat \Xi^N,(\frac{1}{\sqrt{n}}\hat{\tilde E}_{k,nt}^{N,-})_{t\in[-\varepsilon,\varepsilon]})$ for any $N\in\mathds{N}^*$ and $(\hat \Xi,(\hat{\tilde W}_t^k)_{t\in[-\varepsilon,\varepsilon]})$ having the respective laws of $(\Xi^N,(\frac{1}{\sqrt{n}}{\tilde E}_{k,nt}^{N,-})_{t\in[-\varepsilon,\varepsilon]})$ and $(\Xi,({\tilde W}_t^k)_{t\in[-\varepsilon,\varepsilon]})$, and so that $(\hat \Xi^N,(\frac{1}{\sqrt{n}}\hat{\tilde E}_{k,nt}^{N,-})_{t\in[-\varepsilon,\varepsilon]})$ converges almost surely to $(\hat \Xi,(\hat{\tilde W}_t^k)_{t\in[-\varepsilon,\varepsilon]})$ when $N$ tends to $+\infty$. We denote by $(\frac{1}{\sqrt{n}}\hat E_{k,nt}^{N})_{t\in[-a-\varepsilon,a+\varepsilon]}$ the last coordinate of $\hat \Xi^N$ and by $(\hat W_t^k)_{t\in[-a-\varepsilon,a+\varepsilon]}$ the last coordinate of $\hat \Xi$. We then have the following. 
 
 \begin{claim}\label{claim_conv_va_Skorohod}
 $F_N((\frac{1}{\sqrt{n}}\hat{\tilde E}_{k,nt}^{N,-})_{t\in[-\varepsilon,\varepsilon]},(\frac{1}{\sqrt{n}}\hat E_{k,nt}^{N})_{t\in[-\varepsilon,\varepsilon]})$ converges in probability to the quantity $F((\hat{\tilde W}_t^k)_{t\in[-\varepsilon,\varepsilon]},(\hat W_t^k)_{t\in[-\varepsilon,\varepsilon]})$ when $N$ tends to $+\infty$.
 \end{claim}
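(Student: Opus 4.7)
The plan is to introduce the difference processes $D_N(t) := (\hat{\tilde E}_{k,nt}^{N,-} - \hat E_{k,nt}^N)/\sqrt n$ and $D(t) := \hat{\tilde W}_t^k - \hat W_t^k$, and to write $(s_N, f_3^N) := F_N((\frac{1}{\sqrt n}\hat{\tilde E}^{N,-}_{k,nt})_{t\in[-\varepsilon,\varepsilon]}, (\frac{1}{\sqrt n}\hat E^N_{k,nt})_{t\in[-\varepsilon,\varepsilon]})$ and $(s, f_3) := F((\hat{\tilde W}^k_t)_{t\in[-\varepsilon,\varepsilon]}, (\hat W^k_t)_{t\in[-\varepsilon,\varepsilon]})$. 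The Skorohod coupling in Claim \ref{claim_conv_mixed_evts}, combined with the fact that $(\frac{1}{\sqrt n}\hat E^N_{k,nt})_{t\in[-\varepsilon,\varepsilon]}$ is a component of $\hat\Xi^N \to \hat\Xi$, gives uniform almost sure convergence $\|D_N - D\|_{\infty,[-\varepsilon,\varepsilon]} \to 0$. It therefore suffices to show that $s_N \to s$ in probability and that $\|f_3^N - f_3\|_\infty \to 0$ in probability.

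The key input for $s_N \to s$ is Proposition \ref{prop_envts_above_barrier}, which guarantees $D(0) = \hat{\tilde W}_0^k - \hat W_0^k > 0$ almost surely. On $[0, \varepsilon]$, the construction of $\hat{\tilde W}^k$ in the proof of Claim \ref{claim_conv_mixed_evts} through the auxiliary Brownian motion $W^{k,I}$ (independent of $\hat W^k$) makes $(\hat{\tilde W}^k_t)_{t\in[0,\varepsilon]}$, conditionally on $\hat{\tilde W}^k_0$ and $\hat W^k$, a Brownian motion started at $\hat{\tilde W}^k_0 > \hat W^k_0$ and independent of $\hat W^k|_{[0,\varepsilon]}$. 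This is exactly the setting of Lemma \ref{lem_conv_abs_times} (rescaled to $[0,\varepsilon]$), applied with $W = \hat{\tilde W}^k|_{[0,\varepsilon]}$ and barrier $g = \hat W^k|_{[0,\varepsilon]}$: defining $\sigma(\delta) := \inf\{t \in [0, \varepsilon] : D(t) \leq \delta\}$, we obtain $\sigma(\pm\delta) \to s = \sigma(0)$ in probability as $\delta \to 0$. On the event $\{\|D_N - D\|_{\infty,[0,\varepsilon]} \leq \delta\}$, we have $D_N > 0$ on $[0, \sigma(\delta))$, hence $s_N \geq \sigma(\delta)$; and when $s < \infty$, since $D_N$ is continuous (as a linear interpolation), $D_N(\sigma(\delta)) \leq 2\delta$ and $D_N(\sigma(-\delta)) \leq 0$, the intermediate value theorem yields $s_N \leq \sigma(-\delta)$. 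On the event $\{s = +\infty\}$, continuity of $D > 0$ on the compact $[0,\varepsilon]$ gives $D \geq \delta_0$ for some random $\delta_0 > 0$, so for $N$ large enough $D_N > 0$ on $[0, \varepsilon]$ and $s_N = +\infty$. Combining the sandwich with $\sigma(\pm\delta) \to s$ yields $s_N \to s$ in probability.

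Once $s_N \to s$ is available, the convergence $\|f_3^N - f_3\|_\infty \to 0$ in probability follows by partitioning $[-\varepsilon, \varepsilon]$ into $\{t \leq \min(s, s_N)\}$, $\{t \geq \max(s, s_N)\}$, and the (vanishing) interval $[\min(s,s_N), \max(s,s_N)]$. On the first region, $f_3^N(t) - f_3(t) = \frac{1}{\sqrt n}\hat{\tilde E}^{N,-}_{k,nt} - \hat{\tilde W}^k_t$ is controlled by uniform convergence, and similarly on the second with $\hat E^N_{k,n\cdot}/\sqrt n - \hat W^k$. On the middle region, because $\hat{\tilde W}^k_s = \hat W^k_s$ and both limit processes are continuous, inserting and subtracting this common value and using $|s_N - s|$ small together with the uniform convergences of the two processes bounds the difference. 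The main obstacle in all of this is the sandwich argument $\sigma(\delta) \leq s_N \leq \sigma(-\delta)$ of the previous paragraph, which requires both the strict positivity $D(0) > 0$ provided by Proposition \ref{prop_envts_above_barrier} and the conditional Brownian/independence structure of $\hat{\tilde W}^k|_{[0,\varepsilon]}$ needed to invoke Lemma \ref{lem_conv_abs_times}, together with a consistent treatment of $s = +\infty$.
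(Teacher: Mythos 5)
Your proposal is correct and follows essentially the same route as the paper's proof: both establish $s_N\to s$ by sandwiching between $\sigma(\delta)$ and $\sigma(-\delta)$ using Proposition \ref{prop_envts_above_barrier} to guarantee strict positivity at $0$ and Lemma \ref{lem_conv_abs_times} for $\sigma(\pm\delta)\to s$, and both deduce the uniform convergence of the third coordinate by partitioning $[-\varepsilon,\varepsilon]$ around the absorption time. The only cosmetic difference is that you work with the difference processes $D_N,D$ throughout, whereas the paper tracks the two coupled processes separately and uses the discrete fact $\hat{\tilde E}^{N,-}_{k,1}\geq\hat E^{N}_{k,1}$ where you instead use $D_N>0$ near $0$ via the coupling bound and $D(0)>0$.
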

 
  The proof of Claim \ref{claim_conv_va_Skorohod} is detailed in the appendix. It basically comes down to proving that $F$ is almost surely continuous at the limit point $((\hat{\tilde W}_t^k)_{t\in[-\varepsilon,\varepsilon]},(\hat W_t^k)_{t\in[-\varepsilon,\varepsilon]})$. This can be proven with the help of Lemma \ref{lem_conv_abs_times}, which we are able to use thanks to Proposition \ref{prop_envts_above_barrier}.  
 
 We can now prove the convergence in distribution of $(\Xi^N,\frac{1}{n}\sigma_{k,-}^N,(\frac{1}{\sqrt{n}}E_{k,nt}^{N,-})_{t\in[-\varepsilon,\varepsilon]})$. Indeed, if $\Phi$ is a continous real bounded function accepting $(\Xi^N,\frac{1}{n}\sigma_{k,-}^N,(\frac{1}{\sqrt{n}}E_{k,nt}^{N,-})_{t\in[-\varepsilon,\varepsilon]})$ as argument, then 
 \[
  \mathds{E}\left(\Phi\left(\Xi^N,\frac{1}{n}\sigma_{k,-}^N,\left(\frac{1}{\sqrt{n}}E_{k,nt}^{N,-}\right)_{t\in[-\varepsilon,\varepsilon]}\right)\right)
  =\mathds{E}\left(\Phi\left(\Xi^N,F_N\left(\left(\frac{1}{\sqrt{n}}{\tilde E}_{k,nt}^{N,-}\right)_{t\in[-\varepsilon,\varepsilon]},\left(\frac{1}{\sqrt{n}}E_{k,nt}^{N}\right)_{t\in[-\varepsilon,\varepsilon]}\right)\right)\right) 
  \]
  \[
  =\mathds{E}\left(\Phi\left(\hat \Xi^N,F_N\left(\left(\frac{1}{\sqrt{n}}\hat{\tilde E}_{k,nt}^{N,-}\right)_{t\in[-\varepsilon,\varepsilon]},\left(\frac{1}{\sqrt{n}}\hat E_{k,nt}^{N}\right)_{t\in[-\varepsilon,\varepsilon]}\right)\right)\right), 
\]
which by Claim \ref{claim_conv_va_Skorohod} converges to $\mathds{E}(\Phi(\hat \Xi,F((\hat{\tilde W}_t^k)_{t\in[-\varepsilon,\varepsilon]},(\hat W_t^k)_{t\in[-\varepsilon,\varepsilon]})))=\mathds{E}(\Phi(\Xi,F((\tilde W_t^k)_{t\in[-\varepsilon,\varepsilon]},(W_t^k)_{t\in[-\varepsilon,\varepsilon]})))$ when $N$ tends to $+\infty$. Hence $(\Xi^N,\frac{1}{n}\sigma_{k,-}^N,(\frac{1}{\sqrt{n}}E_{k,nt}^{N,-})_{t\in[-\varepsilon,\varepsilon]})$ converges in distribution to $(\Xi,F((\tilde W_t^k)_{t\in[-\varepsilon,\varepsilon]},(W_t^k)_{t\in[-\varepsilon,\varepsilon]}))$ when $N$ tends to $+\infty$. This random variable is $(\Xi,\sigma_{k,-},(V_t^{k,-})_{t\in[-\varepsilon,\varepsilon]})$ where $V^{k,-}$ is a Brownian motion with $V_{-\varepsilon}^{k,-}=W_{-\varepsilon}^k$ reflected above $W^k$ on $W^k$ on $[-\varepsilon,0]$ and absorbed by $W^k$ on $[0,\varepsilon]$, while $\sigma_{k,-}$ is the absorption time. 
 
 This ends the study of the ``environment at the first time after $T_k$ at which the process reaches $X_{T_k}-\lfloor \varepsilon n\rfloor$''. We can define a similar process for the ``environment at the first time after $T_k$ at which the process reaches $X_{T_k}+\lfloor \varepsilon n\rfloor$'': $(E_{k,i}^{N,+})_{i\in\mathds{Z}}$ is defined by $E_{k,i}^{N,+}=E_{k,\lfloor \varepsilon n \rfloor}^N+1+\sum_{j=X_{T_k}+i}^{X_{T_k}+\lfloor \varepsilon n \rfloor-1}\zeta_j^{T_k,+,E}$ for $i<\lfloor \varepsilon n \rfloor$ and $E_{k,i}^{N,+}=E_{k,\lfloor \varepsilon n \rfloor}^N+1$ for $i\geq \lfloor \varepsilon n \rfloor$. We also define $\sigma_{k,+}^N=\sup\{i\leq0\,|\, L_{X_{T_k}+i}^{T_k,+}=0\}$. By the same arguments as before, we can prove that $(\Xi^N,\frac{1}{n}\sigma_{k,+}^N,(\frac{1}{\sqrt{n}}E_{k,nt}^{N,+})_{t\in[-\varepsilon,\varepsilon]})$ converges in distribution to a random variable $(\Xi,\sigma_{k,+},(V_t^{k,+})_{t\in[-\varepsilon,\varepsilon]})$ when $N$ tends to $+\infty$, where $V^{k,+}$ is a Brownian motion with $V_{\varepsilon}^{k,+}=W_{\varepsilon}^k$ above $W^k$ reflected on $W^k$ on $[0,\varepsilon]$ and absorbed by $W^k$ on $[-\varepsilon,0]$, while $\sigma_{k,+}$ is the absorption time.
 
 By putting the results about $(E_{k,i}^{N,-})_{i\in\mathds{Z}}$ and $(E_{k,i}^{N,+})_{i\in\mathds{Z}}$ together, we will now be able to complete the proof of Proposition \ref{prop_conv_envts}. $\bar \Xi^N$ and $\bar \Xi$ will denote the same objects as $\Xi^N$ and $\Xi$, but with $[-a,a]$ replacing $[-a-\varepsilon,a+\varepsilon]$. Let $\Psi$ be a continuous bounded function of $(\bar \Xi^N,Z_{k+1}^N,(\frac{1}{\sqrt{n}}E_{k+1,nt}^N)_{[-a,a]})$. If $\sigma_{k,-}^N\leq\lfloor \varepsilon n \rfloor$, we have $Z_{k+1}^N=Z_{k}^N-1$ and $(\frac{1}{\sqrt{n}}E_{k+1,nt}^N)_{[-a,a]}$ can be obtained as a continuous function of a deterministic modification of $\Xi^N$, $(\frac{1}{\sqrt{n}}E_{k,nt}^{N,-})_{t\in[-\varepsilon,\varepsilon]}$ and some $\frac{1}{\sqrt{n}}E_{k,i}^N$, $\frac{1}{\sqrt{n}}E_{k,i}^{N,-}$ whose convergence in distribution is implied by that of $\Xi^N$ and $(\frac{1}{\sqrt{n}}E_{k,nt}^{N,-})_{t\in[-\varepsilon,\varepsilon]}$, so in this case by an abuse of notation we write $\Psi(\bar \Xi^N,Z_{k+1}^N,(\frac{1}{\sqrt{n}}E_{k+1,nt}^N)_{[-a,a]})=\Psi_-(\Xi^N,(\frac{1}{\sqrt{n}}E_{k,nt}^{N,-})_{t\in[-\varepsilon,\varepsilon]})$ with $\Psi_-$ continuous and bounded. Similarly, if $\sigma_{k,+}^N>-\lfloor \varepsilon n \rfloor$, we write $\Psi(\bar \Xi^N,Z_{k+1}^N,(\frac{1}{\sqrt{n}}E_{k+1,nt}^N)_{[-a,a]})=\Psi_+(\Xi^N,(\frac{1}{\sqrt{n}}E_{k,nt}^{N,+})_{t\in[-\varepsilon,\varepsilon]})$ with $\Psi_+$ continuous and bounded. We then have 
 \begin{equation}\label{eq_conv_envts}
 \begin{split} \textstyle
  \mathds{E}\left(\Psi\left(\bar \Xi^N,Z_{k+1}^N,\left(\frac{1}{\sqrt{n}}E_{k+1,nt}^N\right)_{[-a,a]}\right)\right)
   = & \mathds{E}\left(\Psi_-\left(\Xi^N,\left(\frac{1}{\sqrt{n}}E_{k,nt}^{N,-}\right)_{t\in[-\varepsilon,\varepsilon]}\right)\mathds{1}_{\{\sigma_{k,-}^N\leq\lfloor \varepsilon n \rfloor\}}\right) \\
  + &\mathds{E}\left(\Psi_+\left(\Xi^N,\left(\frac{1}{\sqrt{n}}E_{k,nt}^{N,+}\right)_{t\in[-\varepsilon,\varepsilon]}\right)\mathds{1}_{\{\sigma_{k,+}^N>-\lfloor \varepsilon n \rfloor\}}\right).
   \end{split}
 \end{equation}
 We can use again the Skorohod Representation Theorem to assume the convergence in distribution of the variables $(\Xi^N,\frac{1}{n}\sigma_{k,\pm}^N,(\frac{1}{\sqrt{n}}E_{k,nt}^{N,\pm})_{t\in[-\varepsilon,\varepsilon]})$ to $(\Xi,\sigma_{k,\pm},(V_t^{k,\pm})_{t\in[-\varepsilon,\varepsilon]})$ is almost sure. Furthermore, by the definition of $\sigma_{k,-}$, the probability that $\sigma_{k,-}=\varepsilon$ is smaller than the probability that a Brownian motion starting at $V_0^{k,-}$ at time 0 is exactly at $W_\varepsilon^k$ at time $\varepsilon$, which is 0, hence $\mathds{P}(\sigma_{k,-}=\varepsilon)=0$. Similarly, $\mathds{P}(\sigma_{k,+}=-\varepsilon)=0$. Consequently, the right-hand side of (\ref{eq_conv_envts}) converges to $\mathds{E}(\Psi_-(\Xi,(V_t^{k,-})_{t\in[-\varepsilon,\varepsilon]})\mathds{1}_{\{\sigma_{k,-}< \varepsilon\}})  + \mathds{E}(\Psi_+(\Xi,(V_t^{k,+})_{t\in[-\varepsilon,\varepsilon]})\mathds{1}_{\{\sigma_{k,+}>- \varepsilon\}})$. Now, we remember the quantity $p_{k,-}=\mathds{P}(\sigma_{k,-}< \varepsilon|W^k)$ introduced in Definition \ref{def_lim_envts}. We then have $p_{k,-}=\mathds{P}(\sigma_{k,-}< \varepsilon|\Xi)$, therefore 
 \[
 \mathds{E}\left(\Psi_-\left(\Xi,(V_t^{k,-})_{t\in[-\varepsilon,\varepsilon]}\right)\mathds{1}_{\{\sigma_{k,-}< \varepsilon\}}\right)
 =\mathds{E}\left(\mathds{E}\left(\left.\Psi_-\left(\Xi,(V_t^{k,-})_{t\in[-\varepsilon,\varepsilon]}\right)\mathds{1}_{\{\sigma_{k,-}< \varepsilon\}}\right|\Xi\right)\right)
 \]
 \[
 =\mathds{E}\left(p_{k,-}\mathds{E}\left(\left.\Psi_-\left(\Xi,(V_t^{k,-})_{t\in[-\varepsilon,\varepsilon]}\right)\frac{\mathds{1}_{\{\sigma_{k,-}< \varepsilon\}}}{\mathds{P}(\sigma_{k,-}< \varepsilon|\Xi)}\right|\Xi\right)\right)
 =\mathds{E}\left(p_{k,-}\mathds{E}\left(\left.\Psi_-\left(\Xi,(\bar W_t^{k,-})_{t\in[-\varepsilon,\varepsilon]}\right)\right|\Xi\right)\right).
\]
 In the same way, $\mathds{E}(\Psi_+(\Xi,(V_t^{k,+})_{t\in[-\varepsilon,\varepsilon]})\mathds{1}_{\{\sigma_{k,+}>- \varepsilon\}})=\mathds{E}(p_{k,+}\mathds{E}(\Psi_+(\Xi,(\bar W_t^{k,+})_{t\in[-\varepsilon,\varepsilon]})|\Xi))$, where $p_{k,+}=\mathds{P}(\sigma_{k,+}>- \varepsilon|W^k)$ was also introduced in Definition \ref{def_lim_envts}. In addition, by Proposition \ref{prop_envts_above_barrier} we have $\mathds{P}(V_0^{k,-}>W_0^k)=1$, so $\mathds{P}(V_0^{k,-}>W_0^k|W^k)=1$ almost surely, therefore by Proposition \ref{prop_better_than_Z} $p_{k,-}+p_{k,+}=1$ almost surely. We deduce that when $N$ tends to $+\infty$, $\mathds{E}(\Psi(\bar \Xi^N,Z_{k+1}^N,(\frac{1}{\sqrt{n}}E_{k+1,nt}^N)_{[-a,a]}))$ converges to 
 \[
 \mathds{E}(p_{k,-}\mathds{E}(\Psi_-(\Xi,(\bar W_t^{k,-})_{t\in[-\varepsilon,\varepsilon]})|\Xi))+\mathds{E}(p_{k,+}\mathds{E}(\Psi_+(\Xi,(\bar W_t^{k,+})_{t\in[-\varepsilon,\varepsilon]})|\Xi))
 =\mathds{E}(\Psi(\bar \Xi,\breve Z_{k+1}^N,(W_t^{k+1})_{[-a,a]}))
 \]
 when $N$ tends to $+\infty$. Consequently, $(\bar \Xi^N,Z_{k+1}^N,(\frac{1}{\sqrt{n}}E_{k+1,nt}^N)_{[-a,a]})$ converges in distribution to $(\bar \Xi,\breve Z_{k+1}^N,(W_t^{k+1})_{[-a,a]})$ when $N$ tends to $+\infty$. Proposition \ref{prop_conv_envts} is thus true for $k+1$, therefore by induction it is true for all $k\in\mathds{N}$.
 \end{proof}
 
 \begin{proof}[Proof of Lemma \ref{lem_continuity}.]
  The proof is the same as in Proposition \ref{prop_conv_ZT}, except for a difference in the equivalent of Proposition \ref{prop_law_zeta}. The definition of $\mathcal{B}_0^{\lfloor \psi(N)\theta\rfloor,0,-}$ must be modified by replacing $\mathcal{B}_{0,2}^{\lfloor \psi(N)\theta\rfloor,0,-}$ by $\{\sup_{y\in\mathds{R}}|\frac{1}{\psi(N)}\ell_{T_0',\lfloor \psi(N) y\rfloor}^+-(\theta-\frac{|y|}{2})_+| \geq \theta/4\}$ (and $n^{(\alpha-1)/4}\lfloor \varepsilon n\rfloor$ by $\lfloor \theta \psi(N)/2 \rfloor$ in $\mathcal{B}_{0,1}^{\lfloor \psi(N)\theta\rfloor,0,-}$). With such a definition, $\mathcal{B}_0^{\lfloor \psi(N)\theta\rfloor,0,-}$ will contain $\{$there exists $-\lfloor \theta \psi(N)/2 \rfloor-1 \leq i \leq \lfloor \theta \psi(N)/2 \rfloor+1, \bar\Delta_{T_0',i}\neq\Delta_{T_0',i}\}$. Moreover, Theorem 1 of \cite{Toth_et_al2008} yields that $\sup_{y\in\mathds{R}}|\frac{1}{\psi(N)}\ell_{T_0',\lfloor \psi(N) y\rfloor}^+-(\theta-\frac{|y|}{2})_+|$ converges in probability to 0 when $N$ tends to $+\infty$, so $\mathds{P}(\mathcal{B}_{0,2}^{\lfloor \psi(N)\theta\rfloor,0,-})$ tends to 0 when $N$ tends to $+\infty$, so $\mathds{P}(\mathcal{B}_{0}^{\lfloor \psi(N)\theta\rfloor,0,-})$ tends to 0 when $N$ tends to $+\infty$.
 \end{proof}

\section*{Acknowledgements}

Laure Marêché was partially supported by the University of Strasbourg Initiative of Excellence. Thomas Mountford was partially supported by the Swiss National Science Foundation, grant FNS 200021L 169691.

\section*{Appendix}

In this appendix we give the proofs that were not included in the main body of the article so not to slow down the reader. The first proof is needed in the proof of Lemma \ref{lem_zeta_E_and_I}. The definitions of the various quantities are given in the part of the proof in the main body of the article.

\begin{proof}[Exponential bounds on the terms in \eqref{eq_B3}.]
 Firstly, when $n$ is large enough we have $\mathds{P}((\mathcal{B}_0^{m,i,\iota})^c \cap \mathcal{A}_1) 
 \leq \mathds{P}(|\bar\Delta_{\bar m,j}|>\frac{1-2p_w}{2}\lfloor(\ln n)^2/4\rfloor)$, 
 which is smaller than $C_{3,1}e^{-c_{3,1}(\ln n)^2}$ with $C_{3,1}=C_{3,1}(w) < +\infty$ and $c_{3,1}=c_{3,1}(w) > 0$ 
 since $\bar\Delta_{\bar m,j}$ has law $\rho_-$ or $\rho_+$, which have exponential tails. 
 
 We now bound 
 \begin{align*}
 \mathds{P}(\mathcal{A}_1^c \cap \mathcal{A}_2) 
 \leq & \mathds{P}(\mathcal{A}_2||\xi(0)| < i_w)\mathds{P}(|\xi(0)| < i_w) \\
 & + \mathds{P}\left(\mathcal{A}_2\left|i_w < \xi(0) \leq \frac{1-2p_w}{2}\lfloor(\ln n)^2/4\rfloor\right.\right)
 \mathds{P}\left(i_w < \xi(0) \leq \frac{1-2p_w}{2}\lfloor(\ln n)^2/4\rfloor\right) \\
 & + \mathds{P}\left(\mathcal{A}_2\left|i_w < -\xi(0) \leq \frac{1-2p_w}{2}\lfloor(\ln n)^2/4\rfloor\right.\right)
 \mathds{P}\left(i_w < -\xi(0) \leq \frac{1-2p_w}{2}\lfloor(\ln n)^2/4\rfloor\right),
 \end{align*}
 assuming $n$ is large enough (if $i_w=0$, we simply remove the first term). For $i_w > 0$, we notice that if $|\xi(0)| < i_w$, for any 
 $\ell \in \mathds{N}$, $\mathds{P}(E_0 \leq (2 i_w-1)(\ell+1)|E_0 > (2 i_w-1)\ell) \geq (1/2)^{2 i_w-1}$, 
 as it is at least the probability that $\xi((2 i_w-1)\ell+1)=\xi((2 i_w-1)\ell)+1$, $\xi((2 i_w-1)\ell+2)=\xi((2 i_w-1)\ell)+2$,\dots 
 until the chain reaches $i_w$. This yields $\mathds{P}(\mathcal{A}_2||\xi(0)| < i_w) 
 \leq (1-(1/2)^{2 i_w-1})^{\lfloor(\ln n)^2/(4(2 i_w-1))\rfloor} \leq C_{3,2}e^{-c_{3,2}(\ln n)^2}$ 
 with $C_{3,2}=C_{3,2}(w) < +\infty$ and $c_{3,2}=c_{3,2}(w) > 0$. We now consider 
 $\mathds{P}(\mathcal{A}_2|i_w < \xi(0) \leq \frac{1-2p_w}{2}\lfloor(\ln n)^2/4\rfloor)$. 
 We notice that it is possible to construct 
 i.i.d. random variables $(\breve \xi_\ell)_{\ell \in \mathds{N}}$ such that $\mathds{P}(\breve\xi_0=1)=1-\mathds{P}(\breve\xi_0=-1)=p_w$ 
 (we denote this law $R(p_w)$ for short) and if 
 $i_w < \xi(0) \leq \frac{1-2p_w}{2}\lfloor(\ln n)^2/4\rfloor$, for all $0 \leq \ell < E_0$, $\xi(\ell+1)-\xi(\ell) \leq \breve \xi_\ell$. 
 We then have $\mathds{P}(\mathcal{A}_2|i_w < \xi(0) \leq \frac{1-2p_w}{2}\lfloor(\ln n)^2/4\rfloor) \leq 
 \mathds{P}(\sum_{\ell=0}^{\lfloor(\ln n)^2/4\rfloor-1}\breve\xi_\ell > -\frac{1-2p_w}{2}\lfloor(\ln n)^2/4\rfloor) 
 = \mathds{P}(\sum_{\ell=0}^{\lfloor(\ln n)^2/4\rfloor-1}(\breve\xi_\ell-\mathds{E}(\breve\xi_\ell))>
 \frac{1-2p_w}{2}\lfloor(\ln n)^2/4\rfloor) \leq \exp(-\frac{(1-2p_w)^2}{8}\lfloor(\ln n)^2/4\rfloor)$ by the Hoeffding inequality. 
 This yields the existence of $C_{3,2}'=C_{3,2}'(w) < +\infty$ and $c_{3,2}'=c_{3,2}'(w) > 0$ such that 
 $\mathds{P}(\mathcal{A}_2|i_w < \xi(0) \leq \frac{1-2p_w}{2}\lfloor(\ln n)^2/4\rfloor) \leq C_{3,2}'e^{-c_{3,2}'(\ln n)^2}$. 
 In addition, $\xi$ is symmetric, hence $\mathds{P}(\mathcal{A}_2|i_w < -\xi(0) \leq \frac{1-2p_w}{2}\lfloor(\ln n)^2/4\rfloor) = 
 \mathds{P}(\mathcal{A}_2|i_w < \xi(0) \leq \frac{1-2p_w}{2}\lfloor(\ln n)^2/4\rfloor)$. We deduce 
 $\mathds{P}(\mathcal{A}_1^c \cap \mathcal{A}_2) \leq (C_{3,2} \vee C_{3,2}')e^{-(c_{3,2} \wedge c_{3,2}')(\ln n)^2}$.
 
 We now deal with $\mathds{P}(\mathcal{A}_3)$. We will begin by finding a constant $c_{3,3}=c_{3,3}(w) > 0$ 
 so that for any $\ell \in \mathds{N}$, we have $\mathds{E}(e^{c_{3,3}(E_{\ell+1}-E_\ell)}) < +\infty$. 
 Since $\xi$ is symmetric, the $E_{\ell+1}-E_\ell$, $\ell \in \mathds{N}$ are i.i.d., so we study $E_1-E_0$. 
 By the argument that allowed us to bound $\mathds{P}(\mathcal{A}_2||\xi(0)| < i_w)$, we can see 
 that if $i_w > 0$, for any $\ell \in \mathds{N}^*$, $\mathds{P}(E_1-E_0 > \ell||\xi(E_0+1)|<i_w) 
 \leq (1-(1/2)^{2 i_w-1})^{\lfloor(\ell-1)/(2 i_w-1)\rfloor}$. Moreover, we also have 
 $\mathds{P}(E_1-E_0 > \ell|\xi(E_0+1)=i_w+1)=\mathds{P}(E_1-E_0 > \ell|\xi(E_0+1)=-i_w-1) \leq 
 \mathds{P}(\sum_{\ell'=1}^{\ell-1}\breve\xi_{\ell'} \geq 0)$ where the $\breve\xi_{\ell'}$, $\ell' \in \mathds{N}$ 
 are i.i.d. with law $R(p_w)$, and we can use the Hoeffding inequality to bound the last probability by 
 $e^{-(1-2p_w)^2(\ell-1)/2}$. We deduce that there exist constants $C_{3,3}=C_{3,3}(w) < +\infty$ and 
 $c_{3,3}=c_{3,3}(w) > 0$ such that $\mathds{P}(E_1-E_0 > \ell) \leq C_{3,3}e^{-2c_{3,3}\ell}$, 
 which implies $\mathds{E}(e^{c_{3,3}(E_1-E_0)}) < +\infty$. Consequently, we have $\mathds{P}(\mathcal{A}_3) 
 = \mathds{P}(\sum_{\ell=0}^{\lfloor \bar c_3 (\ln n)^2\rfloor-1}(E_{\ell+1}-E_\ell)> (\ln n)^2/4) 
 \leq e^{-c_{3,3}(\ln n)^2/4}\mathds{E}(\exp(\sum_{\ell=0}^{\lfloor \bar c_3 (\ln n)^2\rfloor-1}c_{3,3}(E_{\ell+1}-E_\ell))) 
 = e^{-c_{3,3}(\ln n)^2/4}\mathds{E}(e^{c_{3,3}(E_1-E_0)})^{\lfloor \bar c_3 (\ln n)^2\rfloor}$. 
 If we choose $\bar c_3 = \frac{c_{3,3}}{8 \ln(\mathds{E}(e^{c_{3,3}(E_1-E_0)}))}$, $\bar c_{3}$ is positive, 
 depends only on $w$, and satisfies $\mathds{P}(\mathcal{A}_3) \leq e^{-c_{3,3}(\ln n)^2/8}$. 
 
 Finally, we bound $\mathds{P}(\mathcal{A}_4)$. In order to do that, we notice that the arguments used for $\mathcal{A}_3$ 
 yield in particular that for any $\ell \in \mathds{N}$, $E_{\ell+1}-E_\ell$ is finite a.s. In addition, 
 similar arguments can be used to show that $E_0$ is finite a.s., so for any $\ell \in \mathds{N}$, $E_{\ell}$ is finite a.s.
 Moreover for any $\ell \in \mathds{N}^*$, 
 if $i_w > 0$, $\mathds{P}(T' > E_{2\ell}) = \mathds{P}(T' > E_{2\ell},\xi(E_{2(\ell-1)})=-i_w,T' > E_{2(\ell-1)}) 
 + \mathds{P}(T' > E_{2\ell},\xi(E_{2(\ell-1)})=i_w,T' > E_{2(\ell-1)})$. Furthermore, we have 
 $\mathds{P}(T' \leq E_{2\ell},\xi(E_{2(\ell-1)})=-i_w,T' > E_{2(\ell-1)}) \geq 
 \mathds{P}(T' > E_{2(\ell-1)},\xi(E_{2(\ell-1)})=-i_w,\xi(E_{2(\ell-1)}+1)=-i_w+1,\dots,\xi(E_{2(\ell-1)}+i_w)=0,
 \xi(E_{2(\ell-1)}+i_w+1)=1,\xi(E_{2(\ell-1)}+i_w+2)=0)=(1/2)^{i_w+2}\mathds{P}(\xi(E_{2(\ell-1)})=-i_w,T' > E_{2(\ell-1)})$. 
 Similarly, $\mathds{P}(T' \leq E_{2\ell},\xi(E_{2(\ell-1)})=i_w,T' > E_{2(\ell-1)}) 
 \geq (1/2)^{i_w}\mathds{P}(\xi(E_{2(\ell-1)})=i_w,T' > E_{2(\ell-1)})$. We deduce $\mathds{P}(T' > E_{2\ell}) \leq 
 (1-(1/2)^{i_w+2})\mathds{P}(T' > E_{2(\ell-1)})$. The same argument yields that if $i_w=0$, 
 $\mathds{P}(T' \leq E_{2\ell},T' > E_{2(\ell-1)}) \geq ((1-p_w)/2)\mathds{P}(T' > E_{2(\ell-1)})$, 
 so $\mathds{P}(T' > E_{2\ell}) \leq (1-(1-p_w)/2)\mathds{P}(T' > E_{2(\ell-1)})$. Therefore in both cases 
 there exists a constant $c_{3,4}=c_{3,4}(w)>0$ such that $\mathds{P}(T' > E_{2\ell}) 
 \leq e^{-c_{3,4}}\mathds{P}(T' > E_{2(\ell-1)})$. We deduce that 
 $\mathds{P}(\mathcal{A}_4) \leq e^{-c_{3,4}\lfloor\lfloor \bar c_3 (\ln n)^2\rfloor/2\rfloor}$, 
 so $\mathds{P}(\mathcal{A}_4) \leq e^{3c_{3,4}/2}e^{-c_{3,4} \bar c_3 (\ln n)^2/2}$, which ends the proof of the exponential bounds on the terms in \eqref{eq_B3}. 
\end{proof}

We now detail the proof of Proposition \ref{prop_excursions}, which states the existence of a constant $c_4=c_4(w,\varepsilon)>0$ such that when $n$ is large enough, $\mathds{P}(\mathcal{B}_0^c \cap \mathcal{B}_1^c \cap \mathcal{B}_3^c \cap \mathcal{B}_4) \leq e^{-c_4(\ln n)^2}$. 

\begin{proof}[Proof of Proposition \ref{prop_excursions}.]
Let $\lfloor N \theta\rfloor-2n^{(\alpha+4)/5} \leq m \leq \lfloor N \theta\rfloor+2n^{(\alpha+4)/5}$, $\lfloor Nx \rfloor-n^{(\alpha+4)/5} \leq i 
\leq \lfloor Nx \rfloor+n^{(\alpha+4)/5}$, $\iota \in \{+,-\}$. 
We denote $\bar m = \mathbf{T}_{m,i}^\iota$ and $\bar i = X_{\mathbf{T}_{m,i}^\iota}$. It is enough to find 
a constant $\tilde c_4=\tilde c_4(w,\varepsilon) > 0$ 
such that when $n$ is large enough, $\mathds{P}((\mathcal{B}_0^{m,i,\iota})^c \cap (\mathcal{B}_{\bar m,1}^-)^c
\cap (\mathcal{B}_{\bar m,3}^-)^c \cap \mathcal{B}_{\bar m,4}^-) \leq e^{-\tilde c_4 (\ln n)^2}$ and 
$\mathds{P}((\mathcal{B}_0^{m,i,\iota})^c \cap (\mathcal{B}_{\bar m,1}^+)^c
\cap (\mathcal{B}_{\bar m,3}^+)^c \cap \mathcal{B}_{\bar m,4}^+) \leq e^{-\tilde c_4 (\ln n)^2}$. 
We will write the proof only for the $-$ case, as the $+$ case can be dealt with in the same way. 
The idea will be to look at $L^{\bar m,-}$ as a random walk and 
to show that there are at most $(\ln n)^2\sqrt{n}$ ``excursions of $L^{\bar m,-}$ below $(\ln n)^3$'', 
each one having length at most $(\ln n)^8$ since $(\mathcal{B}_{\bar m,1}^-)^c$ occurs. We will achieve it 
by noticing that the ``excursions of $L^{\bar m,-}$ below $(\ln n)^3$'' are between ``excursions above $(\ln n)^3$''. To control those, we see that by Observation \ref{obs_rec_temps_local} we have 
$L_{j+1}^{\bar m,-}=L_j^{\bar m,-}+\zeta^{\bar m,-,E}_j-\zeta^{\bar m,-,B}_j$ and since 
$(\mathcal{B}_0^{m,i,\iota})^c \cap (\mathcal{B}_{\bar m,3}^-)^c$ occurs, 
when $L_j^{\bar m,-} \geq (\ln n)^3$, $\zeta^{\bar m,-,E}_j=\zeta^{\bar m,-,I}_j$. We deduce  $L_{j+1}^{\bar m,-}-L_j^{\bar m,-}=\zeta^{\bar m,-,I}_j-\zeta^{\bar m,-,B}_j$, hence $L_j^{\bar m,-}$ will roughly be a random walk with i.i.d. increments. Therefore each ``excursion of $L^{\bar m,-}$ above $(\ln n)^3$'' has probability roughly $\frac{1}{\sqrt{n}}$ to have length at least $n$, thus to be the last ``excursion'' we see as we only consider an interval of size $\varepsilon n$. Consequently, we will not see more than $(\ln n)^2\sqrt{n}$ ``excursions of $L^{\bar m,-}$ above $(\ln n)^3$'', which is enough.

We define $J(0)=\bar i - \lfloor \varepsilon n\rfloor$, and for all $\ell \in \mathds{N}$, 
$J(\ell+1)=\inf\{J(\ell)<j < \bar i \,|\,\bar L^{\bar m,-}_j \geq (\ln n)^3,\exists\, j' \in \{J(\ell)+1,\dots,j-1\},
\bar L^{\bar m,-}_{j'} < (\ln n)^3\}$. 
If $(\mathcal{B}_0^{m,i,\iota})^c \cap (\mathcal{B}_{\bar m,1}^-)^c$ occurs and $n$ is large enough, 
$\bar L_{j}^{\bar m,-} = L_{j}^{\bar m,-}$ for any $\bar i -\lfloor\varepsilon n\rfloor \leq j \leq \bar i$, 
so for any $\ell \in \mathds{N}$, 
$|\{J(\ell) < j < J(\ell+1) \wedge \bar i \,|\, \bar L^{\bar m,-}_{j} < (\ln n)^3\}| \leq \lfloor (\ln n)^8 \rfloor$. 
Therefore if $(\mathcal{B}_0^{m,i,\iota})^c \cap (\mathcal{B}_{\bar m,1}^-)^c \cap \mathcal{B}_{\bar m,4}^-$ 
occurs and $n$ is large enough, 
$J(\lfloor(\ln n)^2\sqrt{n}\rfloor) < \bar i$. For any $1 \leq \ell < \lfloor(\ln n)^2\sqrt{n}\rfloor$ such that $J(\ell+1) < \bar i$, 
we set $\mathcal{D}_\ell = \{\exists\, j \in \{J(\ell),\dots,J(\ell+1)\} \,|\, 
\sum_{j'=J(\ell)}^{j-1}(\bar\zeta_{j'}^{\bar m ,-,I}+\bar\Delta_{\bar m,j'}+1/2) < 0\}$. 

We are going to show that when $n$ is large enough, for any $1 \leq \ell < \lfloor(\ln n)^2\sqrt{n}\rfloor$, if 
$(\mathcal{B}_0^{m,i,\iota})^c \cap (\mathcal{B}_{\bar m,3}^-)^c$ 
occurs and $J(\ell),J(\ell+1) < \bar i$, 
$\mathcal{D}_\ell$ occurs. Let $1 \leq \ell < \lfloor(\ln n)^2\sqrt{n}\rfloor$. Let $j_0$ be the smallest 
$j \in \{J(\ell)+1,\dots,J(\ell+1)-1\}$ such that $\bar L_j^{\bar m,-} < (\ln n)^3$. 
One can prove by induction on $j \in \{J(\ell),\dots,j_0\}$ that 
$\bar L_j^{\bar m,-}=\bar L_{J(\ell)}^{\bar m,-}+\sum_{j'=J(\ell)}^{j-1}(\bar\zeta_{j'}^{\bar m ,-,I}+\bar\Delta_{\bar m,j'}+1/2)$. 
Indeed, it is true for $j=J(\ell)$; now suppose 
we have $\bar L_j^{\bar m,-}=\bar L_{J(\ell)}^{\bar m,-}+\sum_{j'=J(\ell)}^{j-1}(\bar\zeta_{j'}^{\bar m ,-,I}+\bar\Delta_{\bar m,j'}+1/2)$ 
for some $j \in \{J(\ell),\dots,j_0-1\}$. Since $(\mathcal{B}_0^{m,i,\iota})^c$ occurs and $n$ is large enough, 
$L_j^{\bar m,-} = \bar L_j^{\bar m,-} \geq (\ln n)^3$. Moreover, we notice that $L_{j+1}^{\bar m,-}=
L_j^{\bar m,-}+\zeta^{\bar m,-,E}_j-\zeta^{\bar m,-,B}_j=L_j^{\bar m,-}+\zeta^{\bar m,-,I}_j+\Delta_{\bar m,j}+1/2$ 
since $L_j^{\bar m,-} \geq (\ln n)^3$ and $(\mathcal{B}_{\bar m,3}^-)^c$ occurs. Since $(\mathcal{B}_0^{m,i,\iota})^c$ 
occurs and $n$ is large enough, we deduce $\bar L_{j+1}^{\bar m,-}=\bar L_{J(\ell)}^{\bar m,-}
+\sum_{j'=J(\ell)}^{j}(\bar\zeta_{j'}^{\bar m ,-,I}+\bar\Delta_{\bar m,j'}+1/2)$. 
Therefore $\bar L_{j_0}^{\bar m,-}=\bar L_{J(\ell)}^{\bar m,-}
+\sum_{j'=J(\ell)}^{j_0-1}(\bar\zeta_{j'}^{\bar m ,-,I}+\bar\Delta_{\bar m,j'}+1/2)$. 
In addition, $\bar L_{J(\ell)}^{\bar m,-} \geq (\ln n)^3$ and $\bar L_{j_0}^{\bar m,-} < (\ln n)^3$, so 
$\sum_{j'=J(\ell)}^{j_0-1}(\bar\zeta_{j'}^{\bar m ,-,I}+\bar\Delta_{\bar m,j'}+1/2) < 0$, hence $\mathcal{D}_\ell$ is satisfied. 

As a consequence, if $n$ is large enough and $(\mathcal{B}_0^{m,i,\iota})^c \cap (\mathcal{B}_{\bar m,1}^-)^c 
\cap (\mathcal{B}_{\bar m,3}^-)^c \cap \mathcal{B}_{\bar m,4}^-$ occurs, 
for all $1 \leq \ell < \lfloor(\ln n)^2\sqrt{n}\rfloor$ we have that $\mathcal{D}_\ell$ 
occurs. Therefore, when $n$ is large enough, $\mathds{P}((\mathcal{B}_0^{m,i,\iota})^c \cap (\mathcal{B}_{\bar m,1}^-)^c 
\cap (\mathcal{B}_{\bar m,3}^-)^c \cap \mathcal{B}_{\bar m,4}^-) \leq \mathds{P}(\{J(\lfloor(\ln n)^2\sqrt{n}\rfloor) < \bar i\}
\cap\bigcap_{\ell=1}^{\lfloor(\ln n)^2\sqrt{n}\rfloor-1}\mathcal{D}_\ell)$, so it is enough to bound the latter probability. 
To do that, for any $\bar i - \lfloor \varepsilon n\rfloor < j < \bar i$, 
we define $\mathcal{G}_j=\sigma(\bar\zeta_{j'}^{\bar m ,-,I},\bar\Delta_{\bar m,j'},j' < j;\bar L^{\bar m,-}_{j'},j' \leq j)$. 
We stress that these are not the same 
$\mathcal{G}_j$ as in the proof of Proposition \ref{prop_TCL}, though they are ``morally'' the same thing: 
the $\sigma$-algebra of what happens at the left of $j$. 
We also set $\hat\zeta_j = \bar\zeta_{j_2}^{\bar m ,-,I}+\bar\Delta_{\bar m,j_2}+1/2$. 
To have more convenient notation, we also introduce i.i.d. random variables $\hat\zeta_j$, $j \geq \bar i$, with law that 
of the sum of two independent random variables of law $\rho_0$ (we call this law $\rho_0^{*2}$), independent of everything else. 
The $\hat\zeta_j$, $j > \bar i - \lfloor \varepsilon n\rfloor$, are then i.i.d. random variables with law $\rho_0^{*2}$. 
Furthermore, for $\bar i - \lfloor \varepsilon n\rfloor < j < \bar i$, we define $\mathcal{U}_j = 
\{\forall\, j_1 \in \{j+1,\dots,j+\lfloor\varepsilon n\rfloor\} \,|\, \sum_{j_2=j}^{j_1-1}\hat\zeta_j \geq 0\}$. 
 We will prove that there exists a constant $\bar c_4=\bar c_4(w,\varepsilon) > 0$ such that 
 when $n$ is large enough for any $\bar i -\lfloor \varepsilon n \rfloor < j < \bar i$, $\mathds{P}(\mathcal{U}_{j}|\mathcal{G}_j) 
 \geq \frac{\bar c_4}{\sqrt{n}}$. Indeed, if this is true, we have the following when $n$ is large enough 
 (the third inequality is due to an induction, the fourth one to the fact that $1-x \leq e^{-x}$ for any $x \geq 0$):
 \[
  \mathds{P}\left(\{J(\lfloor(\ln n)^2\sqrt{n}\rfloor) < \bar i\}
\cap\bigcap_{\ell=1}^{\lfloor(\ln n)^2\sqrt{n}\rfloor-1}\mathcal{D}_\ell\right)
 \leq \mathds{E}\left( \mathds{1}_{\mathcal{U}_{J(\lfloor(\ln n)^2\sqrt{n}\rfloor-1)}^c}
 \mathds{1}_{\{J(\lfloor(\ln n)^2\sqrt{n}\rfloor-1) < \bar i\}}
 \prod_{\ell=1}^{\lfloor(\ln n)^2\sqrt{n}\rfloor-2}\mathds{1}_{\mathcal{D}_\ell}\right)
 \]
 \[
  \leq \left(1-\frac{\bar c_4}{\sqrt{n}}\right)\mathds{E}\left(\mathds{1}_{\{J(\lfloor(\ln n)^2\sqrt{n}\rfloor-1) < \bar i\}}
  \prod_{\ell=1}^{\lfloor(\ln n)^2\sqrt{n}\rfloor-2}\mathds{1}_{\mathcal{D}_\ell}\right)
  \leq \left(1-\frac{\bar c_4}{\sqrt{n}}\right)^{\lfloor(\ln n)^2\sqrt{n}\rfloor-1}
  \leq e^{-\frac{\bar c_4}{2\sqrt{n}}(\ln n)^2\sqrt{n}} = e^{-\frac{\bar c_4}{2}(\ln n)^2},
 \]
 which is enough. 
 
 Consequently, we only have to show that there exists a constant $\bar c_4=\bar c_4(w,\varepsilon) > 0$ such that 
 when $n$ is large enough, for any $\bar i -\lfloor \varepsilon n \rfloor < j < \bar i$, $\mathds{P}(\mathcal{U}_{j}|\mathcal{G}_j) 
 \geq \frac{\bar c_4}{\sqrt{n}}$. For any $\bar i -\lfloor \varepsilon n \rfloor < j < \bar i$, the $\hat \zeta_{j'}$, $j' \geq j$ 
 are independent from $\mathcal{G}_j$, so what we have to prove is that $\mathds{P}(\mathcal{U})\geq \frac{\bar c_4}{\sqrt{n}}$ where 
 $\mathcal{U}$ is defined as follows: $\breve\zeta_j$, $j \in \mathds{N}$ are i.i.d. random variables 
 with law $\rho_0^{*2}$ and $\mathcal{U} = 
 \{\forall\, j \in \{1,\dots,\lfloor\varepsilon n\rfloor\} \,|\, \sum_{j'=0}^{j-1}\breve\zeta_j \geq 0\}$. 

 In order to do that, we denote $\breve\tau=\inf\{j \geq 1 \,|\,\sum_{j'=0}^{j-1}\breve\zeta_j \geq \sqrt{\varepsilon n}\}$ 
 and $\breve\tau'=\inf\{j \geq 1 \,|\,\sum_{j'=0}^{j-1}\breve\zeta_{j'} \leq 0\}$. We notice that 
 $\mathds{P}(\mathcal{U}) \geq \mathds{P}(\{\breve\zeta_0 > 0\} \cap \{\breve\tau < \breve\tau'\} \cap \mathcal{U}')$, 
 where $\mathcal{U}'=\{\forall\, j \in \{0,\dots,\lfloor\varepsilon n\rfloor-1\},
 \sum_{j'=\breve\tau \wedge \breve\tau'}^{\breve\tau \wedge \breve\tau'+j}\breve\zeta_{j'} \geq -\sqrt{\varepsilon n}\}$. 
 For any $j \in \mathds{N}^*$, we denote $\mathcal{G}_j'=\sigma(\breve\zeta_{j'},j'<j)$. 
 We then have $\mathds{P}(\mathcal{U}) \geq \mathds{E}(\mathds{1}_{\{\breve\zeta_0 > 0\} \cap \{\breve\tau < \breve\tau'\}}
  \mathds{P}(\mathcal{U}'|\mathcal{G}_{\breve\tau \wedge \breve\tau'}'))$. In addition, if we denote 
  $\mathcal{U}''=\{\forall\, j \in \{0,\dots,\lfloor\varepsilon n\rfloor-1\}, \sum_{j'=0}^{j}\breve\zeta_{j'} \geq -\sqrt{\varepsilon n}\}$, 
  we have $\mathds{P}(\mathcal{U}'|\mathcal{G}_{\breve\tau \wedge \breve\tau'}')) = \mathds{P}(\mathcal{U}'')$, 
  thus $\mathds{P}(\mathcal{U}) \geq \mathds{P}(\{\breve\zeta_0 > 0\} \cap \{\breve\tau < \breve\tau'\})
  \mathds{P}(\mathcal{U}'')$. 
  
  We begin by dealing with $\mathds{P}(\{\breve\zeta_0 > 0\} \cap \{\breve\tau < \breve\tau'\})$. 
  We can write $\mathds{P}(\{\breve\zeta_0 > 0\} \cap \{\breve\tau < \breve\tau'\}) = \sum_{\ell=1}^{+\infty}\mathds{P}(\breve\tau < \breve\tau'|\breve\zeta_0 = \ell)\mathds{P}(\breve\zeta_0 = \ell)$. For $\ell \geq \sqrt{\varepsilon n}$, $\mathds{P}(\breve\tau < \breve\tau'|\breve\zeta_0 = \ell)=1$. 
 Set $1 \leq \ell < \sqrt{\varepsilon n}$. We notice that since the $\breve\zeta_j$ are i.i.d. with law $\rho_0^{*2}$ 
 and $\rho_0$ has exponential tails, the $\breve\zeta_j$ have expectation 0 and finite nonzero variance. 
 Therefore, by the classical gambler's ruin result for i.i.d. random walks (see for example Theorem 5.1.7 of 
 \cite{Lawler_Limic_RW}), there exists a constant $\breve c_4=\breve c_4(w,\varepsilon) > 0$ such that when $n$ is large enough, 
 $\mathds{P}(\breve\tau < \breve\tau'|\breve\zeta_0 = \ell) \geq \frac{\breve c_4}{\sqrt{n}}$. 
 We deduce that when $n$ is large enough, $\mathds{P}(\{\breve\zeta_0 > 0\} \cap \{\breve\tau < \breve\tau'\}) 
 \geq \sum_{\ell=1}^{+\infty}\frac{\breve c_4}{\sqrt{n}}\mathds{P}(\breve\zeta_0 = \ell) 
 = \frac{\breve c_4}{\sqrt{n}}\mathds{P}(\breve\zeta_0 > 0)$, where $\breve c_4\mathds{P}(\breve\zeta_0 > 0)$ 
 is positive and depends only on $w$ and $\varepsilon$. We now deal with $\mathds{P}(\mathcal{U}'')$. 
 Let $\breve\tau''=\inf\{j \geq 1 \,|\,\sum_{j'=0}^{j-1}\breve\zeta_j \leq -\sqrt{\varepsilon n}\}$, 
 we then have $\mathds{P}(\mathcal{U}'') \geq \mathds{P}(\breve\tau''>\lfloor\varepsilon n\rfloor)$. 
 Now, theorem 5.1.7 of \cite{Lawler_Limic_RW} yields that there exists a constant $\breve c_4'=\breve c_4'(w)>0$ 
 such that when $n$ is large enough, $\mathds{P}(\breve\tau''>\lfloor\varepsilon n\rfloor) \geq 
 \breve c_4'\frac{\sqrt{\varepsilon n}}{\sqrt{\lfloor\varepsilon n\rfloor}} \geq \breve c_4'$. 
 We conclude that when $n$ is large enough, 
 $\mathds{P}(\mathcal{U}) \geq \frac{\breve c_4}{\sqrt{n}}\mathds{P}(\breve\zeta_0 > 0)\breve c_4'$, 
 with $\breve c_4 \breve c_4' \mathds{P}(\breve\zeta_0 > 0)$ positive depending only on $w$ and $\varepsilon$, 
 which ends the proof. 
\end{proof}

We now prove Lemma \ref{lem_no_atoms}, which states that for all $k\in\mathds{N}^*$, the random variables $\breve T_k$ and $\sum_{k'=1}^k \breve T_{k'}$ (defined in Definition \ref{def_lim_envts}) have no atoms.
 
\begin{proof}[Proof of Lemma \ref{lem_no_atoms}.]
 Set $k\in\mathds{N}$ and let us prove that $\sum_{k'=1}^{k+1} \breve T_{k'}$ has no atoms (the argument for $\breve T_{k+1}$ is similar). By Definition \ref{def_lim_envts} it is enough to show that for any $a \geq 0$, we have $\mathds{P}(2\int_{-\varepsilon}^\varepsilon(\bar W_t^{k,-}-W_t^k)\mathrm{d}t+\sum_{k'=1}^{k} \breve T_{k'}=a)=0$ and $\mathds{P}(2\int_{-\varepsilon}^\varepsilon(\bar W_t^{k,+}-W_t^k)\mathrm{d}t+\sum_{k'=1}^{k} \breve T_{k'}=a)=0$ almost-surely. We write the proof for $\bar W_t^{k,-}$, as the proof for $\bar W_t^{k,+}$ is similar. Let $a \geq 0$. By Definition \ref{def_lim_envts}, it is enough to prove that $\mathds{P}(2\int_{-\varepsilon}^\varepsilon(V_t^{k,-}-W_t^k)\mathrm{d}t+\sum_{k'=1}^{k} \breve T_{k'}=a)=0$. We set $\delta>0$. It is enough to prove that $\mathds{P}(2\int_{-\varepsilon}^\varepsilon(V_t^{k,-}-W_t^k)\mathrm{d}t+\sum_{k'=1}^{k} \breve T_{k'}=a) \leq \delta$. 

In order to do that, we need to introduce a new process. We recall that $(V_t^{k,-})_{t\in[-\varepsilon,\varepsilon]}$ is a Brownian motion reflected on $W^k$ above $W^k$ on $[-\varepsilon,0]$ and absorbed by $W^k$ on $[0,\varepsilon]$ with $V_{-\varepsilon}^{k,-}=W_{-\varepsilon}^{k}$ (see Definition \ref{def_lim_envts}). We consider the process $(\tilde W_t^{k})_{t\in[-\varepsilon,\varepsilon]}$ that is ``the same Brownian motion reflected on $W^k$ on $[-\varepsilon,0]$, but free on $[0,\varepsilon]$'': if $(V_t^{k,-})_{t\in[-\varepsilon,\varepsilon]}$ was constructed as the reflection and absorption of a given Brownian motion, $(\tilde W_t^{k})_{t\in[-\varepsilon,\varepsilon]}$ is the latter Brownian motion reflected on $W^k$ above $W^k$ on $[-\varepsilon,0]$ and free on $[0,\varepsilon]$, with $\tilde W_{-\varepsilon}^{k}=W_{-\varepsilon}^{k}$. In Definition \ref{def_lim_envts} we also denoted $\sigma_{k,-}=\inf\{t \geq 0\,|\,V_t^{k,-}=W_t^k\}$ the absorption time of $(V_t^{k,-})_{t\in[-\varepsilon,\varepsilon]}$; for any $t \leq \sigma_{k,-}$ we have $V_t^{k,-}=\tilde W_t^{k}$. By Proposition \ref{prop_envts_above_barrier}, $\mathds{P}(V_0^{k,-}>W_0^k)=1$, and the processes $V^{k,-}$ and $W^k$ are continuous, so there exists $\bar\varepsilon>0$ so that $\mathds{P}(\sigma_{k,-}<\bar\varepsilon)\leq \delta$. This implies $\mathds{P}(2\int_{-\varepsilon}^\varepsilon(V_t^{k,-}-W_t^k)\mathrm{d}t+\sum_{k'=1}^{k} \breve T_{k'}=a) \leq \mathds{P}(\int_{-\varepsilon}^\varepsilon(V_t^{k,-}-W_t^k)\mathrm{d}t+\frac{1}{2}\sum_{k'=1}^{k} \breve T_{k'}=\frac{a}{2},\sigma_{k,-}\geq\bar\varepsilon)+\mathds{P}(\sigma_{k,-}<\bar\varepsilon)\leq \mathds{P}(\int_{0}^{\bar\varepsilon}(\tilde W_t^{k}-W_t^k)\mathrm{d}t=\frac{a}{2}-\int_{-\varepsilon}^0(\tilde W_t^{k}-W_t^k)\mathrm{d}t-\int_{\bar\varepsilon}^{\sigma_{k,-}}(\tilde W_t^{k}-W_t^k)\mathrm{d}t-\frac{1}{2}\sum_{k'=1}^{k} \breve T_{k'})+\delta$. Consequently, we only have to prove that $\mathds{P}(\int_{0}^{\bar\varepsilon}(\tilde W_t^{k}-W_t^k)\mathrm{d}t=\frac{a}{2}-\int_{-\varepsilon}^0(\tilde W_t^{k}-W_t^k)\mathrm{d}t-\int_{\bar\varepsilon}^{\sigma_{k,-}}(\tilde W_t^{k}-W_t^k)\mathrm{d}t-\frac{1}{2}\sum_{k'=1}^{k} \breve T_{k'})=0$. 

 Now, we may consider a Brownian motion $(\breve W_t^{k})_{t\in[-\varepsilon,\varepsilon]}$ so that $\breve W_{-\varepsilon}^k=W_{-\varepsilon}^k$ and $(\tilde W_t^{k})_{t\in[-\varepsilon,\varepsilon]}$ is the Brownian motion $(\breve W_t^{k})_{t\in[-\varepsilon,\varepsilon]}$ reflected on $W^k$ on $[-\varepsilon,0]$ and free on $[0,\varepsilon]$. Then the process $(\breve W_t^{k}-\breve W_0^k - \frac{t}{\bar\varepsilon}(\breve W_{\bar\varepsilon}^{k}-\breve W_0^k))_{t\in[0,\bar\varepsilon]}$ is independent from $(\breve W_t^{k}-\breve W_{-\varepsilon}^k)_{t\in[-\varepsilon,0]}$ and $(\breve W_t^{k}-\breve W_{0}^k)_{t\in[\bar\varepsilon,\varepsilon]}$, as these are Gaussian processes and the covariances are 0. Furthermore, these three processes are independent from $W^k,\breve T_1,...,\breve T_k$, so $(\breve W_t^{k}-\breve W_0^k - \frac{t}{\bar\varepsilon}(\breve W_{\bar\varepsilon}^{k}-\breve W_0^k))_{t\in[0,\bar\varepsilon]}$ is independent from $(\breve W_t^{k}-\breve W_{-\varepsilon}^k)_{t\in[-\varepsilon,0]},(\breve W_t^{k}-\breve W_{0}^k)_{t\in[\bar\varepsilon,\varepsilon]},W^k,\breve T_1,...,\breve T_k$. This implies $(\tilde W_t^{k}-\tilde W_0^k - \frac{t}{\bar\varepsilon}(\tilde W_{\bar\varepsilon}^{k}-\tilde W_0^k))_{t\in[0,\bar\varepsilon]}$ is independent from $(\tilde W_t^{k})_{t\in[-\varepsilon,0]},(\tilde W_t^{k})_{t\in[\bar\varepsilon,\varepsilon]},W^k,\breve T_1,...,\breve T_k$. In addition, we may write $\int_{0}^{\bar\varepsilon}(\tilde W_t^{k}-W_t^k)\mathrm{d}t=\int_{0}^{\bar\varepsilon}(\tilde W_t^{k}-\tilde W_0^k - \frac{t}{\bar\varepsilon}(\tilde W_{\bar\varepsilon}^{k}-\tilde W_0^k))\mathrm{d}t+\int_{0}^{\bar\varepsilon}(\tilde W_0^k + \frac{t}{\bar\varepsilon}(\tilde W_{\bar\varepsilon}^{k}-\tilde W_0^k)-W_t^k)\mathrm{d}t$, so we can write $\mathds{P}(\int_{0}^{\bar\varepsilon}(\tilde W_t^{k}-W_t^k)\mathrm{d}t=\frac{a}{2}-\int_{-\varepsilon}^0(\tilde W_t^{k}-W_t^k)\mathrm{d}t-\int_{\bar\varepsilon}^{\sigma_{k,-}}(\tilde W_t^{k}-W_t^k)\mathrm{d}t-\frac{1}{2}\sum_{k'=1}^{k} \breve T_{k'})=\mathds{P}(\int_{0}^{\bar\varepsilon}(\tilde W_t^{k}-\tilde W_0^k - \frac{t}{\bar\varepsilon}(\tilde W_{\bar\varepsilon}^{k}-\tilde W_0^k))\mathrm{d}t=\frac{a}{2}-I)$, where $I$ depends only on $(\tilde W_t^{k})_{t\in[-\varepsilon,0]},(\tilde W_t^{k})_{t\in[\bar\varepsilon,\varepsilon]},W^k,\breve T_1,...,\breve T_k$ hence is independent from $\int_{0}^{\bar\varepsilon}(\tilde W_t^{k}-\tilde W_0^k - \frac{t}{\bar\varepsilon}(\tilde W_{\bar\varepsilon}^{k}-\tilde W_0^k))\mathrm{d}t$. Consequently, it is enough to prove that $\int_{0}^{\bar\varepsilon}(\tilde W_t^{k}-\tilde W_0^k - \frac{t}{\bar\varepsilon}(\tilde W_{\bar\varepsilon}^{k}-\tilde W_0^k))\mathrm{d}t$ has no atoms. In addition, we may write $\int_{0}^{\bar\varepsilon}(\tilde W_t^{k}-\tilde W_0^k - \frac{t}{\bar\varepsilon}(\tilde W_{\bar\varepsilon}^{k}-\tilde W_0^k))\mathrm{d}t=\int_0^{\bar\varepsilon/2}(\tilde W_t^{k}-\tilde W_0^k - \frac{t}{\bar\varepsilon}(\tilde W_{\bar\varepsilon}^{k}-\tilde W_0^k)-\frac{2}{\bar\varepsilon}t (\tilde W_{\bar\varepsilon/2}^k-\frac{1}{2}\tilde W_{0}^k-\frac{1}{2}\tilde W_{\bar\varepsilon}^k))\mathrm{d}t+\int_{\bar\varepsilon/2}^{\bar\varepsilon}(\tilde W_t^{k}-\tilde W_0^k - \frac{t}{\bar\varepsilon}(\tilde W_{\bar\varepsilon}^{k}-\tilde W_0^k)-(2-\frac{2}{\bar\varepsilon}t) (\tilde W_{\bar\varepsilon/2}^k-\frac{1}{2}\tilde W_{0}^k-\frac{1}{2}\tilde W_{\bar\varepsilon}^k)))\mathrm{d}t+\frac{\bar\varepsilon}{2}(\tilde W_{\bar\varepsilon/2}^k-\frac{1}{2}\tilde W_{0}^k-\frac{1}{2}\tilde W_{\bar\varepsilon}^k)$. Moreover, $\frac{\bar\varepsilon}{2}(\tilde W_{\bar\varepsilon/2}^k-\frac{1}{2}\tilde W_{0}^k-\frac{1}{2}\tilde W_{\bar\varepsilon}^k)$ is independent from the two integrands (one can check the covariances are 0), hence from the sum of the integrals. Furthermore, $\frac{\bar\varepsilon}{2}(\tilde W_{\bar\varepsilon/2}^k-\frac{1}{2}\tilde W_{0}^k-\frac{1}{2}\tilde W_{\bar\varepsilon}^k)=\frac{\bar\varepsilon}{4}(\tilde W_{\bar\varepsilon/2}^k-\tilde W_{0}^k)+\frac{\bar\varepsilon}{4}(\tilde W_{\bar\varepsilon/2}^k-\tilde W_{\bar\varepsilon}^k)$ has no atoms, therefore $\int_{0}^{\bar\varepsilon}(\tilde W_t^{k}-\tilde W_0^k - \frac{t}{\bar\varepsilon}(\tilde W_{\bar\varepsilon}^{k}-\tilde W_0^k))\mathrm{d}t$ has no atoms, which ends the proof of Lemma \ref{lem_no_atoms}.
\end{proof}

Finally, we give the proof of Claim \ref{claim_conv_va_Skorohod}, which is needed in the proof of Proposition \ref{prop_conv_envts} (the notations are defined there).

 \begin{proof}[Proof of Claim \ref{claim_conv_va_Skorohod}.]
  We denote $F_N((\frac{1}{\sqrt{n}}\hat{\tilde E}_{k,nt}^{N,-})_{t\in[-\varepsilon,\varepsilon]},(\frac{1}{\sqrt{n}}\hat E_{k,nt}^{N})_{t\in[-\varepsilon,\varepsilon]})=(\frac{1}{n}\hat\sigma_{k,-}^N,(\frac{1}{\sqrt{n}}\hat E_{k,nt}^{N,-})_{t\in[-\varepsilon,\varepsilon]})$; we also denote $F((\hat{\tilde W}_t^k)_{t\in[-\varepsilon,\varepsilon]},(\hat W_t^k)_{t\in[-\varepsilon,\varepsilon]})=(\hat \sigma_{k,-},(\hat V_t^{k,-})_{t\in[-\varepsilon,\varepsilon]})$. We begin by proving that $\frac{1}{n}\hat\sigma_{k,-}^N$ converges in probability to $\hat \sigma_{k,-}$. Let $\delta_1,\delta_2 > 0$. 
  For any $\delta \in \mathds{R}$, we denote $\frac{1}{n}\hat\sigma_{k,-}^N(\delta)=\inf\{t\in[0,\varepsilon]\,|\,\frac{1}{\sqrt{n}} \hat{\tilde E}_{k,nt}^{N,-} \leq \frac{1}{\sqrt{n}}\hat E_{k,nt}^{N}+\delta\}$ and $\hat\sigma_{k,-}(\delta)=\inf\{t\in[0,\varepsilon]\,|\,\hat{\tilde W}_t^k \leq \hat W_t^k+\delta\}$. By Proposition \ref{prop_envts_above_barrier} we have $\mathds{P}(V_0^{k,-}>W_0^k)=1$, hence $\mathds{P}(\hat {\tilde W}_0^k>\hat W_0^k)=1$, hence Lemma \ref{lem_conv_abs_times} implies that $\mathds{P}(|\hat\sigma_{k,-}(0)-\hat\sigma_{k,-}(\delta)|>\delta_1)$ tends to 0 when $\delta$ tends to 0, thus there exists a $\delta_0 > 0$ so that $\mathds{P}(|\hat\sigma_{k,-}(0)-\hat\sigma_{k,-}(\pm\delta_0)|>\delta_1)\leq\delta_2/4$. We also denote $\mathcal{I}_1(\delta)=\{\|(\frac{1}{\sqrt{n}}\hat E_{k,nt}^{N})_{t\in[-\varepsilon,\varepsilon]}-(\hat W_t^k)_{t\in[-\varepsilon,\varepsilon]}\|_\infty \geq \delta\}$ and $\mathcal{I}_1'(\delta)=\{\|(\frac{1}{\sqrt{n}}\hat{\tilde E}_{k,nt}^{N,-})_{t\in[-\varepsilon,\varepsilon]}-(\hat{\tilde W}_t^k)_{t\in[-\varepsilon,\varepsilon]}\|_\infty \geq \delta\}$. Since $((\frac{1}{\sqrt{n}}\hat{\tilde E}_{k,nt}^{N,-})_{t\in[-\varepsilon,\varepsilon]},(\frac{1}{\sqrt{n}}\hat E_{k,nt}^{N})_{t\in[-\varepsilon,\varepsilon]})$ converges in probability to $((\hat{\tilde W}_t^k)_{t\in[-\varepsilon,\varepsilon]},(\hat W_t^k)_{t\in[-\varepsilon,\varepsilon]})$ when $N$ tends to $+\infty$, we deduce that $\mathds{P}(\mathcal{I}_1(\delta_0/3)),\mathds{P}(\mathcal{I}_1'(\delta_0/3)) \leq \delta_2/4$ when $N$ is large enough. Now, if $(\mathcal{I}_1(\delta_0/3))^c$ and $(\mathcal{I}_1'(\delta_0/3))^c$ occur, then for $0\leq t\leq\varepsilon\wedge\hat\sigma_{k,-}(\delta_0)$ we have $\frac{1}{\sqrt{n}}\hat{\tilde E}_{k,nt}^{N,-} \geq \hat {\tilde W}_t^k-\delta_0/3 \geq \hat W_t^k +2\delta_0/3 \geq \frac{1}{\sqrt{n}}\hat E_{k,nt}^{N}+\delta_0/3$, hence $\frac{1}{n}\hat\sigma_{k,-}^N \geq \hat\sigma_{k,-}(\delta_0)$. We now prove a symmetric bound. If $\hat\sigma_{k,-}(-\delta_0)$ is finite (we temporarily call it $s$), we have $\frac{1}{\sqrt{n}}\hat{\tilde E}_{k,ns}^{N,-} \leq \hat {\tilde W}_{s}^k+\delta_0/3 \leq \hat W_{s}^k -2\delta_0/3 \leq \frac{1}{\sqrt{n}}\hat E_{k,ns}^{N}-\delta_0/3$. Moreover, we already saw that $\mathds{P}(\hat{\tilde W}_0^{k} \geq \hat W_0^{k})=1$, so $\hat\sigma_{k,-}(-\delta_0) > 0$ almost-surely, which implies that $\frac{1}{n}<\hat\sigma_{k,-}(-\delta_0)$ when $N$ is large enough. In addition, $\mathds{P}(\frac{1}{\sqrt{n}}\tilde E_{k,1}^{N,-} \geq \frac{1}{\sqrt{n}}E_{k,1}^{N})=\mathds{P}(L_{X_{T_k}+1}^{T_k,-}\geq0)=1$ so $\mathds{P}(\frac{1}{\sqrt{n}}\hat{\tilde E}_{k,1}^{N,-} \geq \frac{1}{\sqrt{n}}\hat E_{k,1}^{N})=1$, so if $\hat\sigma_{k,-}(-\delta_0)$ is finite, when $N$ is large enough there exists $t\in[\frac{1}{n},\hat\sigma_{k,-}(-\delta_0)]$ so that $\frac{1}{\sqrt{n}}\hat{\tilde E}_{k,nt}^{N,-}=\frac{1}{\sqrt{n}}\hat E_{k,nt}^{N}$ therefore $\frac{1}{n}\hat\sigma_{k,-}^N \leq \hat\sigma_{k,-}(-\delta_0)$. We deduce 
 \[
 \textstyle
  \mathds{P}(|\frac{1}{n}\hat\sigma_{k,-}^N-\hat \sigma_{k,-}(0)|>\delta_1)
  \leq \mathds{P}(\{|\hat\sigma_{k,-}(0)-\hat\sigma_{k,-}(-\delta_0)|>\delta_1\}\cup\{|\hat\sigma_{k,-}(0)-\hat\sigma_{k,-}(\delta_0)|>\delta_1\}\cup\mathcal{I}_1(\delta_0/3)\cup\mathcal{I}_1'(\delta_0/3))
 \]
\[
 \leq \delta_2/4+\delta_2/4+\delta_2/4+\delta_2/4=\delta_2
\]
when $N$ is large enough. This yields that $\frac{1}{n}\hat\sigma_{k,-}^N$ converges in probability to $\hat \sigma_{k,-}(0)=\hat\sigma_{k,-}$ a.s.
 
 We now prove that $(\frac{1}{\sqrt{n}}\hat E_{k,nt}^{N,-})_{t\in[-\varepsilon,\varepsilon]}$ converges in probability to $(\hat V_t^{k,-})_{t\in[-\varepsilon,\varepsilon]}$. Let $\delta_1,\delta_2 > 0$. For any $\delta > 0$, we denote $\mathcal{I}_2(\delta)=\{\exists\, t,t'\in[-\varepsilon,\varepsilon],|t-t'|\leq\delta,|\hat W_t^k-\hat W_{t'}^k|\geq\delta_1/3\}$ and $\mathcal{I}_2'(\delta)=\{\exists\, t,t'\in[-\varepsilon,\varepsilon],|t-t'|\leq\delta,|\hat {\tilde W}_t^k-\hat {\tilde W}_{t'}^k|\geq\delta_1/3\}$. The functions $(\hat{\tilde W}_t^k)_{t\in[-\varepsilon,\varepsilon]}$ and $(\hat W_t^k)_{t\in[-\varepsilon,\varepsilon]}$ are continous on $[-\varepsilon,\varepsilon]$, hence uniformly continuous, so there exists $\delta_0 > 0$ so that for $\mathds{P}(\mathcal{I}_2(\delta_0))\leq\delta_2/6$ and $\mathds{P}(\mathcal{I}_2'(\delta_0))\leq\delta_2/6$. In addition, we can use the previous reasoning to prove that $\mathds{P}(|\frac{1}{n}\hat\sigma_{k,-}^N-\hat \sigma_{k,-}|>\delta_0) \leq \delta_2/3$ when $N$ is large enough. We now assume $|\frac{1}{n}\hat\sigma_{k,-}^N-\hat \sigma_{k,-}|\leq\delta_0$, $(\mathcal{I}_1(\delta_1/3))^c$, $(\mathcal{I}_1'(\delta_1/3))^c$, $(\mathcal{I}_2(\delta_0))^c$ and $(\mathcal{I}_2'(\delta_0))^c$ occur, and bound $\|(\frac{1}{\sqrt{n}}\hat E_{k,nt}^{N,-})_{t\in[-\varepsilon,\varepsilon]}-(\hat V_t^{k,-})_{t\in[-\varepsilon,\varepsilon]}\|_\infty$. Let $t\in[-\varepsilon,\varepsilon]$. If $t \leq \hat \sigma_{k,-}-\delta_0$, we have $\hat V_t^{k,-}=\hat{\tilde W}_t^k$, and we also have $t \leq \frac{1}{n}\hat\sigma_{k,-}^N$ which yields $\frac{1}{\sqrt{n}}\hat E_{k,nt}^{N,-}=\frac{1}{\sqrt{n}}\hat {\tilde E}_{k,nt}^{N,-}$, therefore $(\mathcal{I}_1'(\delta_1/3))^c$ implies $|\frac{1}{\sqrt{n}}\hat E_{k,nt}^{N,-}-\hat V_t^{k,-}| \leq \delta_1/3$. If $t \geq \hat \sigma_{k,-}+\delta_0$, we have $\hat V_t^{k,-}=\hat W_t^k$, and we also have $t \geq \frac{1}{n}\hat\sigma_{k,-}^N$ which yields $\frac{1}{\sqrt{n}}\hat E_{k,nt}^{N,-}=\frac{1}{\sqrt{n}}\hat E_{k,nt}^{N}$, therefore $(\mathcal{I}_1(\delta_1/3))^c$ implies $|\frac{1}{\sqrt{n}}\hat E_{k,nt}^{N,-}-\hat V_t^{k,-}| \leq \delta_1/3$. Now let $t \in [\hat \sigma_{k,-}-\delta_0,\hat \sigma_{k,-}+\delta_0]$ (which means $\hat \sigma_{k,-}$ is finite). Denoting temporarily $\hat \sigma_{k,-}=s$, by $(\mathcal{I}_2(\delta_0))^c$ and $(\mathcal{I}_2'(\delta_0))^c$ we have $|\hat W_t^k-\hat W_{s}^k|\leq\delta_1/3$ and $|\hat {\tilde W}_t^k-\hat {\tilde W}_{s}^k|\leq\delta_1/3$, thus since $\hat W_{s}^k=\hat {\tilde W}_{s}^k$ we get $|\hat W_t^k-\hat {\tilde W}_t^k|\leq2\delta_1/3$. If $t \leq \frac{1}{n}\hat\sigma_{k,-}^N$ then $\frac{1}{\sqrt{n}}\hat E_{k,nt}^{N,-}=\frac{1}{\sqrt{n}}\hat {\tilde E}_{k,nt}^{N,-}$, so by $(\mathcal{I}_1'(\delta_1/3))^c$ we have $|\frac{1}{\sqrt{n}}\hat E_{k,nt}^{N,-}-\hat{\tilde W}_t^k|\leq \delta_1/3$; if $t \geq \frac{1}{n}\hat\sigma_{k,-}^N$ then $\frac{1}{\sqrt{n}}\hat E_{k,nt}^{N,-}=\frac{1}{\sqrt{n}}\hat {E}_{k,nt}^{N}$, so by $(\mathcal{I}_1'(\delta_1/3))^c$ we have $|\frac{1}{\sqrt{n}}\hat E_{k,nt}^{N,-}-\hat W_t^k|\leq \delta_1/3$. Since $\hat V_t^{k,-}$ is either $\hat W_t^k$ or $\hat{\tilde W}_t^k$, we deduce $|\frac{1}{\sqrt{n}}\hat E_{k,nt}^{N,-}-\hat V_t^{k,-}|\leq\delta_1$. Consequently, for all $t\in[-\varepsilon,\varepsilon]$ we have $|\frac{1}{\sqrt{n}}\hat E_{k,nt}^{N,-}-\hat V_t^{k,-}|\leq\delta_1$, therefore $\|(\frac{1}{\sqrt{n}}\hat E_{k,nt}^{N,-})_{t\in[-\varepsilon,\varepsilon]}-(\hat V_t^{k,-})_{t\in[-\varepsilon,\varepsilon]}\|_\infty\leq\delta_1$. We deduce
 \[
  \textstyle
  \mathds{P}\left(\left\|(\frac{1}{\sqrt{n}}\hat E_{k,nt}^{N,-})_{t\in[-\varepsilon,\varepsilon]}-(\hat V_t^{k,-})_{t\in[-\varepsilon,\varepsilon]}\right\|_\infty>\delta_1\right)
  \leq \mathds{P}\left(\{|\frac{1}{n}\hat\sigma_{k,-}^N-\hat \sigma_{k,-}|>\delta_0\} \cup \mathcal{I}_1(\delta_1/3)\cup \mathcal{I}_1'(\delta_1/3) \cup \mathcal{I}_2(\delta_0) \cup \mathcal{I}_2'(\delta_0)\right)
 \]
 \[
  \leq \delta_2/3+\delta_2/6+\delta_2/6+\delta_2/6+\delta_2/6=\delta_2
 \]
 when $N$ is large enough. This yields that $(\frac{1}{\sqrt{n}}\hat E_{k,nt}^{N,-})_{t\in[-\varepsilon,\varepsilon]}$ converges in probability to $(\hat V_t^{k,-})_{t\in[-\varepsilon,\varepsilon]}$.
 \end{proof}

\end{document}